\pdfoutput=1
\documentclass[11pt]{article}

\usepackage[margin=1in]{geometry}
\usepackage{amsmath,amssymb,amsthm,mathtools}
\usepackage{mathrsfs}
\usepackage{aliascnt}
\usepackage{enumitem}
\usepackage{array}
\usepackage{microtype}
\usepackage[colorlinks=true,linkcolor=blue,citecolor=blue,urlcolor=blue]{hyperref}
\usepackage[nameinlink,noabbrev]{cleveref}

\numberwithin{equation}{section}

\newtheorem{theorem}{Theorem}[section]

\newaliascnt{proposition}{theorem}
\newtheorem{proposition}[proposition]{Proposition}
\aliascntresetthe{proposition}

\newaliascnt{lemma}{theorem}
\newtheorem{lemma}[lemma]{Lemma}
\aliascntresetthe{lemma}

\newaliascnt{corollary}{theorem}
\newtheorem{corollary}[corollary]{Corollary}
\aliascntresetthe{corollary}

\newaliascnt{claim}{theorem}

\aliascntresetthe{claim}

\theoremstyle{definition}
\newaliascnt{definition}{theorem}
\newtheorem{definition}[definition]{Definition}
\aliascntresetthe{definition}

\newaliascnt{remark}{theorem}
\newtheorem{remark}[remark]{Remark}
\aliascntresetthe{remark}

\newaliascnt{assumption}{theorem}

\aliascntresetthe{assumption}
\newaliascnt{example}{theorem}
\newtheorem{example}[example]{Example}
\aliascntresetthe{example}

\crefname{theorem}{Theorem}{Theorems}
\crefname{proposition}{Proposition}{Propositions}
\crefname{lemma}{Lemma}{Lemmas}
\crefname{corollary}{Corollary}{Corollaries}
\crefname{definition}{Definition}{Definitions}
\crefname{remark}{Remark}{Remarks}
\crefname{example}{Example}{Examples}
\Crefname{theorem}{Theorem}{Theorems}
\Crefname{proposition}{Proposition}{Propositions}
\Crefname{lemma}{Lemma}{Lemmas}
\Crefname{corollary}{Corollary}{Corollaries}
\Crefname{definition}{Definition}{Definitions}
\Crefname{remark}{Remark}{Remarks}
\Crefname{example}{Example}{Examples}

\newcommand{\Hn}{\mathbb H^n}
\newcommand{\Sn}{\mathbb S^{n-1}}

\newcommand{\Om}{\Omega}
\newcommand{\Gap}{\Gamma}
\newcommand{\Braw}{B_R^{\mathrm{raw}}}

\newcommand{\calA}{\mathcal A}
\newcommand{\calB}{\mathcal B}
\newcommand{\calH}{\mathcal H}

\newcommand{\eps}{\varepsilon}
\DeclareMathOperator{\diam}{diam}
\DeclareMathOperator{\spec}{spec}
\DeclareMathOperator{\dist}{dist}

\DeclareMathOperator{\Lip}{Lip}

\title{Effective Angular Asymptotics and the Sharp \texorpdfstring{$D^{-3}$}{D^-3} Horoconvex Gap Scale}
\author{SangHyun Park\\Yonsei University}
\date{June 2026}

\begin{document}
\maketitle

\begin{abstract}
Let \(\Om\subset \mathbb H^n\), \(n\ge2\), be a bounded horoconvex domain and let
\(\Gap(\Om)=\lambda_2(\Om)-\lambda_1(\Om)\) be its Dirichlet fundamental gap.  We identify the
large-diameter first-band asymptotic expansion for the first two Dirichlet eigenvalues.  If a divergent sequence is written, after
Chebyshev centering, as a radial graph
\[
  \Om_{R,V}=\{(r,\theta):0\le r\le R-V(\theta)\},\qquad R\to\infty,
\]
with deficits converging to an admissible horoconvex profile \(V\in\calA_n\), then, for \(j=0,1\),
\[
  \lambda_{j+1}(\Om_{R,V})=
  \alpha^2+\frac{\pi^2}{R^2}
  +\frac{2\pi^2}{R^3}\Bigl(\eta_j(T_n+V)-b_0\Bigr)+o(R^{-3}),
  \qquad \alpha=\frac{n-1}{2}.
\]
Here \(b_0=-\gamma_E-\psi(\alpha)\), and \(T_n\) is the nonlocal spherical operator
\[
  T_nY_{\ell m}=\left[\psi\!\left(\ell+\frac{n-1}{2}\right)
  -\psi\!\left(\frac{n-1}{2}\right)\right]Y_{\ell m}.
\]
The operator \(T_n\) is the threshold scattering contribution of the hyperbolic radial equation: all
angular sectors have the same leading radial energy \(\pi^2/R^2\), and the first splitting occurs
through the zero-energy scattering lengths
\(b_\ell=-\gamma_E-\psi(\ell+\alpha)\).  The boundary deficit \(V\) shifts the same endpoint phase and
therefore appears as a multiplication potential.

As consequences, the horoconvex fundamental gap has the sharp polynomial scale \(D^{-3}\), and its
leading constant at this scale is characterized by the compact variational formula
\[
  \lim_{D\to\infty}D^3
  \inf_{\substack{\Om\ \mathrm{horoconvex}\\ \diam\Om=D}}
  \Gap(\Om)
  =16\pi^2\inf_{V\in\calA_n}\bigl[\eta_1(T_n+V)-\eta_0(T_n+V)\bigr].
\]
Geodesic balls realize the polynomial scale, and an explicit admissible axial perturbation lowers the
reduced leading-constant value at first order by \(2(n-1)/(n(n+2))\).
\end{abstract}

\tableofcontents

\section{Introduction}

For a bounded domain \(\Om\) in a Riemannian manifold, let
\[
  0<\lambda_1(\Om)<\lambda_2(\Om)\le\cdots
\]
be the Dirichlet eigenvalues of the positive Laplace--Beltrami operator, and set
\[
  \Gap(\Om)=\lambda_2(\Om)-\lambda_1(\Om).
\]
The fundamental gap problem asks how \(\Gap(\Om)\) is controlled by the geometry of \(\Om\), most
classically by its diameter \(D\).  The modern Euclidean theory starts from the one-dimensional
comparison paradigm of Singer--Wong--Yau--Yau \cite{SWYY} and culminates in the sharp
Andrews--Clutterbuck theorem for convex Euclidean domains, where the gap has scale \(D^{-2}\)
\cite{AC}.  Positive curvature admits closely related comparison theorems, including the sharp
spherical estimate of Seto--Wang--Wei \cite{SWW}; related spherical gap estimates and probabilistic
proofs were developed by He--Wei--Zhang and Cho--Wei--Yang \cite{HWZ,CWY}.  Negative curvature changes
the question: McKean's spectral threshold \cite{McKean} accounts for the term
\(\alpha^2=(n-1)^2/4\), while convexity alone no longer gives a diameter-only \(D^{-2}\)-gap bound.
Bourni--Clutterbuck--Nguyen--Stancu--Wei--Wheeler proved that, for convex domains in
\(\mathbb H^n\), the quantity \(D^2\Gap\) can be arbitrarily small \cite{BCNSWW}; Khan--Nguyen later
showed that this instability is a general negative-curvature phenomenon \cite{KN}.

Horoconvexity is the natural stronger convexity condition in hyperbolic space.  Nguyen--Stancu--Wei
showed, using the large-radius behavior of geodesic balls and related hyperbolic ball estimates
\cite{BenguriaLinde}, that even among horoconvex domains \(D^2\Gap\) has no positive lower bound
\cite{NSW}.  Khan--Nguyen--T\"urkoen--Wei, Khan--Saha--T\"urkoen, and Khan--T\"urkoen subsequently
developed conformal log-concavity estimates and diameter-dependent positivity results in
positive-curvature, conformal, and horoconvex settings \cite{KNTW,KST,KT}.  These results establish
positivity in terms of diameter and dimension.  In the large-diameter horoconvex regime, however,
the explicit bound obtained in \cite{KT} decays doubly exponentially in the diameter; the present
paper identifies the polynomial asymptotic model and the leading-constant problem at that scale.
This line builds on comparison and probabilistic gap methods, including Cho--Wei--Yang \cite{CWY},
and is complemented by small-diameter super-log-concavity results of Wei--Xiao \cite{WeiXiao}.

It is useful to separate the scale question from the leading-constant question.  Geodesic balls and
Nguyen--Stancu--Wei's result show that the Euclidean \(D^{-2}\) scale is unavailable and that the
correct polynomial order, if a polynomial lower bound holds, must be \(D^{-3}\).  A contemporaneous
public announcement by J. Ennis reports an independent matching \(D^{-3}\) lower-bound scale.\footnote{J. Ennis (@johnennis), announcement on X, June 1, 2026, accessed June 9, 2026.  At the time of writing, no public manuscript was available to the author.  The present paper is independent and proves the first-band asymptotic expansion and the variational formula for the leading large-diameter constant.}
The announcement brings the scale question into the same large-diameter regime as the Nguyen--Stancu--Wei
geodesic-ball model.  The abstract accompanying the announcement refers to a limiting angular
operator.  The present paper identifies that operator explicitly as the digamma multiplier \(T_n\),
traces it to the sector scattering lengths \(b_\ell=-\gamma_E-\psi(\ell+\alpha)\), proves the
two-sided \(R^{-3}\) first-band expansion for \(\lambda_1\) and \(\lambda_2\), reduces the leading
constant to a compact nonlocal variational problem, and gives an explicit admissible direction
lowering the geodesic-ball reduced value.

After Chebyshev centering, a large horoconvex body has radial boundary
\[
  \rho(\theta)=R-V(\theta),
\]
where the deficits \(V\) converge, along subsequences, in a compact support-envelope class
\(\calA_n\).  The first two Dirichlet eigenvalues then satisfy
\[
  \frac{R^3}{2\pi^2}
  \left(\lambda_{j+1}(\Omega_{R,V})-\alpha^2-\frac{\pi^2}{R^2}\right)
  \longrightarrow \eta_j(T_n+V)-b_0,
  \qquad j=0,1.
\]
The angular operator is determined by the radial calculation.  In the \(\ell\)-th spherical sector, the radial
Liouville equation has zero-energy scattering length
\[
  b_\ell=-\gamma_E-\psi(\ell+\alpha),
\]
and the difference \(b_0-b_\ell=\psi(\ell+\alpha)-\psi(\alpha)\) is precisely the multiplier of
\(T_n\).  The boundary deficit \(V(\theta)\) shifts the same endpoint phase and hence appears as an
ordinary potential.  Thus the large-diameter hyperbolic gap problem reduces to the nonlocal spherical
Hamiltonian
\[
  T_n+V-b_0.
\]

The distinction between polynomial scale and leading constant is essential.  Geodesic balls determine the sharp
polynomial scale \(D^{-3}\), and the large-diameter constant is governed by the reduced variational
problem.  We prove
\[
  \lim_{D\to\infty}D^3
  \inf_{\substack{\Omega\ \mathrm{horoconvex}\\ \diam\Omega=D}}
  \Gap(\Omega)
  =16\pi^2
  \inf_{V\in\calA_n}\bigl(\eta_1(T_n+V)-\eta_0(T_n+V)\bigr),
\]
and an explicit admissible axial perturbation decreases the reduced gap at \(V=0\), the geodesic-ball
profile.  Thus balls are scale-sharp, while the displayed axial direction gives a strict one-sided
decrease for the leading-constant functional.

\begin{center}
\scriptsize
\renewcommand{\arraystretch}{1.18}
\setlength{\tabcolsep}{3pt}
\begin{tabular}{>{\raggedright\arraybackslash}p{.20\textwidth}>{\raggedright\arraybackslash}p{.20\textwidth}>{\raggedright\arraybackslash}p{.27\textwidth}>{\raggedright\arraybackslash}p{.20\textwidth}}
\hline
Work & Class & Main contribution & Relation here \\
\hline
Singer--Wong--Yau--Yau; Andrews--Clutterbuck & Euclidean convex domains & one-dimensional \(D^{-2}\) gap paradigm & baseline contrast \\
Seto--Wang--Wei; He--Wei--Zhang; Cho--Wei--Yang & spherical convex domains & positive-curvature comparison and probabilistic gap proofs & curvature comparison background \\
McKean & negative curvature & bottom spectral threshold \((n-1)^2/4\) & explains the \(\alpha^2\) threshold \\
Bourni--Clutterbuck--Nguyen--Stancu--Wei--Wheeler; Khan--Nguyen & hyperbolic/negative-curvature convex domains & diameter-only convex gap instability & motivates horoconvexity \\
Nguyen--Stancu--Wei & horoconvex domains & \(D^2\Gap\) has no positive lower bound; balls give \(D^{-3}\) upper behavior & source of the large-ball model \\
Khan--Nguyen--T\"urkoen--Wei; Khan--Saha--T\"urkoen; Khan--T\"urkoen; Wei--Xiao & horoconvex/conformal settings & log-concavity, diameter-dependent positivity, and small-diameter concavity estimates; the explicit large-\(D\) bound in \cite{KT} is doubly exponential & existence/positivity and complementary regimes; not a sharp large-\(D\) asymptotic formula \\
This paper & horoconvex domains & effective Hamiltonian \(T_n+V-b_0\), variational formula for the leading constant, explicit axial decrease from the ball profile & sharp asymptotic structure \\
\hline
\end{tabular}
\end{center}

The analytic reduction uses four ingredients.  The moving Dirichlet condition is encoded by a boundary
characteristic pencil, whose characteristic values are counted by analytic Fredholm multiplicity, as
in elliptic boundary and Grushin reductions \cite{SjoestrandZworski,Grubb,McLean,Kato}.  The number
\(b_\ell\) is the threshold phase slope of the regular radial solution, giving the low-energy
scattering input for the first band \cite{JensenKato,DZ}.  The effective angular operator has a
Poisson-semigroup representation, placing it among nonlocal generators obtained from Bernstein
functions of classical operators \cite{SchillingSongVondracek}.  Since its multiplier grows like
\(\log\ell\), the reduced problem is naturally compared with logarithmic-order nonlocal operators,
including the logarithmic Laplacian and its spectral theory \cite{ChenWeth,LaptevWeth}.  Finally,
the passage from finite angular windows to the full pencil is a Schur reduction with compact
multiplication remainder \(\sup_{V\in\mathcal B}\|P_{>L}VP_M\|\to0\), analogous in role to first-band
mode separation in long or thin domain problems \cite{Post,Kuchment}.

\subsection{Main results}

A domain will be called \emph{horoconvex} if it is an intersection of closed horoballs.  In the
smooth case this is equivalent to all principal curvatures of \(\partial\Om\), computed with respect
to the outward unit normal, being at least \(1\).  We freely pass between a bounded horoconvex
open domain and the compact horoconvex body \(K=\overline\Omega\): the support functions and
Chebyshev data are attached to \(K\), while the Dirichlet spectrum is that of \(\Omega=\operatorname{int}K\).
The Dirichlet eigenvalues are unchanged by modifying the boundary of the compact body.

\begin{theorem}[Sharp large-diameter scale]\label{thm:main-lower}
For every \(n\ge2\) there exist constants \(c_n>0\) and \(D_0(n)>0\) such that every bounded
horoconvex domain \(\Om\subset\Hn\) with nonempty interior and
\(D=\diam\Om\ge D_0(n)\) satisfies
\[
  \lambda_2(\Om)-\lambda_1(\Om)\ge c_nD^{-3}.
\]
\end{theorem}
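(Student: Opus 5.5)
The plan is to deduce the bound by compactness from the first-band asymptotic expansion described in the introduction, together with strict positivity of the reduced gap on the compact class \(\calA_n\). Argue by contradiction. Suppose there are horoconvex \(\Om_k\) with \(D_k=\diam\Om_k\to\infty\) and \(D_k^3\Gap(\Om_k)\to0\). After Chebyshev centering, write each \(\Om_k\) as a radial graph \(\{r\le R_k-V_k(\theta)\}\), where \(R_k\) is the circumradius.

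The first step is geometric. Horoconvexity should give \(D_k=2R_k+O(1)\) and a uniform bound on the deficits \(V_k\), so that the \(V_k\) lie in a fixed compact support-envelope family. Passing to a subsequence, \(V_k\to V\in\calA_n\) in the topology used for the expansion. For intersections of horoballs this is the standard statement that a large horoconvex body is within bounded Hausdorff distance of the ball of its circumradius. I would check it via support functions: each supporting horosphere at distance \(\approx R\) from the center contains a ball of radius \(R-O(1)\).

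The second step applies the first-band expansion uniformly along the subsequence. This is the statement in the abstract, made uniform over the compact family \(\mathcal B\) via the Schur reduction with \(\sup_{V\in\mathcal B}\|P_{>L}VP_M\|\to0\). It gives
\[
\Gap(\Om_k)=\frac{2\pi^2}{R_k^3}\bigl(\eta_1(T_n+V_k)-\eta_0(T_n+V_k)\bigr)+o(R_k^{-3}).
\]
Since \(D_k\sim2R_k\), we get \(D_k^3\Gap(\Om_k)\to16\pi^2\bigl(\eta_1(T_n+V)-\eta_0(T_n+V)\bigr)\), using continuity of \(\eta_j(T_n+\cdot)\) on \(\calA_n\). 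This continuity holds because \(T_n\) has compact resolvent (its multiplier grows like \(\log\ell\)) and \(V\) is a bounded potential.

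The third step is strict positivity of \(\eta_1-\eta_0\) for every \(V\in\calA_n\). Here I would use the Poisson-semigroup representation: \(T_n\) is a Bernstein function of the spherical Laplacian. Hence \(e^{-t(T_n+V)}\) is positivity improving on \(L^2(\Sn)\), since the subordinated heat kernel is strictly positive. By Perron--Frobenius (Krein--Rutman), the ground state of \(T_n+V\) is simple, so \(\eta_1>\eta_0\). Compactness of \(\calA_n\) and continuity then give \(\inf_{\calA_n}(\eta_1-\eta_0)=:\delta_n>0\). This contradicts \(D_k^3\Gap\to0\), which yields \(c_n\) and \(D_0(n)\).

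The main obstacle is step two. The uniformity of the \(o(R^{-3})\) remainder over all horoconvex bodies, and not just a fixed convergent sequence, requires an effective version of the pencil and scattering-length analysis with errors controlled only by the \(\calA_n\) bounds. I would handle it by proving the expansion along arbitrary sequences with \(V_k\to V\), which is all the contradiction argument needs. The lower-order regularity of the deficits (Lipschitz bounds from horoconvexity) then gives the required compactness in the operator-norm sense.
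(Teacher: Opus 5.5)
Your proposal is correct and follows essentially the same route as the paper. The paper also argues by contradiction: Chebyshev-centered radial compactness gives \(V_k\to V\in\calA_n\) with \(D_k=2R_k+O(1)\), and the sequential first-band expansion yields \(\Gap(\Om_k)=2\pi^2R_k^{-3}\mathfrak g(V)+o(R_k^{-3})\). Positivity improving of \(e^{-t(T_n+V)}\) together with compactness of \(\calA_n\) then gives \(\inf_{\calA_n}\mathfrak g>0\).

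The one point your geometric sketch leaves implicit is where the lower support bound \(s_K(\xi)\ge e^R/2\) comes from; this is the bound ensuring every supporting horoball contains \(B_{R-\log 2}(o)\). The paper obtains it from the Chebyshev contact condition \(0\in\operatorname{conv}C_R\), which supplies, for each \(\xi\), a contact direction \(\theta\) with \(\theta\cdot\xi\le 0\).
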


The theorem is sharp in scale because geodesic balls satisfy
\[
  \lambda_2(B_R)-\lambda_1(B_R)=\frac{4\pi^2}{(n-1)R^3}+O(R^{-4}),
  \qquad R\to\infty.
\]
Since \(\diam B_R=2R\), this asymptotic proves the sharpness of the \(D^{-3}\) scale.

The precise theorem is an asymptotic reduction to an effective Hamiltonian on the sphere.  Let
\(\alpha=(n-1)/2\).  For \(\ell=0,1,2,\ldots\), let \(\calH_\ell\) denote the spherical harmonics of
degree \(\ell\) on \(\Sn\).  Define the self-adjoint operator \(T_n\) by
\begin{equation}\label{eq:Tn-intro}
  T_nY=\left[\psi(\ell+\alpha)-\psi(\alpha)\right]Y,
  \qquad Y\in\calH_\ell,
\end{equation}
where \(\psi=\Gamma'/\Gamma\) is the digamma function.  If \(V\in L^\infty(\Sn)\), we write
\[
  \eta_0(T_n+V)<\eta_1(T_n+V)\le\cdots
\]
for the eigenvalues of \(T_n+V\).  The admissible class \(\calA_n\) is defined intrinsically by the
support-envelope model in Definition~\ref{def:admissible-class}; Theorem~\ref{thm:geometry-package}
then identifies this class with the compact family of large-radius radial deficits of horoconvex
bodies.

\begin{theorem}[Effective angular limit]\label{thm:effective-limit}
Let \(\Om_k\subset\Hn\) be horoconvex domains with circumradii \(R_k\to\infty\), after choosing
Chebyshev centers.  After passing to a subsequence there exists \(V\in\calA_n\) such that
\(\diam\Om_k=2R_k+O(1)\) and, for \(j=1,2\),
\begin{equation}\label{eq:eff-intro}
  \lambda_j(\Om_k)=
  \alpha^2+\frac{\pi^2}{R_k^2}
  +\frac{2\pi^2}{R_k^3}\Bigl(\eta_{j-1}(T_n+V)-b_0\Bigr)
  +o(R_k^{-3}),
\end{equation}
where
\[
  b_0=-\gamma_E-\psi(\alpha).
\]
Conversely, for every \(V\in\calA_n\) there is a sequence of horoconvex domains realizing
\eqref{eq:eff-intro}.
\end{theorem}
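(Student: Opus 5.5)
The plan is to reduce \eqref{eq:eff-intro} to three inputs: a geometric compactness statement, a one-dimensional threshold-scattering computation in each spherical sector, and a finite-rank Schur (Grushin) reduction that couples the sectors through the boundary deficit. For the geometry we invoke Theorem~\ref{thm:geometry-package}. After Chebyshev centering, each $\Om_k$ is a radial graph $\{r\le R_k-V_k(\theta)\}$ with $V_k\ge0$ uniformly bounded and uniformly Lipschitz. This gives $\diam\Om_k=2R_k+O(1)$, and a subsequence with $V_k\to V\in\calA_n$ uniformly. The same theorem supplies the converse: every $V\in\calA_n$ arises as such a limit. Hence both directions of the theorem follow once we prove the following stability statement. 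If $V_k\to V$ uniformly inside the compact class $\calB=\calA_n$, then the first two eigenvalues of $\Om_{R_k,V_k}$ obey \eqref{eq:eff-intro}, with an $o(R^{-3})$ error that is uniform over $\calB$.

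\textbf{Radial step.} Conjugate by $\sinh^{\alpha}r$ and restrict to $\calH_\ell$. The radial operator becomes $-\partial_r^2+\alpha^2+q_\ell(r)$, where $q_\ell(r)=O(\ell^2e^{-2r})$ for large $r$. Write $\lambda=\alpha^2+\kappa^2$. Matching the regular solution of the hypergeometric (Legendre-type) equation to its large-$r$ behavior yields
\[
\sin\bigl(\kappa(r-\beta_\ell)\bigr)+O(\kappa^3)+O(e^{-2r}),\qquad \beta_\ell=b_\ell \text{ up to the sign convention},
\]
uniformly for $\ell\le L$ and $r$ in a neighborhood of $R$. The phase slope $\beta_\ell$ comes from the threshold expansion $\Gamma(\ell+\alpha+i\kappa)/\Gamma(\ell+\alpha)$, which is where $\psi(\ell+\alpha)+\gamma_E$ enters. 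Imposing the Dirichlet condition at $r=R-v$ forces
\[
\kappa=\frac{\pi}{R-v-\beta_\ell}+O(R^{-3}),
\]
so that
\[
\kappa^2=\frac{\pi^2}{R^2}+\frac{2\pi^2}{R^3}(v+\beta_\ell)+O(R^{-4}).
\]
We fix the sign convention so that $v+\beta_\ell$ equals $V+T_n-b_0$ on $\calH_\ell$. The same expansion explains why $V$ appears as a multiplication potential: it shifts the endpoint phase in exactly the way $\beta_\ell$ does.

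\textbf{Coupling step.} This is a boundary characteristic pencil. Take $u=\sum_\ell w_\ell(r;\kappa)Y_\ell$ with regular radial solutions, and impose $u=0$ on $r=R-V(\theta)$. Linearizing $w_\ell(R-V(\theta))$ in $V$ gives the operator
\[
\sin(\kappa R)-\kappa\cos(\kappa R)\,(T_n+V-b_0)+\text{errors}
\]
acting on $L^2(\Sn)$. Its characteristic values $\kappa$, counted by analytic Fredholm multiplicity, are the Dirichlet eigenvalues. Near $\kappa R=\pi$, a Grushin problem reduces this to the finite window $P_M$, and the eigenvalues $\eta_0,\eta_1$ of $T_n+V$ then produce the two lowest characteristic values. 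Modes with $\ell>L$ are handled separately. Their radial energies are still about $\pi^2/R^2$, but their phase slopes grow like $\log\ell$, so they are separated at order $R^{-3}$. Their coupling to the window is controlled by $\sup_{V\in\calB}\|P_{>L}VP_M\|\to0$, which holds by compactness of the Lipschitz class. As a cross-check, independent of the pencil-counting, we also obtain matching upper bounds from min--max. The test functions are $\sum_{\ell\le L}f_\ell(r)Y_\ell$ built from the eigenvectors of $P_L(T_n+V)P_L$, cut off smoothly at $R-V$.

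\textbf{Main obstacle.} We expect the hardest step to be uniformity. The radial expansion must hold for $\ell$ up to a slowly growing $L(R)$. The Lipschitz deficit must also be linearized inside the boundary trace with error $o(R^{-3})$: the $O(e^{-2r})$ and $O(\ell^2/R)$ remainders have to be beaten by a $\log\ell$ gap. We would first attempt this with a choice $L\sim\log R$ together with explicit hypergeometric remainder bounds. If those bounds prove too weak, we would replace the pointwise phase linearization by a quadratic-form comparison with the flattened domain, using a radial rescaling $r\mapsto rR/(R-V(\theta))$.
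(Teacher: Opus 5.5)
Your architecture matches the paper's. Compactness, the converse and $\diam\Om_k=2R_k+O(1)$ come from \cref{thm:geometry-package}. The other ingredients are the same too: the scattering lengths $b_\ell$, the normalized trace pencil $\zeta-(T_n+V-b_0)+o(1)$ on fixed angular windows, a Schur reduction controlled by $\sup_{V\in\calB}\|P_{>L}VP_M\|\to0$, and collar-corrected regular continuations as trial functions for the upper bounds. For those trial functions the correction must live on a mesoscopic collar of width $1\ll\ell_R\ll R$, since a fixed-width cutoff costs $O(1)$ in the effective scale. One small correction: the finite-$R$ deficits, and those of the realizations $K_R(q_V)$, lie in a fixed-height Lipschitz family $\mathfrak C_K$, not in $\calA_n$. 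The stability statement should therefore be posed on such a family.

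\textbf{The gap.} The genuine gap is the reverse direction, which is what gives the lower bounds for $\lambda_1,\lambda_2$. You must show that no Dirichlet eigenvalue in the window $\alpha^2+\pi^2R^{-2}+2\pi^2R^{-3}[-C,C]$ escapes the finite core. You obtain this from a Grushin reduction of the \emph{full} pencil, which needs $(I-P_M)\mathfrak B_R(\zeta,V)(I-P_M)$ to be uniformly invertible on contours, and that is false. The phase expansion is uniform only on fixed windows, and since $T_n(\ell)\sim\log\ell$ grows so slowly, the high sectors are never perturbative. Already for $V\equiv0$ the normalized diagonal entry $-\frac R\pi s_\ell(R,k_R(\zeta))$ behaves like $\zeta+b_\ell$ only while $\log\ell\ll R$. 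It tends to $0$ as $\ell\to\infty$: for $\ell\gg e^R$ the barrier $\ell^2/\sinh^2 r$ covers $r\le R$, so the unit-amplitude regular solutions are exponentially small on the boundary. Hence the high block is not even boundedly invertible on $L^2(\Sn)$; this is why the paper's Fredholm setting uses the graph-norm space $\mathcal X^s_{R,V}$, which yields no uniform Schur bound. A Lipschitz $V$ couples all these modes, and $L\sim\log R$ with hypergeometric remainder bounds never reaches them. ``Phase slopes grow like $\log\ell$'' is a statement about energies of decoupled sectors, not about invertibility of the coupled trace operator.

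\textbf{The missing idea.} Bypass the high block entirely in this direction by Dirichlet monotonicity. Since $\Omega_{R,V}\subset B_R$, zero extension preserves the form, and the ball spectrum is separated. All higher radial roots are $\ge4\pi^2R^{-2}+O(R^{-3})$. Because $\beta(\beta-1)$ increases for $\beta=\ell+\alpha\ge1/2$, the first radial eigenvalue of $H_\ell$ increases with $\ell$. So one fixed-sector asymptotic, at the first $\ell$ with $T_n(\ell)>M$, bounds every higher sector at once (\cref{lem:ball-bracketing-window}). Hence any state with Rayleigh quotient in the window is $\delta$-close to the finite first band $\Pi_{R,M}$ (\cref{lem:graph-spectral-cutoff}). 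The same inclusion gives the a priori bottom $\lambda_1(\Omega_{R,V})\ge\lambda_1(B_R)$.

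The localized exact cluster is then captured by the finite corrected Ritz space $\mathcal T_{R,V}(0)P_LL^2(\Sn)$, whose Ritz matrix is $P_L(T_n+V-b_0)P_L+O_L(R^{-1/2})$. Min--max then yields the lower bounds, and the boundary expansion is only ever applied to finitely many sectors. With this, $L$ stays fixed while $R\to\infty$ (order: $M$, then $L$, then $R\to\infty$, then $M\to\infty$). So the growing-window uniformity you single out as the main obstacle never arises.

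Nor do you need to linearize a Lipschitz deficit in the trace. Prove the result for $W=P_tV$ with $t$ fixed, and use $\Omega_{R-\delta,W}\subset\Omega_{R,V}\subset\Omega_{R+\delta,W}$ for $\|V-W\|_\infty\le\delta$; this costs only $\pm\delta$ in the effective scale. Your fallback of pulling back by $r\mapsto rR/(R-V(\theta))$ is a plausible purely variational substitute for the pencil. It too, however, needs this localization step to turn a form comparison into lower bounds.
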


Combining \cref{thm:effective-limit} with compactness of \(\calA_n\) gives the sharp constant as an
effective variational problem.

\begin{theorem}[Asymptotic variational formula]\label{thm:var-formula}
For every \(n\ge2\),
\begin{equation}\label{eq:main-constant}
  \lim_{D\to\infty}D^3
  \inf_{\substack{\Om\subset\Hn\ \mathrm{horoconvex}\\ \diam\Om=D}}
  \bigl(\lambda_2(\Om)-\lambda_1(\Om)\bigr)
  =16\pi^2\inf_{V\in\calA_n}\mathfrak g(V),
\end{equation}
where
\[
  \mathfrak g(V):=\eta_1(T_n+V)-\eta_0(T_n+V).
\]
Moreover
\[
  \inf_{V\in\calA_n}\mathfrak g(V)>0.
\]
\end{theorem}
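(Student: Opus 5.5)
The plan is to derive the formula from \cref{thm:effective-limit}, using compactness of \(\calA_n\) and continuity of \(V\mapsto\mathfrak g(V)\) on it. The first step is to convert circumradius to diameter. By \cref{thm:effective-limit}, \(\diam\Om_k=2R_k+O(1)\). For the difference \(j=2\) minus \(j=1\) in \eqref{eq:eff-intro}, both \(\alpha^2+\pi^2/R^2\) and \(b_0\) cancel, which gives
\[
  \Gap(\Om_k)=\frac{2\pi^2}{R_k^3}\,\mathfrak g(V)+o(R_k^{-3}).
\]
Since \(D_k^3=8R_k^3(1+O(R_k^{-1}))\), it follows that \(D_k^3\Gap(\Om_k)\to16\pi^2\mathfrak g(V)\). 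Throughout I will use that the \(O(1)\) in the diameter relation is uniform, which comes from the geometry package (\cref{thm:geometry-package}).

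\textbf{Lower bound (liminf).} Take \(D_k\to\infty\) and horoconvex \(\Om_k\) with \(\diam\Om_k=D_k\) and
\[
  D_k^3\Gap(\Om_k)\le \inf_{\diam=D_k}D_k^3\Gap+1/k .
\]
Choose the subsequence so that these values converge to the liminf. Since \(R_k\ge D_k/2\to\infty\), \cref{thm:effective-limit} yields a further subsequence and some \(V\in\calA_n\). Hence the liminf equals \(16\pi^2\mathfrak g(V)\ge16\pi^2\inf_{\calA_n}\mathfrak g\).

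\textbf{Upper bound (limsup).} Fix \(V\in\calA_n\). The converse part of \cref{thm:effective-limit} gives horoconvex \(\Om_k\) with \(D_k^3\Gap(\Om_k)\to16\pi^2\mathfrak g(V)\), but with diameters \(D_k\) that are not arbitrary. To hit every large \(D\) I would interpolate. Either I realize the converse construction with the circumradius parameter \(R\) varying continuously, so that \(\diam\) is continuous in \(R\) and covers all large \(D\) by the intermediate value theorem. Or I note that the expansion is locally uniform on \(R\)-intervals, since the \(o(R^{-3})\) depends only on \(V\) through the compact class. Then for each \(D\) I choose \(R\) with \(\diam\Om_{R,V}=D\), and this gives \(\limsup_{D} D^3\inf\Gap\le16\pi^2\mathfrak g(V)\) for each \(V\). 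Taking the infimum over \(V\) shows that the limit exists and equals \(16\pi^2\inf_{\calA_n}\mathfrak g\).

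\textbf{Positivity.} I would show that \(\mathfrak g\) is continuous on the compact set \(\calA_n\) in its topology. If that topology is uniform convergence, then \(V\mapsto T_n+V\) is norm continuous, since \(\|V-V'\|_\infty\) bounds the perturbation. Eigenvalues are then \(1\)-Lipschitz by min–max, because \(T_n\) has compact resolvent (\(\psi(\ell+\alpha)\to\infty\)). So the infimum is attained at some \(V_*\), and it remains to show \(\eta_0(T_n+V_*)\) is simple. I would argue this via positivity improvement. \(T_n\) has the Poisson-semigroup or Bernstein-function representation from the introduction, so \(e^{-t(T_n+V)}\) has a strictly positive kernel on \(\Sn\) (a subordinate heat semigroup perturbed by a bounded potential). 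Perron–Frobenius then gives simplicity of the ground state, so \(\mathfrak g(V_*)>0\). The same positivity also gives the strict inequality \(\eta_0<\eta_1\) asserted in the notation.

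\textbf{Main obstacle.} I expect the hard part to be the uniformity needed in the limsup step, namely hitting every diameter with a uniform \(o(R^{-3})\). If continuity in \(R\) is not readily available, I would first try a direct monotonicity argument. Adjacent admissible diameters differ by \(O(1)\), which changes \(D^{-3}\) only by a factor \(1+O(D^{-1})\). Thus a domain of diameter \(D'\in[D,D+C]\) can be shrunk by a dilation-like horoconvex-preserving operation, for example intersecting with a concentric ball, to diameter exactly \(D\) while keeping the limiting profile \(V\).
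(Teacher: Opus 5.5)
Your proposal follows the paper's proof essentially step for step: the lower bound by extracting a convergent deficit via \cref{thm:effective-limit} together with \(D=2R+O(1)\); the upper bound via the continuous realization family \(K_R(q_V)\), whose diameter is continuous in \(R\) and is calibrated to every large \(D\) by the intermediate value theorem (this is \cref{lem:diameter-calibration}, used through \cref{prop:horoconvex-transfer}); and positivity via compactness of \(\calA_n\), the \(1\)-Lipschitz dependence of \(\eta_j\) on \(V\), and positivity improvement of \(e^{-t(T_n+V)}\) (\cref{lem:positive-improving,lem:uniform-effective-gap}). Your first option for exact diameters is the right one, but the fallback of intersecting with a concentric ball \(\overline{B_{R-s}(o)}\) would not work as stated: it replaces the limiting deficit \(V\) by \((V-s)_+\) with \(s\) of order one, so the profile \(V\) is not preserved.
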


Formula \eqref{eq:main-constant} separates the polynomial scale from the remaining compact
variational problem.  The sharp large-diameter constant is characterized by the value of this
nonlocal problem on \(\calA_n\).  Theorem~\ref{thm:ball-not-optimal} gives the first structural information
on that problem by exhibiting an admissible axial direction along which the reduced gap decreases
from the symmetric point \(V\equiv0\).

\begin{theorem}[Explicit axial decrease from the symmetric point]\label{thm:ball-not-optimal}
For each unit vector \(e\in\mathbb R^n\) there is an admissible curve
\(V_\eps\in\calA_n\),
\[
  V_\eps(\theta)=\eps\bigl(1-(e\cdot\theta)^2\bigr)+O(\eps^2)
  \quad\text{in }C(\Sn),
\]
such that
\begin{equation}\label{eq:ball-perturb-intro}
  \mathfrak g(V_\eps)=\frac{2}{n-1}
  -\frac{2(n-1)}{n(n+2)}\eps+O(\eps^2).
\end{equation}
Equivalently, \(d\mathfrak g(V_\eps)/d\eps|_{\eps=0}<0\).  Thus the displayed admissible curve gives a strict one-sided decrease from the large-ball profile in the reduced problem.
\end{theorem}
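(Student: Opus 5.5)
The plan is to treat \(\eps\mapsto T_n+V_\eps\) as a bounded perturbation of the diagonal operator \(T_n\) and apply Kato's first-order perturbation theory. There are two independent parts: construct an admissible curve \(V_\eps\in\calA_n\) with the stated expansion, then compute the first-order shifts of \(\eta_0\) and \(\eta_1\).

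\emph{Spectral structure at \(\eps=0\).} Since \(\psi\) is strictly increasing on \((0,\infty)\), the eigenvalues of \(T_n\) are ordered by \(\ell\). The lowest one is \(0\), on the constants \(\calH_0\). The next is
\[
\psi(\alpha+1)-\psi(\alpha)=\frac1\alpha=\frac{2}{n-1},
\]
on the \(n\)-dimensional space \(\calH_1=\operatorname{span}\{\theta_1,\dots,\theta_n\}\). The \(\ell=2\) value \(\frac1\alpha+\frac1{\alpha+1}\) is strictly larger. Hence \(\mathfrak g(0)=2/(n-1)\), and both clusters are isolated with a uniform gap.

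\emph{Perturbation step.} Write \(V_\eps=\eps W+E_\eps\) with \(W(\theta)=1-(e\cdot\theta)^2\) and \(\|E_\eps\|_{L^\infty}=O(\eps^2)\). Since multiplication by \(E_\eps\) has operator norm \(O(\eps^2)\), it moves every eigenvalue by \(O(\eps^2)\) by min–max. So it suffices to analyse \(T_n+\eps W\).

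For the simple eigenvalue, \(\eta_0=\eps\langle W\rangle+O(\eps^2)\), where \(\langle\cdot\rangle\) is the normalized spherical mean. Using \(\langle\theta_i^2\rangle=1/n\), this gives \(\langle W\rangle=1-1/n\).

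For the degenerate cluster, Kato's reduction says the \(n\) perturbed eigenvalues are \(\frac2{n-1}+\eps\mu_k+O(\eps^2)\). Here \(\mu_k\) are the eigenvalues of the compression \(P_1WP_1\) on \(\calH_1\). The lowest branch is \(\eta_1\), because \(\eta_0\) stays near \(0\) and the \(\ell\ge2\) spectrum stays away. Take \(e=e_1\) by rotation invariance and use the orthonormal basis \(\sqrt n\,\theta_i\). With the moments
\[
\langle\theta_i^4\rangle=\frac{3}{n(n+2)},\qquad \langle\theta_i^2\theta_j^2\rangle=\frac{1}{n(n+2)}\ (i\ne j),
\]
and odd moments vanishing, the compression is diagonal. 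Its entries are \(1-\frac3{n+2}\) for \(i=1\) and \(1-\frac1{n+2}\) for \(i\neq1\). Thus \(\mu_{\min}=1-\frac3{n+2}\), attained by the axial harmonic \(e\cdot\theta\). Subtracting the \(\eta_0\) shift gives
\[
\mu_{\min}-\Bigl(1-\frac1n\Bigr)=\frac1n-\frac3{n+2}=-\frac{2(n-1)}{n(n+2)},
\]
which is \eqref{eq:ball-perturb-intro}.

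\emph{Admissibility (main obstacle).} The delicate step is producing \(V_\eps\in\calA_n\) with leading term exactly \(\eps W\) in \(C(\Sn)\); the pointwise-small function \(\eps W\) is not obviously admissible. I would construct it from Definition~\ref{def:admissible-class}, via the geometric identification in Theorem~\ref{thm:geometry-package}. Concretely, take the large-radius deficit of an axially symmetric horoconvex body: the ball \(B_R\) flattened symmetrically in the two directions \(\pm e\), for instance by intersecting with two suitable horoballs or a support-envelope family depending smoothly on \(\eps\). The deficit must vanish at \(\theta=\pm e\) and be maximal on the equator \(e\cdot\theta=0\).

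My first attempt is to write the candidate support-envelope with parameter \(\eps\) directly in the model of Definition~\ref{def:admissible-class}. I would then verify the envelope inequality strictly for small \(\eps\), since \(V\equiv0\) is presumably an interior point in the relevant directions. Finally, I would Taylor-expand the resulting radial deficit in \(\eps\) to identify the linear term as \(1-(e\cdot\theta)^2\). If the natural construction instead yields \(c\,\eps\bigl(1-(e\cdot\theta)^2\bigr)\) with \(c\neq1\), or adds a constant, I would reparametrize \(\eps\). A constant shift is harmless in any case, because it cancels in \(\mathfrak g\).
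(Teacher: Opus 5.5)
Your spectral computation is correct and is the paper's argument: Kato's compression of \(W\) to \(\calH_1\) is diagonal with axial entry \((n-1)/(n+2)\), you subtract \(\eta_0'(0)=(n-1)/n\), and the \(O(\eps^2)\) replacement follows from min--max.

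\textbf{The gap is admissibility.} The existence of \(V_\eps\in\calA_n\) with \(V_\eps=\eps W+O(\eps^2)\) is part of the statement. You flag it as the main obstacle but do not resolve it, and the concrete routes you sketch would not work.
\begin{enumerate}[label=(\roman*)]
\item \emph{Finitely many horoballs.} Intersecting \(B_R\) with two, or any fixed finite number, of horoballs gives a limiting deficit \(\max\bigl(0,\max_i\log\frac{1-\theta\cdot\xi_i}{2q_i}\bigr)\). Its height must be \(O(\eps)\), so each active cap has radius \(O(\sqrt\eps)\). The deficit then vanishes on most of the sphere and stays at \(C^0\)-distance of order \(\eps\) from \(\eps W\), which is positive off \(\pm e\). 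You need a continuum of active supports.
\item \emph{Interior point.} The heuristic that \(V\equiv0\) is an interior point is false. Every element of \(\calA_n\) is \(\ge0\). At a contact direction \(\theta\), the inequality in \eqref{eq:Cq-def} at \(\xi=-\theta\) forces \(q(-\theta)=1\), so it is an equality there and can never be checked strictly. A first-order comparison at \(\xi=-\theta\) is required.
\item \emph{Constant shift.} Adding a constant is not available. An \(O(\eps)\) constant contradicts the required \(C^0\)-expansion, and a positive constant removes the zeros needed for the Chebyshev normalization.
\end{enumerate}

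\textbf{The missing step} is a first-order dual profile, as in Lemma~\ref{lem:axial-admissible-curve}. Take \(q_\eps(\xi)=\exp[-\eps W(-\xi)]\), so \(1/2\le q_\eps\le1\) for \(\eps<\log2\).

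For the normalization, set \(\theta=e\) and \(t=e\cdot\xi\). Then
\[
  e^{-\eps(1-t^2)}-\frac{1-t}{2}
  \ge 1-\eps(1-t)(1+t)-\frac{1-t}{2}
  =(1+t)\Bigl(\frac12-\eps(1-t)\Bigr)\ge0
\]
for \(\eps\le1/4\), and the same holds at \(-e\). Hence \(0\in\operatorname{conv}\{e,-e\}\subset\operatorname{conv}C(q_\eps)\).

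For the envelope, choosing \(\xi=-\theta\) gives \(V_{q_\eps}\ge\eps W\). For the upper bound, use
\(\log\frac{1-\theta\cdot\xi}{2}=\log\bigl(1-\tfrac14|\xi+\theta|^2\bigr)\le-\tfrac14|\xi+\theta|^2\) together with
\(W(-\xi)-W(\theta)=(e\cdot(\theta-\xi))(e\cdot(\theta+\xi))\le2|\xi+\theta|\).
These give \(V_{q_\eps}\le\eps W+\sup_{s\ge0}(2\eps s-s^2/4)=\eps W+4\eps^2\).

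\textbf{Alternative route.} One can also show that \(\eps W\) itself lies in \(\calA_n\) for small \(\eps\). The function \(\theta\mapsto-\log\frac{1-\theta\cdot\xi}{2}\) has geodesic Hessian \(\ge\frac13\) on \(\{d(\theta,-\xi)\le\pi/3\}\), which dominates \(\eps D^2W\). So \(\eps W\) is \(c\)-convex, \(V_{q_{\eps W}}=\eps W\), and \(\pm e\) lie in the contact set. Either way, some such argument has to be supplied.
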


\subsection{Outline of the proof}

The argument separates the geometry, the one-dimensional scattering calculation, and the moving-boundary
spectral reduction.

\paragraph{1. Compact horoconvex geometry.}
If \(R\) is the circumradius and \(o\) is a Chebyshev center, the normalized horospherical support
profile
\[
  q_R(\xi)=e^{-R}s_K(\xi)
\]
is trapped between \(1/2+o(1)\) and \(1+o(1)\).  The radial function has the form
\[
  \rho_R(\theta)=R-V_R(\theta),
\]
and every subsequential limit is a support envelope
\begin{equation}\label{eq:Vq-intro}
  V_q(\theta)=\sup_{\xi\in\Sn}
  \log\frac{1-\theta\cdot\xi}{2q(\xi)},
  \qquad \frac12\le q\le1.
\end{equation}
Theorem~\ref{thm:geometry-package} characterizes the large-radius compactification, canonical support duality,
horoconvex realization, and exact diameter calibration.

\paragraph{2. Zero-energy scattering.}
The Liouville transform reduces the \(\ell\)-th angular branch to
\[
  -\frac{d^2}{dr^2}+\frac{(\ell+\alpha)(\ell+\alpha-1)}{\sinh^2 r}.
\]
Its zero-energy regular solution has scattering length
\[
  b_\ell=-\gamma_E-\psi(\ell+\alpha).
\]
Thus a large interval of length \(R\) has first radial energy
\[
  \frac{\pi^2}{(R+b_\ell)^2}
  =\frac{\pi^2}{R^2}-\frac{2\pi^2b_\ell}{R^3}+O(R^{-4}),
\]
and the difference \(b_0-b_\ell=\psi(\ell+\alpha)-\psi(\alpha)\) is exactly the multiplier of \(T_n\).

\paragraph{3. Boundary Grushin reduction.}
For a radial graph \(r=R-V(\theta)\), one solves the regular radial equation from the origin and
imposes the Dirichlet condition through the boundary operator
\[
  B_R^{\mathrm{raw}}(k,V)f=\mathcal S(k)f(R-V(\theta),\theta).
\]
The normalized operator
\[
  \mathfrak B_R(\zeta,V)=-\frac R\pi B_R^{\mathrm{raw}}(k_R(\zeta),V),
  \qquad
  k_R(\zeta)=\frac\pi R+\frac{\pi\zeta}{R^2},
\]
satisfies, on finite angular windows,
\[
  \mathfrak B_R(\zeta,V)=\zeta I-(T_n+V-b_0)+o(1).
\]
The Sobolev--Fredholm framework proves that characteristic zeros of \(B_R^{\mathrm{raw}}\) are exactly the
Dirichlet eigenvalues, with multiplicity.  The quantitative part uses ordered angular cutoffs:
\[
  M\text{ fixed}\quad\Longrightarrow\quad L\gg M\quad\Longrightarrow\quad R\to\infty
  \quad\Longrightarrow\quad M\to\infty.
\]
The quantitative high-mode estimate is
\[
  \sup_{V\in\mathcal B}\|P_{>L}VP_M\|\longrightarrow0
  \qquad(L\to\infty)
\]
for each fixed inner cutoff \(M\) and each compact family \(\mathcal B\subset C(\Sn)\).

\paragraph{4. Positivity of the effective gap.}
The identity
\begin{equation}\label{eq:poisson-intro}
  T_n=\int_0^1(I-P_\rho)\frac{\rho^{\alpha-1}}{1-\rho}\,d\rho
\end{equation}
expresses \(T_n\) through the spherical Poisson semigroup.  Hence \(e^{-tT_n}\), and therefore
\(e^{-t(T_n+V)}\), is positivity improving.  The ground state of \(T_n+V\) is simple for every
\(V\), and compactness of \(\calA_n\) gives
\[
  \inf_{V\in\calA_n}\mathfrak g(V)>0.
\]

\paragraph{5. Exact diameter transfer.}
Graph asymptotics are transferred back to actual horoconvex bodies through the realization theorem.
The diameter expansion
\[
  \diam K_R(q)=2R+\Delta(V_q)+O(e^{-2R})
\]
then allows the upper-bound sequence to be calibrated to the exact constraint \(\diam\Om=D\).  The
factor \(16\pi^2\) in \cref{thm:var-formula} is the product of the effective coefficient
\(2\pi^2/R^3\) and \((2R)^3\).

\subsection{Notation}

\begin{center}
\renewcommand{\arraystretch}{1.15}
\begin{tabular}{>{$}l<{$} p{0.72\textwidth}}
\hline
\alpha=(n-1)/2 & the hyperbolic threshold parameter; the bottom of the essential radial energy is \(\alpha^2\).\\
R & large circumradius or radial length parameter, depending on context.\\
D & diameter; for admissible large bodies \(D=2R+O(1)\).\\
q, q_R & normalized horospherical support profiles, with \(1/2\le q\le1\).\\
V, V_R & radial deficits: \(\rho_R(\theta)=R-V_R(\theta)\).\\
\calA_n & compact class of limiting horoconvex deficits.\\
b_\ell & zero-energy scattering length in angular sector \(\ell\): \(b_\ell=-\gamma_E-\psi(\ell+\alpha)\).\\
T_n & nonlocal spherical operator with multiplier \(b_0-b_\ell\).\\
\eta_j(T_n+V) & eigenvalues of the effective angular Hamiltonian.\\
\mathfrak g(V) & effective fundamental gap \(\eta_1(T_n+V)-\eta_0(T_n+V)\).\\
\hline
\end{tabular}
\end{center}

\section{Horoconvex compactness}\label{sec:horoconvex-compactness}

We use the hyperboloid model when convenient.  Fix \(o\in\Hn\).  In polar coordinates
\((r,\theta)\in[0,\infty)\times\Sn\), the Busemann-type horospherical support functions are
\begin{equation}\label{eq:beta}
  \beta_\xi(r,\theta)=\cosh r-\sinh r\,\theta\cdot\xi,
  \qquad \xi\in\Sn.
\end{equation}
A compact horoconvex body \(K\) is described by a support profile \(s_K:\Sn\to(0,\infty)\):
\begin{equation}\label{eq:support-body}
  K=\bigcap_{\xi\in\Sn}\{x:\beta_\xi(x)\le s_K(\xi)\}.
\end{equation}
The profile is lower semicontinuous and is uniquely determined after replacing it by the lower
semicontinuous envelope
\[
  s_K(\xi)=\sup_{x\in K}\beta_\xi(x).
\]

A Chebyshev center of a compact horoconvex body lies in the body.  Indeed, horoballs are
geodesically convex, so \(K\) is a closed convex subset of the Hadamard manifold \(\Hn\).  If a
circumcenter \(o\) were outside \(K\), let \(p\) be the metric projection of \(o\) onto \(K\).  The
CAT\((0)\) projection inequality gives the Pythagorean estimate
\[
  d(p,x)^2+d(o,p)^2\le d(o,x)^2,
  \qquad x\in K .
\]
If \(R=\max_{x\in K}d(o,x)\), then
\[
  d(p,x)\le (R^2-d(o,p)^2)^{1/2}<R
  \qquad x\in K,
\]
so the ball centered at \(p\) has smaller radius than the ball centered at \(o\), a contradiction.
Consequently \(K\) is star-shaped with respect to its Chebyshev center, and it is legitimate to write
\(K\) as a radial graph from that center.

Let \(R\) be the circumradius of \(K\) and choose a Chebyshev center \(o\).  Thus
\(K\subset \overline{B_R(o)}\), and no strictly smaller ball centered at any point contains \(K\).
Define
\begin{equation}\label{eq:qR}
  q_R(\xi)=e^{-R}s_K(\xi).
\end{equation}

The following elementary lemma is the geometric normalization that prevents a large horoconvex body
from having an \(O(R)\) radial loss in some direction.

\begin{lemma}[Support normalization]\label{lem:support-normalization}
Let \(K\) be a horoconvex body with Chebyshev center \(o\), circumradius \(R\), and normalized
support profile \(q_R\).  Then
\begin{equation}\label{eq:q-bounds}
  \frac12\le q_R(\xi)\le1
  \qquad\text{for all }\xi\in\Sn.
\end{equation}
Moreover the contact set
\[
  C_R=\{\theta\in\Sn:(R,\theta)\in K\}
\]
satisfies \(0\in\operatorname{conv} C_R\).
\end{lemma}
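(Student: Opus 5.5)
The plan is to prove the two bounds in \eqref{eq:q-bounds} separately: the upper bound from the containment \(K\subset\overline{B_R(o)}\), and the lower bound from the contact property \(0\in\operatorname{conv}C_R\). So I would establish the convex-hull statement first, by a first-variation argument at the Chebyshev center, and then deduce the lower bound from it. Throughout I use \(s_K(\xi)=\sup_{x\in K}\beta_\xi(x)\) with \(\beta_\xi\) as in \eqref{eq:beta}, and the fact, shown above, that \(o\in K\).

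\emph{Upper bound.} For \(x=(r,\theta)\in K\) we have \(r\le R\). Since \(|\theta\cdot\xi|\le1\),
\[
  \beta_\xi(x)=\cosh r-\sinh r\,\theta\cdot\xi\le\cosh r+\sinh r=e^r\le e^R .
\]
Hence \(q_R\le1\).

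\emph{Contact set.} The set \(C_R\) is compact, and it is nonempty because \(R=\max_{x\in K}d(o,x)\) is attained. Suppose \(0\notin\operatorname{conv}C_R\). By Hahn--Banach separation there is a unit vector \(v\in\mathbb R^n\) and \(\delta>0\) with \(\theta\cdot v\ge\delta\) for all \(\theta\in C_R\). Let \(o_t=\exp_o(tv)\). The first variation of distance gives
\[
  \frac{d}{dt}\Big|_{t=0}d(o_t,(r,\theta))=-\theta\cdot v,\qquad r>0,
\]
and this holds uniformly on compact subsets of \(\{r\ge R/2\}\), since \(d\) is smooth away from the diagonal.

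By compactness of \(K\) and continuity, there is a neighbourhood \(U\) of \(C_R\) in \(\Sn\) and \(\epsilon>0\) such that:
\begin{itemize}
  \item every \(x=(r,\theta)\in K\) with \(r\ge R-\epsilon\) has \(\theta\in U\);
  \item \(\theta\cdot v\ge\delta/2\) on \(U\).
\end{itemize}
For small \(t>0\) this gives \(d(o_t,x)\le R-t\delta/4\) on the part of \(K\) with \(r\ge R-\epsilon\). On the remaining part, \(d(o_t,x)\le R-\epsilon+t\) by the triangle inequality. Therefore \(\max_K d(o_t,\cdot)<R\), which contradicts the minimality of the circumradius. I expect this uniformity bookkeeping to be the only delicate step; the fallback is a first-order Taylor expansion with a uniform remainder.

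\emph{Lower bound.} Fix \(\xi\in\Sn\). Since \(0\in\operatorname{conv}C_R\), we can write \(0=\sum_i t_i\theta_i\) as a convex combination with \(\theta_i\in C_R\). Then \(\sum_i t_i\,\theta_i\cdot\xi=0\), so some \(\theta\in C_R\) satisfies \(\theta\cdot\xi\le0\). The point \(x=(R,\theta)\) lies in \(K\), and
\[
  s_K(\xi)\ge\beta_\xi(x)=\cosh R-\sinh R\,\theta\cdot\xi\ge\cosh R\ge\tfrac12e^R .
\]
Hence \(q_R\ge\tfrac12\), which completes \eqref{eq:q-bounds}.
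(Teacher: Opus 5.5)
Your proposal is correct and follows the paper's proof step for step. The upper bound comes from \(K\subset\overline{B_R(o)}\). The contact property comes from moving the center along \(\exp_o(tv)\) for a separating direction \(v\), with \(K\) split into points near the contact directions and points with \(r<R-\epsilon\). The lower bound comes from choosing \(\theta\in C_R\) with \(\theta\cdot\xi\le0\). Your uniform Taylor remainder and the triangle-inequality bound \(d(o_t,x)\le R-\epsilon+t\) on the inner part spell out the compactness bookkeeping that the paper only sketches.
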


\begin{proof}
The upper bound is immediate from \(K\subset B_R(o)\).  Indeed, for \(x=(r,\theta)\in B_R(o)\),
\[
  \beta_\xi(x)=\cosh r-\sinh r\,\theta\cdot\xi\le \cosh R+\sinh R=e^R,
\]
so \(s_K(\xi)\le e^R\).

We next prove the contact condition.  If \(0\notin\operatorname{conv} C_R\), choose a unit vector
\(v\in T_o\Hn\) and a number \(a>0\) such that \(\theta\cdot v\ge a\) for every \(\theta\in C_R\).
Let \(o_t=\exp_o(tv)\).  For \(x=(r,\theta)\), the first variation formula gives
\[
  \left.\frac d{dt}\right|_{t=0}d(o_t,x)=-\theta\cdot v .
\]
Thus all contact points satisfy \(d(o_t,x)\le R-at/2\) for \(t>0\) small.  Points with
\(d(o,x)\le R-\delta\) remain within radius \(R-at/4\) after choosing \(t\ll\delta\), and points with
\(d(o,x)>R-\delta\) have directions close to \(C_R\) by compactness, hence still satisfy the same
negative first variation.  Therefore a ball centered at \(o_t\) has radius strictly smaller than
\(R\), contradicting the Chebyshev property.

Fix \(\xi\in\Sn\).  Since \(0\in\operatorname{conv}C_R\), there is \(\theta\in C_R\) with
\(\theta\cdot\xi\le0\).  As \((R,\theta)\in K\),
\[
  s_K(\xi)\ge \beta_\xi(R,\theta)
  =\frac{e^R}{2}(1-\theta\cdot\xi)+\frac{e^{-R}}2(1+\theta\cdot\xi)
  \ge \frac{e^R}{2}.
\]
Dividing by \(e^R\) gives the lower bound.
\end{proof}

For a star-shaped body about \(o\), let
\[
  \rho_K(\theta)=\sup\{r:(r,\theta)\in K\},
  \qquad V_R(\theta)=R-\rho_K(\theta).
\]
The next calculation is the basic support-to-radial conversion.  If \(r=R-V\), then
\begin{equation}\label{eq:beta-asymp}
  e^{-R}\beta_\xi(R-V,\theta)
  =\frac{e^{-V}}2(1-\theta\cdot\xi)
  +\frac{e^{-2R+V}}2(1+\theta\cdot\xi).
\end{equation}
Thus the inequality \(\beta_\xi(R-V,\theta)\le e^Rq_R(\xi)\) is, up to an exponentially small error,
\[
  V\ge \log\frac{1-\theta\cdot\xi}{2q_R(\xi)}.
\]
This motivates the limiting envelope.

\begin{definition}[Normalized support profiles and admissible deficits]\label{def:admissible-class}
A \emph{normalized limiting support profile} is a lower semicontinuous function
\(q:\Sn\to[1/2,1]\) whose contact set
\begin{equation}\label{eq:Cq-def}
  C(q):=\left\{\theta\in\Sn:
  q(\xi)\ge \frac{1-\theta\cdot\xi}{2}\text{ for every }\xi\in\Sn\right\}
\end{equation}
satisfies
\begin{equation}\label{eq:limiting-cheb-normalization}
  0\in\operatorname{conv} C(q).
\end{equation}
For such a profile define its radial envelope
\begin{equation}\label{eq:Vq}
  V_q(\theta):=\sup_{\xi\in\Sn}\log\frac{1-\theta\cdot\xi}{2q(\xi)} .
\end{equation}
The horoconvex admissible class is
\begin{equation}\label{eq:A-def-profile}
  \calA_n:=\{V_q:\ q\text{ is a normalized limiting support profile}\}\subset C(\Sn).
\end{equation}
We quotient normalized support profiles by the equivalence relation \(q\sim\tilde q\) if
\(V_q=V_{\tilde q}\).  The topology on \(\calA_n\) is the uniform topology of \(C(\Sn)\) through the
envelope representatives.  When a sequence of profiles is said to converge below, it means
convergence of the associated radial envelopes, after passing to the canonical representative of
Lemma~\ref{lem:canonical-dual-support}.
\end{definition}

\begin{remark}[Why the definition uses envelopes]\label{rem:profile-topology}
The normalized profiles \(q_R=e^{-R}s_K\) of finite horoconvex bodies may fail to be equicontinuous.
The radial deficit is the compact representative of the asymptotic shape.  Definition~\ref{def:admissible-class}
therefore builds the admissible class in the envelope topology.  The compactness-realization theorem
below proves that this intrinsic definition is the set of uniform large-radius limits of
Chebyshev-centered horoconvex bodies.
\end{remark}

\begin{remark}[Horospherical Legendre transform]\label{rem:horo-legendre}
The map \(q\mapsto V_q\) is a horospherical analogue of a Legendre transform.  With
\[
  c(\theta,\xi)=-\log\frac{1-\theta\cdot\xi}{2},
\]
the envelope can be written as
\[
  V_q(\theta)=\sup_{\xi\in\Sn}\{-c(\theta,\xi)-\log q(\xi)\}.
\]
Thus \(\calA_n\) is a normalized class of \(c\)-convex envelopes.  The canonical support profile in
Lemma~\ref{lem:canonical-dual-support} is the corresponding biconjugate representative.  This explains why the radial deficits form a compact convexity-type class rather than an arbitrary family of Lipschitz functions.
\end{remark}

\begin{example}[Basic admissible profiles]\label{ex:admissible-profiles}
The support-envelope model contains the elementary large-radius shapes used later.
\begin{enumerate}[label=(\roman*)]
\item If \(q\equiv1\), then
\[
  V_q(\theta)=\sup_\xi\log\frac{1-\theta\cdot\xi}{2}=0,
\]
so \(V\equiv0\) is the large-ball deficit.
\item In dimension \(n=3\), one has \(\alpha=1\) and hence
\[
  T_3Y_{\ell m}=\bigl[\psi(\ell+1)-\psi(1)\bigr]Y_{\ell m}=H_\ell Y_{\ell m},
\]
where \(H_\ell\) is the \(\ell\)-th harmonic number.  Thus, in dimension three, the effective angular operator is the harmonic-number multiplier.
\end{enumerate}
\end{example}

\begin{center}
\scriptsize
\renewcommand{\arraystretch}{1.18}
\begin{tabular}{>{\raggedright\arraybackslash}p{.18\textwidth}>{\raggedright\arraybackslash}p{.24\textwidth}>{\raggedright\arraybackslash}p{.20\textwidth}>{\raggedright\arraybackslash}p{.18\textwidth}}
\hline
Support profile & Deficit envelope & Limiting geometry & Effect in the reduced problem \\
\hline
\(q\equiv1\) & \(V_q\equiv0\) & large geodesic balls; all directions active & baseline value \(\mathfrak g(0)=2/(n-1)\) \\
Axial perturbation \(q_\eps(\xi)=\exp[-\eps(1-(e\cdot\xi)^2)]\) & \(\eps(1-(e\cdot\theta)^2)+O(\eps^2)\) & ball with an axial second-order flattening; contacts include \(\pm e\) & decreases \(\mathfrak g\) to first order \\
Envelope generated by finitely many active directions & maximum of finitely many horospherical affine profiles & nonsmooth limiting horoconvex body with finite contact set & natural candidate type for constrained minimizers \\
\hline
\end{tabular}
\end{center}

\begin{lemma}[An admissible axial curve]\label{lem:axial-admissible-curve}
Let \(e\in\mathbb R^n\) be a unit vector and set
\[
  W(\theta)=1-(e\cdot\theta)^2.
\]
For sufficiently small \(\eps>0\), the profile
\[
  q_\eps(\xi)=\exp[-\eps W(-\xi)]
\]
is normalized in the sense of Definition~\ref{def:admissible-class}, and its envelope satisfies
\begin{equation}\label{eq:axial-envelope-expansion}
  V_{q_\eps}(\theta)=\eps W(\theta)+O(\eps^2)
\end{equation}
uniformly on \(\Sn\).
\end{lemma}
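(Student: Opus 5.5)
We check the three defining conditions of Definition~\ref{def:admissible-class} for \(q_\eps\) directly, and then compute the envelope by a first-order expansion around the maximizer \(\xi=-\theta\).

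\textbf{Range.} Since \(0\le W\le1\), we have \(e^{-\eps}\le q_\eps\le1\), so \(q_\eps\) takes values in \([1/2,1]\) once \(\eps\le\log2\). It is continuous, hence lower semicontinuous.

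\textbf{Contact set.} We claim \(\pm e\in C(q_\eps)\), which gives \(0\in\operatorname{conv}C(q_\eps)\). By definition we need
\[
  \frac{1-e\cdot\xi}{2}\le \exp[-\eps W(-\xi)]\qquad\text{for all }\xi\in\Sn.
\]
Write \(s=e\cdot\xi\in[-1,1]\), so that \(W(-\xi)=1-s^2\). The inequality becomes
\[
  \tfrac{1-s}{2}\le e^{-\eps(1-s^2)}.
\]
Using \(e^{-x}\ge1-x\), it suffices to show
\[
  1-\eps(1-s)(1+s)-\tfrac{1-s}{2}\ge0 .
\]
The left side equals \(\tfrac{1+s}{2}-\eps(1-s)(1+s)=(1+s)\bigl(\tfrac12-\eps(1-s)\bigr)\). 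This is nonnegative whenever \(\eps\le1/4\). The case \(-e\) is symmetric, since \(W\) is even.

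\textbf{Envelope, lower bound.} Put \(F(\theta,\xi)=\log\frac{1-\theta\cdot\xi}{2}+\eps W(-\xi)\), so that \(V_{q_\eps}(\theta)=\sup_\xi F(\theta,\xi)\). Choosing \(\xi=-\theta\) gives \(V_{q_\eps}(\theta)\ge\eps W(\theta)\), because \(W(-\xi)=W(\theta)\) there.

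\textbf{Envelope, upper bound (the main estimate).} Write \(t=1+\theta\cdot\xi\in[0,2]\), so that \(|\xi+\theta|^2=2t\). The first term of \(F\) is \(\log(1-t/2)\le -t/2\). Since \(W\) is Lipschitz with some constant \(L\),
\[
  \eps W(-\xi)\le \eps W(\theta)+\eps L|\xi+\theta|=\eps W(\theta)+\eps L\sqrt{2t}.
\]
Hence
\[
  F\le \eps W(\theta)+\sup_{t\ge0}\bigl(-t/2+\eps L\sqrt{2t}\bigr)=\eps W(\theta)+\eps^2L^2 .
\]
The last equality comes from maximizing in \(u=\sqrt t\): the expression \(-u^2/2+\sqrt2\,\eps Lu\) has maximum \(\eps^2L^2\).

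Combining the two bounds gives \(\eps W(\theta)\le V_{q_\eps}(\theta)\le \eps W(\theta)+\eps^2L^2\). This holds uniformly in \(\theta\) and proves \eqref{eq:axial-envelope-expansion}.

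The only delicate point is this uniform upper bound. The square-root gain from the Lipschitz estimate, balanced against the linear loss in \(t\), is exactly what yields the \(O(\eps^2)\) error.
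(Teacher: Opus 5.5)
Your proof is correct. It follows the same outline as the paper: range check, $\pm e\in C(q_\eps)$, the test direction $\xi=-\theta$ for the lower bound, and ``quadratic loss in $|\xi+\theta|$ beats linear gain'' for the upper bound. The two nontrivial steps, however, are carried out by a more elementary and fully quantitative route.

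For the contact inequality, the paper argues qualitatively. It compares the slopes $-2\eps$ and $-1/2$ at $t=-1$ and notes strict inequality away from $t=-1$ at $\eps=0$. This tacitly needs a uniformity argument in $\eps$. Your use of $e^{-x}\ge 1-x$ and the factorization $(1+s)\bigl(\tfrac12-\eps(1-s)\bigr)$ settles it in one line, with the explicit range $\eps\le 1/4$ (which also gives $\eps\le\log 2$).

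For the envelope, the paper invokes the nondegenerate Hessian of $\xi\mapsto\log\frac{1-\theta\cdot\xi}{2}$ at $-\theta$. It also needs a uniform $O(\eps)$ localization of the maximizer. Your global inequalities
\[
  \log(1-t/2)\le -t/2,
  \qquad
  \eps W(-\xi)\le \eps W(\theta)+\eps L\sqrt{2t},
\]
followed by optimization in $\sqrt t$, bypass both the localization and the attainment of the supremum. They give the explicit two-sided bound
\[
  \eps W\le V_{q_\eps}\le \eps W+\eps^2L^2 .
\]
You can take $L=2$, since $|(e\cdot a)^2-(e\cdot b)^2|\le 2|a-b|$ for unit vectors; this is the chordal Lipschitz constant your estimate uses.

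What your version buys is transparency and explicit constants. The envelope estimate visibly depends only on $\Lip(W)$, not on any second-order structure. The paper's version is shorter to state but leaves the uniformity in $\theta$ and $\eps$ to the reader.
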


\begin{proof}
For \(0<\eps<\log2\), one has \(1/2\le q_\eps\le1\).  We check the normalization.  If \(\theta=e\), the required contact inequality is
\[
  e^{-\eps(1-t^2)}\ge \frac{1-t}{2},\qquad t=e\cdot\xi\in[-1,1].
\]
It holds for all sufficiently small \(\eps\): near \(t=-1\) the right side has slope \(-1/2\) in \(t\), while the left side has slope \(-2\eps\), which is larger than \(-1/2\) for small \(\eps\).  Away from \(t=-1\) the inequality is strict at \(\eps=0\).  The same argument with \(t\) replaced by \(-t\) gives \(-e\in C(q_\eps)\).  Hence \(0\in\operatorname{conv}\{e,-e\}\subset\operatorname{conv}C(q_\eps)\).

For the envelope expansion, write
\[
  V_{q_\eps}(\theta)
  =\sup_{\xi\in\Sn}
  \left\{\log\frac{1-\theta\cdot\xi}{2}+\eps W(-\xi)\right\}.
\]
At \(\eps=0\), the function \(\xi\mapsto\log((1-\theta\cdot\xi)/2)\) has the unique maximum \(0\) at \(\xi=-\theta\), with a negative definite Hessian in tangent directions.  Uniformly in \(\theta\), the maximizer for small \(\eps\) therefore remains \(O(\eps)\)-close to \(-\theta\).  Evaluating at \(-\theta\) gives the lower bound \(\eps W(\theta)\), and the quadratic loss of the unperturbed maximum gives an upper error \(O(\eps^2)\).  This proves \eqref{eq:axial-envelope-expansion}.

\end{proof}

\begin{lemma}[Uniform modulus for radial envelopes]\label{lem:modulus}
Every \(V\in\calA_n\) satisfies
\begin{equation}\label{eq:V-bounds}
  0\le V\le\log2.
\end{equation}
Moreover the family \(\calA_n\) is uniformly Lipschitz on \(\Sn\), with a constant depending only on
\(n\).
\end{lemma}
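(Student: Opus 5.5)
We would work directly from the envelope formula \eqref{eq:Vq}, $V_q(\theta)=\sup_\xi\{\log\frac{1-\theta\cdot\xi}{2}-\log q(\xi)\}$, with $q$ a normalized limiting support profile, so $1/2\le q\le 1$ and $0\in\operatorname{conv}C(q)$.

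\emph{Bounds.} For the lower bound we take $\xi=-\theta$. Then the term equals $\log 1-\log q(-\theta)=-\log q(-\theta)\ge 0$, because $q\le1$, and so $V_q(\theta)\ge0$. For the upper bound we use $1-\theta\cdot\xi\le 2$ and $q\ge 1/2$. Each term is then at most $\log\frac{2}{2\cdot(1/2)}=\log 2$, which gives \eqref{eq:V-bounds}. The contact normalization is not needed for either bound.

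\emph{Lipschitz bound.} The difficulty is that $\log(1-\theta\cdot\xi)$ blows up as $\xi\to\theta$, so the family $\theta\mapsto\log\frac{1-\theta\cdot\xi}{2q(\xi)}$ is not uniformly Lipschitz. We localize the supremum to the region where it is. Since $V_q(\theta)\ge0$ and $-\log q\le\log2$, any $\xi$ whose term is within $1$ of the supremum satisfies
\[
\log\frac{1-\theta\cdot\xi}{2}\ge -1-\log 2 .
\]
Equivalently, $1-\theta\cdot\xi\ge c_0:=e^{-1}$, so such $\xi$ stay a uniform distance from $\theta$. Hence
\[
V_q(\theta)=\sup_{\xi\in S_\theta}\Bigl\{\log\tfrac{1-\theta\cdot\xi}{2}-\log q(\xi)\Bigr\},\qquad S_\theta=\{\xi:1-\theta\cdot\xi\ge c_0\}.
\]
On the larger set $\{1-\theta'\cdot\xi\ge c_0/2\}$, the map $\theta'\mapsto\log(1-\theta'\cdot\xi)$ has spherical gradient bounded by $|\xi|/(c_0/2)\le 2e$. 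This bound is uniform in $\xi$ and independent of $q$.

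\emph{Comparison.} Fix $\theta,\theta'$ with $|\theta-\theta'|\le c_0/4$ and take a near-maximizing $\xi\in S_\theta$ for $V_q(\theta)$. Then
\[
1-\theta'\cdot\xi\ge c_0-|\theta-\theta'|\ge c_0/2,
\]
so the whole geodesic segment from $\theta$ to $\theta'$ stays where the gradient bound holds. This gives
\[
V_q(\theta')\ge\log\tfrac{1-\theta'\cdot\xi}{2q(\xi)}\ge V_q(\theta)-2e\,d(\theta,\theta')-\delta .
\]
Letting $\delta\to0$ and swapping the roles of $\theta$ and $\theta'$ yields $|V_q(\theta)-V_q(\theta')|\le 2e\,d(\theta,\theta')$ for nearby points. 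For distant points we use the bound $0\le V\le\log2$: if $d(\theta,\theta')\ge c$, then $|V(\theta)-V(\theta')|\le \log 2\le(\log2/c)\,d(\theta,\theta')$. Combining the two cases gives a global Lipschitz constant depending only on absolute constants, and in particular only on $n$.

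\emph{Main obstacle and measurability.} The only real obstacle is the singularity at $\xi=\theta$, which the localization step removes. We also note that lower semicontinuity of $q$ plays no role: the supremum of uniformly Lipschitz functions (after localization) is Lipschitz. This also confirms that $\calA_n\subset C(\Sn)$.
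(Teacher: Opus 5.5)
Your proof is correct and follows the same route as the paper. The two bounds come from testing $\xi=-\theta$ and from $1-\theta\cdot\xi\le 2$, $q\ge 1/2$, and the Lipschitz bound comes from restricting the supremum to directions uniformly separated from the singularity $\xi=\theta$. The paper takes an exact maximizer with nonnegative value, which forces $\theta\cdot\xi\le 0$; you take near-maximizers, which force $1-\theta\cdot\xi\ge e^{-1}$. Your explicit margin argument for nearby $\theta'$ and the local-to-global patch via $0\le V\le\log 2$ make rigorous a point the paper passes over quickly, namely that the localization region depends on $\theta$. They also remove any need for the supremum to be attained.
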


\begin{proof}
The upper bound follows from \(1-\theta\cdot\xi\le2\) and \(q\ge1/2\).  The lower bound is intrinsic in the envelope representation: choosing \(\xi=-\theta\) gives
\[
  \frac{1-\theta\cdot(-\theta)}{2q(-\theta)}=\frac1{q(-\theta)}\ge1,
\]
so the supremum in \eqref{eq:Vq} is nonnegative.

For the Lipschitz estimate, fix \(\theta\).  A maximizing direction \(\xi\) in \eqref{eq:Vq} may be
chosen with
\[
  \log\frac{1-\theta\cdot\xi}{2q(\xi)}\ge0.
\]
Since \(q(\xi)\ge1/2\), this implies \(1-\theta\cdot\xi\ge1\), hence \(\theta\cdot\xi\le0\).  On the
closed set \(\{(\theta,\xi):\theta\cdot\xi\le0\}\), the gradient of
\(\log(1-\theta\cdot\xi)\) in the \(\theta\)-variable is uniformly bounded.  A supremum of functions
with a common Lipschitz constant is Lipschitz with the same constant.
\end{proof}

For later use we isolate the uniform class on which the collar and Grushin estimates are valid.
For \(K<\infty\), set
\begin{equation}\label{eq:C-K-def}
  \mathfrak C_K:=
  \{V\in C^{0,1}(\Sn):0\le V\le\log2,\ \Lip(V)\le K\}.
\end{equation}

\begin{lemma}[Admissible Lipschitz hull]\label{lem:admissible-lipschitz-hull}
There is a constant \(K_n<\infty\), depending only on the dimension, such that
\(\calA_n\subset\mathfrak C_{K_n}\).  Moreover each \(\mathfrak C_K\) is compact in
\(C(\Sn)\), and convergence in \(\mathfrak C_K\) will always mean uniform convergence.
\end{lemma}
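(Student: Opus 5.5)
The first assertion is a restatement of Lemma~\ref{lem:modulus}.  That lemma shows every \(V\in\calA_n\) satisfies \(0\le V\le\log2\) and is Lipschitz with a constant depending only on \(n\).  I would make the constant explicit.  In the proof of Lemma~\ref{lem:modulus}, maximizing directions may be taken with \(\theta\cdot\xi\le0\), so \(1-\theta\cdot\xi\ge1\).  Hence the \(\theta\)-gradient of \(\log(1-\theta\cdot\xi)\), taken tangentially on \(\Sn\), is bounded by \(|\xi|/(1-\theta\cdot\xi)\le1\).

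One point needs care: the restriction \(\theta\cdot\xi\le0\) depends on \(\theta\), so I would localize before applying "sup of uniformly Lipschitz functions".  For \(\theta,\theta'\) with \(|\theta-\theta'|\le1/2\), take a maximizer \(\xi\) for \(\theta'\).  Then \(1-\theta\cdot\xi\ge 1-|\theta-\theta'|\ge1/2\), so the function \(\log(1-\cdot\,\cdot\xi)\) is \(2\)-Lipschitz along the segment joining \(\theta\) and \(\theta'\).  This gives \(V(\theta')-V(\theta)\le 2|\theta-\theta'|\), and by symmetry the reverse bound.  For \(|\theta-\theta'|>1/2\), the bound \(0\le V\le\log2\) gives \(|V(\theta)-V(\theta')|\le\log2\le 2\log2\,|\theta-\theta'|\).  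If the supremum is not attained, because \(q\) is only lower semicontinuous, I would use near-maximizers with value \(\ge -\delta\) and let \(\delta\to0\).  With chordal distance this yields \(K_n=2\), and a comparison of chordal and geodesic distance only changes the constant by a factor \(\pi/2\).  So \(\calA_n\subset\mathfrak C_{K_n}\) with, say, \(K_n=\pi\).

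For compactness of \(\mathfrak C_K\), I would apply Arzelà--Ascoli on the compact metric space \(\Sn\).  Functions in \(\mathfrak C_K\) are uniformly bounded by \(\log2\) and equicontinuous with common modulus \(K\,d(\theta,\theta')\), so \(\mathfrak C_K\) is relatively compact in \(C(\Sn)\).  It is also closed, because the pointwise inequalities \(0\le V\le\log2\) and \(|V(\theta)-V(\theta')|\le K d(\theta,\theta')\) survive uniform limits.  Hence \(\mathfrak C_K\) is compact.  The final sentence of the statement is a convention and needs no proof.

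The only delicate step is the localization in the Lipschitz bound, namely ensuring that near-maximizers keep \(1-\theta\cdot\xi\) bounded away from zero uniformly.  The lower bound \(V\ge0\) together with \(q\le1\)-type normalization handles this, as in Lemma~\ref{lem:modulus}.
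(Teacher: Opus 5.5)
Your proof is correct and follows the paper's argument: the inclusion is read off from Lemma~\ref{lem:modulus}, and compactness of \(\mathfrak C_K\) comes from closedness under uniform limits together with Arzel\`a--Ascoli. Your localization step tightens the ``supremum of uniformly Lipschitz functions'' step in Lemma~\ref{lem:modulus}, where the restriction \(\theta\cdot\xi\le0\) depends on \(\theta\). Two small corrections: it is \(q\ge1/2\), not \(q\le1\), that forces \(1-\theta\cdot\xi\ge1\) at (near-)maximizers, and since chordal distance is at most geodesic distance, your chordal constant \(2\) already serves as \(K_n\) without the factor \(\pi/2\).
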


\begin{proof}
The inclusion \(\calA_n\subset\mathfrak C_{K_n}\) is exactly
Lemma~\ref{lem:modulus}.  The set \(\mathfrak C_K\) is closed in \(C(\Sn)\): if
\(V_j\to V\) uniformly and \(\Lip(V_j)\le K\), then
\(|V(\theta)-V(\theta')|\le Kd_{\Sn}(\theta,\theta')\) after passing to the limit, and the bounds
\(0\le V\le\log2\) also pass to the limit.  Equicontinuity and uniform boundedness give compactness by
Arzela--Ascoli.  The same compactness statement holds, with constants depending also on a fixed
height bound \(A\), for the variant \(0\le V\le A\).  We shall use this fixed-height variant for
finite-\(R\) radial deficits, which satisfy \(0\le V\le\log2+o(1)\) before passing to the limit.
\end{proof}

The finite-radius version of the envelope estimate is the uniform graph control used in all later spectral estimates.
\begin{lemma}[Finite-radius graph control]\label{lem:finite-R-lipschitz}
There are constants \(R_0(n)\), \(K_n<\infty\), and \(A_n<\infty\) with the following property.  Let
\(K\) be a Chebyshev-centered horoconvex body with circumradius \(R\ge R_0\), normalized support
profile \(q_R\), radial function \(\rho_R\), and radial deficit \(V_R=R-\rho_R\).  Then
\begin{equation}\label{eq:finite-R-graph-control}
  0\le V_R\le A_n,
  \qquad
  \Lip(V_R)\le K_n .
\end{equation}
In particular every divergent Chebyshev-centered horoconvex sequence is, after discarding finitely
many terms, a sequence of radial graph domains whose deficits lie in one compact Lipschitz family
of the fixed-height type.
\end{lemma}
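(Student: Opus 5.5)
The plan is to repeat the envelope argument of Lemma~\ref{lem:modulus} at finite radius, with the exponentially small term in \eqref{eq:beta-asymp} kept as an error. By Lemma~\ref{lem:support-normalization}, $\tfrac12\le q_R\le1$, and since $K\subset\overline{B_R(o)}$ we have $V_R\ge0$. Because $K$ is horoconvex, the description \eqref{eq:support-body} is exact. Hence a point $(r,\theta)$ with $r=R-V$ lies in $K$ if and only if
\[
  \frac{e^{-V}}2(1-\theta\cdot\xi)+\frac{e^{-2R+V}}2(1+\theta\cdot\xi)\le q_R(\xi)
  \qquad\text{for every }\xi\in\Sn.
\]
By star-shapedness about the Chebyshev center, established before Lemma~\ref{lem:support-normalization}, $\rho_R(\theta)$ is the largest $r\le R$ satisfying this. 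So $V_R(\theta)$ is the smallest $V\in[0,R]$ such that $F_\xi(\theta,V)\le q_R(\xi)$ for all $\xi$, where $F_\xi$ denotes the left-hand side above.

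\emph{Height bound.} For each fixed $\xi$ the function $F_\xi(\theta,\cdot)$ is decreasing in $V$ while $V<R$. Take $V=A$ with $A=\log 2+1$, say. Then $F_\xi\le e^{-A}+e^{-2R+A}$, which is $<\tfrac12\le q_R$ once $R\ge R_0$. Hence $V_R\le A$. The same reasoning gives $V_R\le\log2+O(e^{-2R})$, which is the form used in Lemma~\ref{lem:admissible-lipschitz-hull}.

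\emph{Lipschitz bound.} I would treat $V_R(\theta)=\sup_\xi v_\xi(\theta)$, where $v_\xi(\theta)$ is the unique root in $[0,R)$ of $F_\xi(\theta,v)=q_R(\xi)$, with $v_\xi(\theta)=0$ if no positive root exists. Exactly as in Lemma~\ref{lem:modulus}, only the $\xi$ with $v_\xi(\theta)$ near the supremum matter. There $v\le A$, and so
\[
  \frac{e^{-v}}{2}(1-\theta\cdot\xi)\ge q_R(\xi)-O(e^{-2R})\ge\tfrac12-O(e^{-2R}).
\]
This forces $1-\theta\cdot\xi\ge e^{-A}$ up to a small error, so $1-\theta\cdot\xi$ is bounded below by a dimensional constant. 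On this region the implicit function theorem gives the following: $\partial_v F_\xi\le -c<0$ (dominated by $-\tfrac12 e^{-v}(1-\theta\cdot\xi)$), and $|\nabla_\theta F_\xi|\le C$. Hence $|\nabla_\theta v_\xi|\le C/c$.

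The main obstacle is that the active set of $\xi$ depends on $\theta$. I would avoid differentiating the supremum by giving a direct two-point argument. Fix $\theta,\theta'$ and suppose $V_R(\theta')>V_R(\theta)+K|\theta-\theta'|$. I then want to show that the point $(R-V_R(\theta)-K|\theta-\theta'|,\theta')$ already satisfies every constraint. For those $\xi$ with $1-\theta'\cdot\xi$ small, the constraint holds automatically, because $F_\xi<\tfrac12$ there when $V\ge0$ is bounded and $R$ is large. For the remaining $\xi$, the constraint follows from the derivative bounds along the segment joining the two points. This contradicts the minimality of $V_R(\theta')$; by symmetry it yields $\Lip(V_R)\le K_n$.

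The final sentence of the lemma then follows from Lemma~\ref{lem:admissible-lipschitz-hull} in its fixed-height variant. The one delicate case is $V_R(\theta)=0$, i.e.\ the contact directions. There the root may sit at the boundary of $[0,R)$, and I would handle this by the same monotonicity argument, with the constraint at $v=0$ reading $F_\xi(\theta,0)\le q_R(\xi)$.
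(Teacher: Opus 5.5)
Your proposal is correct and is essentially the paper's argument. The height bound comes from $\tfrac12\le q_R\le1$, and the Lipschitz bound comes from uniform $\theta$-Lipschitz control of the support constraints combined with uniform transversality in the radial variable; your per-constraint two-point comparison (discarding the $\xi$ with $1-\theta'\cdot\xi$ small as automatically satisfied) is just an explicit version of the paper's step of combining the $\theta$-Lipschitz bound on $F_R(W,\theta)$ with transversality at the level set $F_R=1$.

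Two small corrections, neither of which affects the conclusion:
\begin{itemize}
\item $F_\xi(\theta,\cdot)$ is convex but not decreasing on all of $[0,R)$ when $\theta\cdot\xi>0$. Uniqueness of the root therefore comes from convexity together with $F_\xi(\theta,R)=e^{-R}<q_R(\xi)$, and the height bound needs no monotonicity at all.
\item Run the two-point step only for $d_{\Sn}(\theta,\theta')\lesssim 1/K$, so that $v$ stays in a fixed strip and the transversality constant does not depend on $K$. Larger separations are then covered by the height bound.
\end{itemize}
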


\begin{proof}
The boundary value \(W=V_R(\theta)\) is characterized by
\begin{equation}\label{eq:finite-R-root-equation}
  F_R(W,\theta)=1,
\end{equation}
where
\[
  F_R(W,\theta):=
  \sup_{\xi\in\Sn}
  \left\{
  \frac{e^{-W}}{2q_R(\xi)}(1-\theta\cdot\xi)
  +\frac{e^{-2R+W}}{2q_R(\xi)}(1+\theta\cdot\xi)
  \right\}.
\]
The support bounds \(1/2\le q_R\le1\) imply the height estimate.  Since \(K\subset B_R(o)\), one
has \(V_R\ge0\).  Conversely, at \(W=\log2+C e^{-2R}\), the first term is at most
\(2e^{-W}\le1-C'e^{-2R}\), while the second term is \(O(e^{-2R})\).  Choosing \(C\) large gives
\(F_R(W,\theta)\le1\), hence \(V_R\le\log2+Ce^{-2R}\).  A fixed height bound \(A_n\) follows for all
large \(R\).

It remains to prove the Lipschitz bound.  On the strip \(0\le W\le A_n+1\), the functions
\(F_R(W,\cdot)\) are uniformly Lipschitz on \(\Sn\), since \(q_R\ge1/2\) and
\(|(\theta-\theta')\cdot\xi|\le d_{\Sn}(\theta,\theta')\).  Thus
\begin{equation}\label{eq:FR-theta-Lip}
  |F_R(W,\theta)-F_R(W,\theta')|\le C d_{\Sn}(\theta,\theta')
\end{equation}
with \(C\) independent of \(R\).

Near the level set \(F_R=1\), the dependence on \(W\) is uniformly transverse.  More explicitly,
for \(0\le W\le A_n+1\) and small \(h\ge0\), the first term in \(F_R\) is multiplied by \(e^{-h}\)
when \(W\) is replaced by \(W+h\), while the second term is multiplied by \(e^h\) and is
\(O(e^{-2R})\) on the whole height strip.  Hence, whenever \(F_R(W,\theta)=1\),
\[
  F_R(W+h,\theta)\le F_R(W,\theta)-c h+O(h^2)+O(e^{-2R}h),
\]
\[
  F_R(W-h,\theta)\ge F_R(W,\theta)+c h-O(h^2)-O(e^{-2R}h),
\]
with \(c>0\) independent of \(R\).  Taking \(h\) small and then \(R\) large gives the uniform
one-sided inequalities
\[
  F_R(W+h,\theta)\le1-\frac c2h,
  \qquad
  F_R(W-h,\theta)\ge1+\frac c2h .
\]
Combining this transversality with
\eqref{eq:FR-theta-Lip} and the root equation \eqref{eq:finite-R-root-equation} gives
\[
  |V_R(\theta)-V_R(\theta')|\le Cc^{-1}d_{\Sn}(\theta,\theta'),
\]
which proves \eqref{eq:finite-R-graph-control}.  Compactness of the fixed-height family follows from
Arzela--Ascoli, as in Lemma~\ref{lem:admissible-lipschitz-hull}.
\end{proof}

\begin{proposition}[Radial compactness]\label{prop:radial-compactness}
Let \(K_j\) be Chebyshev-centered horoconvex bodies with circumradii \(R_j\to\infty\).  After passing
to a subsequence, there is \(V\in\calA_n\) such that
\[
  \|V_{R_j}-V\|_{C(\Sn)}\to0.
\]
After discarding finitely many terms, the finite-\(R_j\) deficits belong to one fixed compact
Lipschitz graph family with a fixed height bound.
Moreover
\begin{equation}\label{eq:diam-2R}
  \diam K_j=2R_j+O(1).
\end{equation}
\end{proposition}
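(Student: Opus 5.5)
Three things are needed: uniform convergence of a subsequence, that the limit lies in \(\calA_n\), and the diameter estimate.

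\emph{Step 1 (compactness).} By Lemma~\ref{lem:finite-R-lipschitz}, for \(R_j\ge R_0\) every deficit satisfies \(0\le V_{R_j}\le A_n\) and \(\Lip(V_{R_j})\le K_n\). These lie in the fixed-height compact family of Lemma~\ref{lem:admissible-lipschitz-hull}, so Arzel\`a--Ascoli gives a subsequence with \(V_{R_j}\to V\) uniformly, for some Lipschitz \(V\). This also proves the statement about a fixed compact Lipschitz graph family.

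\emph{Step 2 (the limit is admissible).} This is the main obstacle: the profiles \(q_{R_j}\) need not converge nicely (Remark~\ref{rem:profile-topology}). I will not take limits of the \(q_{R_j}\). Instead I build a limiting profile from \(V\). Set
\[
  q(\xi):=\sup_{\theta\in\Sn}\frac{(1-\theta\cdot\xi)e^{-V(\theta)}}{2}.
\]
This is continuous and at most \(1\). It is at least \(1/2\) once \(\min V=0\), which I expect from the contact points.

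First, \(V\le V_q\). This holds by taking \(\xi=-\theta\) in \eqref{eq:Vq}, since \(q(-\theta)\ge e^{-V(\theta)}\).

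Second, \(V_q\le V\). This is where horoconvexity enters. The body equals the intersection of its supporting horoballs, so by \eqref{eq:finite-R-root-equation} the finite deficit satisfies \(V_{R_j}=\sup_\xi\log\frac{1-\theta\cdot\xi}{2q_{R_j}(\xi)}+O(e^{-2R_j})\). Also \(q_{R_j}(\xi)\ge\sup_\theta e^{-R_j}\beta_\xi(R_j-V_{R_j}(\theta),\theta)\), and by \eqref{eq:beta-asymp} the right side converges to \(q(\xi)\). So \(\liminf q_{R_j}\ge q\) pointwise. Since \(q_{R_j}\) appears in a denominator, this gives \(\limsup V_{R_j}\le V_q\); the reverse inequality comes from the choice of \(q\). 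Together these give \(V=V_q\), the \(c\)-biconjugate representative of Remark~\ref{rem:horo-legendre}.

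Third, the Chebyshev normalization. By Lemma~\ref{lem:support-normalization}, \(0\in\operatorname{conv}C_{R_j}\), and \(V_{R_j}=0\) on \(C_{R_j}\). By Carath\'eodory, pick at most \(n+1\) contact points with convex weights realizing \(0\), and pass to a further subsequence where the points and weights converge. The limit points \(\theta_i\) satisfy \(V(\theta_i)=0\) and have \(0\) in their convex hull. At such a point, \(q(\xi)\ge(1-\theta_i\cdot\xi)/2\) by the definition of \(q\), so \(\theta_i\in C(q)\). This also gives \(q\ge1/2\): for any \(\xi\), some \(\theta_i\) has \(\theta_i\cdot\xi\le 0\). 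Hence \(q\) is a normalized limiting support profile, and \(V\in\calA_n\).

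\emph{Step 3 (diameter).} The upper bound is \(\diam K_j\le 2R_j\), since \(K_j\subset\overline{B_{R_j}(o)}\). For the lower bound, \(0\in\operatorname{conv}C_{R_j}\) gives contact points \(\theta,\theta'\) with \(\theta\cdot\theta'\le0\): otherwise \(C_{R_j}\) would lie in an open hemisphere, contradicting Lemma~\ref{lem:support-normalization}. Their angle is therefore at least \(\pi/2\). By the hyperbolic law of cosines,
\[
  \cosh d\ge\cosh^2R_j-\sinh^2R_j\cos(\pi/2)=\cosh^2R_j,
\]
so \(d\ge 2R_j-\log2-o(1)\). This gives \(\diam K_j=2R_j+O(1)\).
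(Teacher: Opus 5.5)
Your proposal follows the paper's proof essentially step for step. The shared steps are Arzel\`a--Ascoli via Lemma~\ref{lem:finite-R-lipschitz}, the canonical dual profile $q=q_V$, and the Carath\'eodory passage of a balanced contact family to the zero set of $V$ (hence into $C(q)$). The diameter bound likewise comes from the law of cosines for two contact directions with $\theta\cdot\theta'\le0$. Your direct argument for $V=V_q$, using $q_{R_j}\ge\tfrac12\sup_\theta e^{-V_{R_j}(\theta)}(1-\theta\cdot\xi)$ together with the finite-$R$ envelope identity, makes explicit the comparison that the paper's one-line appeal to the envelope stability estimate leaves implicit.

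One slip in Step 2 should be fixed: the two inequalities are attributed to the wrong arguments.
\begin{itemize}
\item Taking $\xi=-\theta$ gives $V_q(\theta)\ge-\log q(-\theta)$. But $q(-\theta)\ge e^{-V(\theta)}$ only says $-\log q(-\theta)\le V(\theta)$, so nothing about $V\le V_q$ follows.
\item The easy direction is $V_q\le V$. It is immediate from the definition of $q$, which gives $(1-\theta\cdot\xi)/(2q(\xi))\le e^{V(\theta)}$ for all $\theta,\xi$.
\item The nontrivial direction, the one that encodes horoconvexity, is $V\le V_q$. This is exactly what your ``Second'' paragraph proves through $\limsup V_{R_j}\le V_q$.
\end{itemize}
So drop the ``First'' argument and relabel; the conclusion $V=V_q$ stands.

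Also, a pointwise $\liminf q_{R_j}\ge q$ is not by itself enough to pass through the supremum over $\xi$. Use instead the uniform bound $q_{R_j}(\xi)\ge q(\xi)-\|V_{R_j}-V\|_\infty$, which follows from your own inequality since $|e^{-a}-e^{-b}|\le|a-b|$ for $a,b\ge0$.
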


\begin{proof}
By Lemma~\ref{lem:finite-R-lipschitz}, after discarding finitely many terms the deficits \(V_{R_j}\)
lie in one fixed-height compact Lipschitz family.  Arzela--Ascoli gives a uniformly convergent
subsequence, say \(V_{R_j}\to V\).  Formula \eqref{eq:beta-asymp} and the bounds \eqref{eq:q-bounds}
give the envelope identity
\begin{equation}\label{eq:VR-envelope}
  V_{R_j}(\theta)=\sup_{\xi\in\Sn}\log\frac{1-\theta\cdot\xi}{2q_{R_j}(\xi)}+O(e^{-2R_j})
\end{equation}
uniformly in \(\theta\).  The limiting support profile is obtained by canonical duality: set
\[
  q_V(\xi):=\frac12\sup_{\theta\in\Sn}e^{-V(\theta)}(1-\theta\cdot\xi).
\]
The envelope stability estimate of Lemma~\ref{lem:canonical-dual-support} gives
\(V_{q_V}=V\).  The finite Chebyshev contact condition passes to the limit as follows.  For each
\(R\), choose a finite balanced contact family in \(C(q_R)\) by Caratheodory's theorem.  Passing to a
subsequence of these finite families gives a balanced family in the zero set of \(V\), hence in
\(C(q_V)\).  Thus \(q_V\) satisfies the limiting normalization \eqref{eq:limiting-cheb-normalization},
and \(V\in\calA_n\).

The upper diameter bound is \(\diam K_j\le2R_j\).  For the lower bound, the contact condition gives
two contact directions \(\theta_1,\theta_2\in C_{R_j}\) with \(\theta_1\cdot\theta_2\le0\).  Otherwise
all contact directions would lie in an open hemisphere.  The hyperbolic law of cosines yields
\[
  \cosh d=\cosh^2R_j-\sinh^2R_j\,\theta_1\cdot\theta_2
  \ge \cosh^2R_j,
\]
and hence \(d\ge 2R_j-O(1)\).  Thus \(\diam K_j=2R_j+O(1)\).
\end{proof}

\subsection{Regularization and Mosco stability}

There are two separate approximation issues.  The geometry produces radial deficits that are only
Lipschitz, while the boundary-Grushin calculation in Section~\ref{sec:effective-angular} is first
written for smooth graph functions.  The next lemmas provide the quantitative passage to admissible Lipschitz limits and also fix a canonical support profile for each admissible deficit.

\begin{lemma}[Canonical dual support profile]\label{lem:canonical-dual-support}
Let \(V\in\calA_n\).  Define
\begin{equation}\label{eq:qV-canonical}
  q_V(\xi)=\frac12\sup_{\theta\in\Sn} e^{-V(\theta)}(1-\theta\cdot\xi).
\end{equation}
Then \(q_V\in C(\Sn)\),
\begin{equation}\label{eq:qV-bounds}
  \frac12\le q_V\le1,
\end{equation}
\(q_V\) satisfies the limiting Chebyshev normalization, and
\begin{equation}\label{eq:biconjugacy}
  V(\theta)=\sup_{\xi\in\Sn}\log\frac{1-\theta\cdot\xi}{2q_V(\xi)}.
\end{equation}
Moreover, if \(q,\widetilde q:\Sn\to[1/2,1]\), then their envelopes satisfy
\begin{equation}\label{eq:envelope-stability}
  \|V_q-V_{\widetilde q}\|_{C(\Sn)}
  \le \|\log q-\log\widetilde q\|_{C(\Sn)}
  \le 2\|q-\widetilde q\|_{C(\Sn)}.
\end{equation}
\end{lemma}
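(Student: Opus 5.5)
We prove the lemma by treating \(q\mapsto V_q\) and \(V\mapsto q_V\) as a pair of \(c\)-conjugations, in the sense of Remark~\ref{rem:horo-legendre}. Throughout we fix \(V=V_{q}\) for some normalized limiting support profile \(q\).

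\textbf{Step 1: the stability estimate.} We start with \eqref{eq:envelope-stability}, since it is independent of everything else. For each fixed \(\xi\),
\[
\log\frac{1-\theta\cdot\xi}{2q(\xi)}-\log\frac{1-\theta\cdot\xi}{2\widetilde q(\xi)}=\log\widetilde q(\xi)-\log q(\xi).
\]
A supremum over \(\xi\) of functions that differ pointwise by at most \(\delta\) changes by at most \(\delta\), which gives the first inequality. The terms with \(\theta\cdot\xi=1\) equal \(-\infty\) for both profiles and do not affect the supremum. The second inequality follows from the mean value theorem, because \(|(\log)'|\le 2\) on \([1/2,1]\).

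\textbf{Step 2: continuity and bounds for \(q_V\).} By Lemma~\ref{lem:modulus}, \(V\) is Lipschitz and satisfies \(0\le V\le\log2\). Hence the family \(\xi\mapsto\frac12e^{-V(\theta)}(1-\theta\cdot\xi)\), indexed by \(\theta\), is uniformly Lipschitz in \(\xi\), and so \(q_V\) is continuous (indeed Lipschitz). For the upper bound, \(e^{-V}\le1\) and \(1-\theta\cdot\xi\le2\) give \(q_V\le1\). For the lower bound, take \(\theta=-\xi\) to get \(q_V(\xi)\ge e^{-V(-\xi)}\ge\frac12\).

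\textbf{Step 3: biconjugacy.} The definition of \(q_V\) says exactly that \(e^{-V(\theta)}(1-\theta\cdot\xi)\le 2q_V(\xi)\) for all \(\theta,\xi\), which gives \(V_{q_V}\le V\). For the reverse inequality we show that \(q_V\le q\) pointwise. From \(V=V_q\) we have \(V(\theta)\ge\log\frac{1-\theta\cdot\xi}{2q(\xi)}\) for every \(\theta,\xi\), that is, \(\frac12 e^{-V(\theta)}(1-\theta\cdot\xi)\le q(\xi)\). Taking the supremum over \(\theta\) gives \(q_V\le q\). The map \(q\mapsto V_q\) is order-reversing, so \(V_{q_V}\ge V_q=V\). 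This proves \eqref{eq:biconjugacy}.

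\textbf{Step 4: Chebyshev normalization for \(q_V\).} This is the step we expect to need the most care. Since \(q_V\le q\), a direct inclusion of contact sets points the wrong way: \(C(q_V)\subset C(q)\). We therefore avoid comparing contact sets and characterize \(C(q_V)\) intrinsically. We claim that \(\theta_0\in C(q_V)\) if and only if \(V(\theta_0)=0\).

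For the forward direction, suppose \(V(\theta_0)=0\). The term with \(\theta=\theta_0\) in the definition of \(q_V\) gives \(q_V(\xi)\ge\frac12(1-\theta_0\cdot\xi)\) for all \(\xi\), so \(\theta_0\in C(q_V)\). For the converse, suppose \(q_V(\xi)\ge\frac12(1-\theta_0\cdot\xi)\) for all \(\xi\). Then every term in the envelope \eqref{eq:biconjugacy} evaluated at \(\theta_0\) is \(\le0\), so \(V(\theta_0)\le0\). Combined with \(V\ge0\) this gives \(V(\theta_0)=0\).

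Applying the same argument to \(q\) itself, if \(\theta_0\in C(q)\) then every term in \(V_q(\theta_0)\) is \(\le0\), so \(V(\theta_0)=V_q(\theta_0)=0\) and hence \(\theta_0\in C(q_V)\). This yields \(C(q)\subset\{V=0\}=C(q_V)\). Since \(0\in\operatorname{conv}C(q)\) by the normalization of \(q\), we conclude \(0\in\operatorname{conv}C(q_V)\).
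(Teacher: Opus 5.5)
Your proposal is correct and follows essentially the same route as the paper: the same pointwise comparison \(q_V\le q\) plus order reversal for biconjugacy, and the same inclusion \(C(q)\subset\{V=0\}\subset C(q_V)\) for the Chebyshev normalization. The differences are minor: you prove continuity via a uniform \(\tfrac12\)-Lipschitz bound in \(\xi\) instead of joint continuity on a compact set, and you add the reverse inclusion showing \(C(q_V)=\{V=0\}\), which is true but not needed for the normalization.
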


\begin{proof}
Continuity of \(q_V\) follows because it is the supremum, over a compact parameter set, of the
continuous family
\((\theta,\xi)\mapsto e^{-V(\theta)}(1-\theta\cdot\xi)/2\).  Since \(0\le V\le\log2\), choosing
\(\theta=-\xi\) gives \(q_V(\xi)\ge e^{-V(-\xi)}\ge1/2\), while
\(e^{-V(\theta)}(1-\theta\cdot\xi)/2\le1\) gives \(q_V\le1\).

Let \(q\) be any support profile representing \(V\) in Definition~\ref{def:admissible-class}.  The
identity \(V=V_q\) implies, for every \(\theta,\xi\),
\[
  \log\frac{1-\theta\cdot\xi}{2q(\xi)}\le V(\theta),
\]
or equivalently
\[
  q(\xi)\ge \frac12e^{-V(\theta)}(1-\theta\cdot\xi).
\]
Taking the supremum over \(\theta\) gives \(q\ge q_V\).  Hence
\(V_{q_V}\ge V_q=V\).  Conversely, the definition of \(q_V\) gives
\[
  \frac{1-\theta\cdot\xi}{2q_V(\xi)}\le e^{V(\theta)}
\]
for every \(\theta,\xi\), and therefore \(V_{q_V}\le V\).  This proves
\eqref{eq:biconjugacy}.

It remains to check that the canonical representative preserves the Chebyshev normalization.  Let
\(q\) be a normalized representative of \(V\), and let
\[
  C_q:=\{\theta:\ q(\xi)\ge (1-\theta\cdot\xi)/2\text{ for every }\xi\in\Sn\}.
\]
By normalization, \(0\in\operatorname{conv}C_q\).  If \(\theta\in C_q\), then the envelope formula gives
\(V(\theta)\le0\), while Lemma~\ref{lem:modulus} gives \(V\ge0\).  Thus \(V(\theta)=0\).  For every
zero of \(V\), the definition of \(q_V\) gives
\[
  q_V(\xi)\ge \frac12(1-\theta\cdot\xi),\qquad \xi\in\Sn.
\]
Hence \(C_q\) is contained in the contact set of \(q_V\), and that contact set also has convex hull
containing the origin.

For stability, write
\[
  V_q(\theta)=\sup_\xi \left\{\log\frac{1-\theta\cdot\xi}{2}-\log q(\xi)\right\}.
\]
The difference of two suprema is bounded by the supremum of the difference of the functions being
maximized.  Since \(q,\widetilde q\ge1/2\), the logarithm is \(2\)-Lipschitz on the relevant
interval.
\end{proof}

\begin{lemma}[Closedness of the support-envelope class]\label{lem:admissible-closed}
Let \(V_j\in\calA_n\) and suppose \(V_j\to V\) uniformly on \(\Sn\).  Then \(V\in\calA_n\).
In particular \(\calA_n\) is compact in \(C(\Sn)\).
\end{lemma}

\begin{proof}
By Lemma~\ref{lem:admissible-lipschitz-hull}, the limit satisfies \(0\le V\le\log2\) and has the
same universal Lipschitz bound.  For each \(j\), replace the representing support profile by its
canonical representative \(q_j=q_{V_j}\) from Lemma~\ref{lem:canonical-dual-support}.  The defining estimate for \(q_V\) gives
\[
  \|q_j-q_V\|_{C(\Sn)}
  \le \sup_{\theta}|e^{-V_j(\theta)}-e^{-V(\theta)}|
  \le \|V_j-V\|_{C(\Sn)}\to0 .
\]
The envelope stability estimate \eqref{eq:envelope-stability} then yields
\[
  V_{q_V}=\lim_{j\to\infty}V_{q_j}=\lim_{j\to\infty}V_j=V
  \quad\text{uniformly.}
\]
It remains to check the limiting Chebyshev normalization for \(q_V\).  Let
\[
  Z_j:=\{\theta:\ V_j(\theta)=0\}.
\]
In the proof of Lemma~\ref{lem:canonical-dual-support} we showed that the contact set of \(q_j\)
has convex hull containing the origin and is contained in \(Z_j\).  By Caratheodory's theorem, for
each \(j\) there are \(N\le n+1\), points \(\theta_{j,1},\ldots,\theta_{j,N}\in Z_j\), and weights
\(a_{j,i}\ge0\), \(\sum_i a_{j,i}=1\), such that \(\sum_i a_{j,i}\theta_{j,i}=0\).  Passing to a
subsequence gives \(a_{j,i}\to a_i\) and \(\theta_{j,i}\to\theta_i\).  Uniform convergence implies
\(V(\theta_i)=0\), and \(\sum_i a_i\theta_i=0\).  For every zero \(\theta\) of \(V\), the definition
of \(q_V\) gives
\[
  q_V(\xi)\ge\frac{1-\theta\cdot\xi}{2},\qquad \xi\in\Sn.
\]
Thus the points \(\theta_i\) lie in \(C(q_V)\), and \(0\in\operatorname{conv}C(q_V)\).  Hence
\(q_V\) is normalized and \(V=V_{q_V}\in\calA_n\).

The compactness follows because \(\calA_n\subset\mathfrak C_{K_n}\) by
Lemma~\ref{lem:admissible-lipschitz-hull}, \(\mathfrak C_{K_n}\) is compact in \(C(\Sn)\), and the
preceding argument shows that \(\calA_n\) is closed.
\end{proof}

\begin{lemma}[Balanced almost-contacts control the circumradius]\label{lem:balanced-almost-contacts}
Let \(\theta_1,\ldots,\theta_m\in\Sn\) and let \(a_i\ge0\), \(\sum_i a_i=1\), satisfy
\[
  \sum_{i=1}^m a_i\theta_i=0.
\]
Let \(x_i=(r_i,\theta_i)\in\Hn\) with \(R-\varepsilon\le r_i\le R\), where
\(0<\varepsilon\le1\).  If a hyperbolic ball of radius \(\rho\) contains all the points \(x_i\),
then
\begin{equation}\label{eq:balanced-circumradius-lower}
  \rho\ge R-C\varepsilon,
\end{equation}
where \(C\) depends only on the chosen weights.  Consequently, if a compact set is contained in
\(\overline B_R(o)\) and contains such a balanced family of points, its circumradius is
\(R+O(\varepsilon)\).
\end{lemma}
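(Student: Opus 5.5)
Let \(p\) be the center of a ball of radius \(\rho\) containing all \(x_i\). Write \(p=(s,\omega)\) in polar coordinates about \(o\). The plan is to show that some weighted average of the distances \(d(p,x_i)\) is at least \(R-C\varepsilon\); since \(\rho\ge\max_i d(p,x_i)\), this gives \eqref{eq:balanced-circumradius-lower}. The natural quantity is \(\cosh d\), because the hyperbolic law of cosines gives
\[
  \cosh d(p,x_i)=\cosh s\cosh r_i-\sinh s\sinh r_i\,\omega\cdot\theta_i .
\]
The term linear in \(\theta_i\) cancels after weighting, because \(\sum_ia_i\theta_i=0\). We must, however, control the variation of \(\sinh r_i\) with \(i\).

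The key steps, in order:

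\emph{Step 1.} Average with the weights \(a_i\). Using \(\sinh r_i=\sinh R-(\sinh R-\sinh r_i)\), we get
\[
  \sum_ia_i\cosh d(p,x_i)
  =\cosh s\sum_ia_i\cosh r_i+\sinh s\sum_ia_i(\sinh R-\sinh r_i)\,\omega\cdot\theta_i .
\]
We bound \(\cosh r_i\ge\cosh(R-\varepsilon)\). The error term has absolute value at most \(\sinh s\,(\sinh R-\sinh(R-\varepsilon))\le \sinh s\,\varepsilon\cosh R\).

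\emph{Step 2.} Combine the two terms. This gives
\[
  \cosh\rho\ge\cosh s\cosh(R-\varepsilon)-\varepsilon\sinh s\cosh R .
\]
Since \(\sinh s\le\cosh s\) and \(\cosh R\le e^{\varepsilon}\cosh(R-\varepsilon)\le e\cosh(R-\varepsilon)\), the right side is at least
\[
  \cosh s\cosh(R-\varepsilon)\,(1-e\varepsilon).
\]
This is useful only when \(e\varepsilon<1\). Otherwise \(\varepsilon\) is bounded below, and we can use a crude argument instead. There is some \(i\) with \(a_i>0\). The bound \(\rho\ge d(p,x_i)\) alone only gives \(\rho\ge0\), so in the regime where \(\varepsilon\) is of order one the claim needs \(C\ge R/\varepsilon\). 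A constant independent of \(R\) therefore requires something better, which I handle in Step 3.

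\emph{Step 3 (main obstacle): uniformity in \(R\) and large \(\varepsilon\).} To get a constant independent of \(R\), I will instead use a balance argument that depends on the weights. Let \(a_*=\min\{a_i:a_i>0\}\) and restrict to those indices. The condition \(\sum a_i\theta_i=0\) forces some \(\theta_i\) with \(\omega\cdot\theta_i\le0\). For that \(i\), the law of cosines gives
\[
  \cosh d(p,x_i)\ge\cosh s\cosh r_i\ge\cosh(s)\cosh(R-\varepsilon),
\]
so \(\rho\ge r_i\ge R-\varepsilon\) directly, since \(\cosh s\ge1\). This simple observation already gives \(\rho\ge R-\varepsilon\), i.e.\ \(C=1\), for every \(\varepsilon\). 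It bypasses Steps 1–2, and I will present it as the actual proof, keeping the averaging only as motivation.

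\emph{Step 4.} For the consequence, the circumradius is at most \(R\) because the set lies in \(\overline B_R(o)\). It is at least \(R-C\varepsilon\) by the above applied to the minimal enclosing ball. Hence the circumradius is \(R+O(\varepsilon)\).

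The only point to check carefully is the existence of \(i\) with \(a_i>0\) and \(\omega\cdot\theta_i\le0\). If \(\omega\cdot\theta_i>0\) held for all \(i\) with \(a_i>0\), then
\[
  0=\omega\cdot\sum_ia_i\theta_i=\sum_ia_i\,\omega\cdot\theta_i>0,
\]
a contradiction. When \(s=0\), \(\omega\) is arbitrary and the bound \(\cosh d\ge\cosh r_i\) is immediate.
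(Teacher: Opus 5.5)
Your final argument (Step~3 plus Step~4) is correct. It takes a genuinely different and more elementary route than the paper.

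\textbf{The paper's route.} The paper works in the hyperboloid model with the barycenter $\overline X=\sum_i a_iX_i$. The balance condition makes the spatial part of $\overline X$ of size $O(\varepsilon e^R)$, so the Lorentz length of $\overline X$ is at least $\cosh R-C\varepsilon e^R$. The reverse Cauchy--Schwarz inequality then gives $\sum_i a_i\cosh d(C,x_i)=-\langle C,\overline X\rangle_L\ge\cosh(R-C'\varepsilon)$ for every center $C$. This is exactly the averaging you sketched in Steps~1--2, except that the optimization over the center is done sharply by reverse Cauchy--Schwarz rather than by your crude $\sinh s\le\cosh s$.

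\textbf{What the averaging buys, and its limit.} It bounds the weighted average $\sum_i a_i\cosh d(C,x_i)$, not just the maximum. However, the final step $\cosh R-C\varepsilon e^R\ge\cosh(R-C'\varepsilon)$ only holds for $\varepsilon$ below an absolute threshold. That is harmless in the paper's only application ($\varepsilon=Ce^{-2R}$ in Lemma~\ref{lem:realization}), but it does not literally cover the whole range $0<\varepsilon\le1$.

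\textbf{Your route.} Your pointwise argument sidesteps this. Balance forces a direction $\theta_i$ with $\omega\cdot\theta_i\le0$, so the angle at $o$ between the directions to $p$ and to $x_i$ is at least $\pi/2$. The law of cosines then gives $\cosh d(p,x_i)\ge\cosh s\cosh r_i$, hence $d(p,x_i)\ge r_i\ge R-\varepsilon$. So you get $C=1$, independent of the weights and with no size restriction on $\varepsilon$.

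\textbf{Write-up suggestions.}
\begin{itemize}
\item Drop Steps~1--2. The remark there about needing $C\ge R/\varepsilon$ is muddled and plays no role.
\item In Step~4, no minimal enclosing ball is needed. Your bound holds for every center, hence for the infimum that defines the circumradius.
\end{itemize}
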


\begin{proof}
Use the hyperboloid model.  Write
\[
  X_i=(\cosh r_i,\sinh r_i\,\theta_i),
  \qquad
  C=(\cosh t,\sinh t\,u)
\]
for the points and for an arbitrary candidate center.  Let \(\overline X=\sum_i a_iX_i\).  Since
\(r_i=R+O(\varepsilon)\) and \(\sum_i a_i\theta_i=0\),
\[
  \overline X_0=\cosh R+O(\varepsilon e^R),
  \qquad
  |\overline X_{\rm sp}|\le C\varepsilon e^R.
\]
Hence its Lorentz length satisfies
\[
  \sqrt{\overline X_0^2-|\overline X_{\rm sp}|^2}
  \ge \cosh R-C\varepsilon e^R
  \ge \cosh(R-C'\varepsilon).
\]
The reverse Cauchy--Schwarz inequality in Minkowski space gives, for every future unit timelike
\(C\),
\[
  \sum_i a_i\cosh d(C,x_i)
  =-\langle C,\overline X\rangle_L
  \ge \sqrt{\overline X_0^2-|\overline X_{\rm sp}|^2}.
\]
Therefore \(\max_i d(C,x_i)\ge R-C'\varepsilon\).  Taking the infimum over centers gives
\eqref{eq:balanced-circumradius-lower}.  The final assertion follows because the original center
\(o\) gives the opposite bound \(\rho\le R\).
\end{proof}

\begin{lemma}[Horoconvex realization of an admissible deficit]\label{lem:realization}
Let \(V=V_q\in\calA_n\) with \(q:\Sn\to[1/2,1]\).  For \(R\gg1\), set
\begin{equation}\label{eq:KRq}
  K_R(q)=\bigcap_{\xi\in\Sn}\{x:\beta_\xi(x)\le e^Rq(\xi)\}.
\end{equation}
Let \(\rho_R\) be its radial function and \(V_R=R-\rho_R\).  Then
\begin{equation}\label{eq:realization-error}
  \|V_R-V\|_{C(\Sn)}\le C e^{-2R},
\end{equation}
where \(C\) depends only on the universal bound \(0\le V\le\log2\).  If \(q\) is chosen with the
limiting Chebyshev normalization in Definition~\ref{def:admissible-class}, then the circumradius of
\(K_R(q)\) is \(R+O(e^{-2R})\) and
\begin{equation}\label{eq:realization-diameter}
  \diam K_R(q)=2R+O(1).
\end{equation}
\end{lemma}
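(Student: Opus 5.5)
The plan is to reduce everything to the finite-radius root equation already used in the proof of Lemma~\ref{lem:finite-R-lipschitz}, now with the fixed profile \(q\) in place of \(q_R\). Since \(K_R(q)\) is an intersection of horoballs (each sublevel set \(\{\beta_\xi\le c\}\), \(c>0\), is a closed horoball), it is horoconvex and geodesically convex. It contains \(o\): indeed \(\beta_\xi(o)=1\le e^R/2\le e^Rq(\xi)\). Hence it is star-shaped about \(o\), and along each ray \(r\mapsto\beta_\xi(r,\theta)\) is convex, equal to \(1\) at \(r=0\). Thus the set of admissible \(r\) on each ray is an interval \([0,\rho_R(\theta)]\). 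By \eqref{eq:beta-asymp}, \(W=V_R(\theta)\) is the smallest \(W\) with
\[
  \sup_\xi\Bigl\{\tfrac{e^{-W}}{2q(\xi)}(1-\theta\cdot\xi)+\tfrac{e^{-2R+W}}{2q(\xi)}(1+\theta\cdot\xi)\Bigr\}\le1 .
\]

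The first step is the comparison of this condition with the limiting one. Define \(G(W,\theta)=e^{-W}\sup_\xi(1-\theta\cdot\xi)/(2q(\xi))=e^{V(\theta)-W}\), so that \(G(W,\theta)\le1\) exactly when \(W\ge V(\theta)\). The second term is nonnegative and at most \(2e^{-2R+W}\). Lower bound: if \(W<V(\theta)\), then \(G>1\) and the full sup exceeds \(1\), so \(V_R\ge V\). Upper bound: take \(W=V(\theta)+h\). The full sup is at most \(e^{-h}+2e^{-2R+\log2+1}\). Choosing \(h=Ce^{-2R}\) with \(C\) depending only on the height bound \(\log2\) makes this at most \(1\), so \(V_R\le V+Ce^{-2R}\). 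This is the transversality argument of Lemma~\ref{lem:finite-R-lipschitz} made explicit, and it gives \eqref{eq:realization-error}. One should also note \(V_R\ge0\) only a posteriori, since \(K_R(q)\) need not lie in \(B_R(o)\) a priori; the lower bound \(V_R\ge V\ge0\) from Lemma~\ref{lem:modulus} supplies it, so \(K_R(q)\subset\overline{B_R(o)}\).

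The circumradius bound uses Lemma~\ref{lem:balanced-almost-contacts}. By the normalization, Carath\'eodory gives finitely many \(\theta_i\in C(q)\) and weights with \(\sum a_i\theta_i=0\). For each such \(\theta_i\), the proof of Lemma~\ref{lem:canonical-dual-support} shows \(V(\theta_i)=0\), hence \(V_R(\theta_i)\le Ce^{-2R}\). The points \(x_i=(\rho_R(\theta_i),\theta_i)\in K_R(q)\) then satisfy \(R-Ce^{-2R}\le r_i\le R\). Since \(K_R(q)\subset\overline{B_R(o)}\), Lemma~\ref{lem:balanced-almost-contacts} with \(\varepsilon=Ce^{-2R}\) yields circumradius \(R+O(e^{-2R})\). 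Because the constant there depends on the chosen weights, I would fix one Carath\'eodory family for the given \(q\). The lemma's ``\(C\) depends only on the height bound'' refers to \eqref{eq:realization-error}, so this dependence is acceptable.

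For the diameter, the upper bound \(\diam\le2R\) comes from the containment in \(\overline{B_R(o)}\). For the lower bound, since \(0\in\operatorname{conv}\{\theta_i\}\), two of the \(\theta_i\) satisfy \(\theta_i\cdot\theta_j\le0\). The law of cosines, as in Proposition~\ref{prop:radial-compactness}, then gives \(\cosh d\ge\cosh r_i\cosh r_j\), so \(d\ge r_i+r_j-\log2\ge2R-O(1)\). The main obstacle I expect is bookkeeping rather than substance: making sure the one-sided monotonicity in \(W\) is clean, since the second term increases in \(W\). For that reason I would argue directly with explicit \(h\) as above rather than through a derivative.
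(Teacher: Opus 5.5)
Your proposal is correct and follows essentially the same route as the paper: you trap the root of the normalized support equation within \(Ce^{-2R}\) of \(V(\theta)\), then combine a Carath\'eodory balanced family of zero directions of \(V\) with Lemma~\ref{lem:balanced-almost-contacts} for the circumradius, and apply the law of cosines to two directions with \(\theta_i\cdot\theta_j\le0\) for the diameter. Your version is slightly more explicit, since it gives the exact one-sided bound \(V_R\ge V\) and checks star-shapedness by convexity along rays; also, the containment \(K_R(q)\subset\overline{B_R(o)}\) follows directly from \(q\le1\) by testing \(\xi=-\theta\), so it holds a priori, contrary to your remark.
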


\begin{proof}
For fixed \(\theta\), the boundary point \(r=R-W\) is determined by
\[
  \sup_\xi
  \left\{
  \frac{e^{-W}}{2q(\xi)}(1-\theta\cdot\xi)
  +\frac{e^{-2R+W}}{2q(\xi)}(1+\theta\cdot\xi)
  \right\}=1,
\]
which is just \eqref{eq:beta-asymp} divided by \(q(\xi)\).  The first term alone has level one at
\(W=V(\theta)\), by the envelope identity.  The second term is bounded by \(2e^{-2R+W}\le4e^{-2R}\).
Increasing \(W\) by \(Ce^{-2R}\) decreases the first term by a factor
\(1-Ce^{-2R}+O(e^{-4R})\), while decreasing \(W\) by the same amount increases it by the same order.
Choosing \(C\) large enough traps the exact root between \(V(\theta)-Ce^{-2R}\) and
\(V(\theta)+Ce^{-2R}\), uniformly in \(\theta\).  This proves \eqref{eq:realization-error}.

If the limiting Chebyshev normalization holds, the zero set of \(V\) has convex hull containing the
origin.  By Caratheodory's theorem, choose finitely many zero directions
\(\theta_1,\ldots,\theta_m\) and weights \(a_i\ge0\), \(\sum_i a_i=1\), such that
\(\sum_i a_i\theta_i=0\).  For these directions, \eqref{eq:realization-error} gives boundary points
\(x_i=(r_i,\theta_i)\in K_R(q)\) with \(R-Ce^{-2R}\le r_i\le R\).  Since \(q\le1\), the body is
contained in \(\overline B_R(o)\).  Lemma~\ref{lem:balanced-almost-contacts}, with
\(\varepsilon=Ce^{-2R}\), shows that the circumradius is at least \(R-Ce^{-2R}\), while the chosen
center gives the upper bound \(R\).  Hence the circumradius is \(R+O(e^{-2R})\).

For the diameter, the same balanced finite contact set contains two directions with
\(\theta_i\cdot\theta_j\le0\); otherwise all its points would lie in an open hemisphere and their
convex hull could not contain the origin.  The corresponding boundary points have radii
\(R+O(e^{-2R})\), and the hyperbolic law of cosines gives distance at least \(2R-O(1)\).  The
opposite inequality follows from \(K_R(q)\subset B_R(o)\).  Thus \(\diam K_R(q)=2R+O(1)\).

\end{proof}

\begin{lemma}[Asymptotic diameter and exact calibration]\label{lem:diameter-calibration}
Let
\[
  \Omega_{R,W}=\{(r,\theta):0\le r\le R-W(\theta)\},
  \qquad 0\le W\le\log2,
\]
where the functions \(W\) belong to a fixed compact Lipschitz family.  Define
\begin{equation}\label{eq:DeltaW}
  \Delta(W):=\sup_{\theta,\phi\in\Sn}
  \left\{-W(\theta)-W(\phi)+\log\frac{1-\theta\cdot\phi}{2}\right\}.
\end{equation}
Then, uniformly on that family,
\begin{equation}\label{eq:diam-radial-graph-asymptotic}
  \diam\Omega_{R,W}=2R+\Delta(W)+O(e^{-2R}).
\end{equation}
Moreover
\begin{equation}\label{eq:Delta-Lipschitz}
  |\Delta(W)-\Delta(\widetilde W)|\le2\|W-\widetilde W\|_{C(\Sn)}.
\end{equation}

Let \(V\in\calA_n\), take the canonical support profile \(q=q_V\), and let \(K_R(q)\) be the
horoconvex support realization from Lemma~\ref{lem:realization}.  Then
\begin{equation}\label{eq:diam-support-calibration}
  \diam K_R(q)=2R+\Delta(V)+O(e^{-2R}).
\end{equation}
The map \(R\mapsto \diam K_R(q)\) is continuous, and for every sufficiently large \(D\) there exists
\(R_D\) such that
\begin{equation}\label{eq:exact-diameter-RD}
  \diam K_{R_D}(q)=D,
  \qquad
  R_D=\frac{D-\Delta(V)}2+O(e^{-D}).
\end{equation}
The constants in the error terms are uniform for \(V\) in compact admissible subfamilies.
\end{lemma}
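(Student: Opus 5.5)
The plan is to compute the distance between two boundary points exactly with the hyperbolic law of cosines, expand it at large \(R\), and then maximize over pairs of directions. Since \(\Omega_{R,W}\) is a radial graph about \(o\), its diameter is attained by two boundary points \(x=(R-W(\theta),\theta)\) and \(y=(R-W(\phi),\phi)\). The radial segment from \(o\) to any interior point shows it is dominated by the corresponding boundary point: moving a point outward along its ray does not decrease its distance to a fixed point, up to a harmless monotonicity check.

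For the distance, write \(r=R-W(\theta)\) and \(s=R-W(\phi)\). The law of cosines gives
\[
  \cosh d=\cosh r\cosh s-\sinh r\sinh s\,\theta\cdot\phi
  =\tfrac12 e^{r+s}\,\tfrac{1-\theta\cdot\phi}{2}+\tfrac12\cosh(r-s)\,\tfrac{1+\theta\cdot\phi}{1}\cdot\tfrac12\cdot 2\cdot\tfrac12+\dots
\]
Here I only need the structure
\[
  \cosh d=\tfrac14 e^{r+s}(1-\theta\cdot\phi)+\tfrac14 e^{-(r+s)}(1-\theta\cdot\phi)+\tfrac12\cosh(r-s)(1+\theta\cdot\phi).
\]
Because \(|r-s|\le\log2\), the last two terms are \(O(1)\). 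Taking logarithms gives
\[
  d=\log(2\cosh d)+O(e^{-2d})=2R-W(\theta)-W(\phi)+\log\tfrac{1-\theta\cdot\phi}{2}+\log\bigl(1+O(e^{-2R}/(1-\theta\cdot\phi))\bigr),
\]
and this approximation is good only where \(1-\theta\cdot\phi\) is bounded below.

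The hardest step is the nearly-coincident regime, where \(1-\theta\cdot\phi\) is small and the error term is not uniform. I would handle it by restricting the supremum. The maximizing pairs satisfy \(\Delta(W)\ge-2\log2\), which follows by taking \(\phi=-\theta\) and using \(W\le\log2\). Any pair with \(1-\theta\cdot\phi\le\delta\) for a fixed small \(\delta\) gives the exact bound \(\cosh d\le C e^{2R}\delta+C\), so \(d\le 2R+\log\delta+O(1)\). For \(\delta\) small this is below the competitor value \(2R+\Delta(W)-1\). On the remaining compact region \(1-\theta\cdot\phi\ge\delta\), the expansion holds with error \(O(e^{-2R})\) uniformly, and taking suprema gives \eqref{eq:diam-radial-graph-asymptotic}. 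The estimate \eqref{eq:Delta-Lipschitz} follows because a difference of suprema is bounded by the supremum of the differences, and each of the two \(W\)-terms contributes \(\|W-\widetilde W\|\).

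For \eqref{eq:diam-support-calibration}, Lemma~\ref{lem:realization} shows that \(K_R(q)\) is the radial graph with deficit \(V_R\), where \(\|V_R-V\|\le Ce^{-2R}\). It is star-shaped about \(o\) by the Chebyshev discussion; alternatively, horoballs containing \(o\) are star-shaped about it. Lemma~\ref{lem:finite-R-lipschitz} places \(V_R\) in a fixed compact Lipschitz family. Applying \eqref{eq:diam-radial-graph-asymptotic} to \(W=V_R\) and then \eqref{eq:Delta-Lipschitz} gives \(2R+\Delta(V)+O(e^{-2R})\).

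Continuity of \(R\mapsto\diam K_R(q)\) follows from continuity of \(V_R\) in \(R\). The root \(W\) of the defining equation depends continuously on \(R\) by the transversality used in Lemma~\ref{lem:finite-R-lipschitz}, uniformly in \(\theta\), and the diameter is continuous under uniform perturbation of the radial function. Because the expansion gives \(\diam K_R(q)\to\pm\infty\) at the ends of a large interval, the intermediate value theorem produces \(R_D\) with \(\diam K_{R_D}(q)=D\). Solving \(D=2R_D+\Delta(V)+O(e^{-2R_D})\) gives \(R_D=(D-\Delta(V))/2+O(e^{-D})\), since \(e^{-2R_D}\asymp e^{-D}\). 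All of these constants depend only on the compact family, which gives the claimed uniformity.
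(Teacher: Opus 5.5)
Your outline follows the paper's proof closely. It uses the same law-of-cosines expansion and the same bound \(\Delta(W)\ge-2\log2\) to keep maximizing pairs away from the diagonal. It also matches the paper in comparing suprema for \eqref{eq:Delta-Lipschitz}, transferring to \(K_R(q)\) through \(\|V_R-V\|\le Ce^{-2R}\), getting continuity in \(R\) from the boundary root equation, and closing with the intermediate value theorem.

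\textbf{The one wrong step.} The reduction to boundary pairs is justified incorrectly. It is not true that moving a point outward along its ray never decreases its distance to a fixed point \(y\). If \(y=(s,\theta)\) lies on the same ray with \(s>r\), then \(d=s-r\) decreases in \(r\). More generally, \(\partial_r\cosh d=-\sinh s\,(\theta\cdot\phi)<0\) at \(r=0\) whenever \(\theta\cdot\phi>0\). This matters, because your near-diagonal bound \(\cosh d\le Ce^{2R}\delta+C\) uses \(|r-s|\le\log2\), which holds only for boundary pairs. For an interior pair the middle term \(\tfrac12\cosh(r-s)(1+\theta\cdot\phi)\) can be as large as \(e^{R}\).

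\textbf{Two ways to repair it.} The conclusion is true, and either fix works.
\begin{enumerate}[label=(\roman*)]
\item Use convexity. In \(\Hn\) the function \(x\mapsto d(x,y)\) is convex along the geodesic segment \([o,x_b]\), where \(x_b\) is the boundary point on the ray of \(x\). Hence \(d(x,y)\le\max\{d(o,y),d(x_b,y)\}\le\max\{R,d(x_b,y)\}\). Applying this twice shows that \(\diam\Omega_{R,W}\) equals the supremum over boundary pairs whenever that supremum exceeds \(R\). It does, since the antipodal pairs give at least \(2R-2\log2+O(e^{-2R})\).
\item Avoid the reduction, as the paper does. The triangle inequality through \(o\) gives \(d(x,y)\le r+s\). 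So any pair with \(d\ge2R-C\) has \(r,s\ge R-C\) and \(|r-s|\le C\). Your \(\delta\)-splitting then runs verbatim for arbitrary points \(r=R-A\), \(s=R-B\) with \(A\ge W(\theta)\), \(B\ge W(\phi)\). The upper bound follows from \(-A-B\le-W(\theta)-W(\phi)\).
\end{enumerate}

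\textbf{A minor point.} Lemma~\ref{lem:finite-R-lipschitz} is stated for Chebyshev-centered bodies, and \(o\) need not be the Chebyshev center of \(K_R(q)\). Its proof uses only \(1/2\le q\le1\) and the root equation, so the Lipschitz conclusion still holds. In fact the diameter expansion uses only a height bound on \(W\), not a Lipschitz bound. Note also that \(V_R\le\log2+Ce^{-2R}\) rather than \(\le\log2\), which is harmless.
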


\begin{proof}
We first prove \eqref{eq:diam-radial-graph-asymptotic}.  For two polar points
\(x=(r,\theta)\), \(y=(s,\phi)\), write \(a=\theta\cdot\phi\).  The hyperbolic law of cosines gives
\begin{equation}\label{eq:hyperbolic-distance-expanded}
\begin{aligned}
  \cosh d(x,y)
  &=\frac{e^{r+s}}{4}(1-a)
   +\frac{e^{r-s}+e^{-r+s}}{4}(1+a)
   +\frac{e^{-r-s}}{4}(1-a).
\end{aligned}
\end{equation}
The diameter is at least \(2R-C\), uniformly in \(W\), because for any \(\theta\) the antipodal
boundary pair
\((R-W(\theta),\theta)\), \((R-W(-\theta),-\theta)\) has distance
\(2R-W(\theta)-W(-\theta)+O(e^{-2R})\ge2R-2\log2+O(e^{-2R})\).  Therefore a maximizing pair may be
assumed to have distance at least \(2R-C\).  For such a pair, \eqref{eq:hyperbolic-distance-expanded}
forces both radial variables to be \(R-O(1)\) and \(1-a\ge c>0\), with constants independent of
\(W\).  On this set the first term in \eqref{eq:hyperbolic-distance-expanded} dominates, and hence
\begin{equation}\label{eq:distance-asymptotic}
  d(x,y)=r+s+\log\frac{1-a}{2}+O(e^{-2R}).
\end{equation}
Writing \(r=R-A\), \(s=R-B\), where \(A\ge W(\theta)\) and \(B\ge W(\phi)\), gives
\[
  d(x,y)
  \le 2R-W(\theta)-W(\phi)+\log\frac{1-\theta\cdot\phi}{2}+O(e^{-2R})
  \le 2R+\Delta(W)+O(e^{-2R}).
\]
This proves the upper bound.  The lower bound follows by taking boundary points
\(r=R-W(\theta)\), \(s=R-W(\phi)\) for a pair \((\theta,\phi)\) maximizing \(\Delta(W)\).  Since
\(\Delta(W)\ge -2\log2\), every maximizing pair has \(1-\theta\cdot\phi\ge c>0\), and
\eqref{eq:distance-asymptotic} applies.  The Lipschitz estimate \eqref{eq:Delta-Lipschitz} is
immediate from the definition of \(\Delta\).

For \(K_R(q)\), let \(W_R=R-\rho_R\) be the radial deficit with respect to the fixed center.  By
Lemma~\ref{lem:realization}, \(W_R=V+O(e^{-2R})\) uniformly.  Combining this with
\eqref{eq:diam-radial-graph-asymptotic} and \eqref{eq:Delta-Lipschitz} gives
\eqref{eq:diam-support-calibration}.  The same boundary equation as in Lemma~\ref{lem:realization}
shows that \(W_R\) depends continuously on \(R\); equivalently, the compact sets \(K_R(q)\) vary
continuously in the Hausdorff metric.  Therefore \(R\mapsto\diam K_R(q)\) is continuous.

Set \(R_0=(D-\Delta(V))/2\).  From \eqref{eq:diam-support-calibration},
\(\diam K_R(q)=2R+\Delta(V)+E(R)\), with \(|E(R)|\le Ce^{-2R}\).  Taking
\(R_\pm=R_0\pm A e^{-D}\), with \(A\) larger than the preceding constant, gives
\(\diam K_{R_-}(q)<D<\diam K_{R_+}(q)\) for all large \(D\).  The intermediate value theorem yields
\eqref{eq:exact-diameter-RD}.
\end{proof}

\begin{theorem}[Large-radius compactification of horoconvex deficits]\label{thm:geometry-package}
The support-envelope class \(\calA_n\) has the following equivalent descriptions and stability
properties.
\begin{enumerate}[label=\textup{(\roman*)},leftmargin=2.5em]
\item \emph{Compactness from genuine horoconvex bodies.}  If \(K_j\) are Chebyshev-centered
horoconvex bodies with circumradii \(R_j\to\infty\), then, after passing to a subsequence, their
radial deficits \(V_j=R_j-\rho_{K_j}\) converge uniformly to some \(V\in\calA_n\).  The sequence
\(V_j\) lies, after discarding finitely many terms, in one fixed compact Lipschitz graph family, and
\(\diam K_j=2R_j+O(1)\).

\item \emph{Closed intrinsic model.}  Conversely, \(\calA_n\) is compact in \(C(\Sn)\), each
\(V\in\calA_n\) has the canonical normalized support profile
\[
  q_V(\xi)=\frac12\sup_{\theta\in\Sn}e^{-V(\theta)}(1-\theta\cdot\xi),
\]
and the biconjugacy identity
\[
  V(\theta)=\sup_{\xi\in\Sn}\log\frac{1-\theta\cdot\xi}{2q_V(\xi)}
\]
holds.  Uniform convergence of deficits is equivalent, after canonicalization, to uniform
convergence of the support profiles in the sense recorded in \eqref{eq:envelope-stability}.

\item \emph{Horoconvex realization.}  For every \(V\in\calA_n\), the horoconvex bodies
\[
  K_R(q_V)=\bigcap_{\xi\in\Sn}\{x:\beta_\xi(x)\le e^R q_V(\xi)\}
\]
have radial deficits \(V_R\) satisfying
\[
  \|V_R-V\|_{C(\Sn)}\le Ce^{-2R},
\]
and their true Chebyshev circumradii equal \(R+O(e^{-2R})\).

\item \emph{Exact diameter calibration.}  With \(\Delta(V)\) as in \eqref{eq:DeltaW},
\[
  \diam K_R(q_V)=2R+\Delta(V)+O(e^{-2R}).
\]
Consequently, for every sufficiently large prescribed diameter \(D\) there is an \(R_D\) such that
\[
  \diam K_{R_D}(q_V)=D,
  \qquad
  R_D=\frac{D-\Delta(V)}2+O(e^{-D}).
\]
\end{enumerate}
All constants above are uniform when \(V\) is restricted to a compact subset of \(\calA_n\).
Thus Definition~\ref{def:admissible-class} is exactly the large-radius compactification of
Chebyshev-centered horoconvex bodies at the level relevant to the \(R^{-3}\) spectral asymptotics.
\end{theorem}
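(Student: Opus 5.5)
This theorem is a packaging of the preceding lemmas, so the plan is to verify each item by citing the corresponding earlier result and to check the uniformity claims.

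For \textup{(i)}, apply Proposition~\ref{prop:radial-compactness} directly. It gives a subsequence with $V_j\to V\in\calA_n$ uniformly and the estimate $\diam K_j=2R_j+O(1)$. Lemma~\ref{lem:finite-R-lipschitz} places the deficits, after discarding finitely many terms, in one fixed-height compact Lipschitz family.

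For \textup{(ii)}, compactness of $\calA_n$ is Lemma~\ref{lem:admissible-closed}. The canonical profile $q_V$ and its properties (continuity, the bounds $1/2\le q_V\le 1$, the normalization, biconjugacy) are Lemma~\ref{lem:canonical-dual-support}. The equivalence of the two convergences is proved in both directions.
\begin{itemize}
\item If $V_j\to V$ uniformly, then the estimate $\|q_{V_j}-q_V\|\le\|V_j-V\|$ from the proof of Lemma~\ref{lem:admissible-closed} gives uniform convergence of the canonical profiles.
\item Conversely, \eqref{eq:envelope-stability} gives $\|V_{q_j}-V_q\|\le 2\|q_j-q\|$, and biconjugacy identifies $V_{q_{V_j}}=V_j$.
\end{itemize}

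For \textup{(iii)}, apply Lemma~\ref{lem:realization} with $q=q_V$. Horoconvexity of $K_R(q_V)$ holds because it is an intersection of closed horoballs $\{\beta_\xi\le e^Rq_V(\xi)\}$. The deficit estimate $Ce^{-2R}$ follows from the Lemma. The circumradius bound $R+O(e^{-2R})$ also follows, since $q_V$ satisfies the limiting Chebyshev normalization by Lemma~\ref{lem:canonical-dual-support}.

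For \textup{(iv)}, invoke Lemma~\ref{lem:diameter-calibration}, namely \eqref{eq:diam-support-calibration} and \eqref{eq:exact-diameter-RD}.

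The only step needing real care is the uniformity over compact subsets of $\calA_n$. This is where I expect the main obstacle to lie. My approach is as follows.
\begin{itemize}
\item The constant in \eqref{eq:realization-error} depends only on $0\le V\le\log 2$, so it is uniform over all of $\calA_n$.
\item The diameter expansion \eqref{eq:diam-radial-graph-asymptotic} is uniform on the compact Lipschitz family $\mathfrak C_{K_n}\supset\calA_n$ (Lemma~\ref{lem:admissible-lipschitz-hull}).
\item The Chebyshev circumradius estimate uses Lemma~\ref{lem:balanced-almost-contacts}, whose constant depends on the chosen balanced weights and contact directions. These vary with $V$, so uniformity is not automatic.
\end{itemize}

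To handle the circumradius estimate I would argue by compactness and contradiction. Suppose uniformity fails along $V_j$ in a compact set, with a bad radius sequence. Pass to a convergent subsequence $V_j\to V$ and extract Carath\'eodory families of zeros converging to a balanced family, as in Lemma~\ref{lem:admissible-closed}.

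A degenerate limiting family, with weights tending to zero, is harmless: Lemma~\ref{lem:balanced-almost-contacts} only needs the weighted average $\overline X$ controlled. I would use the Minkowski computation there directly. That computation gives the bound $\cosh R-C\varepsilon e^R$ with $C$ depending only on the radial spread, since $\sum a_i\theta_i=0$ exactly and $\sum a_i=1$. So the constant is in fact absolute, which removes the dependence on the weights.

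Finally, the exact diameter calibration error $O(e^{-D})$ then inherits uniformity from \eqref{eq:Delta-Lipschitz} and the uniform $E(R)$ bound.
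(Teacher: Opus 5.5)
Your proposal is correct and follows the paper's proof. The paper likewise assembles items (i)--(iv) directly from Proposition~\ref{prop:radial-compactness} and Lemmas~\ref{lem:finite-R-lipschitz}, \ref{lem:canonical-dual-support}, \ref{lem:admissible-closed}, \ref{lem:realization}, \ref{lem:balanced-almost-contacts} and \ref{lem:diameter-calibration}, and asserts uniformity from the common height and Lipschitz bounds. Your observation that the constant in Lemma~\ref{lem:balanced-almost-contacts} does not depend on the weights is correct: \(\sum_i a_i\theta_i=0\) gives \(\overline X_{\rm sp}=\sum_i a_i(\sinh r_i-\sinh R)\theta_i\), which is weight-free in norm. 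This makes the uniformity of the circumradius estimate explicit, and it renders your compactness-and-contradiction detour unnecessary.
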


\begin{proof}
Part (i) is Proposition~\ref{prop:radial-compactness} together with the finite-radius Lipschitz
control in Lemma~\ref{lem:finite-R-lipschitz}.  Part (ii) is the canonical-dual construction in
Lemma~\ref{lem:canonical-dual-support} and the closedness statement in
Lemma~\ref{lem:admissible-closed}.  Part (iii) is Lemma~\ref{lem:realization}; the estimate on the
true Chebyshev circumradius uses Lemma~\ref{lem:balanced-almost-contacts}.  Part (iv) is exactly
Lemma~\ref{lem:diameter-calibration}.  The uniformity on compact subfamilies follows from the common
height and Lipschitz bounds in \(\mathfrak C_{K_n}\) and from the uniform forms of the estimates in
those lemmas.
\end{proof}

\begin{remark}[Use in the rest of the paper]\label{rem:geometry-package-use}
From this point on the asymptotic shape is represented by the continuous deficit \(V\in\calA_n\) and, when an actual horoconvex body is
needed, it is represented by the canonical realization \(K_R(q_V)\).  The spectral analysis is carried out for radial
graphs with deficits in compact Lipschitz families; Theorem~\ref{thm:geometry-package} transfers the result
from those graph estimates back to genuine horoconvex domains and to the exact diameter constraint.
\end{remark}

\begin{lemma}[Smooth graph approximation]\label{lem:smooth-graph-approx}
For every \(V\in\calA_n\) there exist \(W_m\in C^\infty(\Sn)\) such that
\begin{equation}\label{eq:smooth-approx}
  0\le W_m\le\log2,
  \qquad
  \|W_m-V\|_{C(\Sn)}\to0,
\end{equation}
and the Lipschitz constants of \(W_m\) are bounded by the common Lipschitz constant in
Lemma~\ref{lem:modulus}.  If one wants support profiles at the same time, the canonical profiles
\(q_{W_m}\) defined by \eqref{eq:qV-canonical} satisfy \(q_{W_m}\to q_V\) uniformly.
\end{lemma}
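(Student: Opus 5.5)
The plan is to mollify \(V\) on the sphere by averaging over rotations. Fix a smooth, nonnegative, compactly supported approximate identity \(\varphi_m\) on the rotation group \(SO(n)\), with unit mass and support shrinking to the identity as \(m\to\infty\). Set
\[
  W_m(\theta)=\int_{SO(n)}\varphi_m(g)\,V(g^{-1}\theta)\,dg .
\]
Since \(SO(n)\) acts transitively and smoothly on \(\Sn\), \(W_m\in C^\infty(\Sn)\). This can be seen by writing \(\theta=h\theta_0\) and moving the derivative onto \(\varphi_m\) via left invariance of Haar measure.

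The three properties in \eqref{eq:smooth-approx} then follow directly.
\begin{enumerate}[label=(\roman*)]
\item Each \(W_m\) is an average of rotated copies of \(V\), and \(0\le V\le\log2\) by Lemma~\ref{lem:modulus}. So \(0\le W_m\le\log2\).
\item Rotations are isometries of \(\Sn\), so each \(V\circ g^{-1}\) has the same Lipschitz constant as \(V\). An average with nonnegative weights of total mass one keeps that constant, so \(\Lip(W_m)\le K_n\).
\item For uniform convergence, write
\[
  |W_m(\theta)-V(\theta)|\le\int\varphi_m(g)\,|V(g^{-1}\theta)-V(\theta)|\,dg\le K_n\sup_{g\in\operatorname{supp}\varphi_m}\sup_\theta d_{\Sn}(g^{-1}\theta,\theta),
\]
and the right side tends to zero as the supports shrink.
\end{enumerate}

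For the support profiles, I would reuse the estimate from the proof of Lemma~\ref{lem:admissible-closed}. The formula \eqref{eq:qV-canonical} makes sense for any \(W\) with \(0\le W\le\log2\). Since \(t\mapsto e^{-t}\) is \(1\)-Lipschitz on \([0,\infty)\) and \(1-\theta\cdot\xi\le2\), we get
\[
  \|q_{W_m}-q_V\|_{C(\Sn)}\le\sup_\theta|e^{-W_m(\theta)}-e^{-V(\theta)}|\le\|W_m-V\|_{C(\Sn)}\to0 .
\]

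The lemma does not ask \(W_m\) to lie in \(\calA_n\), so the main potential obstacle is absent. If admissibility were needed, I would expect it to be the hard step, because rotational averaging need not preserve the envelope structure or the zero set of \(V\). In that case I would first try replacing \(W_m\) by \(V_{q_{W_m}}\), using the stability estimate \eqref{eq:envelope-stability}, but that gives up smoothness.

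A minor point is the dimension \(n=2\), where \(SO(2)\) is the circle group. There the construction is ordinary periodic convolution and nothing changes.
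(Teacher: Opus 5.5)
Your proof is correct. It follows the same template as the paper, smoothing by a positive, unit-mass averaging operator that approximates the identity, but the operator is different, and that moves where the work lies.

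\textbf{The paper's route.} The paper takes \(W_m=P_{t_m}V\), with \(P_t\) the heat semigroup on \(\Sn\). Smoothness is then automatic. The height bounds come from positivity and preservation of constants, and uniform convergence comes from strong continuity of \(P_t\) on \(C(\Sn)\). The Lipschitz bound rests on the assertion, not proved in the paper, that the spherical heat flow does not increase the Lipschitz seminorm.

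\textbf{What your route buys.} With your compactly supported mollifier on \(SO(n)\), the height and Lipschitz bounds are immediate, because rotations are isometries. Your estimate
\(\|W_m-V\|_\infty\le K_n\sup_{g\in\operatorname{supp}\varphi_m}\sup_\theta d_{\Sn}(g^{-1}\theta,\theta)\)
gives a rate that depends only on the Lipschitz constant. It is therefore uniform on all of \(\mathfrak C_K\), which would make the finite-net step in Lemma~\ref{lem:uniform-heat-regularization} unnecessary. The cost is the small argument that smoothness of \(h\mapsto W_m(h\theta_0)\) on \(SO(n)\) descends to \(\Sn\) through the submersion \(h\mapsto h\theta_0\).

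\textbf{What the heat semigroup buys.} It is the same operator the paper reuses in Lemma~\ref{lem:uniform-heat-regularization} and in the proof of Theorem~\ref{thm:effective-band}, where fixed-\(t\) \(C^{m_0}\) bounds on a smoothed compact family are needed. Your operator supplies these too, since for fixed \(m\) all derivatives fall on \(\varphi_m\).

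Your isometry idea also proves the paper's unproved contraction claim. The operator \(P_t\) is Markov and commutes with rotations. Let \(r\) be the rotation in the plane of \(\theta,\theta'\) carrying \(\theta\) to \(\theta'\); it moves every point by at most \(d_{\Sn}(\theta,\theta')\). Then
\(|P_tV(\theta')-P_tV(\theta)|=|P_t(V\circ r-V)(\theta)|\le\|V\circ r-V\|_\infty\le K_n\,d_{\Sn}(\theta,\theta')\).

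The support-profile estimate is identical in the two proofs.
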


\begin{proof}
Let \(P_t\) be the heat semigroup on \(\Sn\) and take \(W_m=P_{t_m}V\) with \(t_m\downarrow0\).
The heat semigroup preserves constants and positivity, so \(0\le W_m\le\log2\).  Since \(V\) is
continuous, \(P_tV\to V\) uniformly.  The heat flow is a contraction for the Lipschitz seminorm on
the sphere, giving the uniform Lipschitz bound.  Finally,
\[
  |q_{W_m}(\xi)-q_V(\xi)|
  \le \frac12\sup_\theta(1-\theta\cdot\xi)|e^{-W_m(\theta)}-e^{-V(\theta)}|
  \le \|W_m-V\|_\infty,
\]
so the canonical supports also converge uniformly.
\end{proof}

\begin{lemma}[Uniform heat regularization on compact Lipschitz families]\label{lem:uniform-heat-regularization}
Let \(\calB\subset\mathfrak C_K\) be compact in \(C(\Sn)\), and let
\(W_t(V):=P_tV\), where \(P_t\) is the heat semigroup on \(\Sn\).  Then, for every
\(t>0\), the image \(\calB_t:=\{W_t(V):V\in\calB\}\) is a compact family of smooth graph deficits contained in
\(\mathfrak C_K\), and
\begin{equation}\label{eq:uniform-heat-regularization}
  \sup_{V\in\calB}\|W_t(V)-V\|_{C(\Sn)}\longrightarrow0,
  \qquad t\downarrow0.
\end{equation}
The canonical supports also converge uniformly on the family:
\begin{equation}\label{eq:uniform-canonical-support-regularization}
  \sup_{V\in\calB}\|q_{W_t(V)}-q_V\|_{C(\Sn)}\longrightarrow0.
\end{equation}
\end{lemma}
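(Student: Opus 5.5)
The argument uses only three standard properties of the heat semigroup \(P_t\) on \(\Sn\), which were already invoked in Lemma~\ref{lem:smooth-graph-approx}. First, \(P_t\) is Markov: it preserves constants and positivity. Second, it does not increase the Lipschitz seminorm. Third, it is a contraction on \(C(\Sn)\) and is strongly continuous there. The only new point is that the convergence is uniform over the compact family \(\calB\). I would obtain this by a standard equicontinuity argument, using either compactness of \(\calB\) or, more directly, the common Lipschitz bound.

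\emph{Step 1 (membership and smoothness).} For \(t>0\) the heat kernel is smooth, so \(P_tV\in C^\infty(\Sn)\). The Markov property gives \(0\le P_tV\le\log2\). The Lipschitz contraction gives \(\Lip(P_tV)\le\Lip(V)\le K\). One way to see this contraction is through rotation invariance: \(P_t\) commutes with isometries \(g\in SO(n)\), and \(|V(g\theta)-V(\theta)|\le K\,d(g\theta,\theta)\). Choosing \(g\) to move \(\theta\) to \(\theta'\) along a geodesic, with displacement at most \(d(\theta,\theta')\) everywhere, and averaging against the kernel gives the bound. Hence \(\calB_t\subset\mathfrak C_K\).

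\emph{Step 2 (compactness of \(\calB_t\)).} Since \(P_t\) is a contraction on \(C(\Sn)\), the map \(V\mapsto P_tV\) is continuous. The image of the compact set \(\calB\) is therefore compact.

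\emph{Step 3 (uniform convergence).} This is the main point, and I would argue it directly from the Lipschitz bound. Write
\[
P_tV(\theta)-V(\theta)=\int p_t(\theta,\phi)\bigl(V(\phi)-V(\theta)\bigr)\,d\phi ,
\]
so that
\[
|P_tV-V|\le K\int p_t(\theta,\phi)\,d(\theta,\phi)\,d\phi .
\]
By rotation invariance the right-hand integral is a number \(m(t)\) independent of \(\theta\), and \(m(t)\to0\) as \(t\downarrow0\). This follows from strong continuity applied to the single Lipschitz function \(\phi\mapsto d(\theta_0,\phi)\) at the point \(\theta_0\), where it vanishes. Therefore
\[
\sup_{\calB}\|P_tV-V\|_\infty\le K\,m(t)\to0 .
\]
Alternatively, one can use an \(\varepsilon\)-net of \(\calB\) together with the contraction property. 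The only delicate issue is making the Lipschitz contraction rigorous, and the rotation-averaging argument above handles it.

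\emph{Step 4 (supports).} As in Lemma~\ref{lem:smooth-graph-approx},
\[
|q_{W}(\xi)-q_V(\xi)|\le\tfrac12\sup_\theta(1-\theta\cdot\xi)\,|e^{-W}-e^{-V}|\le\|W-V\|_\infty ,
\]
because \(e^{-x}\) is \(1\)-Lipschitz on \(x\ge0\). Taking \(W=W_t(V)\) and the supremum over \(V\in\calB\), Step 3 gives \eqref{eq:uniform-canonical-support-regularization}.
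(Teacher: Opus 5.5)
Your proof is correct. Steps 1, 2 and 4 follow the paper. The paper merely asserts that the heat flow contracts the Lipschitz seminorm, and your rotation-averaging argument proves it: with \(g\) the rotation in the plane of \(\theta,\theta'\), one has \(P_tV(\theta')-P_tV(\theta)=P_t(V\circ g-V)(\theta)\) and \(|V\circ g-V|\le K\,d(\theta,\theta')\) pointwise.

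The genuine difference is in Step 3. The paper proves uniform convergence with an \(\varepsilon/3\)-net of \(\calB\) in \(C(\Sn)\) together with the \(C(\Sn)\)-contraction of \(P_t\). That argument uses compactness of \(\calB\) but never the Lipschitz bound, so it applies to any compact family in \(C(\Sn)\). You instead use the common Lipschitz bound to obtain the explicit modulus \(\|P_tV-V\|_\infty\le K\,m(t)\), with \(m(t)=\int p_t(\theta,\phi)\,d(\theta,\phi)\,d\phi\). This bound is uniform over all of \(\mathfrak C_K\), needs no compactness, and gives a rate (\(m(t)=O(\sqrt t)\) by Gaussian heat kernel bounds).

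On Step 2, the paper gets compactness of \(\calB_t\) from continuity of \(P_t:C(\Sn)\to C^\infty(\Sn)\). That also gives the uniform \(C^{m_0}\) bounds on \(P_t\calB\) that are used later in the proof of Theorem~\ref{thm:effective-band}. Your Step 2 gives compactness only in \(C(\Sn)\). This suffices for the lemma as stated, and the stronger version follows immediately from smoothness of the heat kernel at fixed \(t>0\).
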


\begin{proof}
For fixed \(t>0\), the heat semigroup maps \(C(\Sn)\) continuously into
\(C^\infty(\Sn)\); hence the image of the compact set \(\calB\) is compact.  Positivity and
constant preservation give \(0\le W_t(V)\le\log2\).  Since
\(\calB\subset\mathfrak C_K\), and the heat flow is a contraction for the Lipschitz seminorm on the sphere, each \(W_t(V)\) also belongs to
\(\mathfrak C_K\).

It remains only to justify that the convergence \(P_tV\to V\) is uniform in \(V\in\calB\).  Given
\(\varepsilon>0\), choose a finite \(\varepsilon/3\)-net \(V_1,\dots,V_N\) for \(\calB\) in
\(C(\Sn)\).  Since \(P_tV_i\to V_i\) for each \(i\), for all sufficiently small \(t\),
\(\|P_tV_i-V_i\|_\infty<\varepsilon/3\) for every \(i\).  The contraction property in
\(C(\Sn)\) then gives, for \(V\) within \(\varepsilon/3\) of some \(V_i\),
\[
  \|P_tV-V\|_\infty
  \le \|P_t(V-V_i)\|_\infty+\|P_tV_i-V_i\|_\infty+\|V_i-V\|_\infty
  <\varepsilon.
\]
This proves \eqref{eq:uniform-heat-regularization}.  The support convergence follows from the
stability estimate in Lemma~\ref{lem:smooth-graph-approx}, uniformly in \(V\).
\end{proof}

\begin{lemma}[Mosco convergence of radial graph domains]\label{lem:mosco-graphs}
Fix \(R>1\).  Suppose \(W_m,W\in C(\Sn)\) are uniformly Lipschitz, satisfy
\(0\le W_m,W\le\log2\), and \(W_m\to W\) uniformly.  Let
\[
  \Omega_{R,W_m}=\{(r,\theta):0<r<R-W_m(\theta)\},
  \qquad
  \Omega_{R,W}=\{(r,\theta):0<r<R-W(\theta)\}.
\]
Then \(\Omega_{R,W_m}\to\Omega_{R,W}\) in the Mosco sense for Dirichlet forms.  Consequently the
Dirichlet eigenvalues of the hyperbolic Laplacian, and equivalently of the Liouville-transformed
operator, converge for each fixed \(R\).
\end{lemma}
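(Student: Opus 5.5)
The plan is to reduce Mosco convergence to the two standard conditions for the closed subspaces \(H_0^1(\Omega_{R,W_m})\subset H^1(B_R)\), extended by zero. The first is a \emph{weak limsup (recovery) condition}: every \(u\in H_0^1(\Omega_{R,W})\) is a strong \(H^1\)-limit of some \(u_m\in H_0^1(\Omega_{R,W_m})\). The second is a \emph{weak liminf condition}: if \(u_m\in H_0^1(\Omega_{R,W_m})\) and \(u_m\rightharpoonup u\) weakly in \(H^1\), then \(u\in H_0^1(\Omega_{R,W})\). Since all domains lie in the fixed ball \(B_R(o)\), and the Dirichlet form is the fixed quadratic form \(\int|\nabla u|^2\,d\mathrm{vol}\), these two conditions give Mosco convergence of the forms. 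Combined with the uniform compact embedding \(H_0^1(B_R)\hookrightarrow L^2(B_R)\), they give norm-resolvent convergence, hence convergence of each Dirichlet eigenvalue. The Liouville-transformed operator is unitarily equivalent through a fixed weight depending only on \(r\), so its eigenvalues coincide and the statement for it follows at once.

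For the recovery condition I will use radial dilation, which works because the domains are star-shaped Lipschitz graphs. Set \(\delta_m=\|W_m-W\|_\infty\). For \(s<1\) close to \(1\), the dilation \(\Phi_s(r,\theta)=(sr,\theta)\) maps \(\Omega_{R,W}\) onto \(\{r<s(R-W)\}\). Since \(R-W\ge R-\log2>0\), the gap \((1-s)(R-W)\ge (1-s)(R-\log 2)\) exceeds \(\delta_m\) once \(1-s\gg\delta_m\); in that case \(\{r<s(R-W)\}\subset\Omega_{R,W_m}\). First I will approximate \(u\) by \(C_c^\infty(\Omega_{R,W})\) functions \(\varphi\). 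Then \(\varphi\circ\Phi_s^{-1}\) is compactly supported in \(\Omega_{R,W_m}\), and \(\varphi\circ\Phi_s^{-1}\to\varphi\) in \(H^1\) as \(s\to1\). The hyperbolic metric coefficients \(\sinh^2 r\) are smooth and bounded on \([0,R]\), and \(\Phi_s\) is a smooth diffeomorphism near the identity, so this convergence holds. A diagonal choice \(s=s_m\to1\), with \(1-s_m\gg\delta_m\), gives the recovery sequence.

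For the liminf condition, let \(u_m\rightharpoonup u\). By Rellich, \(u_m\to u\) in \(L^2\), so \(u=0\) a.e. outside \(\bigcap_k\bigcup_{m\ge k}\Omega_{R,W_m}\subset\overline{\Omega_{R,W}}\). The boundary \(\partial\Omega_{R,W}\) is a Lipschitz graph and has measure zero, so \(u\in H^1(B_R)\) vanishes a.e. outside \(\Omega_{R,W}\). The main obstacle is upgrading this to \(u\in H_0^1(\Omega_{R,W})\). For this I will invoke the fact that Lipschitz (more generally, continuous-boundary / segment-property) domains are \emph{stable}: any \(H^1\) function on the ambient space vanishing a.e. off the domain lies in \(H_0^1\). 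Concretely, I will prove it here by the dilation trick in reverse. Using \(\Phi_s\) with \(s>1\) slightly, \(u\circ\Phi_s\) is supported in \(\{r\le (R-W)/s\}\). The strict inequality \((R-W)/s<R-W\) pushes the support a positive distance \((1-1/s)(R-\log 2)\) inside, at least away from a neighbourhood of the origin, where nothing happens. After mollification the result lies in \(C_c^\infty(\Omega_{R,W})\), and \(u\circ\Phi_s\to u\) in \(H^1\) as \(s\downarrow1\). Only the continuity of \(W\) is really used here, together with the uniform positive radial lower bound \(R-W\ge R-\log2>1-\log 2>0\). The uniform Lipschitz hypothesis is kept to guarantee that the boundary is null and that the graph description is legitimate.

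Finally, I will conclude eigenvalue convergence. Mosco convergence, together with uniform boundedness of all domains in \(B_R\) and Rellich compactness on \(H_0^1(B_R)\), yields strong, indeed compact, resolvent convergence of the Dirichlet Laplacians regarded on \(L^2(B_R)\). By the min-max characterization, or by the standard result that compact resolvent convergence implies eigenvalue convergence with multiplicity, we get \(\lambda_j(\Omega_{R,W_m})\to\lambda_j(\Omega_{R,W})\) for each \(j\).
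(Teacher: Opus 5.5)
Your proposal is correct and uses the same two-part Mosco scheme as the paper. Recovery comes from \(C_c^\infty(\Omega_{R,W})\) approximants. Closedness comes from showing that the weak limit vanishes a.e.\ off \(\overline{\Omega_{R,W}}\) and then invoking stability of the limit domain. The differences are in how individual steps are carried out.

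In the recovery step your radial dilation is harmless but unnecessary. A function in \(C_c^\infty(\Omega_{R,W})\) is supported at positive distance from the graph \(r=R-W\), so by uniform convergence it already lies in \(C_c^\infty(\Omega_{R,W_m})\) for large \(m\). The paper uses exactly this and then passes to general \(u\) by density.

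The substantive difference is the closedness step. The paper passes from ``\(u=0\) a.e.\ outside \(\Omega_{R,W}\)'' to \(u\in H_0^1(\Omega_{R,W})\) by citing the trace criterion for Lipschitz domains. You prove it directly by shrinking the support with \(u\circ\Phi_s\), \(s\downarrow1\), and mollifying. Your argument is self-contained and needs only that \(\Omega_{R,W}\) is strictly star-shaped with continuous positive radial function \(R-W\). It therefore shows the uniform Lipschitz hypothesis is not actually needed; even the nullity of the boundary graph follows from continuity of \(W\) by Fubini in polar coordinates. The paper's route is shorter but relies on the Lipschitz regularity of the limit boundary.

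Finally, you use the fixed ball \(B_R\) and the compact embedding \(H_0^1(B_R)\hookrightarrow L^2(B_R)\) to upgrade Mosco (strong resolvent) convergence to norm resolvent convergence. That is precisely the mechanism behind the paper's brief remark that compactness of the resolvents yields convergence of individual eigenvalues, and stating it your way makes that step rigorous.
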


\begin{proof}
The domains are uniformly Lipschitz graph domains in the fixed annulus \(0<r<R\), and their radial
functions converge uniformly.  For the recovery condition, take first
\(u\in C_c^\infty(\Omega_{R,W})\).  Its support has positive distance from the boundary, so it is
contained in \(\Omega_{R,W_m}\) for all large \(m\); hence \(u_m=u\) is an admissible recovery
sequence.  Density of \(C_c^\infty(\Omega_{R,W})\) in \(H_0^1(\Omega_{R,W})\), valid for Lipschitz
domains, gives the recovery sequence for every \(u\in H_0^1\).

For the closedness condition, extend \(u_m\in H_0^1(\Omega_{R,W_m})\) by zero to a fixed ball
\(B_R(o)\), and suppose \(u_m\rightharpoonup u\) weakly in \(H^1(B_R(o))\).  If a compact set
\(K\) lies in the complement of \(\overline{\Omega_{R,W}}\), then uniform convergence of the radial
functions implies \(K\cap\Omega_{R,W_m}=\varnothing\) for all large \(m\), so \(u=0\) on \(K\).
Since the boundary graph has measure zero, \(u=0\) almost everywhere outside \(\Omega_{R,W}\), and
the trace criterion for Lipschitz domains gives \(u\in H_0^1(\Omega_{R,W})\).  This is Mosco
convergence.  Strong resolvent convergence of the associated Dirichlet operators follows from
Mosco convergence; compactness of the resolvents gives convergence of individual eigenvalues.
\end{proof}

\section{Liouville transform and scattering lengths}

In polar coordinates the hyperbolic metric is
\[
  ds^2=dr^2+\sinh^2r\,d\theta^2,
\]
and the volume element is \((\sinh r)^{n-1}\,dr\,d\theta\).  Let \(\alpha=(n-1)/2\).  The unitary map
\[
  u\mapsto v=(\sinh r)^\alpha u
\]
transforms the Dirichlet Laplacian, after subtracting \(\alpha^2\), into
\begin{equation}\label{eq:liouville}
  H=-\partial_r^2+\frac{-\Delta_{\Sn}+\alpha(\alpha-1)}{\sinh^2r}.
\end{equation}
On the spherical harmonic sector \(\calH_\ell\), this is
\begin{equation}\label{eq:H-ell}
  H_\ell=-\frac{d^2}{dr^2}+\frac{(\ell+\alpha)(\ell+\alpha-1)}{\sinh^2r}.
\end{equation}

\begin{lemma}[Regular endpoint and Friedrichs convention]\label{lem:friedrichs-endpoint}
The radial realization used throughout is the Friedrichs realization of \(H_\ell\) at \(r=0\).
Equivalently, if \(\beta=\ell+\alpha\), the regular branch is the one with leading behavior
\[
  y(r)=r^\beta(1+O(r^2))\qquad (r\downarrow0)
\]
when \(\beta\ne1/2\).  In the critical case \(n=2,\ell=0\), where \(\beta=1/2\) and
\[
  \frac{\beta(\beta-1)}{\sinh^2r}=-\frac1{4r^2}+O(1),
\]
the Friedrichs condition means
\[
  y(r)=r^{1/2}(1+O(r^2)),
\]
and the logarithmic branch \(r^{1/2}\log r\) is excluded.  With this convention the radial
operators are self-adjoint, bounded below, and have compact resolvent on every finite interval
\((0,R)\).
\end{lemma}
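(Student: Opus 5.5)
The plan is to reduce everything to the local analysis of the regular singular endpoint \(r=0\) and then invoke standard Sturm--Liouville theory on the finite interval \((0,R)\).

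\textbf{Frobenius analysis at \(r=0\).} Write
\[
  \frac{\beta(\beta-1)}{\sinh^2 r}=\frac{\beta(\beta-1)}{r^2}+g(r),
\]
with \(g\) even and analytic near \(0\). Then \(r=0\) is a regular singular point with indicial equation \(s(s-1)=\beta(\beta-1)\), so the exponents are \(s=\beta\) and \(s=1-\beta\). Their difference is \(2\beta-1\). Since the potential is even, the Frobenius recursion proceeds in steps of \(r^2\). Hence the solution attached to the larger exponent is always \(r^\beta(1+O(r^2))\). When \(\beta\ne1/2\) the exponents are distinct. When \(\beta=1/2\) (only \(n=2,\ell=0\)) they coincide, and the second solution is \(r^{1/2}\log r(1+O(r^2))\). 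I would also record that for \(\beta\ge 3/2\) the second branch \(r^{1-\beta}\) fails to be \(L^2\) near \(0\), so the endpoint is limit point. For \(1/2\le\beta<3/2\) it is limit circle, and a boundary condition must be chosen.

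\textbf{Identification with the Friedrichs extension.} Start from the quadratic form
\[
  q[y]=\int_0^R |y'|^2+\frac{\beta(\beta-1)}{\sinh^2 r}|y|^2\,dr
\]
on \(C_c^\infty(0,R)\). In the case \(\beta(\beta-1)\ge -1/4\), Hardy's inequality \(\int|y'|^2\ge\frac14\int|y|^2/r^2\) together with \(1/\sinh^2 r\le 1/r^2\) shows that \(q\) is bounded below. The Friedrichs extension is then characterized as the self-adjoint realization whose domain lies in the form domain. This form domain is the closure of \(C_c^\infty\) in the norm \(q+C\|\cdot\|^2\).

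Next I check which local branches belong to the form domain. The branch \(r^\beta\) does. The branch \(r^{1-\beta}\) with \(1/2<\beta<3/2\) does not, since its form energy diverges or it cannot be approximated by compactly supported functions in the form norm. This is the standard fact that the Friedrichs extension selects the principal (recessive) solution, as in Rellich--Kalf or the Niessen--Zettl characterization.

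The critical case \(\beta=1/2\) is the main obstacle. Here \(q\) is exactly the Hardy-critical form plus a bounded term. Both \(r^{1/2}\) and \(r^{1/2}\log r\) are locally \(L^2\), and both have infinite Dirichlet integral, so a naive form check does not discriminate between them. I would handle it with the ground-state substitution \(y=r^{1/2}w\). This turns the form into \(\int r|w'|^2\,dr\) plus a bounded perturbation, which is the two-dimensional radial Dirichlet form. Its closure from \(C_c^\infty(0,R)\) contains the constants, corresponding to \(y=r^{1/2}\), but not \(\log r\). Equivalently, the Friedrichs operator on \(\ell=0\) is unitarily the Dirichlet Laplacian on the hyperbolic disc, whose eigenfunctions are smooth at the origin. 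The principal-solution criterion \(\int_0 dr/(r\,y_{\rm reg}^2)=\int_0 dr/r^2=\infty\) confirms the same selection.

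\textbf{Conclusion.} Once the endpoint condition at \(0\) is fixed and a Dirichlet condition is imposed at \(R\), the operator is self-adjoint and bounded below, being a Friedrichs extension. Compactness of the resolvent follows because the form domain embeds compactly in \(L^2(0,R)\). Away from \(0\) this is Rellich's theorem. Near \(0\) it follows from the Hardy-type control of \(q\) after the substitution above. Alternatively, the eigenfunction expansion of a limit-circle/regular Sturm--Liouville problem has discrete simple spectrum.

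Finally, the unitary Liouville map \(u\mapsto(\sinh r)^\alpha u\) identifies this realization with the restriction of the Dirichlet Laplacian on the geodesic ball to the sector \(\calH_\ell\). This shows the convention agrees with the geometric operator.
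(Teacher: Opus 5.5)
Your proposal is correct and follows essentially the same route as the paper: Frobenius exponents \(\beta,1-\beta\), selection of the \(r^\beta\) branch by the Friedrichs form domain, and exclusion of \(r^{1/2}\log r\) in the critical case because its energy contains the divergent \(\int r^{-1}\,dr\) term, which your substitution \(y=r^{1/2}w\) (giving \(\int r|w'|^2\,dr\)) makes explicit; like the paper, you then check consistency with smooth hyperbolic eigenfunctions and invoke standard Sturm--Liouville theory for self-adjointness, semiboundedness and compact resolvent. One small slip: for \(-y''+Qy=0\) the principal-solution integral is \(\int_0 dr/y_{\rm reg}^2=\int_0 dr/r=\infty\) (equivalently \(\int_0 dr/(r\,w^2)\) with \(w\equiv1\) in the substituted variables), not \(\int_0 dr/(r\,y_{\rm reg}^2)\), though the conclusion is unaffected.
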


\begin{proof}
For \(\beta>1/2\) the second local solution behaves as \(r^{1-\beta}\), and the Friedrichs form
selects the branch obtained by closing compactly supported smooth functions away from the endpoint.
When \(\beta=1/2\), Frobenius theory gives the two local solutions \(r^{1/2}\) and
\(r^{1/2}\log r\).  The logarithmic solution is excluded by the Friedrichs form domain: after cutting it
off at scale \(\varepsilon\), its renormalized one-dimensional energy contains the divergent term
\(\int_\varepsilon r^{-1}\,dr\).  The closure of \(C_c^\infty(0,R)\) therefore selects the
\(r^{1/2}\) branch.  This is also exactly the branch obtained from a smooth hyperbolic eigenfunction
before the Liouville transform, since \((\sinh r)^{1/2}u(r)\sim r^{1/2}u(0)\) for the radial
\(n=2\) ground sector.  Standard regular-singular Sturm--Liouville theory then gives
self-adjointness, lower semiboundedness, and compact resolvent on \((0,R)\).
\end{proof}

\begin{lemma}[Zero-energy scattering length]\label{lem:scattering}
Let \(\beta>0\), and let \(y_\beta\) be the solution of
\begin{equation}\label{eq:zero-energy}
  -y''+\frac{\beta(\beta-1)}{\sinh^2r}y=0
\end{equation}
that is regular at \(r=0\) and normalized by \(y_\beta'(r)\to1\) as \(r\to\infty\).  Then
\begin{equation}\label{eq:scattering-b}
  y_\beta(r)=r+b(\beta)+O(re^{-2r}),
  \qquad b(\beta)=-\gamma_E-\psi(\beta).
\end{equation}
Consequently, for \(\beta=\ell+\alpha\),
\[
  b_\ell=b(\ell+\alpha)=-\gamma_E-\psi(\ell+\alpha).
\]
\end{lemma}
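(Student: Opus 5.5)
The plan is to solve \eqref{eq:zero-energy} in closed form by reducing it to a hypergeometric equation, and then read off the constant term of the linear asymptotics from Gauss's connection formula at \(z=1\).

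\textbf{Reduction.} Substitute \(z=\tanh^2 r\), or equivalently \(w=-\sinh^2 r\), and make the ansatz
\[
  y=(\sinh r)^\beta(\cosh r)^{1-\beta}F .
\]
The exponents \(\beta\) and \(1-\beta\) are the local exponents at \(r=0\) and at \(r=\infty\) respectively; the latter is chosen so that \(y\) grows linearly. I expect \(F\) to satisfy a Gauss equation in \(z=\tanh^2 r\). The parameters I anticipate are
\[
  a=\tfrac{\beta}{2},\qquad b=\tfrac{\beta}{2},\qquad c=\beta+\tfrac12 ,
\]
so that \(c-a-b=\tfrac12\). This leaves only a \(\sqrt{1-z}\) singularity, which fits the need for the constant solution at infinity in the variable \(r\). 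A cleaner route is probably the explicit regular zero-energy solution
\[
  y_\beta(r)=\int_0^r(\sinh s)^{\beta}(\ldots),
\]
or, more simply, the observation that \(u=(\sinh r)^{-\alpha'}y\) is a zero-energy harmonic function for a radial Laplacian on \(\mathbb H^{2\beta+1}\) formally. Concretely, I would verify that
\[
  y_\beta(r)=(\sinh r)^{\beta}\int_r^\infty\ldots
\]
is not needed, and instead take
\[
  y_\beta(r)=c_\beta(\sinh r)^{1-\beta}\int_0^r(\sinh s)^{2\beta-2}\,ds ,
\]
checking directly that this solves \eqref{eq:zero-energy}. The check uses \(y=(\sinh r)^{1-\beta}g\) with \(g'=(\sinh r)^{2\beta-2}\). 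This is the standard reduction of order from the trivially obtained solution \((\sinh r)^{1-\beta}\), since
\[
  \bigl((\sinh r)^{1-\beta}\bigr)''=\bigl((1-\beta)^2\coth^2r-(1-\beta)\operatorname{csch}^2 r\bigr)(\sinh r)^{1-\beta},
\]
and \(\coth^2 r=1+\operatorname{csch}^2 r\). So \((\sinh r)^{1-\beta}\) solves the equation with energy \(-(1-\beta)^2\), not zero. I therefore expect to fall back on the hypergeometric route, with the regular solution
\[
  y_\beta=(\tanh r)^\beta\,{}_2F_1\!\bigl(\tfrac\beta2,\tfrac{\beta}{2}-\tfrac12;\beta+\tfrac12;\tanh^2 r\bigr)\cdot\cosh r\cdots .
\]
The exact parameters are to be fixed by Frobenius matching at \(z=0\). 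Regularity at \(r=0\) in the Friedrichs sense of Lemma~\ref{lem:friedrichs-endpoint} selects the \(z^{\beta/2}\) branch, and this includes the case \(\beta=1/2\).

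\textbf{Asymptotics.} As \(r\to\infty\) we have \(1-z\sim4e^{-2r}\). The critical case \(c-a-b\in\mathbb Z\) (I expect \(c-a-b=0\) after the correct gauge) produces the logarithmic connection formula
\[
  {}_2F_1(a,b;a+b;z)=\frac{\Gamma(a+b)}{\Gamma(a)\Gamma(b)}\Bigl[-\log(1-z)-\psi(a)-\psi(b)-2\gamma_E\Bigr]+O\bigl((1-z)\log(1-z)\bigr).
\]
Since \(-\log(1-z)=2r-\log4+O(e^{-2r})\), this is exactly where a linear term \(r\) plus a digamma constant arises. Normalizing \(y_\beta'\to1\) removes the Gamma prefactor. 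The constant then becomes
\[
  -\gamma_E-\tfrac12\bigl(\psi(a)+\psi(b)\bigr)-\log2 ,
\]
plus contributions from the prefactor. Using the duplication formula
\[
  \psi(2x)=\tfrac12\psi(x)+\tfrac12\psi\bigl(x+\tfrac12\bigr)+\log2
\]
with \(a=\beta/2\) and \(b=(\beta+1)/2\), this combines to \(-\gamma_E-\psi(\beta)\). The error term \(O(re^{-2r})\) comes from the \((1-z)\log(1-z)\) remainder.

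\textbf{Consistency check.} For \(\beta=1\) the potential vanishes, so \(y=r\) and \(b=0\). This agrees with
\[
  -\gamma_E-\psi(1)=0 .
\]
For \(\beta=0\) as a limit, or for \(\beta=2\), one can check against the explicit solution \(y=r\coth r-1\)-type formula, which satisfies \(y\sim r-1\). This agrees with \(-\gamma_E-\psi(2)=-1\). These two cases will be used to confirm the parameter choice \(a=\beta/2\), \(b=(\beta+1)/2\), \(c=a+b\), in the variable \(z=\tanh^2 r\) with gauge \(y=(\tanh r)^\beta(\cosh r)\cdots\).

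\textbf{Main obstacle.} The main obstacle is pinning down the correct gauge factor and hypergeometric parameters so that the equation is genuinely of logarithmic type \(c=a+b\). Once this is done, the digamma constant follows mechanically from the connection formula and the duplication identity.
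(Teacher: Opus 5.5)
\textbf{Verdict: same route as the paper, but the one step that needs proof, the hypergeometric reduction, is left open, and the gauge you write down is wrong.}

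Your intended route is the paper's, and your asymptotic step is correct. Once the regular solution is written as $(\tanh r)^\beta\,{}_2F_1(\frac\beta2,\frac{\beta+1}2;\beta+\frac12;\tanh^2 r)$, the rest is mechanical. The zero-balanced logarithmic connection formula, $-\log(1-z)=2r-2\log2+O(e^{-2r})$, normalization by $B(a,b)/2$, and the duplication identity give $-\gamma_E-\psi(\beta)$, exactly as you compute. Your $\beta=2$ check against $r\coth r-1\sim r-1$ is also correct.

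The gap is the reduction. You defer it ("the exact parameters are to be fixed by Frobenius matching"). The gauge you do commit to fails. Your first ansatz $(\sinh r)^\beta(\cosh r)^{1-\beta}$ equals $(\tanh r)^\beta\cosh r$, the same as the "$(\tanh r)^\beta(\cosh r)\cdots$" in your consistency check. With that prefactor, $F$ cannot satisfy a Gauss equation:
\begin{itemize}
\item The two solutions of \eqref{eq:zero-energy} behave like $1$ and $r$ as $r\to\infty$.
\item Hence $F$ would behave like $(1-z)^{1/2}$ and $(1-z)^{1/2}\log(1-z)$.
\item That is the exponent pair $\{\tfrac12,\tfrac12\}$ at $z=1$, whereas every Gauss equation has exponents $\{0,c-a-b\}$ there.
\end{itemize}
For the same reason your displayed candidate $(\tanh r)^\beta\cosh r\,{}_2F_1(\frac\beta2,\frac\beta2-\frac12;\beta+\frac12;\tanh^2 r)$ is not the linearly growing solution. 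There $c-a-b=1$, so the ${}_2F_1$ tends to a nonzero constant and the product grows like $e^r$. The "contributions from the prefactor" you mention would then dominate rather than be negligible.

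The fix is to drop the $\cosh r$ factor. With $x=\tanh r$ one has $\partial_r=(1-x^2)\partial_x$ and $\sinh^{-2}r=(1-x^2)/x^2$. So \eqref{eq:zero-energy} becomes $(1-x^2)y_{xx}-2xy_x-\beta(\beta-1)x^{-2}y=0$. Substituting $y=x^\beta F(x^2)$ gives
\[
  z(1-z)F''+\Bigl(\beta+\tfrac12-\bigl(\beta+\tfrac32\bigr)z\Bigr)F'-\tfrac{\beta(\beta+1)}{4}F=0,
\]
which is Gauss's equation with $a=\beta/2$, $b=(\beta+1)/2$, $c=a+b$. This is precisely the paper's argument. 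The factor $x^{\beta}$ selects the $r^\beta$ branch at the pole. As $r\to\infty$, the prefactor $x^\beta=1+O(e^{-2r})$ only feeds the $O(re^{-2r})$ error, together with the $(1-z)\log(1-z)$ remainder.

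Your first parameter guess $a=b=\beta/2$, $c=\beta+\frac12$ is in fact right in the other variable. By Pfaff's transformation the same solution equals $(\sinh r)^\beta\,{}_2F_1(\frac\beta2,\frac\beta2;\beta+\frac12;-\sinh^2 r)$. In that form the logarithm comes from the degenerate case $a=b$ at $w=\infty$, not from $c=a+b$ at $z=1$.

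Finally, the $\beta=0$ "check" is vacuous. $\psi$ has a pole there, and the regular solution is constant, so it cannot be normalized by $y'\to1$.
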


\begin{proof}
Put \(x=\tanh r\) and \(z=x^2\).  The regular solution at the origin is
\begin{equation}\label{eq:hypergeom-sol}
  \widetilde y_\beta(r)
  =x^\beta\,{}_2F_1\!\left(\frac\beta2,\frac{\beta+1}{2};\beta+\frac12;x^2\right).
\end{equation}
Indeed, after writing \(y=x^\beta F(x^2)\), equation \eqref{eq:zero-energy} becomes
\[
  z(1-z)F''+\left(\beta+\frac12-\left(\beta+\frac32\right)z\right)F'
  -\frac{\beta(\beta+1)}4F=0.
\]
Here the hypergeometric parameters are zero-balanced:
\[
  a=\frac\beta2,
  \qquad b=\frac{\beta+1}{2},
  \qquad c=a+b=\beta+\frac12.
\]
The zero-balanced connection formula gives, as \(z\uparrow1\),
\[
  {}_2F_1(a,b;a+b;z)
  =\frac1{B(a,b)}\{-\log(1-z)+2\psi(1)-\psi(a)-\psi(b)\}
  +O((1-z)\log(1-z)).
\]
Since \(1-z=\operatorname{sech}^2r=4e^{-2r}+O(e^{-4r})\),
\[
  \widetilde y_\beta(r)
  =\frac2{B(a,b)}\left[r-\log2-\gamma_E-\frac{\psi(a)+\psi(b)}2\right]
  +O(re^{-2r}).
\]
Multiplying by \(B(a,b)/2\) normalizes the derivative at infinity to one.  Finally, the duplication
identity for the digamma function,
\[
  \psi\!\left(\frac\beta2\right)+\psi\!\left(\frac{\beta+1}{2}\right)
  =2\psi(\beta)-2\log2,
\]
turns the constant term into \(-\gamma_E-\psi(\beta)\).
\end{proof}

\begin{lemma}[Uniform low-energy phase expansion]\label{lem:low-energy-phase}
Fix a compact interval \(I\subset[1/2,\infty)\) of parameters \(\beta\).  Let \(y_\beta\) denote the
zero-energy solution normalized in \cref{lem:scattering}.  For positive energy we use a different
normalization: \(\sigma_\beta(r,k)\) is the Friedrichs regular branch of
\[
  -\sigma''+\frac{\beta(\beta-1)}{\sinh^2r}\sigma=k^2\sigma
\]
whose exterior sine amplitude is one.  Equivalently, \(k^{-1}\sigma_\beta(\cdot,k)\to y_\beta\)
locally as \(k\downarrow0\).  There are \(k_0>0\) and constants depending only on \(I\) such that,
for \(0<k<k_0\) and \(1\le r\le c_0/k\),
\begin{equation}\label{eq:phase-expansion}
  \sigma_\beta(r,k)
  =\sin\bigl(k(r+b(\beta))\bigr)
   +O_I\bigl(k^3+k(1+r)e^{-2r}\bigr),
\end{equation}
and
\begin{equation}\label{eq:phase-error-bound}
  \partial_r\sigma_\beta(r,k)
  =k\cos\bigl(k(r+b(\beta))\bigr)
   +O_I\bigl(k^4+k(1+r)e^{-2r}\bigr).
\end{equation}
In particular, if \(k\sim R^{-1}\) and \(r=R+O(1)\), the boundary trace error is
\(O_I(R^{-3})\) and the derivative error is \(O_I(R^{-4})\), up to exponentially small terms.  The
endpoint \(\beta=1/2\), which occurs for \(n=2,\ell=0\), is included with the Friedrichs convention
of \cref{lem:friedrichs-endpoint}.
\end{lemma}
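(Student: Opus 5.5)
We compare \(\sigma_\beta(\cdot,k)\) with the zero-energy solution \(y_\beta\) of \cref{lem:scattering} on two regions. On the inner region \(r\le r_*\) (with \(r_*\) fixed and large) we use low-energy perturbation theory of the regular solution. On the exterior region \(r_*\le r\le c_0/k\) the potential \(\beta(\beta-1)/\sinh^2 r=O(e^{-2r})\) is exponentially small, and we use a variation-of-constants argument around the free solutions \(\sin kr,\cos kr\).

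\emph{Step 1 (inner region).} Let \(\phi_\beta(r,k)\) be the Friedrichs regular solution normalized as \(r^\beta(1+O(r^2))\) at \(0\), as in \cref{lem:friedrichs-endpoint}, including the \(\beta=1/2\) case. Since \(k^2\) enters the equation analytically, \(\phi_\beta(r,k)=\phi_\beta(r,0)+O(k^2)\) together with its \(r\)-derivative. The bound is uniform for \(\beta\in I\) and \(r\le r_*\); this follows from the Volterra integral equation for the regular solution, whose kernel is built from the two Frobenius branches. By \cref{lem:scattering}, \(\phi_\beta(r,0)=c_\beta\,y_\beta(r)\), where \(c_\beta>0\) is continuous in \(\beta\).

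\emph{Step 2 (exterior region).} For \(r\ge r_*\) write
\[
  \phi_\beta(r,k)=A(r)\sin kr+B(r)\cos kr
\]
with the standard variation-of-constants system. In this system \(A',B'\) are \(O(k^{-1}e^{-2r}|\phi|)\), so \(A,B\) converge as \(r\to\infty\). Consequently \(\phi=\mathcal A(k)\sin(kr+\delta(k))+O(e^{-2r}\cdot\text{amplitude})\), with a tail error controlled by \(\int_r^\infty e^{-2s}\,ds\). Dividing by \(\mathcal A(k)\) gives \(\sigma_\beta\).

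The main step is the phase expansion \(\delta(k)=kb(\beta)+O(k^3)\). To get it, match at \(r=r_*\), or more cleanly use the Wronskian of \(\phi_\beta(\cdot,k)\) against the exterior solutions \(\sin kr,\cos kr\), corrected by the exterior Volterra series. Expanding \(\sin kr=kr+O(k^3r^3)\) and \(\cos kr=1-k^2r^2/2+\dots\) on \(r\le r_*\), and comparing with \(y_\beta(r)=r+b+O(re^{-2r})\), gives \(\tan\delta(k)=kb+O(k^3)\). The absence of a \(k^2\) term comes from parity: \(\phi_\beta(r,k)\) depends on \(k\) only through \(k^2\), so \(\tan\delta/k\) is an even function of \(k\). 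Letting \(r_*\to\infty\) with the exponential tail errors \(O(kr e^{-2r})\) removes the dependence on the matching point.

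\emph{Step 3 (assembling the estimates).} The exterior representation then gives
\[
  \sigma_\beta(r,k)=\sin\bigl(kr+kb+O(k^3)\bigr)+O\bigl(k(1+r)e^{-2r}\bigr).
\]
The factor \(k\) in the remainder appears because the amplitude of \(\sigma\) at finite \(r\) is comparable to \(k\,y_\beta\). Expanding the sine yields \eqref{eq:phase-expansion}. The derivative estimate \eqref{eq:phase-error-bound} follows the same way, using the variation-of-constants formula for \(\partial_r\phi\): the phase error contributes \(k\cdot O(k^3)\), and the tail contributes \(O(k(1+r)e^{-2r})\). The restriction \(r\le c_0/k\) is used only to keep the tail and the phase bookkeeping uniform.

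\emph{Main obstacle.} The hard part is uniformity in \(\beta\in I\), in particular at the Friedrichs endpoint \(\beta=1/2\), where the local Frobenius solutions involve a logarithm. Here I would first work with the Volterra kernel written through the \(k=0\) pair consisting of \(y_\beta\) and a second solution. The entries of this kernel depend continuously on \(\beta\) and have only logarithmic singularities at \(\beta=1/2\), which is integrable against \(r^{2\beta-1}\). This should give \(O_I(k^2)\) bounds in the inner region uniformly, and the exterior argument never sees \(\beta\) except through \(|\beta(\beta-1)|\le C_I\).
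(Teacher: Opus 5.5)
Your argument is correct in outline, but it takes a genuinely different route from the paper. The paper does no matching and no parity argument. It writes the positive-energy Friedrichs branch in closed form as \(z^{\beta/2}(1-z)^{-ik/2}{}_2F_1(\tfrac{\beta-ik}{2},\tfrac{\beta+1-ik}{2};\beta+\tfrac12;z)\), with \(z=\tanh^2r\). The connection formula at \(z=1\) then gives an explicit Jost-type coefficient \(C_\beta(k)\) as a ratio of Gamma functions. The phase \(\delta_\beta(k)=\arg C_\beta(k)+\pi/2=kb(\beta)+O_I(k^3)\) is read off from the Taylor expansion of \(\log\Gamma\), whose imaginary part has only odd powers of \(k\), together with the digamma duplication formula. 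The spatial remainder comes from bounds on the hypergeometric factors \(G^\pm_\beta\), and uniformity in \(\beta\), including \(\beta=1/2\), follows from the Gamma parameters.

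Your Volterra/Jost argument compares as follows.
\begin{itemize}
\item It uses only the exponential decay of the potential and the regular-singular endpoint.
\item It imports \(b\) from \cref{lem:scattering} rather than recomputing it as a threshold phase slope.
\item It makes the reason for the missing \(k^2\) term transparent: the regular solution is even in \(k\).
\end{itemize}
The price is that you must supply the analyticity the explicit formula gives for free, and two steps need repair.

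\emph{Analyticity.} Take exterior solutions \(\tilde c,\tilde s\) equal to \(\cos kr\) and \(k^{-1}\sin kr\) plus their Volterra corrections from infinity. These are even in \(k\) and analytic in a fixed disc around \(k=0\), thanks to the \(e^{-2r}\) decay. Writing \(\phi_\beta=w_c(k)\tilde c+w_s(k)\tilde s\) gives \(\tan\delta(k)=k\,w_c(k)/w_s(k)\), where \(w_c,w_s\) are even, analytic, continuous in \(\beta\), and \(w_s(0)=c_\beta\neq0\). Parity plus Cauchy estimates then give the uniform \(O_I(k^3)\). The ratio \(w_c(0)/w_s(0)=b(\beta)\) follows exactly from \(y_\beta=r+b+O(re^{-2r})\).

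\emph{The limit \(r_*\to\infty\).} With the construction above this step is unnecessary, and taken literally it would not work. A finite-\(r_*\) matching that ignores the exterior potential leaves an error linear in \(k\), with an exponentially small \(r_*\)-dependent coefficient. Meanwhile the constants in the inner \(O(k^2)\) expansion grow with \(r_*\), so the two limits compete.

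\emph{Tail bookkeeping in Step 2.} Estimate the tail through the combined kernel, using \(|k^{-1}\sin k(s-r)|\le s-r\) and the a priori bound \(|\sigma_\beta(s,k)|\lesssim k(1+s)\). Bounding the \(A\sin kr\) and \(B\cos kr\) pieces separately loses a factor of \(r\) against the stated \(k(1+r)e^{-2r}\).
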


\begin{proof}
This is \cref{prop:appendix-phase}.  The compact interval \(I\) is used only after the angular
window has been fixed; hence all constants are allowed to depend on that window but not on \(R\).
\end{proof}

\begin{proposition}[First radial eigenvalue in one sector]\label{prop:sector-asymp}
Let \(\mu_{\ell,1}(R)\) be the first Dirichlet eigenvalue of \(H_\ell\) on \((0,R)\).  Then
\begin{equation}\label{eq:sector-asymp}
  \mu_{\ell,1}(R)=\frac{\pi^2}{(R+b_\ell)^2}+O_\ell(R^{-4})
  =\frac{\pi^2}{R^2}-\frac{2\pi^2b_\ell}{R^3}+O_\ell(R^{-4}).
\end{equation}
The estimate is uniform for \(\ell\) in bounded sets.
\end{proposition}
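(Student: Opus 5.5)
The plan is to locate the first Dirichlet eigenvalue of \(H_\ell\) on \((0,R)\) as the smallest \(k^2\) with \(\sigma_\beta(R,k)=0\), taking \(\beta=\ell+\alpha\).  Here \(\sigma_\beta(\cdot,k)\) is the Friedrichs regular branch of \cref{lem:low-energy-phase}.  With the convention of \cref{lem:friedrichs-endpoint}, the eigenfunctions of \(H_\ell\) on \((0,R)\) are exactly multiples of \(\sigma_\beta(\cdot,k)\) with \(\sigma_\beta(R,k)=0\).  Hence \(\mu_{\ell,1}(R)=k_1(R)^2\), where \(k_1(R)\) is the smallest positive zero of \(k\mapsto\sigma_\beta(R,k)\).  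Since \(\ell\) ranges over a bounded set, \(\beta\) lies in a compact interval \(I\subset[1/2,\infty)\), and all constants in \cref{lem:low-energy-phase} are uniform.

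The first step is an a priori localization \(k_1(R)\asymp R^{-1}\).  Comparing with the free problem does this.  For \(\beta\ge1\) the potential \(\beta(\beta-1)/\sinh^2r\) is nonnegative.  Its negative part for \(1/2\le\beta<1\) is bounded by the Hardy term \(-1/(4r^2)\), so the form is nonnegative.  Domain monotonicity on \((R/2,R)\), where the potential is \(O(e^{-R})\), gives \(\mu_{\ell,1}(R)\le 4\pi^2/R^2+O(e^{-R})\).  For the lower bound one uses a test-free argument: \cref{lem:low-energy-phase} shows \(\sigma_\beta(R,k)\) is close to \(\sin(k(R+b))\), which is bounded away from zero for \(k\le c/R\).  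So \(k_1\in[c/R,C/R]\), and in particular \(R\le c_0/k\) holds once \(C\le c_0\).

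The second step is to solve the boundary equation perturbatively.  In the window \(k\in[c/R,C/R]\), \eqref{eq:phase-expansion} at \(r=R\) gives
\[
  \sigma_\beta(R,k)=\sin\bigl(k(R+b_\ell)\bigr)+O(R^{-3}).
\]
The first zero of the main term is \(k_*=\pi/(R+b_\ell)\).  The \(k\)-derivative of the main term there is \(-(R+b_\ell)\), of size \(R\).  We must check that the error is also controlled in \(C^1\) in \(k\), meaning its \(k\)-derivative is \(o(R)\).  Alternatively, one can use a sign-change argument: evaluate at \(k_*\pm AR^{-4}\) and note that the main term has size \(\asymp AR^{-3}\) with opposite signs, which dominates the \(O(R^{-3})\) error for \(A\) large.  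This yields a zero within \(O(R^{-4})\) of \(k_*\).  To know that this zero is the first one, we also need no zero in \([c/R,k_*-AR^{-4}]\).  There \(|\sin(k(R+b))|\gtrsim \min(kR,\pi-kR)\gtrsim AR^{-3}\), so again the error cannot create a zero.

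Then \(\mu_{\ell,1}=k_1^2=\pi^2/(R+b_\ell)^2+O(R^{-1}\cdot R^{-4})\), which is even better than the claimed \(O(R^{-4})\).  Expanding \((R+b_\ell)^{-2}\) gives the second form \(\pi^2/R^2-2\pi^2b_\ell/R^3+O_\ell(R^{-4})\).

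The main obstacle is the lower a priori bound \(k_1\gtrsim R^{-1}\).  The issue is that \cref{lem:low-energy-phase} is stated only for \(k<k_0\), and small \(k\) with \(kR\) tiny must be excluded.  There \(\sigma_\beta(R,k)/k\approx y_\beta(R)=R+b_\ell+O(Re^{-2R})>0\), using \(k^{-1}\sigma_\beta\to y_\beta\).  This is only a local statement, so we must make it uniform up to \(r=R\).  To do so we will use the expansion \eqref{eq:phase-expansion} itself, \(\sin(k(R+b))\ge \tfrac{2}{\pi}k(R+b)\) for \(k(R+b)\le\pi/2\), which dominates \(O(k^3)\) once \(kR\ge\) any fixed small constant.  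For \(kR\) below that constant, Sturm comparison with the zero-energy solution applies: \(y_\beta\) has no zero on \((0,\infty)\) since \(y_\beta'\) increases to \(1\) and \(y_\beta>0\) near \(0\).  This shows \(\sigma_\beta(\cdot,k)\) has no zero on \((0,R]\) for such \(k\), because of energy positivity.
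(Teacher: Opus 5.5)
Your proposal is correct and follows the paper's route: apply the threshold phase expansion of \cref{lem:low-energy-phase} at \(r=R\) and locate the first zero of \(\sin(k(R+b_\ell))+O(R^{-3})\) near \(k(R+b_\ell)=\pi\), with your sign-change and no-earlier-zero arguments making explicit what the paper compresses into the a priori claim \(k\sim\pi/R\). The one flaw is in the Sturm-comparison step for tiny \(kR\). As phrased it runs the wrong way: a positive-energy solution oscillates faster than \(y_\beta\), so positivity of \(y_\beta\) does not by itself forbid zeros of \(\sigma_\beta(\cdot,k)\). The step is also unnecessary, because the constraint \(R\le c_0/k\) only gets easier as \(k\) decreases, so the expansion applies on the whole range \(0<k\le\pi/(2(R+b_\ell))\). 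On that range the error \(O(k^3+k(1+R)e^{-2R})\) is \(o(kR)\) uniformly, so \((2/\pi)k(R+b_\ell)\) dominates with no lower threshold on \(kR\).
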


\begin{proof}
Let \(k^2=\mu_{\ell,1}(R)\), and put \(\beta=\ell+\alpha\).  The first root has
\(k\sim\pi/R\), for instance by comparison with the free interval on the outer part of the radial
operator and the boundedness from below of the finite inner part.  The zeros of the regular branch are independent of scalar normalization, so we use the
sine-amplitude branch \(\sigma_\beta\) of Lemma~\ref{lem:low-energy-phase}.  At \(r=R\),
\[
  0=\sigma_\beta(R,k)=\sin\bigl(k(R+b_\ell)\bigr)+O_\ell(R^{-3}),
\]
because \(k\sim R^{-1}\) and the exponential tail is negligible.  This is the first positive zero,
so \(k(R+b_\ell)\) lies in a fixed small neighborhood of \(\pi\), where the sine function has
nonzero derivative.  Hence
\[
  k(R+b_\ell)=\pi+O_\ell(R^{-3}).
\]
Therefore
\[
  k=\frac{\pi}{R+b_\ell}+O_\ell(R^{-4}),
\]
and squaring gives \eqref{eq:sector-asymp}.  The estimate is uniform on bounded \(T_n\)-windows
because such windows contain finitely many angular degrees and the constants in
Lemma~\ref{lem:low-energy-phase} are uniform there.
\end{proof}

For geodesic balls the first eigenvalue lies in the \(\ell=0\) sector and the second in the
\(\ell=1\) sector.  Since
\[
  b_0-b_1=\psi(\alpha+1)-\psi(\alpha)=\frac1\alpha,
\]
we obtain
\begin{equation}\label{eq:ball-gap}
  \Gap(B_R)=\frac{2\pi^2}{R^3}(b_0-b_1)+O(R^{-4})
  =\frac{4\pi^2}{(n-1)R^3}+O(R^{-4}).
\end{equation}

\begin{remark}[Sign conventions and constant deficits]\label{rem:sign-conventions}
The preceding constants fix the sign convention used throughout the effective reduction.  We record
three calibrations.

First, the formula is exact in the free case.  If \(\beta=1\), the potential in
\eqref{eq:zero-energy} vanishes, \(\psi(1)=-\gamma_E\), and hence \(b(1)=0\); the regular zero-energy
solution is \(y(r)=r\).  Thus the normalization in \eqref{eq:scattering-b} agrees with the ordinary
Dirichlet interval.

Second, a constant radial deficit has the expected sign.  If \(V\equiv a\), then
\(\Omega_{R,a}=B_{R-a}\) and \cref{prop:sector-asymp} gives
\begin{equation}\label{eq:constant-deficit-calibration}
  \mu_{\ell,1}(R-a)
  =\frac{\pi^2}{(R-a+b_\ell)^2}+O_\ell(R^{-4})
  =\frac{\pi^2}{R^2}
   +\frac{2\pi^2}{R^3}(a-b_\ell)+O_\ell(R^{-4}).
\end{equation}
Since \(T_nY=(b_0-b_\ell)Y\), the coefficient in \eqref{eq:constant-deficit-calibration} is
\[
  a-b_\ell=(T_n+a-b_0)\quad\text{on }\calH_\ell.
\]
Thus the effective operator must be \(T_n+V-b_0\).  In particular the boundary displacement enters
with a plus sign: moving the boundary inward by \(a>0\) raises the eigenvalue.

Third, this is the sign produced by the boundary Grushin normalization used below.  With
\[
  k_R(\zeta)=\frac\pi R+\frac{\pi\zeta}{R^2}
\]
and with constant deficit \(a\), the exterior phase formula gives
\[
  s_\ell(R-a,k_R(\zeta))
  =\sin\{k_R(\zeta)(R-a+b_\ell)\}+O(R^{-2})
  =-\frac\pi R\,(\zeta+b_\ell-a)+O(R^{-2}).
\]
Therefore, for
\[
  \mathfrak B_R(\zeta,V):=-\frac R\pi B_R^{\mathrm{raw}}(k_R(\zeta),V),
\]
we obtain on \(\calH_\ell\)
\[
  \mathfrak B_R(\zeta,a)
  =\zeta+b_\ell-a+O(R^{-1})
  =\zeta-(T_n+a-b_0)+O(R^{-1}).
\]
This matches both the scalar eigenvalue calibration and the operator expansion in
\cref{lem:boundary-grushin-expansion}.  As a further check, for \(a=0\) the first gap is
\[
  \frac{2\pi^2}{R^3}\bigl[(b_0-b_1)-0\bigr]
  =\frac{4\pi^2}{(n-1)R^3},
\]
which is \eqref{eq:ball-gap}.
\end{remark}

\section{The effective angular Hamiltonian}\label{sec:effective-angular}

This section proves the first-band reduction for radial graphs
\[
  \Omega_{R,V}=\{(r,\theta):0<r<R-V(\theta)\}.
\]
After the Liouville transform and subtraction of \(\alpha^2\), the relevant spectral window is
centered at
\[
  E_R:=\frac{\pi^2}{R^2},
  \qquad
  k_R(\zeta):=\frac\pi R+\frac{\pi\zeta}{R^2},
  \qquad
  k_R(\zeta)^2=E_R+\frac{2\pi^2}{R^3}\zeta+O(R^{-4}).
\]
The boundary phase of the \(\ell\)-th regular radial branch is
\[
  k_R(\zeta)\bigl(R-V(\theta)+b_\ell\bigr),
  \qquad
  b_\ell=-\gamma_E-\psi(\ell+\alpha).
\]
The sector-dependent part of this phase is the multiplier
\begin{equation}\label{eq:Tn}
  T_nY=\left[\psi(\ell+\alpha)-\psi(\alpha)\right]Y,
  \qquad Y\in\calH_\ell,
\end{equation}
where \(\alpha=(n-1)/2\).  Equivalently,
\begin{equation}\label{eq:Tn-b}
  T_nY=(b_0-b_\ell)Y,
  \qquad b_0=-\gamma_E-\psi(\alpha).
\end{equation}
Thus the effective operator is
\[
  A_V:=T_n+V-b_0.
\]

\subsection{Statement and boundary-phase calculation}

The following scalar computation fixes the normalization and the sign.

\begin{proposition}[Boundary phase of one sector]\label{prop:model-boundary-phase}
For fixed angular degree \(\ell\), fixed \(|\zeta|\le C\), and fixed endpoint displacement \(a=O(1)\),
\begin{equation}\label{eq:model-boundary-phase}
  s_\ell(R-a,k_R(\zeta))
  =-\frac\pi R(\zeta+b_\ell-a)+O_{\ell,C}(R^{-2}).
\end{equation}
Consequently, for the normalized boundary pencil
\[
  \mathfrak B_R(\zeta,V):=-\frac R\pi B_R^{\rm raw}(k_R(\zeta),V),
\]
the constant-deficit boundary equation is
\[
  \mathfrak B_R(\zeta,a)Y
  =\bigl[\zeta-(T_n+a-b_0)\bigr]Y+O_{\ell,C}(R^{-1}),
  \qquad Y\in\calH_\ell.
\]
\end{proposition}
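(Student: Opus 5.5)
The plan is to apply the uniform low-energy phase expansion of \cref{lem:low-energy-phase} with \(\beta=\ell+\alpha\), in the sine-amplitude normalization \(s_\ell=\sigma_{\ell+\alpha}\). Since \(\ell\) is fixed, a single compact interval \(I\ni\ell+\alpha\) suffices, and every constant below depends only on \(\ell\) and \(C\). Take \(k=k_R(\zeta)=\pi/R+\pi\zeta/R^2\) with \(|\zeta|\le C\), so that \(k\sim R^{-1}\) uniformly. Take \(r=R-a\) with \(a=O(1)\); then \(1\le r\le c_0/k\) holds for \(R\) large, because \(kr=\pi+O(R^{-1})\) and we may shrink to the range where \(c_0>\pi\). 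If necessary, the lemma's range is enlarged first by the same argument, which only needs \(kr\) bounded. The lemma then gives
\[
  s_\ell(R-a,k_R(\zeta))=\sin\bigl(k_R(\zeta)(R-a+b_\ell)\bigr)+O(R^{-3}+R^{-1}Re^{-2R}),
\]
and this error is \(O(R^{-2})\).

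Next expand the phase. We have
\[
  k_R(\zeta)(R-a+b_\ell)=\pi+\frac{\pi}{R}(\zeta+b_\ell-a)+O\bigl(R^{-2}\bigr),
\]
where the \(O(R^{-2})\) term is \(\pi\zeta(b_\ell-a)/R^2\). Then use \(\sin(\pi+x)=-\sin x=-x+O(x^3)\) with \(x=O(R^{-1})\). This gives \(\sin(\cdots)=-\frac{\pi}{R}(\zeta+b_\ell-a)+O(R^{-2})\), which is \eqref{eq:model-boundary-phase}.

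For the consequence, note how the raw boundary operator acts on a constant deficit \(V\equiv a\) and \(f=s_\ell(r,k)Y(\theta)\) with \(Y\in\calH_\ell\). By its definition (solve the regular radial equation from the origin sectorwise with the normalization \(\mathcal S(k)\), then restrict to \(r=R-a\)), it acts as multiplication by the scalar \(s_\ell(R-a,k)\) on \(\calH_\ell\), because the trace at constant radius preserves the harmonic sector. Multiplying by \(-R/\pi\) gives \(\mathfrak B_R(\zeta,a)Y=(\zeta+b_\ell-a)Y+O(R^{-1})Y\). By \eqref{eq:Tn-b}, \(b_\ell=b_0-(T_n\text{ eigenvalue on }\calH_\ell)\), so \(\zeta+b_\ell-a=\zeta-(T_n+a-b_0)\) on \(\calH_\ell\).

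The only delicate point is confirming that the normalization \(\mathcal S(k)\) built into \(B_R^{\rm raw}\) is exactly the unit exterior sine amplitude of \cref{lem:low-energy-phase}. If instead it used the zero-energy normalization \(k^{-1}\sigma\), the factor \(-R/\pi\) would have to be adjusted accordingly. I would check this against \cref{rem:sign-conventions}, which records the same identity, and keep the sine-amplitude convention.
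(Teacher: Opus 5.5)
Your proof is correct and follows the paper's own argument: apply the threshold phase expansion of \cref{lem:low-energy-phase} with \(\beta=\ell+\alpha\) at \(r=R-a\), \(k=k_R(\zeta)\sim\pi/R\), expand \(\sin\bigl(k_R(\zeta)(R-a+b_\ell)\bigr)\) about \(\pi\), and read off the sector identity from \(T_nY=(b_0-b_\ell)Y\). Your two side worries need no extra argument. The range \(r=R+O(1)\) with \(k\sim R^{-1}\) is covered by the ``in particular'' clause of that lemma. The sine-amplitude normalization you settle on is the one the paper uses for \(s_\ell\).
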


\begin{proof}
The threshold expansion gives
\[
  s_\ell(r,k)=\sin\{k(r+b_\ell)\}+O(k^3)+O(k(1+r)e^{-2r})
\]
for \(r=R+O(1)\), \(k\sim R^{-1}\).  Since
\[
  k_R(\zeta)(R-a+b_\ell)
  =\pi+\frac\pi R(\zeta+b_\ell-a)+O_C(R^{-2}),
\]
expansion at \(\pi\) proves \eqref{eq:model-boundary-phase}.  The last identity follows from
\(T_nY=(b_0-b_\ell)Y\).
\end{proof}

For a graph endpoint \(a=V(\theta)\), multiplication by \(V\) couples angular sectors.  The fixed
angular-window calculation is
\begin{equation}\label{eq:section4-main-boundary-equation}
  P_L\mathfrak B_R(\zeta,V)P_L
  =P_L\bigl(\zeta I-(T_n+V-b_0)\bigr)P_L+o_R(1).
\end{equation}
This finite-window identity is the central reduction.  It shows that the moving Dirichlet
condition, after subtracting the common radial energy and scaling by \(R^3/(2\pi^2)\), is governed
by the effective Hamiltonian \(A_V=T_n+V-b_0\).  The rest of the section removes the angular cutoff
and passes from characteristic values of the boundary pencil to Dirichlet eigenvalues.  The second
radial root is separated from the first one by \(3\pi^2/R^2+o(R^{-2})\), so higher radial roots lie
outside the \(R^{-3}\) effective window once the boundary reduction is made.

\begin{theorem}[Effective first-band convergence]\label{thm:effective-band}\label{thm:feshbach}
Let \(\calB\subset\mathfrak C_K\) be compact.  For \(V\in\calB\), let \(H_{R,V}\) be the Liouville
transformed Dirichlet operator on \(\Omega_{R,V}\), with \(\alpha^2\) subtracted, and define
\[
  \zeta_{j,R}(V):=\frac{R^3}{2\pi^2}
  \left(\lambda_{j+1}(\Omega_{R,V})-\alpha^2-\frac{\pi^2}{R^2}\right),
  \qquad j=0,1,\ldots .
\]
Then, for every fixed \(N\), uniformly for \(V\in\calB\),
\begin{equation}\label{eq:eig-effective}
  \lambda_{j+1}(\Om_{R,V})
  =\alpha^2+\frac{\pi^2}{R^2}
  +\frac{2\pi^2}{R^3}\bigl(\eta_j(T_n+V)-b_0\bigr)
  +o(R^{-3}),\qquad 0\le j\le N .
\end{equation}
Equivalently,
\[
  \zeta_{j,R}(V)=\eta_j(T_n+V)-b_0+o(1),
  \qquad 0\le j\le N,
\]
uniformly on \(\calB\).  On compact effective windows whose boundary stays away from the limiting
spectrum of \(T_n+V-b_0\), the normalized Dirichlet multisets converge to the corresponding
effective multisets, counted with multiplicity.
\end{theorem}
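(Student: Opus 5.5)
The plan is to prove \cref{thm:effective-band} through the boundary characteristic pencil and a Schur (Grushin) reduction onto a finite angular window. After that, a two-sided eigenvalue comparison converts characteristic values of the pencil into Dirichlet eigenvalues. Throughout, \(V\) first ranges over the smoothed family \(\calB_t\) of \cref{lem:uniform-heat-regularization}. At the end we remove \(t\) using the Mosco stability of \cref{lem:mosco-graphs}, together with the fact that \(\eta_j(T_n+V)\) is \(1\)-Lipschitz in \(V\) for the sup norm. The \(\eta_j\) are well defined because \(T_n\) has compact resolvent (its multiplier grows like \(\log\ell\)) and \(V\) is bounded. The graph-domain eigenvalues at fixed \(R\) are also uniformly continuous in \(V\), which we get by comparing \(\Omega_{R,V}\) with its inner and outer dilates \(\Omega_{R,V\pm\delta}\): these are geodesic radial shifts, and by domain monotonicity together with \cref{prop:sector-asymp}-type scaling they move \(\lambda_j\) by \(O(\delta R^{-3})\). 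This last estimate is the cleanest way to obtain uniformity, and I would prove it first.

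\textbf{Step 1 (pencil and multiplicity).} For \(k\) in the window \(k=k_R(\zeta)\) with \(|\zeta|\le C\), write a candidate eigenfunction as \(\sum_{\ell,m}c_{\ell m}\sigma_{\ell+\alpha}(r,k)Y_{\ell m}(\theta)\), with \(\sigma\) the regular branch of \cref{lem:low-energy-phase}. The Dirichlet condition on \(r=R-V(\theta)\) is then \(\Braw(k,V)c=0\). I would show that \(\zeta\mapsto\mathfrak B_R(\zeta,V)\) is an analytic Fredholm family on \(\ell^2\) (equivalently on \(L^2(\Sn)\)). Its null spaces are isomorphic to the eigenspaces of \(H_{R,V}\) at \(k_R(\zeta)^2\), and its analytic Fredholm multiplicities match the spectral multiplicities. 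The model for this is the Sjöstrand--Zworski Grushin framework: the derivative in \(\zeta\) equals the identity plus a small term on the relevant block, which gives simple characteristic values.

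\textbf{Step 2 (block decomposition).} Split \(\ell^2=\operatorname{Ran}P_L\oplus\operatorname{Ran}P_{>L}\).

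On the low block, \cref{prop:model-boundary-phase} extends to variable \(a=V(\theta)\). Since \(\sin\) is expanded at \(\pi\) and \(V=O(1)\), the trace of \(\sigma_{\ell+\alpha}\) on \(r=R-V(\theta)\) equals \(-\frac\pi R(\zeta+b_\ell-V(\theta))Y+O_L(R^{-2})\). This gives \eqref{eq:section4-main-boundary-equation}: on the low block the pencil is \(\zeta-(T_n+V-b_0)+o(1)\).

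On the high block (\(\ell>L\)), the centrifugal term forces \(\sigma_{\ell+\alpha}\) to behave differently. Its phase shift is \(b_\ell\approx-\log\ell\), so \(\mathfrak B_R\) restricted there looks like \(\zeta+\log\ell-V+\ldots\). This is large and invertible, with inverse norm \(\lesssim(\log L)^{-1}\). This holds only while \(\ell\ll R\). For \(\ell\gtrsim R\) I would instead use a separate crude argument: the Dirichlet energy of such modes exceeds the window by \(\gtrsim R^{-2}\), so the quadratic form decouples them. I expect this uniform control of \(\ell\) between \(L\) and \(\infty\) to be the main obstacle. The phase expansion of \cref{lem:low-energy-phase} is only uniform on compact \(\beta\)-sets, so I would first try to prove a monotone bound \(b_\ell\le b_L\) and a positive lower bound for the high-block symbol using Sturm comparison in \(\beta\).

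\textbf{Step 3 (Schur complement and counting).} The off-diagonal blocks are \(P_{>L}VP_M\) plus \(o(1)\). By the estimate \(\sup_{V\in\calB}\|P_{>L}VP_M\|\to0\), which holds because the smoothed family is compact in \(C(\Sn)\) and hence uniformly approximable by polynomials, the Schur complement on \(\operatorname{Ran}P_M\) is \(\zeta-P_M(T_n+V-b_0)P_M+\epsilon(M,L,R)\). Here \(\epsilon\to0\) under the ordered limits \(M\) fixed, then \(L\gg M\), then \(R\to\infty\), then \(M\to\infty\). The truncation \(P_MV P_M\) converges to \(V\) in the min-max sense, because the high eigenvalues of \(T_n\) diverge. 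By Gohberg--Sigal/Rouché for analytic Fredholm families, on contours encircling clusters of the limiting spectrum the multiplicities agree. This yields \(\zeta_{j,R}(V)=\eta_j-b_0+o(1)\) for \(j\le N\), uniformly on \(\calB_t\).

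To rule out eigenvalues of \(H_{R,V}\) below the window or missed by it, use the upper bound \(\lambda_{N+1}\le\alpha^2+\pi^2/R^2+O(R^{-3})\) from test functions built on the low modes. Also, the second radial root is separated by \(3\pi^2/R^2\), so every eigenvalue up to that level lies in the pencil's window. Finally, let \(t\downarrow0\) using the uniform dilation continuity from the first paragraph.
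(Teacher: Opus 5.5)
Your counting step applies Gohberg--Sigal/Rouch\'e to the \emph{full} pencil. That needs $P_{>L}\mathfrak B_R(\zeta,V)P_{>L}$ to be uniformly invertible on all of $\operatorname{Ran}P_{>L}$, with controlled couplings to $P_L$, for every $\ell>L$, including $\ell$ growing with $R$. You flag this as the main obstacle, but none of your proposed fixes closes it.
\begin{itemize}
\item \Cref{lem:low-energy-phase} is uniform only for $\beta$ in a compact set, so it says nothing about $\ell$ growing with $R$.
\item Your fallback for $\ell\gtrsim R$ is quantitatively wrong. The first root in sector $\ell$ behaves like $\pi^2/(R+b_\ell)^2\approx\pi^2/R^2+2\pi^2\log\ell/R^3$. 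Its excess over the window is therefore only of order $(\log\ell)R^{-3}$, not $R^{-2}$, until $\log\ell$ is comparable to $R$.
\item More basically, an energy lower bound for high modes is a statement about the quadratic form. It is not a bound on the inverse of a block of the boundary-trace operator.
\item Sturm comparison in $\beta$ would only make each individual trace $\sigma_{\ell+\alpha}(R-V(\theta),k)$ sign-definite. That says nothing about invertibility of the mode-coupling trace operator on a non-spherical boundary.
\end{itemize}
Step~1 also fails as stated. For $\ell\gg e^R$ the boundary lies deep under the centrifugal barrier, so $\sigma_{\ell+\alpha}(R-V,k)$ is exponentially small in $\ell e^{-R}$ relative to its exterior sine amplitude. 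The sine-amplitude trace map on $\ell^2$ is therefore not Fredholm, and a boundary-adapted coefficient space is needed; this is the role of $\mathcal X^s_{R,V}$ in \cref{prop:sobolev-fredholm-framework}. Your closing paragraph does not replace the missing inversion. The test-function upper bound and the lower bound $\lambda_1\ge\pi^2/R^2-O(R^{-3})$ only place the first $N+1$ eigenvalues in the window. They do not exclude extra eigenvalues there coming from high angular modes.

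The paper avoids this by never inverting the pencil outside a finite window. For existence of Dirichlet eigenvalues near each effective one, it uses only Schur-lifted vectors $f_L=f_M+h\in P_LL^2(\Sn)$. Their full residual, including the $P_{>L}$ part, is controlled by $\|P_{>L}VP_M\|$ and $\log2\,\|h\|$ (\cref{lem:ordered-boundary-schur-residual}). These vectors are then turned into $H^1_0$ quasimodes by the collar correction (\cref{lem:collar-corrected-trial-map}). To exclude extra eigenvalues, it switches to the quadratic form:
\begin{itemize}
\item It uses $\Omega_{R,V}\subset B_R$, together with monotonicity in $\ell$ of $(\ell+\alpha)(\ell+\alpha-1)\sinh^{-2}r$.
\item Every ball mode with $T_n(\ell)>M$ then lies at least as high as the first root of the single sector $\ell_M$, where \cref{prop:sector-asymp} applies. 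Higher radial roots sit near $4\pi^2/R^2$.
\item Exact low eigenfunctions are therefore concentrated in the finite first band $\Pi_{R,M}$ (\cref{lem:ball-bracketing-window,lem:graph-spectral-cutoff}).
\item A corrected finite Ritz comparison then finishes the argument (\cref{lem:fixed-corrected-ritz-space}).
\end{itemize}
Your Sturm-comparison instinct is right, but it must be applied to the ball spectrum and to exact eigenfunctions, not to the pencil's high block.

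Two smaller points:
\begin{itemize}
\item Characteristic values need only be semisimple, not simple: $\eta_1(T_n)$ has multiplicity $n$.
\item The dilation estimate in your $t\downarrow0$ step does not follow from \cref{prop:sector-asymp} for non-radial $W$. The paper obtains it by applying the smooth-family result at radii $R\pm\delta$ inside $\Omega_{R-\delta,W}\subset\Omega_{R,V}\subset\Omega_{R+\delta,W}$. A radial stretching map with Jacobian $1+O(\delta/R)$ would also give it.
\end{itemize}
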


The quantitative reduction below is stated for smooth graph deficits.  Lipschitz deficits are obtained
at the end of the section by smoothing and radial bracketing.

\begin{definition}[Windowed multisets and matching distance]\label{def:windowed-multisets}
Let \(A\) be self-adjoint with compact resolvent.  Write
\[
  \boldsymbol\eta_J(A)=(\eta_0(A),\ldots,\eta_J(A))
\]
for the first \(J+1\) eigenvalues, counted with multiplicity.  For a holomorphic Fredholm pencil
\(F(\zeta)\), \(\operatorname{Char}(F;I)\) denotes the multiset of characteristic values in an
interval \(I\), counted with analytic Fredholm multiplicity.  If \(X=\{x_0,\ldots,x_J\}\) and \(Y=\{y_0,\ldots,y_J\}\) are finite multisets of the
same cardinality, set
\[
  d_{\rm ms}(X,Y):=\min_{\sigma\in S_{J+1}}\max_{0\le j\le J}|x_j-y_{\sigma(j)}|.
\]
An interval \(I\) is \(\gamma\)-regular for a \((J+1)\)-point effective cluster of \(A_V\), uniformly
for \(V\in\calB\), if
\[
  \#\operatorname{Spec}(A_V;I)=J+1,
  \qquad
  \dist(\partial I,\operatorname{spec}A_V)\ge\gamma .
\]
Internal multiplicities and crossings inside \(I\) are allowed.
\end{definition}

\paragraph{Finite-dimensional model.}
Before the analytic estimates, we record the finite model that the boundary pencil will realize.  Fix
angular cutoffs
\[
  P_M=\mathbf 1_{[0,M]}(T_n),\qquad
  P_L=\mathbf 1_{[0,L]}(T_n),\qquad
  Q_{M,L}=P_L-P_M .
\]
On the finite window the model pencil is
\[
  F_L(\zeta)=P_L(\zeta I-A_V)P_L .
\]
With respect to \(P_ML^2\oplus Q_{M,L}L^2\),
\[
F_L(\zeta)=
\begin{pmatrix}
 \zeta I-P_MA_VP_M & -P_MA_VQ_{M,L}\\
 -Q_{M,L}A_VP_M & Q_{M,L}(\zeta I-A_V)Q_{M,L}
\end{pmatrix}.
\]
For \(|\zeta|\le C\), the lower-right block is invertible once
\(M>|b_0|+C+\|V\|_\infty\), and its inverse is
\(O((M-|b_0|-C-\|V\|_\infty)^{-1})\).  Hence the low characteristic values are governed by the
Schur complement
\begin{align}\label{eq:section4-model-schur-front}
  S_{M,L}(\zeta)
  &=\zeta I-P_MA_VP_M \notag\\
  &\quad-P_MA_VQ_{M,L}
  \{Q_{M,L}(\zeta I-A_V)Q_{M,L}\}^{-1}
  Q_{M,L}A_VP_M .
\end{align}
The analytic part of the section proves that the actual objects follow this model in the chain
\[
\text{Dirichlet spectrum}
\longleftrightarrow
\mathfrak B_R(\zeta,V)
\longrightarrow
P_L\mathfrak B_R(\zeta,V)P_L
\longrightarrow
S^B_{R,M,L}(\zeta,V)
\longrightarrow
P_MA_VP_M
\longrightarrow
A_V .
\]
The two comparisons use different estimates: approximate Schur zeros pass to the Dirichlet problem through full
boundary residuals, whereas exact Dirichlet states pass back to the effective model through first-band
localization and Ritz comparison.  The comparison below has five remainders, each attached to one step of this
chain: Galerkin approximation of \(A_V\), coercivity of the Schur shell, compactness of multiplication
after the finite projection \(P_M\), the finite-window boundary-phase expansion, and the
trace-residual-to-quasimode lifting estimate.

\begin{proposition}[Boundary Grushin reduction to the effective Hamiltonian]\label{prop:boundary-pair-reduction}
Fix \(C<\infty\), \(J\ge0\), and \(K<\infty\).  There is an integer
\(m_0=m_0(n,J,C,K)\) with the following property.  Let
\(\calB\subset C^\infty(\Sn)\cap\mathfrak C_K\) be compact in \(C(\Sn)\) and assume
\[
  \mathfrak S_{m_0}(\calB):=\sup_{V\in\calB}\|V\|_{C^{m_0}(\Sn)}<\infty .
\]
Let \(I\Subset(-C,C)\) be \(\gamma\)-regular for the first \(J+1\) eigenvalues of
\(A_V=T_n+V-b_0\), uniformly for \(V\in\calB\).  For cutoffs \(M\le L\), with
\(\dim P_ML^2(\Sn)\ge J+1\), set
\[
  E_{\rm Gal}(M)=
  \sup_{V\in\calB}\max_{0\le j\le J}
  |\eta_j(P_MA_VP_M)-\eta_j(A_V)|,
\]
\[
  E_{\rm tail}(M,L)=\sup_{V\in\calB}\|P_{>L}VP_M\|,
  \qquad
  E_{\rm shell}(M,C)=\frac{(\log2)^2}{M-|b_0|-C-\log2},
\]
where the last term is used only when \(M>|b_0|+C+\log2\).  There is a constant
\[
  C_*=C_*(J,C,K,\gamma,\mathfrak S_{m_0}(\calB))
\]
and functions \(E_{\rm phase}(L,R)\), \(E_{\rm collar}(L,R)\), depending only on
\(J,C,K\) and \(\mathfrak S_{m_0}(\calB)\), such that
\[
  E_{\rm phase}(L,R)+E_{\rm collar}(L,R)\to0
  \qquad (R\to\infty)
\]
for each fixed \((M,L)\).  Define
\[
  \mathcal E_{M,L,R}:=
  E_{\rm Gal}(M)+E_{\rm shell}(M,C)+E_{\rm tail}(M,L)
  +E_{\rm phase}(L,R)+E_{\rm collar}(L,R).
\]
If
\begin{equation}\label{eq:comparison-remainder-small-condition}
  C_*\mathcal E_{M,L,R}<\gamma/4,
\end{equation}
then, for every \(V\in\calB\), the pencil \(\mathfrak B_R(\cdot,V)\) has exactly \(J+1\)
characteristic values in \(I\), counted with analytic Fredholm multiplicity, and
\begin{equation}\label{eq:boundary-pair-comparison-error}
  d_{\rm ms}
  \left(
    \operatorname{Char}(\mathfrak B_R(\cdot,V);I),
    \boldsymbol\eta_J(A_V)
  \right)
  \le C_*\mathcal E_{M,L,R}.
\end{equation}
Consequently, for every regular interval \(I\),
\[
  \lim_{M\to\infty}\limsup_{L\to\infty}\limsup_{R\to\infty}
  \sup_{V\in\calB}
  d_{\rm ms}
  \left(
    \operatorname{Char}(\mathfrak B_R(\cdot,V);I),
    \boldsymbol\eta_J(A_V)
  \right)=0.
\]
\end{proposition}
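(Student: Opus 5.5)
The plan is to follow the chain displayed before the proposition and attach one remainder to each arrow. Two inputs come first. The boundary pencil \(\mathfrak B_R(\cdot,V)\) is holomorphic Fredholm in \(\zeta\), and its characteristic values in \(I\) are the normalized Dirichlet eigenvalues, counted with analytic Fredholm multiplicity. The finite-window expansion, for fixed \(L\) and \(|\zeta|\le C\), is
\[
  \sup_{V\in\calB}\bigl\|P_L\mathfrak B_R(\zeta,V)P_L-P_L(\zeta I-A_V)P_L\bigr\|\le E_{\rm phase}(L,R)\to0 .
\]
This follows from \cref{lem:low-energy-phase} applied sector by sector; the compact \(\beta\)-interval is \([\alpha,L+\alpha]\). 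The \(\theta\)-dependent endpoint \(R-V(\theta)\) is Taylor expanded exactly as in \cref{prop:model-boundary-phase}: the phase \(k_R(\zeta)(R-V(\theta)+b_\ell)\) linearizes to \(\pi+\frac\pi R(\zeta+b_\ell-V(\theta))\). The matrix entries of the linearized term against \(\calH_\ell\oplus\calH_{\ell'}\), \(\ell,\ell'\le L\), are then those of \(\zeta-(T_n+V-b_0)\), and the errors are \(O_L(R^{-1})\) uniformly, since \(\|V\|_\infty\le\log2\). The \(C^{m_0}\) bound on \(\calB\) is used here to control \(V\) applied to finitely many harmonics.

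Next comes the Schur step. Write \(P_L\mathfrak B_RP_L\) in the block form of the finite model over \(P_ML^2\oplus Q_{M,L}L^2\). On \(Q_{M,L}\) we have \(T_n\ge M\), so the lower-right block satisfies
\[
  Q(\zeta-A_V)Q\le -(M-|b_0|-C-\log2)
\]
up to the phase error. Its inverse therefore has norm at most \((M-|b_0|-C-\log2)^{-1}\). The off-diagonal blocks reduce to \(-Q_{M,L}VP_M\), because \(T_n\) is diagonal, and \(\|V\|\le\log2\). Hence the Schur complement \(S^B_{R,M,L}\) differs from \(\zeta-P_MA_VP_M\) by at most \(E_{\rm shell}+C\,E_{\rm phase}\). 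Standard Grushin theory says that characteristic values of the window pencil in \(I\) coincide, with multiplicity, with those of the Schur complement. Comparing the Hermitian matrix \(P_MA_VP_M\) with \(A_V\) costs \(E_{\rm Gal}(M)\) by definition. By the \(\gamma\)-regularity of \(I\) and a Rouché/Weyl-type argument for Hermitian pencils \(\zeta-H\), the zeros then lie within \(C_*\mathcal E\) of \(\boldsymbol\eta_J(A_V)\) and number exactly \(J+1\).

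The main obstacle is passing from the window \(P_L\mathfrak B_RP_L\) to the full pencil \(\mathfrak B_R\), in both directions. For the \emph{existence} direction, I take an approximate Schur zero \(\zeta_*\) with vector \(Y\in P_ML^2\). I build the trial state from \(Y\) plus the Schur correction in \(Q_{M,L}\), and radially extend it by the regular branches \(\sigma_{\ell+\alpha}\). Its Dirichlet trace residual involves \(P_{>L}VP_M Y\), which is bounded by \(E_{\rm tail}\). A collar lifting estimate converts this trace residual into an \(L^2\) quasimode residual for \(H_{R,V}\); this is \(E_{\rm collar}(L,R)\), obtained by cutting off the trace correction in an \(O(1)\) collar near \(r=R-V\). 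Spectral-theorem quasimode bounds, together with the \(3\pi^2/R^2\) separation of the second radial band, then place Dirichlet eigenvalues within the window.

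For the \emph{converse} direction, the goal is to count no more than \(J+1\) states. I take a Dirichlet eigenfunction with normalized energy in \(I\) and show first-band localization. The radial energy forces its high-\(\ell\) and higher-radial components to carry small mass: sectors with \(\ell>L\) still have radial energy \(\approx\pi^2/R^2\), but their phase shift \(b_0-b_\ell\to\infty\) like \(\log \ell\). I then apply a Ritz/min-max comparison against the projected effective quadratic form. I expect the delicate point to be making the \(P_{>L}\) control uniform. For this I would use the compactness of \(\calB\) in \(C(\Sn)\): the tail \(\|P_{>L}VP_M\|\) tends to zero uniformly because \(P_M\) has finite rank and \(\calB\) is compact. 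Finally, the nested limit statement follows by choosing \(M\), then \(L\), then \(R\) so that \eqref{eq:comparison-remainder-small-condition} holds.
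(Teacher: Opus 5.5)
Your architecture is the paper's: finite-window expansion, shell coercivity and Schur complement, Galerkin cost, Schur lift with tail control, and first-band localization plus Ritz comparison. The existence step, however, fails as you set it up.

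\textbf{The existence step.} You turn the trace residual into an $L^2$ residual for $H_{R,V}$ using an $O(1)$ collar, and then apply the spectral-theorem quasimode bound. Take the raw trace $g$, with $\|g\|\le\pi R^{-1}\rho\|f\|$, and the lift $w=\chi(t/\ell)g$. The residual $(H-k^2)(u-w)=-(H-k^2)w$ contains $-\ell^{-2}\chi''(t/\ell)g$, whose $L^2$ norm is $\asymp\ell^{-3/2}R^{-1}\rho\|f\|$. Divide by $\|u-w\|\asymp R^{1/2}\|f\|$ and multiply by $R^3/(2\pi^2)$. The effective error is $\asymp(R/\ell)^{3/2}\rho$. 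That is $R^{3/2}\rho$ for $\ell=O(1)$, and still $R^{3/4}\rho$ for the paper's $\ell_R=R^{1/2}$. Meanwhile $\rho$ does not tend to zero with $R$, because it contains the fixed tail and shell contributions. So your $E_{\rm collar}$ does not vanish.

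The paper measures the defect in the quadratic form instead. In \cref{lem:collar-correction} the identity $\mathfrak q[v]-k^2\|v\|^2=-(\mathfrak q[w,v]-k^2\langle w,v\rangle)$ pairs $\partial_t w$ with $\partial_t u=O(R^{-1})$ on the collar. This gives an effective defect $O(\rho+\ell_R^{-1}\rho^2+\ell_R/R)$. Eigenvalues are then located through Ritz/min--max comparisons (trial-space upper bounds plus capture of exact clusters) and the Fredholm--Rouch\'e count, not through a single-vector spectral bound. An $L^2$ quasimode at the right scale would require spreading the correction over a length $\ell\asymp R$. Note also that the lifted residual contains $P_{>L}Vh$ with $h\in Q_{M,L}L^2$, which is of shell size; it is not only $P_{>L}VP_MY$.

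\textbf{The converse direction.} This part is under-specified exactly where $V$ must enter. The ball first-band projection $\Pi_{R,L}\widetilde u$ of an exact eigenfunction is not in $H_0^1(\Omega_{R,V})$. Its graph trace is not small in normalized units; compare the paper's example $\sin(\pi r/R)+\tfrac12\sin(2\pi r/R)$. The ball form on that space only produces $\frac{\pi^2}{R^2}+\frac{2\pi^2}{R^3}(T_n-b_0)$. So a Ritz comparison ``against the projected effective quadratic form'' is either undefined or loses the $+V$ term. It then yields only the non-sharp bound $\lambda_j(\Omega_{R,V})\ge\lambda_j(B_R)$.

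What you need is a trial space inside the Dirichlet form domain that captures the exact cluster and whose Ritz matrix you can compute. The paper uses $F^D_{R,L}(V)=\mathcal T_{R,V}(0)P_LL^2(\Sn)$. It proves capture in \cref{lem:low-eigenfunction-coefficient-localization}. It then computes the Ritz matrix $P_L(T_n+V-b_0)P_L+O(R^{-1/2})$ from a boundary-flux identity, \cref{lem:fixed-corrected-bilinear-estimates}: the trace $\frac{\pi}{R}A_{V,L}f$ is paired with $\partial_\nu u_g\approx-\frac{\pi}{R}g$.

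Finally, the localization step comes from ball bracketing with monotonicity in $\ell$, uniformly for all $0\le V\le\log2$. Compactness of $\calB$ is needed only for $E_{\rm tail}$ in the Schur lift, not here.
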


The proof of \cref{prop:boundary-pair-reduction} is given in
\cref{sec:effective-completion}.

\paragraph{Two comparisons.}
The proof uses two directions.  A core vector is lifted by solving the shell equation,
\[
  f_M\longmapsto f_L=f_M+h,
\]
and the Schur estimate gives a bound for the full normalized residual
\[
  \|\mathfrak B_R(\zeta,V)f_L\|.
\]
This full residual is the quantity used in the trace-residual lifting estimate.  In the reverse direction,
exact low Dirichlet states are localized in the first radial band and compared with the corrected
finite Ritz matrix.  Thus approximate zeros are compared with the Dirichlet problem through boundary residuals, whereas exact
states are compared with the effective model through Ritz capture.
 \subsection{The constant-end first band}

Let \(C_R=(0,R)\times\Sn\), and let \(H_R^0\) be the Liouville-transformed operator
\begin{equation}\label{eq:constant-cylinder-op}
  H_R^0=-\partial_s^2+\frac{-\Delta_{\Sn}+\alpha(\alpha-1)}{\sinh^2s}
\end{equation}
on \(C_R\), with the regular condition at \(s=0\) and Dirichlet condition at \(s=R\).  If
\(Y_{\ell m}\in\calH_\ell\), the radial part is \(H_\ell\) from \eqref{eq:H-ell}.  Denote by
\(\zeta_{\ell,R}\) the normalized first radial eigenfunction of \(H_\ell\) on \((0,R)\), and define
an isometry
\begin{equation}\label{eq:first-band-map}
  \mathcal J_R:L^2(\Sn)\longrightarrow L^2(C_R),
  \qquad
  \mathcal J_R\varphi
  =\sum_{\ell,m}\varphi_{\ell m}\zeta_{\ell,R}(s)Y_{\ell m}(\theta).
\end{equation}
Let \(P_R=\mathcal J_R\mathcal J_R^*\) and \(Q_R=I-P_R\).  Thus \(P_R\) is the first radial band
of the constant-end cylinder; it is infinite dimensional, but its low-energy part corresponds to
bounded spectral windows of \(T_n\).

\begin{lemma}[Constant-end first-band expansion]\label{lem:constant-band}
Fix \(M<\infty\).  On the spectral subspace
\[
  \mathbf 1_{[0,M]}(T_n)L^2(\Sn),
\]
\begin{equation}\label{eq:constant-band-expansion}
  \mathcal J_R^*H_R^0\mathcal J_R
  =\frac{\pi^2}{R^2}
   +\frac{2\pi^2}{R^3}(T_n-b_0)+O_M(R^{-4})
\end{equation}
in operator norm.  Moreover
\begin{equation}\label{eq:first-band-normal-derivative}
  \partial_s\mathcal J_R\varphi(R,\theta)
  =-\sqrt{\frac{2\pi^2}{R^3}}\,\varphi(\theta)+O_M(R^{-2})\|\varphi\|_{L^2}
\end{equation}
in \(L^2(\Sn)\), again uniformly for \(\varphi\in\mathbf 1_{[0,M]}(T_n)L^2(\Sn)\).
\end{lemma}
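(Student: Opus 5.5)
The plan is to exploit the fact that \(\mathcal J_R\) diagonalizes \(H_R^0\) sector by sector, so both assertions reduce to scalar statements about the first radial eigenfunction \(\zeta_{\ell,R}\) for finitely many \(\ell\).

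\textbf{Step 1: reduction to finitely many sectors.} Since \(\psi(\ell+\alpha)\to\infty\) as \(\ell\to\infty\), the spectral subspace \(\mathbf 1_{[0,M]}(T_n)L^2(\Sn)\) is the finite orthogonal sum \(\bigoplus_{\ell\le\ell_M}\calH_\ell\). Here \(\ell_M\) depends only on \(M\) and \(n\). All constants may therefore depend on the finitely many parameters \(\beta=\ell+\alpha\), \(\ell\le\ell_M\), which lie in a compact interval \(I\subset[1/2,\infty)\). This is exactly the uniformity provided by \cref{lem:low-energy-phase} and \cref{prop:sector-asymp}.

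\textbf{Step 2: the quadratic form.} Each \(\zeta_{\ell,R}(s)Y_{\ell m}\) is an eigenfunction of \(H_R^0\) with eigenvalue \(\mu_{\ell,1}(R)\), and these functions are orthonormal. Hence \(\mathcal J_R^*H_R^0\mathcal J_R\) acts on \(\calH_\ell\) as multiplication by \(\mu_{\ell,1}(R)\). By \cref{prop:sector-asymp},
\[
  \mu_{\ell,1}(R)=\frac{\pi^2}{R^2}-\frac{2\pi^2b_\ell}{R^3}+O_\ell(R^{-4}).
\]
Writing \(-b_\ell=(b_0-b_\ell)-b_0\) and using \eqref{eq:Tn-b}, the coefficient of \(2\pi^2/R^3\) is \(T_n-b_0\) on \(\calH_\ell\). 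The operator-norm bound \eqref{eq:constant-band-expansion} follows by taking the maximum of the error over \(\ell\le\ell_M\).

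\textbf{Step 3: the boundary derivative.} Write \(\zeta_{\ell,R}=c_{\ell,R}\,\sigma_\beta(\cdot,k)\), where \(k=\sqrt{\mu_{\ell,1}(R)}\) and the sign is fixed so that \(c_{\ell,R}>0\). The proof of \cref{prop:sector-asymp} gives
\[
  k(R+b_\ell)=\pi+O(R^{-3}),\qquad k=\frac{\pi}{R}+O(R^{-2}).
\]

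\emph{Normalization constant.} Split \(\int_0^R\sigma_\beta^2\) at \(r=1\).
\begin{itemize}
\item On \([0,1]\), the locally uniform convergence \(k^{-1}\sigma_\beta\to y_\beta\) gives \(\sigma_\beta=O(k)\), so this piece contributes \(O(R^{-2})\).
\item On \([1,R]\), use \eqref{eq:phase-expansion}. This requires \(R\le c_0/k\); since \(k\approx\pi/R\), one may need to enlarge \(c_0\) beyond \(\pi\), which I expect the appendix proposition permits. If it does not, I would continue the exact solution past \(c_0/k\) as a free sine wave, because the potential there is exponentially small. The main term is \(\int_1^R\sin^2(k(r+b_\ell))\,dr=R/2+O(1)\). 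The error terms contribute \(O(Rk^3)+O(k)=O(R^{-2})\).
\end{itemize}
Hence \(c_{\ell,R}=\sqrt{2/R}\,(1+O(R^{-1}))\).

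\emph{Derivative at \(s=R\).} By \eqref{eq:phase-error-bound},
\[
  \partial_s\zeta_{\ell,R}(R)=c_{\ell,R}\bigl[k\cos(k(R+b_\ell))+O(R^{-4})\bigr].
\]
Since \(\cos(\pi+O(R^{-3}))=-1+O(R^{-6})\), this equals
\[
  -\sqrt{\tfrac2R}\,\tfrac{\pi}{R}\bigl(1+O(R^{-1})\bigr)=-\sqrt{\tfrac{2\pi^2}{R^3}}+O(R^{-5/2}).
\]
This is better than the claimed \(O(R^{-2})\). Summing over the orthonormal harmonics with \(\ell\le\ell_M\) and using Parseval gives \eqref{eq:first-band-normal-derivative} in \(L^2(\Sn)\).

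\textbf{Expected main obstacle.} The only delicate point is the normalization: the phase expansion must hold up to \(r=R\) and not merely up to \(c_0/k\), and the \(O(1)\) error in the \(L^2\) norm must be controlled. The rest is bookkeeping on finitely many sectors.
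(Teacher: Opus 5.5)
Your proposal is correct and follows the same route as the paper. The form expansion is \cref{prop:sector-asymp} read off sector by sector on the finitely many harmonics in the \(T_n\)-window, and the boundary derivative comes from the near-endpoint shape of the normalized first radial eigenfunction. You make that shape explicit through the phase expansion and \(c_{\ell,R}=\sqrt{2/R}\,(1+O(R^{-1}))\), even getting \(O(R^{-5/2})\), whereas the paper just compares \(\zeta_{\ell,R}\) with \(\sqrt{2/R}\sin(\pi s/R)\) on the outer collar.

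Two harmless points:
\begin{itemize}
\item The cross term in \(\int_1^R\sigma_\beta^2\) coming from the \(k(1+r)e^{-2r}\) error is \(O(k^2)\), because \(|\sin(k(r+b_\ell))|\le Ck(1+r)\). Your stated \(O(k)\) would be \(O(R^{-1})\), not \(O(R^{-2})\); either way only \(R/2+O(1)\) is needed.
\item The restriction \(r\le c_0/k\) is immaterial, since the bound \(|\sin(kr+\delta_\beta(k))|\le Ck(1+r)\) used in the appendix holds for every \(r\).
\end{itemize}
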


\begin{proof}
The diagonal expansion \eqref{eq:constant-band-expansion} is Proposition~\ref{prop:sector-asymp},
written in the first-band basis.  A bounded \(T_n\)-window contains bounded angular momenta,
because \(\psi(\ell+\alpha)-\psi(\alpha)\to\infty\).  Hence the sectorwise \(O_\ell(R^{-4})\)
remainders are uniform in that window.

For \eqref{eq:first-band-normal-derivative}, use the normalized first radial root.
Near the outer endpoint the potential is exponentially small, and
\[
  \zeta_{\ell,R}(s)=\sqrt{\frac2R}\sin\frac{\pi s}{R}+O_{M,a_{\rm col}}(R^{-3/2})
  \quad\text{for }R-2a_{\rm col}\le s\le R.
\]
Thus
\[
  \zeta_{\ell,R}'(R)=-\sqrt{\frac2R}\frac{\pi}{R}+O_M(R^{-2})
  =-\sqrt{\frac{2\pi^2}{R^3}}+O_M(R^{-2}).
\]
Summing over the finitely many angular sectors in the \(T_n\)-window proves the estimate.
\end{proof}

\subsection{Collar flattening and the Hadamard form}

Choose once and for all a collar width \(a_{\rm col}\gg1\) and a smooth cutoff
\(\chi:[0,\infty)\to[0,1]\) such that
\[
  \chi(t)=1\quad(0\le t\le a_{\rm col}),
  \qquad
  \chi(t)=0\quad(t\ge2a_{\rm col}),
  \qquad
  \|\chi'\|_\infty\log2<\frac12 .
\]
Then the maps below are diffeomorphisms for all \(0\le V\le\log2\).  For
\(0\le\tau\le1\), set
\begin{equation}\label{eq:collar-map}
  r=F_{\tau,V}(s,\theta):=s-\tau\chi(R-s)V(\theta),
  \qquad 0<s<R.
\end{equation}
Then \(F_{\tau,V}=s\) away from the fixed outer collar \(R-2a_{\rm col}<s<R\), while
\(F_{\tau,V}(R,\theta)=R-\tau V(\theta)\).  Pulling back by \(F_{\tau,V}\), including the square root
of the Jacobian, gives a unitary map from the Liouville space on \(\Om_{R,\tau V}\) to \(L^2(C_R)\).
Let \(\mathfrak q_{R,\tau V}\) be the pulled-back quadratic form.

\begin{lemma}[First-band Hadamard calibration on the fixed cylinder]\label{lem:hadamard-fixed}
Let \(V\in C^2(\Sn)\), \(0\le V\le\log2\), and fix \(M<\infty\).  For
\(w=\mathcal J_R\varphi\), \(z=\mathcal J_R\psi\), with \(\varphi,\psi\in\mathbf 1_{[0,M]}(T_n)L^2(\Sn)\), the pulled-back form satisfies
\begin{equation}\label{eq:hadamard-derivative}
  \left.\frac{d}{d\tau}\right|_{\tau=0}
  \mathfrak q_{R,\tau V}[w,z]
  =\int_{\Sn}V(\theta)\partial_s w(R,\theta)\overline{\partial_s z(R,\theta)}\,d\theta
  +O_{M,V}(R^{-4})\|\varphi\|\|\psi\| .
\end{equation}
The same bound holds uniformly for \(0\le\tau\le1\) after replacing the derivative at \(0\) by the derivative at \(\tau\).  This lemma is used to calibrate the \(+V\) contribution on fixed first-band windows; complementary modes are treated by the boundary Grushin argument below.
\end{lemma}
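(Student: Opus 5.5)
The plan is to compute the pulled-back form explicitly in the collar and differentiate in \(\tau\). The goal is to reduce the derivative to a boundary term by the classical Hadamard integration by parts, with the first-band eigenfunction equation absorbing the interior terms.

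\textbf{Step 1: the pulled-back form.} Write the Liouville-transformed form on \(\Om_{R,\tau V}\) as
\[
  \int\Bigl(|\partial_r v|^2+\frac{|\nabla_\theta v|^2+\alpha(\alpha-1)|v|^2}{\sinh^2 r}\Bigr)\,dr\,d\theta .
\]
The map \(F_{\tau,V}\) is the identity outside the collar \(R-2a_{\rm col}<s<R\). Inside the collar the potential term is \(O(e^{-2R})\), so up to exponentially small errors the form there is the flat Dirichlet form in the graph coordinates. Pull back with the unitary factor \(J_\tau^{1/2}\), where \(J_\tau=\partial_sF=1+\tau\chi'(R-s)V\).

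\textbf{Step 2: the \(\tau\)-derivative as a first variation.} Differentiate at \(\tau=0\). The derivative is the first variation of the Dirichlet energy under the vector field \(X=-\chi(R-s)V(\theta)\partial_s\), together with the half-density correction. By the standard computation (Lie derivative of the energy density), it equals the shape-derivative expression
\[
  \int \bigl(\operatorname{div}X\,\nabla w\cdot\nabla\bar z-(DX+DX^{T})\nabla w\cdot\nabla\bar z\bigr)\;+\;\text{half-density terms}.
\]
Use that \(w,z\) are finite sums of \(\zeta_{\ell,R}(s)Y_{\ell m}\), which satisfy \(H_R^0w=\mu w\) with \(\mu=\pi^2/R^2+O_M(R^{-3})\). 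Integrating by parts (Rellich–Pohozaev identity) converts the interior terms into the boundary term \(\int V\,\partial_sw\,\overline{\partial_sz}\) at \(s=R\), because \(X\cdot\nu=V\) there and \(w=z=0\) there. What remains is interior terms of size \(\mu\int\operatorname{div}X\,w\bar z\), terms containing \(\nabla_\theta V\) or \(\Delta V\) paired with \(\partial_sw\,\nabla_\theta\bar z\) and similar products, and the \(\alpha(\alpha-1)\sinh^{-2}\) contribution, which is exponentially small.

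\textbf{Step 3: sizes of the remainders.} On the collar, \(\zeta_{\ell,R}=\sqrt{2/R}\sin(\pi s/R)+O(R^{-3/2})\). Hence for \(R-2a_{\rm col}<s<R\) one has \(|w|\lesssim R^{-3/2}\) and \(|\partial_sw|\lesssim R^{-3/2}\) (pointwise times \(\|\varphi\|\), uniformly on the bounded window), and \(\nabla_\theta w\) obeys the same bound with an \(M\)-dependent constant. The collar has fixed width. Therefore \(\mu\int_{\rm collar}|w||z|=O(R^{-2}\cdot R^{-3})\), and every remaining bilinear term is a product of two collar quantities of size \(R^{-3/2}\). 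That would give only \(O(R^{-3})\), so products of derivatives need a finer treatment. The tangential terms \(\nabla_\theta w\) pair with \(\partial_s\bar z\) multiplied by \(\chi V\) or its derivatives. Near \(s=R\), \(w\sim(R-s)\partial_sw(R)\), so these terms are integrals of \((R-s)\) against smooth cutoffs. They do not vanish automatically, so I would show they cancel after a full integration by parts in \(s\), using \(w(R)=0\) and the ODE \(\partial_s^2w=-\mu w+O(e^{-2R})w\). Each such pairing then reduces to a boundary term that vanishes because of the Dirichlet condition, plus \(\mu\)-weighted terms of size \(O(R^{-5})\).

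\textbf{Main obstacle.} The main difficulty is carrying out this bookkeeping so that every term other than the boundary term is genuinely \(O(R^{-4})\) rather than \(O(R^{-3})\). The key observation is that the first-band functions are, to leading order, \(\sqrt{2/R}\sin(\pi s/R)\). In the collar this is essentially linear with slope \(-\sqrt{2\pi^2/R^3}\) and has second derivative \(O(R^{-7/2})\). So all contributions quadratic in the slope integrate exactly to the Hadamard boundary term. For general \(\tau\) the same computation applies with the base domain \(\Om_{R,\tau V}\), with constants depending on \(\|V\|_{C^2}\) and \(\|\chi'\|_\infty\log2<1/2\).
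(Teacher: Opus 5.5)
Your radial bookkeeping is right, and more explicit than the paper's sketch. As you note, collar size counting alone gives only $O(R^{-3})$. It is the near-constancy of $\partial_s w$ on the collar, together with the radial ODE, that produces the boundary term, in fact with error $O(R^{-5})$.

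\textbf{First gap: the tangential terms.} You count pairings such as $2\operatorname{Re}\int\chi(R-s)\,\nabla_\theta V\cdot\nabla_\theta\bar z\,\partial_s w$ as genuine $O(R^{-3})$ contributions. You then assert that they cancel after integrating by parts in $s$ using $w(R)=0$ and the radial ODE. That mechanism does not exist. Integrating by parts in $s$ only moves the derivative onto $\chi\nabla_\theta\bar z$. For single-sector $w=\zeta_{\ell,R}Y$, $z=\zeta_{\ell',R}Y'$, the $s$-factor of this term is
\[
\int\chi(R-s)\,\zeta_{\ell',R}\,\partial_s\zeta_{\ell,R}\,ds\approx-\partial_s\zeta_{\ell,R}(R)\,\partial_s\zeta_{\ell',R}(R)\int_0^\infty t\chi(t)\,dt\neq0,
\]
and no radial identity touches it.

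The missing observation, which is the one the paper uses, is that in the Liouville form every tangential derivative carries the weight $\sinh^{-2}r=O(e^{-2R})$ on the collar. This disposes at once of three things:
\begin{enumerate}[label=(\roman*)]
\item the $\nabla_\theta V$ terms from $F_{\tau,V}$;
\item the $\theta$-dependence of $J_\tau^{1/2}$;
\item the angular Laplacian, which you already drop implicitly in $\partial_s^2w=-\mu w+O(e^{-2R})w$.
\end{enumerate}
The derivative is then fibrewise one-dimensional:
\[
\left.\frac{d}{d\tau}\right|_{\tau=0}\mathfrak q_{R,\tau V}[w,z]
=\int_{\Sn}\int_0^R j\Bigl(-\partial_s w\,\overline{\partial_s z}+\tfrac12\bigl(w\,\overline{\partial_s^2 z}+\partial_s^2 w\,\bar z\bigr)\Bigr)\,ds\,d\theta+O_{M,V}(e^{-2R})\|\varphi\|\|\psi\|,
\]
with $j=\chi'(R-s)V(\theta)$. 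There are no boundary terms, since $\chi'$ vanishes for $R-s<a_{\rm col}$ and for $R-s>2a_{\rm col}$. Then $\int_0^\infty\chi'=-1$ and your slope estimates give the $\tau=0$ claim. (Minor: at $s=R$ one has $X\cdot\nu=-V$, not $V$; the final sign is unaffected.)

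\textbf{Second, more serious problem: the uniform-in-$\tau$ claim.} Your last sentence says that for general $\tau$ ``the same computation applies''. This fails, and the paper's one-line justification (``the collar geometry remains uniformly controlled'') fails for the same reason. The $\tau=0$ argument uses that $w,z$ solve the frozen radial equation on the base domain. For $\tau>0$ the pulled-back functions do not solve it on $\Omega_{R,\tau V}$. In the fixed layer $a_{\rm col}<R-s<2a_{\rm col}$ their slope is distorted by a relative amount of order $\tau V/a_{\rm col}$, and this costs energy of order $R^{-3}$.

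Concretely, take $V\equiv a\in(0,\log2]$ and $\varphi$ constant with $\|\varphi\|=1$. On each fibre the pulled-back function coincides with $w$ for $r\le R-2a_{\rm col}$ and vanishes at $r=R-a$. Cauchy--Schwarz on that interval gives
\[
\mathfrak q_{R,a}[w]-\mathfrak q_{R,0}[w]\ \ge\ \frac{2a_{\rm col}}{2a_{\rm col}-a}\int_{\Sn}a\,|\partial_s w(R,\theta)|^2\,d\theta+O(R^{-5}),
\qquad \int_{\Sn}|\partial_s w(R,\theta)|^2\,d\theta=\frac{2\pi^2}{R^3}(1+o(1)).
\]
More precisely, for $w$ linear in the collar, with $t=R-s$ and $J_\tau=1+\tau a\chi'(t)$, the fibre energy increment is
\[
|\partial_sw(R)|^2\Bigl[\tfrac12\int(J_\tau^{-2}-1)\,dt+\tfrac14\int t^2J_\tau^{-4}(\partial_tJ_\tau)^2\,dt\Bigr].
\]
Its $\tau^2$-coefficient $a^2\bigl(\tfrac32\int\chi'^2+\tfrac14\int t^2\chi''^2\bigr)$ is at least $\tfrac32a^2/a_{\rm col}$.

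Suppose the $\tau$-derivative equalled $\int a|\partial_sw(R)|^2+O(R^{-4})$ for every $\tau\in[0,1]$. Integrating from $0$ to $1$ would then contradict the bound above by an amount of order $a^2R^{-3}/a_{\rm col}$. So only the $\tau=0$ assertion can be proved. Along the path there is an extra error of order $\|V\|_\infty^2R^{-3}/a_{\rm col}$, which is $o(R^{-3})$ only if the collar width grows with $R$. This is the same compression energy as the $\ell_R^{-1}\rho^2$ term in Lemma~\ref{lem:collar-correction}.
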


\begin{proof}
The Dirichlet Hadamard formula gives the displayed boundary term for pairs satisfying the frozen equation near the boundary.  The vectors \(\mathcal J_R\varphi\), \(\mathcal J_R\psi\) are finite sums of the first constant-end radial branches, hence have exactly this property up to the scalar errors recorded in Lemma~\ref{lem:constant-band}.  Pulling back by \eqref{eq:collar-map}, the vector field is \(-V(\theta)\partial_r\) on the boundary and is supported in a collar of fixed width.  Integration by parts in the radial variable produces the boundary contribution.

All remaining first-variation terms are collar terms.  Terms involving tangential derivatives of the flattening are multiplied by \(\sinh^{-2}r=O(e^{-2R})\), while radial coefficient errors are paired with first-band functions whose boundary size and derivative are \(O(R^{-3/2})\) on a fixed collar.  Therefore each such term is \(O_{M,V}(R^{-4})\|\varphi\|\|\psi\|\).  The same estimates hold along the path \(0\le\tau\le1\), because the collar geometry remains uniformly controlled.
\end{proof}

\begin{remark}[Smooth constants and the Lipschitz passage]\label{rem:smooth-constants-order}
The constants in Lemmas~\ref{lem:hadamard-fixed}--\ref{lem:boundary-shift} may depend on a fixed
smooth norm of \(V\).  In the passage from smooth graphs to Lipschitz admissible deficits we therefore
keep the heat-regularization time \(t>0\) fixed while taking \(R\to\infty\), and only then let
\(t\downarrow0\).  The quantitative finite-window continuity estimates below are uniform in that ordered limit.
\end{remark}

\begin{lemma}[Boundary-shift form]\label{lem:boundary-shift}
Let \(V\in C^2(\Sn)\), \(0\le V\le\log2\), and fix \(M<\infty\).  On
\(\mathbf 1_{[0,M]}(T_n)L^2(\Sn)\),
\begin{equation}\label{eq:boundary-shift-form}
  \mathcal J_R^*(H_{R,V}-H_R^0)\mathcal J_R
  =\frac{2\pi^2}{R^3}V+O_{M,V}(R^{-4})
\end{equation}
in quadratic-form norm, where \(V\) denotes multiplication by \(V(\theta)\) on the sphere.
\end{lemma}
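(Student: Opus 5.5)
The plan is to integrate the Hadamard calibration of Lemma~\ref{lem:hadamard-fixed} along the straight path \(\tau\mapsto\tau V\), \(0\le\tau\le1\), and then use the normal-derivative asymptotics of Lemma~\ref{lem:constant-band}. Since the collar map \eqref{eq:collar-map} gives a unitary identification of the Liouville space on \(\Om_{R,\tau V}\) with \(L^2(C_R)\), the difference \(\mathcal J_R^*(H_{R,V}-H_R^0)\mathcal J_R\) is, as a quadratic form on \(\mathcal J_R\bigl(\mathbf 1_{[0,M]}(T_n)L^2\bigr)\), the difference \(\mathfrak q_{R,V}[w,z]-\mathfrak q_{R,0}[w,z]\) with \(w=\mathcal J_R\varphi\), \(z=\mathcal J_R\psi\). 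At \(\tau=0\) the map is the identity and \(\mathfrak q_{R,0}\) is the form of \(H_R^0\). These test functions lie in the form domain for every \(\tau\), because \(\mathcal J_R\varphi\) vanishes at \(s=R\) and the pullback fixes the cylinder \(C_R\).

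Key steps, in order. First, by the fundamental theorem of calculus,
\[
\mathfrak q_{R,V}[w,z]-\mathfrak q_{R,0}[w,z]=\int_0^1\frac{d}{d\tau}\mathfrak q_{R,\tau V}[w,z]\,d\tau .
\]
Second, apply the uniform-in-\(\tau\) version of Lemma~\ref{lem:hadamard-fixed}. It gives, for each \(\tau\), the boundary integral \(\int V\,\partial_sw(R)\overline{\partial_sz(R)}\) plus \(O_{M,V}(R^{-4})\|\varphi\|\|\psi\|\). The boundary term is evaluated on the fixed cylinder for the first-band vectors themselves, so it is \(\tau\)-independent up to the recorded error. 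Third, insert \eqref{eq:first-band-normal-derivative}:
\[
\partial_sw(R,\cdot)=-\sqrt{2\pi^2/R^3}\,\varphi+O_M(R^{-2})\|\varphi\|,
\]
and likewise for \(z\). The product then equals
\[
\frac{2\pi^2}{R^3}\int_{\Sn}V\varphi\bar\psi+O_M\bigl(R^{-3/2}R^{-2}\bigr)\|V\|_\infty\|\varphi\|\|\psi\|.
\]
The cross term is \(O(R^{-7/2})\). The remaining terms are also \(O(R^{-4})\) or smaller, and all are \(o(R^{-3})\). Strictly speaking the cross term is \(R^{-7/2}\), not \(R^{-4}\). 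To reach the stated \(O(R^{-4})\) I would sharpen \eqref{eq:first-band-normal-derivative}: Proposition~\ref{prop:sector-asymp} gives \(k=\pi/(R+b_\ell)+O(R^{-4})\), so the endpoint derivative is \(-\sqrt{2/R}\,k\,(1+O(R^{-1}))\). Its deviation from \(-\sqrt{2\pi^2/R^3}\) is \(O(R^{-5/2})\), which makes the cross term \(O(R^{-4})\). Fourth, the window \(\mathbf 1_{[0,M]}(T_n)\) contains finitely many sectors, so every sectorwise constant is uniform. Taking the supremum over unit \(\varphi,\psi\) then gives \eqref{eq:boundary-shift-form} in quadratic-form norm, with \(V\) acting as multiplication (compressed to the window, which is what the form identity means).

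The main obstacle is the second step: justifying that the \(\tau\)-derivative is uniformly given by the \(\tau=0\) boundary term. The boundary integral at parameter \(\tau\) really involves normal derivatives on the moved boundary \(R-\tau V\) after pullback. I would control this with the collar structure. Inside the outer collar the potential is \(O(e^{-2R})\) and the first-band functions are explicit sines with amplitude \(\sqrt{2/R}\) and frequency \(\sim\pi/R\). Hence the pulled-back derivative \(\partial_s(w\circ F_{\tau V})\) at \(s=R\) differs from \(\partial_sw(R)\) by relative \(O(R^{-1})\), because \(\partial_sF=1+O(\|\chi'\|V)\) enters only through second-order terms that are \(O(R^{-5/2})\) in amplitude. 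Tangential terms carry \(\sinh^{-2}\) and are exponentially small. Smoothness of \(V\) (\(C^2\)) is used there, which is why the constants depend on \(V\), consistent with Remark~\ref{rem:smooth-constants-order}.
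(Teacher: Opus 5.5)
\textbf{Your outline is the paper's own proof.} You apply \cref{lem:hadamard-fixed} to $w=\mathcal J_R\varphi$, $z=\mathcal J_R\psi$, insert \eqref{eq:first-band-normal-derivative}, and integrate from $\tau=0$ to $\tau=1$. Your sharpening of the endpoint derivative to $-\sqrt{2\pi^2/R^3}+O_M(R^{-5/2})$ is correct. It follows from the normalization $\sqrt{2/R}\,(1+O(R^{-1}))$ and $k_{\ell,1}(R)=\pi/R+O(R^{-2})$. It also fixes power counting the paper passes over, since the stated $O_M(R^{-2})$ alone leaves an $R^{-7/2}$ cross term.

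\textbf{The genuine gap is the step you flag as the main obstacle, and the paper shares it.} The paper's proof only asserts an ``$O_{M,V}(R^{-4})$ second-variation bound.'' For fixed test functions and the fixed-width collar map \eqref{eq:collar-map}, $\frac{d}{d\tau}\mathfrak q_{R,\tau V}[w,z]$ is \emph{not} equal to the $\tau=0$ boundary term up to $O(R^{-4})$ uniformly in $\tau$. The Hadamard identity turns the first variation into a boundary integral only when the pushed-forward function solves the frozen equation on the support of the deformation field. That holds at $\tau=0$. For $\tau>0$, the pushed-forward function is a pure translate where $\chi=1$. Its normal derivative on the moved boundary is therefore exactly $\partial_sw(R)$, as you observe. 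In the transition layer $a_{\rm col}<R-s<2a_{\rm col}$, however, it is a Jacobian-weighted compression of $w$ and no longer solves the equation. Your heuristic checks $s=R$, where nothing changes; the error sits in that layer.

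\textbf{The error is of order $R^{-3}$, not $R^{-4}$.} Fix $\theta$ and set $c(\theta)=-\partial_sw(R,\theta)$.
\begin{itemize}
\item In the collar, $w(s,\theta)=c(\theta)(R-s)\bigl(1+O(R^{-2})\bigr)$, and by your sharpened estimate $c(\theta)^2=\frac{2\pi^2}{R^3}|\varphi(\theta)|^2+O_M(R^{-4})\|\varphi\|^2$.
\item The angular and potential terms in the collar carry $\sinh^{-2}r=O(e^{-2R})$. The pullback is unitary, and nothing changes for $R-s>2a_{\rm col}$.
\item The pushed-forward function has end values $2a_{\rm col}c\,(1+O(R^{-2}))$ and $0$ on an $r$-interval of length $2a_{\rm col}-V(\theta)$.
\item By Cauchy--Schwarz its radial energy there is at least $4a_{\rm col}^2c^2/(2a_{\rm col}-V)$, against $2a_{\rm col}c^2$ at $\tau=0$.
\end{itemize}
Hence
\[
  \mathfrak q_{R,V}[w]-\mathfrak q_{R,0}[w]-\frac{2\pi^2}{R^3}\int_{\Sn}V|\varphi|^2
  \ \ge\ \frac{\pi^2}{a_{\rm col}R^3}\int_{\Sn}V^2|\varphi|^2-O_{M}(R^{-4})\|\varphi\|^2 .
\]
For fixed $a_{\rm col}$ this is of order $R^{-3}$. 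In fact the second variation is $\frac{2\pi^2}{R^3}\int V^2|\varphi|^2\cdot\int_0^{2a_{\rm col}}\bigl(3\chi'^2+\tfrac12t^2\chi''^2\bigr)\,dt$.

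\textbf{Consequence and a possible repair.} Along this route only the first variation at $\tau=0$ is justified. With the collar identification, \eqref{eq:boundary-shift-form} holds only up to an extra $O(R^{-3}/a_{\rm col})$ error. To reach $O(R^{-4})$ you need an identification whose second variation is small. One choice is the global radial dilation $r=s\,(R-V(\theta))/R$. Under it the radial kinetic form is multiplied by $(1-V/R)^{-2}=1+2V/R+O(R^{-2})$. This gives $\frac{2\pi^2}{R^3}V+O(R^{-4})$ directly on the first-band window. The angular and potential contributions are themselves $O(R^{-3})$ for first-band vectors and change only at relative order $R^{-1}$.
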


\begin{proof}
Apply Lemma~\ref{lem:hadamard-fixed} to
\(w=\mathcal J_R\varphi\), \(z=\mathcal J_R\psi\).  By
Lemma~\ref{lem:constant-band},
\[
  \partial_s\mathcal J_R\varphi(R,\theta)
  \overline{\partial_s\mathcal J_R\psi(R,\theta)}
  =\frac{2\pi^2}{R^3}\varphi(\theta)\overline{\psi(\theta)}+O_M(R^{-4})
\]
in the integrated sense.  Integrating the first variation from \(\tau=0\) to \(\tau=1\), and using
the \(O_{M,V}(R^{-4})\) second-variation bound, gives \eqref{eq:boundary-shift-form}.
\end{proof}

 \subsection{The boundary Grushin reduction}

The graph \(R-V(\theta)\) moves the Dirichlet form domain.  The boundary pencil fixes the interior
equation and places the spectral parameter in the moving boundary condition.  We solve the regular
equation from the pole once and impose the boundary condition through
\[
  B_R^{\mathrm{raw}}(k,V)f=\gamma_{R,V}\mathcal S(k)f.
\]
In this formulation the radial channels, including the higher radial roots, have already been summed
into the regular continuation \(\mathcal S(k)f\).  The choice of trace rather than endpoint normal
coordinate is already visible on a flat interval.  If
\[
  z_m(r)=\sin(m\pi r/R),\qquad 0<r<R,
\]
then
\[
  z_m'(R)=(-1)^m\frac{m\pi}{R}.
\]
For \(m\ge2\), the energy gap between the first and the \(m\)-th radial roots is
\[
  \frac{(m^2-1)\pi^2}{R^2},
\]
while the squared endpoint derivative is of size \(m^2/R^2\).  Hence
\[
  \frac{|z_m'(R)|^2}{(m^2-1)\pi^2/R^2}
  \simeq 1,
\]
uniformly for high radial modes.  Derivative boundary coordinates therefore receive order-one
contributions from all higher radial modes after the natural resolvent weighting.
The boundary trace records the moving Dirichlet condition itself; after normalization it is the
endpoint phase \(k_R(\zeta)(R-V+b_\ell)\) that becomes an operator equation on the sphere.

\subsubsection{Boundary Fredholm framework}

We next isolate the functional-analytic framework behind the boundary equation.  For each fixed pair \((R,V)\), the coefficient space below is fixed as \(k\) varies in the first-band window; this is the sense in which the Fredholm family acts between fixed spaces.  Uniform comparisons in \((R,V)\) are used only after finite angular projections, where Sobolev norms reduce to ordinary coefficient norms in the ordered cutoff scheme of Remark~\ref{rem:ordered-cutoffs}.

For the next proposition, let \(s_\ell(r,k)\) denote the regular radial branch in the \(\ell\)-sector, with the Friedrichs convention at \(r=0\), and set
\[
  \mathcal S(k)f(r,\theta)=\sum_{\ell,m}f_{\ell m}s_\ell(r,k)Y_{\ell m}(\theta)
\]
on finite angular sums.  The precise low-energy phase normalization used for estimates is fixed below in \eqref{eq:regular-phase-normalization}; the Fredholm statement itself is independent of the scalar exterior normalization of the regular branches.

Let
\[
  \Lambda=(I-\Delta_{\Sn})^{1/2},\qquad H^s(\Sn)=\mathcal D(\Lambda^s).
\]
For a smooth graph deficit \(V\), write \(\gamma_{R,V}u(\theta)=u(R-V(\theta),\theta)\) for the
outer trace, after identifying \(\partial\Omega_{R,V}\) with \(\Sn\).  Fix \(s>3/2\) and fix once and for all a reference point
\[
  k_{R,*}:=\pi/R
\]
in the first-band window.  For this chosen pair \((R,V)\), define a \(k\)-independent coefficient space \(\mathcal X^s_{R,V}\) to be the completion of finite angular sums for the norm
\begin{equation}\label{eq:fixed-coefficient-norm}
  \|f\|_{\mathcal X^s_{R,V}}
  :=
  \|\mathcal S(k_{R,*})f\|_{H^s(\{r<R-\log2-1\})}
  +\|\gamma_{R,V}\mathcal S(k_{R,*})f\|_{H^{s-1/2}(\Sn)} .
\end{equation}
The next lemma and proposition form the boundary Fredholm framework used below.  The lemma fixes coefficient coordinates for the regular homogeneous solutions; the proposition then identifies the corresponding boundary trace pencil with the Dirichlet spectral problem.

\begin{lemma}[Regular solution parametrization on a k-independent coefficient space]\label{lem:fixed-coefficient-parametrization}
Let \(s>3/2\).  There is an integer \(m_s=m_s(s)\) with the following property.  If \(V\in C^\infty(\Sn)\), \(0\le V\le\log2\), and \(\|V\|_{C^{m_s}}\le S\), then for every fixed \(C<\infty\) and all large \(R\), the norms obtained from \eqref{eq:fixed-coefficient-norm} by replacing \(k_{R,*}\) with any \(k\) satisfying \(|k-\pi/R|\le CR^{-2}\) are mutually equivalent with constants depending on \(s,C,S\), but not on \(R\), \(k\), or the particular \(V\).  If
\[
  \mathcal N^s_{\rm reg}(k)
  :=\{u\in H^s_{\rm loc}(\Omega_{R,V}):(H-k^2)u=0,\ u\text{ satisfies the regular pole condition}\},
\]
then
\[
  \mathcal S(k):\mathcal X^s_{R,V}\longrightarrow \mathcal N^s_{\rm reg}(k)
\]
is a topological isomorphism for \(|k-\pi/R|\le CR^{-2}\).  Moreover, if \(r_0>1\) is fixed and \(\mathcal C_{r_0}\) denotes the spherical-harmonic coefficient extraction map on the interior sphere \(\{r=r_0\}\), then
\[
  \mathcal C_{r_0}\mathcal S(k)=I,
  \qquad
  \mathcal S(k)\mathcal C_{r_0}=I
  \quad\text{on }\mathcal N^s_{\rm reg}(k).
\]
The coordinate map \(\mathcal C_{r_0}\) is independent of \(k\), and the family \(\mathcal S(k)\), viewed in these fixed coefficient coordinates, is holomorphic in \(k\).  The trace
\(\gamma_{R,V}\mathcal S(k):\mathcal X^s_{R,V}\to H^{s-1/2}(\Sn)\) is bounded and holomorphic.  In
particular, there is a constant \(C_{s,C,S}\), independent of \(R\), \(k\), and the particular \(V\) with \(\|V\|_{C^{m_s}}\le S\), such that
\begin{equation}\label{eq:coefficient-uniform-isomorphism}
  C_{s,C,S}^{-1}\|f\|_{\mathcal X^s_{R,V}}
  \le
  \|\mathcal S(k)f\|_{H^s(\{r<R-\log2-1\})}
  +\|\gamma_{R,V}\mathcal S(k)f\|_{H^{s-1/2}(\Sn)}
  \le C_{s,C,S}\|f\|_{\mathcal X^s_{R,V}} .
\end{equation}
\end{lemma}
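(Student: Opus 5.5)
The plan is to prove the lemma sector by sector in spherical harmonics, and then reassemble the pieces using the fact that $V$ enters only through the outer trace, which lives in a fixed collar near $r=R$. First I fix the scalar normalization of the regular branches $s_\ell(r,k)$ so that $s_\ell(r_0,k)=1$ at the fixed interior radius $r_0>1$. This is possible for $k$ in the window $|k-\pi/R|\le CR^{-2}$: for $r_0$ fixed and $k\to0$, the regular branch converges locally, after rescaling, to the zero-energy solution $y_\beta$ of Lemma~\ref{lem:scattering}, which is positive on $(0,\infty)$ because it has no zero before its linear growth. Hence $s_\ell(r_0,k)\neq0$ uniformly in $\ell$. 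For large $\ell$ I use the WKB/Bessel-type growth $r^{\beta}$ near the pole and monotonicity, since the potential $\beta(\beta-1)/\sinh^2 r\gg k^2$ on $r\le r_0+1$. With this normalization the identities $\mathcal C_{r_0}\mathcal S(k)=I$ and $\mathcal S(k)\mathcal C_{r_0}=I$ on $\mathcal N^s_{\rm reg}(k)$ are immediate. The first holds by construction. For the second, a regular solution of $(H-k^2)u=0$ on $\Omega_{R,V}$ restricts to the ball $\{r<R-\log 2\}$, and there separation of variables together with uniqueness of the regular branch (Lemma~\ref{lem:friedrichs-endpoint}) gives $u=\sum u_{\ell m}(r_0)s_\ell(r,k)Y_{\ell m}$. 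Unique continuation then extends this identity to all of $\Omega_{R,V}$. Holomorphy in $k$ follows because each $s_\ell(\cdot,k)$ is entire in $k^2$ by Volterra iteration from the pole, and the normalization at $r_0$ is $k$-independent.

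Next I handle norm equivalence under changing $k$ within the window. On the inner region $\{r<R-\log2-1\}$ I compare $s_\ell(\cdot,k)$ with $s_\ell(\cdot,k_{R,*})$ sectorwise. Both solve the same ODE with energies differing by $O(R^{-3})$, so Duhamel/Gronwall on the interval $[r_0,R]$ gives a relative difference of $O(R\cdot R^{-3}\cdot R)=O(R^{-1})$ in $C^1$. This holds uniformly in $\ell$, because the large-$\ell$ solutions grow, and the Gronwall factor is controlled by the ratio of the solution to its Wronskian partner. Derivatives up to order $s$ are then recovered from the equation. For the trace term, $\gamma_{R,V}$ composes evaluation at $r=R-V(\theta)$ with the sum. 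Tangential derivatives fall on $V$, which is where $\|V\|_{C^{m_s}}\le S$ and the choice of $m_s\ge\lceil s\rceil+1$ enter. The same sectorwise closeness, now combined with the Moser-type product estimate in $H^{s-1/2}$, gives the equivalence.

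The isomorphism property and \eqref{eq:coefficient-uniform-isomorphism} then follow. By definition $\mathcal X^s_{R,V}$ is normed by exactly the quantities on the right, so the middle expression is just the $k$-version of the norm, and the previous step shows it is equivalent to the reference norm with constants depending only on $(s,C,S)$. To see that $\mathcal S(k)$ is onto $\mathcal N^s_{\rm reg}(k)$ with continuous inverse, I use interior elliptic regularity: the $H^s$ norm of $u$ on the whole graph domain is controlled by its $H^s$ norm on the inner ball plus its trace in $H^{s-1/2}$, via the Dirichlet problem in the collar, which has bounded coefficients after the flattening \eqref{eq:collar-map}. Boundedness and holomorphy of $\gamma_{R,V}\mathcal S(k)$ are inherited from those of $\mathcal S(k)$ and from continuity of the trace.

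I expect the main obstacle to be uniformity in $\ell$, in particular controlling the mid-range modes where $\ell\sim kR$. There the potential barrier and the energy are comparable near the outer collar, so neither the growing WKB regime nor the oscillatory regime is clean. My first attempt would be a Riccati or Prüfer-angle comparison for $s_\ell'/s_\ell$. The point is that in the window the perturbation $k^2-k_{R,*}^2=O(R^{-3})$ integrated over a length $R$ with weights $R$ changes the Prüfer phase only by $O(R^{-1})$. This bound is uniform in $\ell$ because $\sinh^{-2}r$ is exponentially small on the collar, so on the collar all sectors behave like free waves with bounded amplitude relative to their normalization.
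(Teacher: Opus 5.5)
You have a genuine gap in the trace half of \eqref{eq:coefficient-uniform-isomorphism}. Your normalization at \(r_0\) and the identities \(\mathcal C_{r_0}\mathcal S(k)=I\), \(\mathcal S(k)\mathcal C_{r_0}=I\) (via separation of variables plus continuation) match the paper. A sectorwise comparison of the interior norms is also plausible, since that norm is essentially diagonal in \(\ell\). The problem is that \(\gamma_{R,V}\) is not diagonal in \(\ell\). For nonconstant \(V\), the graph \(\{r=R-V(\theta)\}\) leaves the largest pole-centred ball \(\{r<R-\max V\}\) inside \(\Omega_{R,V}\) wherever \(V<\max V\). Consequently, for a general element of \(\mathcal X^s_{R,V}\), the separated series \(\sum f_{\ell m}s_\ell(R-V(\theta),k)Y_{\ell m}(\theta)\) need not converge on the graph. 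To see this, take a solution singular at a point outside \(\overline{\Omega_{R,V}}\) whose distance from the pole is less than \(R-\min V\). Its coefficient sequence lies in \(\mathcal X^s_{R,V}\) by Runge approximation with global regular solutions, and its trace exists only as the trace of the continued solution. So per-sector relative bounds \(s_\ell(\cdot,k)=(1+\epsilon_\ell)\,s_\ell(\cdot,k_{R,*})\), even if uniform in \(\ell\), cannot be summed to bound \(\|\gamma_{R,V}(\mathcal S(k)-\mathcal S(k_{R,*}))f\|_{H^{s-1/2}}\) by \(\|f\|_{\mathcal X^s_{R,V}}\). A Moser product estimate does not rescue this step. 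The difference is not a fixed \(C^{m_s}\) function of \(\theta\) times \(\gamma_{R,V}\mathcal S(k_{R,*})f\), because the factor \(\epsilon_\ell(R-V(\theta))\) depends jointly on \(\ell\) and \(\theta\). What is missing is an estimate for the continued solution on the outer collar that exploits cancellation between sectors. The paper is organized to avoid a sectorwise treatment of the trace. It applies the threshold ODE comparison only on a fixed finite angular window, where sums are finite and all norms are equivalent. It controls the high angular part by the graph norm together with elliptic trace estimates on the flattened collar.

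Your discussion of uniformity in \(\ell\) is also mis-scaled, and this is tied to the gap. In \(\Hn\) the centrifugal term on the collar is \(\beta(\beta-1)\sinh^{-2}r\approx 4\ell^2e^{-2R}\). Barrier and energy are therefore comparable near \(r=R\) when \(\ell\approx k\sinh R\sim e^R/R\), not when \(\ell\sim kR\); the latter is \(O(1)\), since \(kR\approx\pi\). Your assertion that all sectors behave like free waves on the collar is false for \(\ell\gtrsim e^R/R\). Those modes are classically forbidden there and grow by a factor of order \(\exp(2\ell e^{-R})\) across \(R-\log2\le r\le R\). The per-sector comparison itself can probably be repaired. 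With \(\delta E=k^2-k_{R,*}^2\), linearized Riccati damping gives relative error \(O(\delta E\,e^{r}/\ell)\) in the forbidden region, and the Pr\"ufer phase moves by \(O(R^{-1})\) in the allowed region. But these growing modes are exactly what make \(f\mapsto\gamma_{R,V}\mathcal S(k)f\) behave like a Poisson-type operator with \(\theta\)-dependent time of size \(e^{-R}\), rather than a tame multiplier. When you supply the collar estimate, note that after flattening the angular coefficient is \(\sinh^{-2}r=O(e^{-2R})\). So the collar problem is not uniformly elliptic in the coordinates in which \(H^{s-1/2}(\Sn)\) is measured. This is harmless for the fixed-\(R\) continuity of \(\mathcal S(k)^{-1}\) in your surjectivity step, but it cannot by itself give \(R\)-independent constants. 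Already for \(-\partial_t^2+\epsilon^2\Lambda^2\) on a unit interval, the \(H^{s-1/2}\to H^s\) Dirichlet bound degenerates like \(\epsilon^{-1/2}\). The same uniform bound is also what you need to pass from sectorwise holomorphy of \(s_\ell(\cdot,k)\) to holomorphy of \(\mathcal S(k)\) as an operator family on \(\mathcal X^s_{R,V}\).
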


\begin{proof}
The uniform assertion comes from comparing the regular branches on the whole growing interval.  Choose
once and for all a radius \(r_0>1\).  On \(0<r\le r_0\) the separated equation is a regular-singular
ODE with the Friedrichs branch fixed at the pole.  In the critical case \(n=2,\ell=0\), this is
exactly the branch specified in \cref{lem:friedrichs-endpoint}; the logarithmic solution is not in the
Friedrichs form domain.  Thus the Cauchy data at \(r_0\) depend holomorphically on \(k^2\), with
constants independent of \(R\) in the first-band window.

On the growing region
\[
  r_0\le r\le R-\log2-1
\]
we compare \(\mathcal S(k)\) with \(\mathcal S(k_{R,*})\) by the threshold transfer estimate of
\cref{lem:low-energy-phase}.  In a fixed angular window, the exterior sine representation gives
\[
  \sigma_\beta(r,k)=\sin(kr+\delta_\beta(k))+a_\beta(r,k),
  \qquad
  \delta_\beta(k)=k b(\beta)+O(k^3),
\]
with \(a_\beta=O(k^3+k(1+r)e^{-2r})\), uniformly for \(r\le R+O(1)\).  If
\(|k-k_{R,*}|\le CR^{-2}\), then the accumulated phase variation on an interval of length \(O(R)\) is
\(O(R^{-1})\), and the amplitude error is \(O(R^{-1})\).  Hence the \(L^2(0,R-\log2-1)\) and
\(H^s\) norms of the regular continuations for \(k\) and \(k_{R,*}\) are mutually bounded by a
constant independent of \(R\), after summing over the finite angular window.  On the full coefficient
space the graph norm in \eqref{eq:fixed-coefficient-norm}, together with elliptic trace estimates on
the flattened graph, gives the two-sided bound
\[
  C^{-1}\|f\|_{\mathcal X^s_{R,V}}
  \le
  \|\mathcal S(k)f\|_{H^s(\{r<R-\log2-1\})}
  +\|\gamma_{R,V}\mathcal S(k)f\|_{H^{s-1/2}(\Sn)}
  \le C\|f\|_{\mathcal X^s_{R,V}},
\]
for \(|k-\pi/R|\le CR^{-2}\).  Here \(C\) may depend on \(s,C\) and the common bound \(S\), but not on \(R\), \(k\), or the particular \(V\).  The threshold phase comparison is used only after a fixed finite angular projection has been chosen; high angular components are controlled by this Sobolev graph norm.

Let \(\mathcal C_{r_0}u\) be the coefficient sequence of the spherical-harmonic expansion of \(u(r_0,\cdot)\), with the radial branches normalized by their value at \(r_0\).  The map \(\mathcal S(k)\) is injective because this expansion is unique on the interior sphere, and it is onto \(\mathcal N^s_{\rm reg}(k)\) by separation of variables on the common interior ball: the regular pole condition selects exactly the branches \(s_\ell(r,k)\).  Thus \(\mathcal C_{r_0}\mathcal S(k)=I\) and \(\mathcal S(k)\mathcal C_{r_0}=I\) on the regular nullspace.  Elliptic continuation carries the expansion from the interior sphere to the graph domain.

Finally take the trace on the moving graph.  After the collar flattening \eqref{eq:collar-map}, the
outer collar has fixed width and uniformly elliptic coefficients depending on the Lipschitz graph
and on \(k^2=O(R^{-2})\).  The trace estimate on this fixed collar, together with the growing-interval
comparison just proved, gives
\[
  \|\mathcal S(k)f\|_{H^s(\{r<R-\log2-1\})}
  +\|\gamma_{R,V}\mathcal S(k)f\|_{H^{s-1/2}(\Sn)}
  \le C\|f\|_{\mathcal X^s_{R,V}},
\]
and the same estimate with \(k\) and \(k_{R,*}\) interchanged gives the lower bound for the
coefficient coordinates.  Holomorphic dependence is a statement in the fixed coordinates \(\mathcal C_{r_0}\): the regular-singular Cauchy data and the transfer equation are holomorphic in \(k^2\), and the same estimates apply after differentiation.
\end{proof}

\begin{definition}[Characteristic multiplicity]\label{def:characteristic-multiplicity}
Let \(F(k):X\to Y\) be a holomorphic Fredholm family of index zero that is invertible on a punctured neighbourhood of \(k_0\).  Its characteristic multiplicity at \(k_0\) is the winding number of \(\det F_N(k)\) around zero after any finite-dimensional Grushin reduction \(F_N(k)\) which is invertible on the boundary of a small circle around \(k_0\).  This number is independent of the reduction; equivalently it is the rank of the Riesz projection of the associated analytic Fredholm problem.
\end{definition}

\begin{proposition}[Boundary Fredholm framework]\label{prop:sobolev-fredholm-framework}
Let \(V\in C^\infty(\Sn)\), \(0\le V\le\log2\), and fix \(s>3/2\).  For \(R\) large and
\(|k-\pi/R|\le C R^{-2}\), the coefficient boundary operator
\[
  B_R^{\mathrm{raw}}(k,V):\mathcal X^s_{R,V}\longrightarrow H^{s-1/2}(\Sn),
  \qquad
  B_R^{\mathrm{raw}}(k,V)f=\gamma_{R,V}\mathcal S(k)f,
\]
is a holomorphic Fredholm family of index zero after the coefficient space \(\mathcal X^s_{R,V}\) has been fixed for the chosen pair \((R,V)\).  Together with Lemma~\ref{lem:fixed-coefficient-parametrization}, this gives a fixed-space Fredholm realization of the moving Dirichlet boundary condition.  Equivalently, on finite angular sums,
\[
  B_R^{\mathrm{raw}}(k,V)f=\mathcal S(k)f(R-V(\theta),\theta).
\]
Moreover
\[
  k^2\in\spec(H_{R,V}^{D})
  \quad\Longleftrightarrow\quad
  \ker B_R^{\mathrm{raw}}(k,V)\ne\{0\},
\]
and the characteristic multiplicity of the zero of \(B_R^{\mathrm{raw}}(k,V)\) equals the Dirichlet spectral multiplicity of \(k^2\).
\end{proposition}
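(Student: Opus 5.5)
We must prove Proposition~\ref{prop:sobolev-fredholm-framework}: $B^{\rm raw}_R(k,V)$ is a holomorphic Fredholm family of index zero, its kernel detects Dirichlet eigenvalues, and characteristic multiplicity equals spectral multiplicity. The plan is to reduce everything to the standard Dirichlet boundary problem on the fixed smooth domain $\Omega_{R,V}$ through the isomorphism $\mathcal S(k)\colon\mathcal X^s_{R,V}\to\mathcal N^s_{\rm reg}(k)$ of Lemma~\ref{lem:fixed-coefficient-parametrization}. The boundary operator then becomes the trace map $\gamma_{R,V}$ restricted to the regular null space of $H-k^2$.

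\textbf{Step 1: Fredholm property of index zero.} Fix a reference value $k_0^2$ below $\spec(H^D_{R,V})$, for instance $k_0^2=-1$. The operator $H^D_{R,V}$ is positive after subtracting $\alpha^2$ by McKean's bound, and the regular pole condition is the Friedrichs one by Lemma~\ref{lem:friedrichs-endpoint}.
\begin{itemize}
\item For this $k_0$, the Dirichlet problem $(H-k_0^2)u=0$, $\gamma_{R,V}u=g$ is uniquely solvable for every $g\in H^{s-1/2}(\Sn)$. Take any $H^s$ extension $Eg$, correct it by the resolvent, and apply elliptic regularity on the smooth graph domain. Hence $\gamma_{R,V}$ is an isomorphism $\mathcal N^s_{\rm reg}(k_0)\to H^{s-1/2}$.
\item For general $k$, write the Poisson-type map $P(k)g=Eg-(H^D-k^2)^{-1}(H-k^2)Eg$ wherever the resolvent exists. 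The composite $\gamma\mathcal S(k)$ then differs from $\gamma\mathcal S(k_0)$, in the fixed coordinates $\mathcal C_{r_0}$, by a compact holomorphic perturbation. The reason is that $\mathcal S(k)-\mathcal S(k_0)$ is governed by $(k^2-k_0^2)$ times a solution operator gaining two derivatives on a bounded domain, and Rellich compactness applies.
\item Index is constant under compact perturbation, so the index is zero. Holomorphy was established in Lemma~\ref{lem:fixed-coefficient-parametrization}.
\end{itemize}

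\textbf{Step 2: kernel characterization.} Suppose $B^{\rm raw}_R(k,V)f=0$ with $f\ne0$. Then $u=\mathcal S(k)f$ lies in $H^s$, solves $(H-k^2)u=0$, is regular at the pole, and has zero trace. So $u$ lies in the form domain of $H^D_{R,V}$ and is an eigenfunction. Conversely, every Dirichlet eigenfunction is smooth up to the boundary and satisfies the regular pole condition. Hence it lies in $\mathcal N^s_{\rm reg}(k)$, and its coefficients $\mathcal C_{r_0}u$ lie in the kernel. Injectivity of $\mathcal S(k)$ makes the map $\ker B\to\ker(H^D-k^2)$ bijective, so the dimensions agree.

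\textbf{Step 3: multiplicity.} This is the main obstacle. Kernel dimension equals geometric multiplicity, but the characteristic (Gohberg--Sigal) multiplicity could a priori exceed it if Jordan chains of the pencil occur. The plan is a Grushin/Schur reduction to show the zero is semisimple.
\begin{itemize}
\item Near an eigenvalue $k_*^2$, let $\Pi$ be the Riesz projection of $H^D_{R,V}$. Build the Poisson operator $P(k)=(I-\Pi)$-corrected solution of the Dirichlet problem.
\item Use the factorization $\gamma\mathcal S(k)=\bigl[\text{invertible holomorphic}\bigr]\circ\bigl(I-\text{rank-}\Pi\text{ part with denominator }(k^2-k_*^2)\bigr)$. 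Equivalently, use the Green identity $\langle \partial_\nu u_k, g\rangle$: for $f$ in the kernel and $u=\mathcal S(k)f$, one has $\tfrac{d}{dk}\,\gamma\mathcal S(k)f$ paired against the normal derivatives of eigenfunctions $=2k_*\|u\|^2\ne0$.
\item The last statement is the Hadamard-type nondegeneracy condition, and it excludes Jordan chains of length $\ge2$. The map $k\mapsto k^2$ is locally biholomorphic near $k_*>0$, so orders are preserved.
\end{itemize}
Therefore the winding number in Definition~\ref{def:characteristic-multiplicity} equals $\dim\ker(H^D-k_*^2)$.

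The delicate point in Step 3 is the pairing computation. It needs $\partial_\nu$ of eigenfunctions to separate the kernel, which holds by unique continuation: an eigenfunction with zero Cauchy data vanishes. It also needs the fixed-coordinate identification to hold uniformly as $k$ moves.
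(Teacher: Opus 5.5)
Steps~2 and~3 are sound, but the justification in Step~1 fails.

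\textbf{The gap in Step~1.} Work in the fixed coordinates \(\mathcal C_{r_0}\). The difference \(w=\mathcal S(k)f-\mathcal S(k_0)f\) solves \((H-k^2)w=(k^2-k_0^2)\mathcal S(k_0)f\) and vanishes on \(\{r=r_0\}\). Between that sphere and the graph \(r=R-V(\theta)\), \(w\) is determined by Cauchy data on \(\{r=r_0\}\), that is, by outward continuation sector by sector. It is not given by a well-posed boundary value problem. So no elliptic estimate gains two derivatives up to the graph, and Rellich says nothing about \(\gamma_{R,V}w\).

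The two-derivative gain you have in mind belongs to the Poisson parametrization, where \(P(k)-P(k_0)=(k^2-k_0^2)(H^D_{R,V}-k^2)^{-1}P(k_0)\). It does not transfer to \(\mathcal S(k)\). Read on the interior sphere, the \(\mathcal X^s_{R,V}\)-norm carries weights of size \(|s_\ell(R,k_*)/s_\ell(r_0,k_*)|\), and these grow exponentially in \(\ell\). Smoothing interior data by two derivatives therefore buys no compactness.

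Whether \(\gamma_{R,V}(\mathcal S(k)-\mathcal S(k_0))\) is compact is really a question about how close, as \(\ell\to\infty\), the ratios \(s_\ell(\cdot,k)/s_\ell(r_0,k)\) and \(s_\ell(\cdot,k_0)/s_\ell(r_0,k_0)\) are near the graph. You have not addressed this. The same analysis would be needed even to define \(\mathcal S(i)\) on \(\mathcal X^s_{R,V}\), since Lemma~\ref{lem:fixed-coefficient-parametrization} only covers \(|k-\pi/R|\le CR^{-2}\).

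\textbf{How to repair it.} You do not need a compact perturbation; the tools from your Step~3 suffice. Fix \(k\) in the window.
\begin{enumerate}[label=(\roman*)]
\item Step~2 gives \(\dim\ker\Braw(k,V)=\dim\ker(H^D_{R,V}-k^2)\).
\item Green's identity shows that \(\operatorname{Ran}\Braw(k,V)\) lies in the annihilator of \(\{\partial_\nu\varphi:\varphi\in\ker(H^D_{R,V}-k^2)\}\).
\item Conversely, let \(g\) lie in that annihilator. The Fredholm alternative solves \((H^D_{R,V}-k^2)w=-(H-k^2)Eg\). Then \(u=Eg+w\in\mathcal N^s_{\rm reg}(k)\), and \(f=\mathcal C_{r_0}u\) satisfies \(\Braw(k,V)f=g\).
\item By unique continuation the functions \(\partial_\nu\varphi\) are linearly independent.
\end{enumerate}
Hence the range is closed with codimension equal to \(\dim\ker\), and the index is zero at every \(k\) directly. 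The paper obtains index zero differently, from the Fredholm property of \(\mathcal T(k)=((H-k^2),\gamma_{R,V})\) followed by restriction to the homogeneous nullspace.

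\textbf{Your multiplicity argument.} With this fix in place, Step~3 is correct and takes a genuinely different route from the paper. You prove semisimplicity directly. Take \(\psi=\mathcal S(k_*)f\). Then \(\operatorname{Ran}\Braw(k_*,V)\perp\partial_\nu\psi\), while \(\langle\partial_k\Braw(k_*,V)f,\partial_\nu\psi\rangle=\pm2k_*\|\psi\|^2\neq0\). So no Jordan chain of length at least \(2\) exists, and the Gohberg--Sigal multiplicity equals \(\dim\ker\). Because \(H^D_{R,V}\) is self-adjoint, this also equals the Riesz rank.

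The paper instead appeals to Kato and to a holomorphic equivalence \(F_\partial=UE_{-+}V\) with the Riesz Grushin block (Lemma~\ref{lem:boundary-dirichlet-multiplicity}), whose factors \(U,V\) are not constructed there. Your computation is the more explicit of the two.
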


\begin{proof}
Flatten the graph boundary by the collar diffeomorphism used in
\eqref{eq:collar-map}.  On the resulting fixed smooth cylinder the Liouville operator becomes a
second-order uniformly elliptic operator with coefficients depending smoothly on \(V\) and
holomorphically on \(k\) through the zeroth-order term \(-k^2\).  The regular condition at the pole is part of the domain; in the exceptional case \(n=2,\ell=0\) this means the Friedrichs condition, so the logarithmic branch is excluded.

Consider the Dirichlet boundary operator
\[
  \mathcal T(k)u=((H-k^2)u,\gamma_{R,V}u):
  H^s_{\rm reg}(\Omega_{R,V})
  \longrightarrow
  H^{s-2}(\Omega_{R,V})\oplus H^{s-1/2}(\Sn),
\]
where \(H^s_{\rm reg}\) denotes the Sobolev domain with the regular pole condition.  Standard elliptic boundary theory \cite{McLean} gives that \(\mathcal T(k)\) is a holomorphic Fredholm family of index zero; for non-real \(k^2\), and for real \(k^2\) away from the Dirichlet spectrum, it is invertible.  Restricting to the homogeneous solution space leaves the boundary trace map.  By Lemma~\ref{lem:fixed-coefficient-parametrization}, \(\mathcal S(k)\) is a topological isomorphism from the fixed coefficient space \(\mathcal X^s_{R,V}\) onto the regular homogeneous nullspace \(\mathcal N^s_{\rm reg}(k)\).  Pulling the boundary trace on this nullspace back through that isomorphism gives \(B_R^{\mathrm{raw}}(k,V)\).  Hence \(B_R^{\mathrm{raw}}(k,V)\) is a holomorphic Fredholm family of index zero on the coefficient domain \(\mathcal X^s_{R,V}\), which is fixed during the local Fredholm argument.

The kernel of \(B_R^{\mathrm{raw}}(k,V)\) consists exactly of regular solutions of \((H-k^2)u=0\) whose trace on \(r=R-V(\theta)\) vanishes; these are precisely the Dirichlet eigenfunctions at energy \(k^2\).  Conversely every Dirichlet eigenfunction is regular at the pole and has the separated expansion \(u=\mathcal S(k)f\) on the common interior region, hence gives an element of the kernel.

We record the multiplicity statement.  Choose a small contour \(\Gamma\) in the \(k\)-plane around a real point \(k_0>0\) and containing no other Dirichlet square roots.  Since \(k\mapsto k^2\) is locally biholomorphic at \(k_0\), the Riesz projection of the Dirichlet operator inside \(k(\Gamma)^2\) has rank equal to the spectral multiplicity.  Analytic Fredholm theory \cite{Kato} identifies the same rank with the winding number, or equivalently the characteristic multiplicity, of the Fredholm family \(B_R^{\mathrm{raw}}(k,V)\) around \(\Gamma\).  Thus characteristic and Dirichlet multiplicities agree.
\end{proof}

\begin{lemma}[Characteristic multiplicity and Dirichlet Riesz multiplicity]\label{lem:boundary-dirichlet-multiplicity}
Let \(k_0>0\) and suppose that \(k_0^2\) is an isolated Dirichlet eigenvalue of \(H^D_{R,V}\).  Let \(\Gamma\) be a small positively oriented circle around \(k_0\), containing no other square roots of Dirichlet eigenvalues.  Then the characteristic multiplicity of the Fredholm boundary pencil \(B_R^{\mathrm{raw}}(k,V)\) inside \(\Gamma\) equals
\[
  \operatorname{rank}\frac{1}{2\pi i}
  \int_{k(\Gamma)^2}(H^D_{R,V}-z)^{-1}\,dz .
\]
Thus characteristic zeros of \(B_R^{\mathrm{raw}}(k,V)\) and Dirichlet eigenvalues agree with multiplicity after the local change of variable \(z=k^2\).
\end{lemma}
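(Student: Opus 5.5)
We plan to prove the lemma by realizing the boundary pencil as a Schur complement (Grushin reduction) of the full Dirichlet boundary problem $\mathcal T(k)$ from the proof of Proposition~\ref{prop:sobolev-fredholm-framework}. Then we invoke the standard fact that, for holomorphic Fredholm families, characteristic multiplicity is invariant under Schur reduction and under multiplication by holomorphically invertible factors (Gohberg--Sigal theory, as recorded in Definition~\ref{def:characteristic-multiplicity}).

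\textbf{Step 1: a factorization of $\mathcal T(k)$.} Fix $k$ near $k_0$, with $k\neq k_0$ inside $\Gamma$. Introduce a right inverse of the interior equation that is holomorphic in $k$. Concretely, choose a fixed $k_1$ with $k_1^2$ not in the Dirichlet spectrum, and let $G(k)$ be a holomorphic solution operator for $(H-k^2)u=g$ with zero Dirichlet trace. The natural candidate is the Dirichlet resolvent $(H^D_{R,V}-k^2)^{-1}$. However, this operator has a pole at $k_0$, so it cannot be used as is. Instead we use an auxiliary regular-pole problem that is invertible throughout the disc, obtained from a Robin-type condition or from a slightly enlarged domain with no eigenvalue near $k_0^2$. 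With this choice we write
\[
  \mathcal T(k)=E(k)\begin{pmatrix}I&0\\0&B_R^{\mathrm{raw}}(k,V)\end{pmatrix}F(k),
\]
where $E(k)$ and $F(k)$ are holomorphic and invertible on the disc. Here $F$ sends $u$ to the pair consisting of the interior data and the coefficient vector $\mathcal C_{r_0}$ of $u-G(k)(H-k^2)u$. Lemma~\ref{lem:fixed-coefficient-parametrization} supplies the isomorphism $\mathcal S(k)$ onto the homogeneous space in fixed coordinates, and this is what makes $F$ invertible. Equivalence of holomorphic Fredholm families preserves partial multiplicities, so the characteristic multiplicity of $B_R^{\mathrm{raw}}$ inside $\Gamma$ equals that of $\mathcal T(k)$.

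\textbf{Step 2: multiplicity of $\mathcal T(k)$ equals the rank of the Riesz projection.} We restrict $\mathcal T$ to the subspace of zero trace. This yields the family $H^D_{R,V}-k^2$ acting from the Dirichlet domain to $L^2$, modulo an invertible boundary block, which is again a Schur step. For a self-adjoint operator with an isolated eigenvalue $k_0^2$ of multiplicity $m$, the pencil $H^D-z$ has only semisimple characteristic values at $z_0=k_0^2$, with total multiplicity $m$. Concretely, $\frac1{2\pi i}\oint \partial_z(H^D-z)\,(H^D-z)^{-1}dz$ has trace $-m$ up to sign convention; equivalently, the logarithmic residue equals the rank of the Riesz projection. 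The change of variable $z=k^2$ is biholomorphic near $k_0>0$ and maps $\Gamma$ to a small curve around $k_0^2$ with winding number one. Logarithmic residues are therefore preserved, which gives the displayed rank formula.

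\textbf{Main obstacle.} The hard step is Step 1: building $E$ and $F$ on fixed spaces so that they are genuinely holomorphic and invertible across $k_0$. The Dirichlet resolvent itself is singular at $k_0$, and the coefficient space $\mathcal X^s_{R,V}$ is defined only through the reference point $k_{R,*}$. We would first try an auxiliary Neumann or Robin realization on the same graph domain. Its spectrum is discrete and, after shrinking $\Gamma$ and perturbing the Robin parameter, avoids $k_0^2$. Its resolvent then supplies $G(k)$ holomorphically, and the uniform norm equivalence \eqref{eq:coefficient-uniform-isomorphism} controls the fixed-coordinate identification. If this step is set up correctly, the remaining multiplicity bookkeeping is standard.
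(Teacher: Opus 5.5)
Your proof is correct, but it is organized differently from the paper's.

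\emph{The paper's route.} The paper does not use $\mathcal T(k)$ in this lemma. It sets up a local Grushin problem for $H^D_{R,V}-k^2$ built from the Riesz projection itself ($R_-$ the inclusion of the eigenspace, $R_+$ the projection). The effective block is then just $(k_0^2-k^2)I_m$, and its determinant winds $m$ times around $\Gamma$. The paper then asserts that a finite Grushin block of the boundary pencil factors as $U(k)E_{-+}(k)V(k)$ with $U,V$ holomorphically invertible. Its only justification is the diagram $\gamma_{R,V}\mathcal S(k)=B_R^{\mathrm{raw}}(k,V)$.

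\emph{Your route.} You establish the equivalence at the infinite-dimensional level, $\operatorname{diag}(I,B_R^{\mathrm{raw}})\sim\mathcal T(k)\sim\operatorname{diag}(H^D_{R,V}-k^2,I)$. Your intertwiners check out: $F(k)^{-1}(g,f)=G(k)g+\mathcal S(k)f$ and $E(k)(g,h)=(g,\gamma_{R,V}G(k)g+h)$, using $\mathcal S(k)\mathcal C_{r_0}=I$ on $\mathcal N^s_{\rm reg}(k)$ from \cref{lem:fixed-coefficient-parametrization}.

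\emph{What each buys.} The paper's route makes the Dirichlet-side count immediate. Yours actually constructs the equivalence the paper only asserts. It also gives invertibility of the intertwiners on the whole disc, which is what the zero-winding step needs; invertibility merely near $\Gamma$, as the paper phrases it, would not suffice. The cost is the auxiliary problem. However, $G(k)$ only needs to be some holomorphic right inverse of $H-k^2$ into $H^s_{\rm reg}$ near $k_0$; no self-adjointness or specific boundary condition is required.

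\emph{Tightening Step 2.} Step 2 becomes precise with a fixed bounded right inverse $P$ of $\gamma_{R,V}$ supported near the boundary. In the splitting $u=(u-P\gamma_{R,V}u)+P\gamma_{R,V}u$, $\mathcal T(k)$ is upper triangular with diagonal $(H^D_{R,V}-k^2,\,I)$. Choosing $s=2$ makes that entry literally $\mathcal D(H^D_{R,V})\to L^2$, so no change of Sobolev scale is needed.

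\emph{A sign slip.} The logarithmic residue $\frac1{2\pi i}\oint\partial_z(H^D_{R,V}-z)(H^D_{R,V}-z)^{-1}\,dz$ is the orthogonal projection onto $\ker(H^D_{R,V}-k_0^2)$; the operator displayed in the lemma is its negative. So its trace is $+m$, not $-m$.
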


\begin{proof}
Let \(z=k^2\).  Since \(k_0>0\), the map \(k\mapsto z\) is biholomorphic in a small
neighbourhood of \(k_0\).  Let
\[
  P_\Gamma=\frac{1}{2\pi i}
  \int_{k(\Gamma)^2}(H^D_{R,V}-z)^{-1}\,dz
\]
be the Dirichlet Riesz projection, and write
\(\mathscr E=\operatorname{Ran}P_\Gamma\), \(\mathscr F=(I-P_\Gamma)L^2\).  On
\(\mathscr F\) the Dirichlet operator has a uniformly bounded inverse
\((H^D_{R,V}-k^2)^{-1}_{\mathscr F}\) for \(k\in\Gamma\).

Use the local Grushin problem
\[
  \mathcal G(k)=
  \begin{pmatrix}
    H^D_{R,V}-k^2 & R_-\\\\
    R_+ & 0
  \end{pmatrix},
  \qquad R_-:\mathscr E\to L^2,
  \quad R_+:L^2\to\mathscr E,
\]
where \(R_-\) is the inclusion and \(R_+=P_\Gamma\).  Schur reduction on \(\mathscr F\) gives a
finite holomorphic block
\[
  E_{-+}(k)=R_+(H^D_{R,V}-k^2)R_- 
  -R_+(H^D_{R,V}-k^2)(H^D_{R,V}-k^2)^{-1}_{\mathscr F}
  (H^D_{R,V}-k^2)R_- .
\]
For this choice of \(R_\pm\), the second term vanishes and \(E_{-+}(k)\) is the restriction of
\(H^D_{R,V}-k^2\) to the finite Riesz space.  Hence
\[
  \operatorname{wind}_{\Gamma}\det E_{-+}(k)=\operatorname{rank}P_\Gamma .
\]

The boundary pencil gives a holomorphically conjugate finite block.  The coefficient representation
commutes with the boundary trace in the diagram
\[
\begin{array}{ccc}
\mathcal X^s_{R,V} & \xrightarrow{\ \mathcal S(k)\ } & \ker(H-k^2)_{\rm reg} \\
B_R^{\mathrm{raw}}(k,V)\downarrow && \downarrow\gamma_{R,V} \\
H^{s-1/2}(\Sn) & = & H^{s-1/2}(\Sn) .
\end{array}
\]
After splitting the Fredholm boundary map into its finite kernel at the cluster and a complementary
range on which it is invertible, the resulting boundary Grushin block \(F_{\partial}(k)\) satisfies
\[
  F_{\partial}(k)=U(k)E_{-+}(k)V(k),
\]
where \(U(k)\) and \(V(k)\) are holomorphically invertible in a neighborhood of \(\Gamma\).  Hence
\[
  \operatorname{wind}_{\Gamma}\det F_{\partial}(k)
  =\operatorname{wind}_{\Gamma}\det E_{-+}(k),
\]
because the determinants of \(U\) and \(V\) have zero winding on the small contour.  The local
Dirichlet Grushin problem gives
\(\operatorname{wind}_{\Gamma}\det E_{-+}=\operatorname{rank}P_\Gamma\).  Thus the boundary
characteristic multiplicity and the Dirichlet Riesz multiplicity agree.
\end{proof}

\begin{remark}[Norms used in the estimates]\label{rem:norms-for-estimates}
The proposition fixes the Fredholm setting.  The estimates that identify the effective operator use its finite-window consequences.  After the cutoffs have been chosen in the order
\[
  M\ \hbox{fixed},\qquad L\gg M,\qquad R\to\infty,\qquad M\to\infty,
\]
the spaces \(P_LL^2(\Sn)\) are finite dimensional, and the Sobolev graph norm, the phase-coefficient norm, and the ordinary \(L^2(\Sn)\) coefficient norm are uniformly equivalent by Lemma~\ref{lem:regular-continuation-conditioning}.
\end{remark}

For \(E=k^2\) close to \(\pi^2/R^2\), we now impose the exterior phase convention
\[
  -s''+\frac{(\ell+\alpha)(\ell+\alpha-1)}{\sinh^2 r}s=k^2s,
\]
namely
\begin{equation}\label{eq:regular-phase-normalization}
  s_\ell(r,k)=\sin\{k(r+b_\ell)\}+O(k^3)+O(k(1+r)e^{-2r})
\end{equation}
for \(1\le r\le R+O(1)\), uniformly for \(\ell\) in bounded \(T_n\)-windows.  This is the same
normalization as in Proposition~\ref{prop:sector-asymp}; with this normalization we keep the notation
\(\mathcal S(k)f=\sum f_{\ell m}s_\ell(r,k)Y_{\ell m}\).
Although an eigenfunction on \(\Omega_{R,V}\) is only defined up to the variable boundary, its
restriction to the common ball \(r<R-\log2\) has such a separated expansion, and the radial ODE
continues it to the fixed collar.  The Dirichlet boundary condition is therefore equivalent to
\begin{equation}\label{eq:boundary-characteristic}
  \Braw(k_R(\zeta),V)f:=\mathcal S(k_R(\zeta))f(R-V(\theta),\theta)=0,
\end{equation}
where
\begin{equation}\label{eq:k-zeta}
  k_R(\zeta)=\frac\pi R+\frac{\pi\zeta}{R^2},
  \qquad
  k_R(\zeta)^2=\frac{\pi^2}{R^2}+\frac{2\pi^2\zeta}{R^3}+O(R^{-4}).
\end{equation}
The parameter \(\zeta\) is the effective energy variable.

\begin{remark}[Raw trace, normalized residual, and coefficient scales]\label{rem:boundary-normalization-scales}
We separate the actual boundary trace from the normalized effective residual throughout the reduction:
\[
  B_R^{\mathrm{raw}}(k,V)f:=\gamma_{R,V}\mathcal S(k)f,
  \qquad
  \mathfrak B_R(\zeta,V)f:=-\frac R\pi B_R^{\mathrm{raw}}(k_R(\zeta),V)f .
\]
Thus a raw trace of size \(R^{-1}\|f\|\) corresponds to a normalized residual of size
\(O(\|f\|)\).  We also use two equivalent finite-window normalizations.  If
\[
  w_L=\sum_{T_n(\ell)\le L,m}a_{\ell m}\zeta_{\ell,R}Y_{\ell m},
  \qquad A_L^2=\sum |a_{\ell m}|^2,
\]
is written in normalized ball first-band coordinates, while \(f_L\) denotes the sine-amplitude
regular-continuation coefficient, then on each fixed angular window
\[
  A_L\simeq R^{1/2}\|f_L\|,
  \qquad
  \|\gamma_{R,V}w_L\|_{L^2(S^{n-1})}=O_L(R^{-3/2}A_L)
       =O_L(R^{-1}\|f_L\|).
\]
All boundary residual estimates below are stated in the normalized pencil \(\mathfrak B_R\), while
all collar traces are raw traces of \(B_R^{\mathrm{raw}}\).
\end{remark}

\begin{lemma}[Conditioning of the regular continuation]\label{lem:regular-continuation-conditioning}
Fix finite numbers \(L,C<\infty\), and let \(P_L=\mathbf 1_{[0,L]}(T_n)\).  Put
\[
  C_R^{\rm int}:=(0,R-\log2)\times\Sn .
\]
For \(|\zeta|\le C\) and all sufficiently large \(R\), the regular continuation map restricted to
\(P_LL^2(\Sn)\),
\[
  \mathcal S_{R,L}(\zeta)f
  :=\mathcal S(k_R(\zeta))P_Lf\big|_{C_R^{\rm int}},
\]
satisfies the two-sided estimate
\begin{equation}\label{eq:regular-continuation-two-sided}
  c_{L,C}R^{1/2}\|P_Lf\|_{L^2(\Sn)}
  \le
  \|\mathcal S_{R,L}(\zeta)f\|_{L^2(C_R^{\rm int})}
  \le
  C_{L,C}R^{1/2}\|P_Lf\|_{L^2(\Sn)} .
\end{equation}
Moreover, after the natural normalization, it is an \(O_{L,C}(R^{-1})\) perturbation of the ball
first-band parametrization:
\begin{equation}\label{eq:regular-continuation-first-band-close}
  \left\|
  \sqrt{\frac2R}\,\mathcal S(k_R(\zeta))P_L
  -\mathcal J_RP_L
  \right\|_{P_LL^2(\Sn)\to L^2(C_R^{\rm int})}
  \le C_{L,C}R^{-1}.
\end{equation}
Consequently all finite-window boundary matrices below are written in a uniformly conditioned
coordinate system.  The next lemma makes quantitative the passage from a small normalized boundary
residual to a genuine Dirichlet quasimode.
\end{lemma}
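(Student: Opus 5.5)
The plan is to reduce both estimates to sectorwise one-dimensional statements. On the window \(P_LL^2(\Sn)\) only finitely many degrees \(\ell\) occur, because \(\psi(\ell+\alpha)-\psi(\alpha)\to\infty\); for each such \(\ell\) the parameter \(\beta=\ell+\alpha\) lies in a fixed compact subset of \([1/2,\infty)\). Hence every constant supplied by \cref{lem:low-energy-phase} may depend on \(L\) and \(C\) but never on \(R\).

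Since the harmonics \(Y_{\ell m}\) are orthonormal and \(\mathcal S(k)\) acts diagonally, we have
\[
\|\mathcal S_{R,L}(\zeta)f\|^2_{L^2(C_R^{\rm int})}=\sum_{\ell,m}|f_{\ell m}|^2\int_0^{R-\log2}|s_\ell(r,k_R(\zeta))|^2\,dr .
\]
Both \eqref{eq:regular-continuation-two-sided} and \eqref{eq:regular-continuation-first-band-close} are therefore diagonal. It suffices to prove them scalar-wise for each \(\ell\) in the window, uniformly for \(|\zeta|\le C\), and take the maximum over the finitely many sectors.

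For the two-sided bound, split \((0,R-\log2)\) at \(r=1\).
\begin{enumerate}[label=(\roman*)]
\item On \((0,1)\), \(s_\ell\) is \(O_L(k)=O(R^{-1})\). This follows from \(k^{-1}\sigma_\beta\to y_\beta\) locally, with locally uniform convergence. Its contribution to the integral is therefore negligible.
\item On \((1,R-\log2)\), \eqref{eq:regular-phase-normalization} gives \(s_\ell=\sin(k(r+b_\ell))+O(R^{-3}+R^{-1}(1+r)e^{-2r})\).
\item The main integral is \(\int_1^{R-\log2}\sin^2(k(r+b_\ell))\,dr\). The phase runs from \(O(R^{-1})\) to \(\pi-O(R^{-1})\), because \(k(R+b_\ell)=\pi+O(R^{-1})\). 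The integral equals \(\tfrac{R}{2}(1+O_L(R^{-1}))\), which is comparable to \(R\).
\end{enumerate}
This yields \eqref{eq:regular-continuation-two-sided} with \(c,C\) near \(2^{-1/2}\).

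For the first-band comparison, recall that \(\zeta_{\ell,R}\) is the normalized first Dirichlet eigenfunction of \(H_\ell\) on \((0,R)\). By \cref{prop:sector-asymp} its frequency is \(\kappa=\pi/(R+b_\ell)+O(R^{-4})\). Writing it as a scalar multiple of \(s_\ell(\cdot,\kappa)\), the normalization computed above gives
\[
\zeta_{\ell,R}=\pm\sqrt{2/R}\,s_\ell(\cdot,\kappa)\,(1+O(R^{-1})).
\]
The sign is fixed by choosing the branch positive near the pole, consistent with \(\mathcal J_R\). It then remains to bound \(\|s_\ell(\cdot,k_R(\zeta))-s_\ell(\cdot,\kappa)\|_{L^2(0,R)}\), using \(|k_R(\zeta)-\kappa|=O_C(R^{-2})\).
\begin{enumerate}[label=(\roman*)]
\item On \([1,R]\), the phase representation gives a pointwise difference of at most \(|k-\kappa|(r+|b_\ell|)+O(R^{-3}+R^{-1}e^{-r})=O(R^{-1})\), so the \(L^2\) norm is \(O(R^{-1/2})\).
\item On \((0,1)\), the difference is controlled by holomorphy of the regular branch in \(k^2\) (\cref{lem:fixed-coefficient-parametrization}). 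The Cauchy data depend holomorphically on \(k^2\), and the difference is \(O(R^{-2})\).
\end{enumerate}
Multiplying by \(\sqrt{2/R}\) gives \(O(R^{-1})\) in \(L^2(C_R^{\rm int})\). Since \(C_R^{\rm int}\subset C_R\), restriction does not increase norms, and \eqref{eq:regular-continuation-first-band-close} follows. The final sentence of the statement, uniform conditioning of finite-window matrices, is then immediate: a matrix written in \(\sqrt{2/R}\,\mathcal S\)-coordinates differs from one in the orthonormal coordinates \(\mathcal J_R\) by a factor \(I+O_L(R^{-1})\).

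The main obstacle is step (i) of the comparison. The error term in \eqref{eq:regular-phase-normalization} is pointwise \(O(k^3)\), but over an interval of length \(R\) it must not accumulate beyond \(O(R^{-1})\) in sup norm. I would verify this directly from \cref{prop:appendix-phase} and \cref{lem:low-energy-phase}: the remainder there is uniform for \(r\le c_0/k\). The upper end \(r\approx R\) is slightly beyond \(c_0/k\) unless \(c_0>\pi\), so I would first check that \(c_0\) can be taken larger than \(\pi+1\). Failing that, on the last stretch \([c_0/k,R]\) the potential is \(O(e^{-2r})\), and I would propagate the solution as a free sine by variation of constants, which costs only exponentially small errors.
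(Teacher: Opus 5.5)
Your proposal is correct and is essentially the paper's proof. Orthogonality reduces everything to the finitely many sectors in the window, the threshold phase expansion gives $\int_0^{R-\log2}|s_\ell(r,k_R(\zeta))|^2\,dr=\tfrac R2+O_{L,C}(1)$, and the comparison with $\zeta_{\ell,R}$ comes from $|k_R(\zeta)-k_{\ell,1}(R)|=O_{L,C}(R^{-2})$: a pointwise $O(R^{-1})$ discrepancy, hence $O(R^{-1/2})$ in $L^2$, before multiplying by $\sqrt{2/R}$.

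Your extra bookkeeping only makes explicit what the paper leaves implicit:
\begin{itemize}
\item On the inner interval $(0,1)$, the crude bound $s_\ell=O(R^{-1})$ already suffices for the comparison, so no holomorphy argument is needed there.
\item You fix the normalization and the sign of $\zeta_{\ell,R}$ explicitly.
\item The range restriction $r\le c_0/k$ that you flag is harmless, since $|\sin(kr+\delta_\beta(k))|\le Ck(1+r)$ holds for all $r$.
\end{itemize}
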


\begin{proof}
Only finitely many angular degrees occur in \(P_LL^2(\Sn)\).  For each such degree, the phase
normalization \eqref{eq:regular-phase-normalization} and \(k_R(\zeta)=\pi/R+O_C(R^{-2})\) give,
uniformly for \(0\le r\le R-\log2\),
\[
  s_\ell(r,k_R(\zeta))
  =\sin\{k_R(\zeta)(r+b_\ell)\}+O_{L,C}(R^{-1})
\]
in the sense needed after integration over \(r\).  Hence
\[
  \int_0^{R-\log2}|s_\ell(r,k_R(\zeta))|^2\,dr=\frac R2+O_{L,C}(1).
\]
Orthogonality of the spherical harmonics then gives the upper and lower bounds in
\eqref{eq:regular-continuation-two-sided}.

Let \(k_{\ell,1}(R)\) be the first Dirichlet root in the \(\ell\)-sector of the ball.  By
Proposition~\ref{prop:sector-asymp},
\[
  k_{\ell,1}(R)=\frac\pi R+O_L(R^{-2}),
\]
and the normalized first radial eigenfunction satisfies
\[
  \zeta_{\ell,R}(r)
  =\sqrt{\frac2R}\sin\{k_{\ell,1}(R)(r+b_\ell)\}+O_L(R^{-3/2})
\]
in \(L^2(0,R-\log2)\).  Since
\(|k_R(\zeta)-k_{\ell,1}(R)|=O_{L,C}(R^{-2})\), differentiating the sine factor with respect to
\(k\) gives an \(L^2\)-change of size \(O_{L,C}(R^{-1/2})\).  Multiplication by \(\sqrt{2/R}\)
therefore gives \eqref{eq:regular-continuation-first-band-close}.  The final sentence is quantified in Lemma~\ref{lem:collar-correction} below.

\end{proof}

\begin{lemma}[Finite trace regularity on moving graphs]\label{lem:finite-trace-regularity}
Fix finite numbers $L,C<\infty$ and a Lipschitz bound $K$.  Let $0\le V\le \log2$ with $\|V\|_{C^{0,1}}\le K$, let $|\zeta|\le C$, and let $f\in P_LL^2(\Sn)$.  Put $u=\mathcal S(k_R(\zeta))f$ and $g=\gamma_{R,V}u$.  Then, for all sufficiently large $R$,
\begin{equation}\label{eq:finite-trace-regularity}
  \|g\|_{L^2(\Sn)}+\|\nabla_\theta g\|_{L^2(\Sn)}
  \le C_{L,C,K}R^{-1}\|f\|.
\end{equation}
Moreover, in terms of the normalized residual,
\begin{equation}\label{eq:finite-trace-raw-residual}
  \|g\|_{L^2(\Sn)}
  =\frac{\pi}{R}\|\mathfrak B_R(\zeta,V)f\|_{L^2(\Sn)}.
\end{equation}
\end{lemma}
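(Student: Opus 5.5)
The identity \eqref{eq:finite-trace-raw-residual} is the definition of the normalized pencil, so the plan focuses on \eqref{eq:finite-trace-regularity}. Since \(\mathfrak B_R(\zeta,V)=-\frac R\pi B_R^{\mathrm{raw}}(k_R(\zeta),V)\) and \(g=B_R^{\mathrm{raw}}(k_R(\zeta),V)f\), we have \(g=-\frac\pi R\mathfrak B_R(\zeta,V)f\). Taking \(L^2\) norms gives \eqref{eq:finite-trace-raw-residual} with no estimate needed.

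For the bound, write \(f=\sum_{T_n(\ell)\le L,m}f_{\ell m}Y_{\ell m}\). Only finitely many degrees \(\ell\le\ell(L)\) occur, and the dimension of \(P_LL^2(\Sn)\) is bounded in terms of \(L\). Then
\[
  g(\theta)=\sum_{\ell,m}f_{\ell m}\,s_\ell\bigl(R-V(\theta),k_R(\zeta)\bigr)Y_{\ell m}(\theta).
\]
By the phase normalization \eqref{eq:regular-phase-normalization}, together with \cref{lem:low-energy-phase} applied on the compact parameter set \(\beta\in\{\ell+\alpha:\ell\le\ell(L)\}\), we have
\[
  s_\ell(r,k)=\sin\{k(r+b_\ell)\}+O_L(R^{-3})
\]
for \(r=R+O(1)\). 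We also have \(k_R(\zeta)(R-V+b_\ell)=\pi+O_{L,C}(R^{-1})\), uniformly in \(V\) since \(0\le V\le\log2\). Hence \(|s_\ell(R-V(\theta),k_R(\zeta))|\le C_{L,C}R^{-1}\) uniformly in \(\theta\). Combining this with \(\|Y_{\ell m}\|_\infty\le C_L\) and Cauchy--Schwarz over the finite index set gives \(\|g\|_{L^2}\le C_{L,C}R^{-1}\|f\|\).

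For the tangential gradient, use the chain rule:
\[
  \nabla_\theta g=\sum f_{\ell m}\Bigl[-\partial_rs_\ell(R-V,k)\,\nabla_\theta V\,Y_{\ell m}+s_\ell(R-V,k)\,\nabla_\theta Y_{\ell m}\Bigr].
\]
This is valid almost everywhere, since \(V\) is Lipschitz (Rademacher) and the composition of a smooth function with a Lipschitz one is Lipschitz. The second term is bounded as before, using \(\|\nabla Y_{\ell m}\|_\infty\le C_L\). For the first term, \eqref{eq:phase-error-bound} gives \(\partial_rs_\ell=k\cos\{k(r+b_\ell)\}+O_L(R^{-4})\), which is \(O_{L,C}(R^{-1})\) because \(k\sim\pi/R\). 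Multiplying by \(|\nabla V|\le K\) gives a contribution \(C_{L,C,K}R^{-1}\|f\|\). Summing the two terms proves \eqref{eq:finite-trace-regularity}.

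The main obstacle is uniformity. The constants in \cref{lem:low-energy-phase} must be uniform over the finitely many \(\beta\)'s in the window, including the Friedrichs endpoint \(\beta=1/2\) when \(n=2\). They must also be uniform over all \(V\) with Lipschitz bound \(K\), which holds because \(V\) enters only through the evaluation point \(r=R-V(\theta)\in[R-\log2,R]\) and through \(\nabla V\). A secondary point is that the \(O(k(1+r)e^{-2r})\) tails are exponentially small at \(r\approx R\) and are absorbed.
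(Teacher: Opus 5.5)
Your proposal is correct and follows essentially the same route as the paper: you read the identity \eqref{eq:finite-trace-raw-residual} off the definition of $\mathfrak B_R$, and you get the $H^1_\theta$ bound from three ingredients. These are the finite angular window, the threshold phase bounds $|s_\ell|+|\partial_r s_\ell|=O_{L,C}(R^{-1})$ at $r=R-V(\theta)$, and the a.e.\ chain rule combined with $\|\nabla_\theta V\|_\infty\le K$. One difference is in your favour: you apply the phase bounds only for $r\in[R-\log2,R]$, which is exactly the range the trace needs. The paper states them on the larger mesoscopic collar, where the undifferentiated bound is really only $O((1+t)/R)$.
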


\begin{proof}
The identity \eqref{eq:finite-trace-raw-residual} is the definition of the normalized boundary pencil.  We record the Sobolev regularity of the finite trace range.  Since $f\in P_LL^2$, only finitely many angular degrees occur.  On the outer collar $r=R+O(\ell_R)$ with $\ell_R=R^{1/2}$ the low-energy phase expansion gives, uniformly in this finite set,
\[
  |s_\ell(r,k_R(\zeta))|+|\partial_rs_\ell(r,k_R(\zeta))|\le C_{L,C}R^{-1}.
\]
The same bound holds after applying any fixed angular derivative to the finite harmonic sum, with the constant depending on $L$.  For a Lipschitz graph the chain rule gives, a.e. in $\theta$,
\[
  \nabla_\theta g
  =(\nabla_\theta u)(R-V(\theta),\theta)
    -(\partial_ru)(R-V(\theta),\theta)\nabla_\theta V(\theta).
\]
Using $\|\nabla_\theta V\|_{L^\infty}\le K$ and the finite-window radial bounds above yields \eqref{eq:finite-trace-regularity}.  Thus the collar extension estimates below require only the \(H^1_\theta\)-bound in \eqref{eq:finite-trace-regularity}.

\end{proof}

\begin{lemma}[Polynomial collar calculus on a moving graph]\label{lem:polynomial-collar-calculus}
Fix a Lipschitz bound \(K\).  Let \(\ell_R\to\infty\) be any mesoscopic width with
\(\ell_R=o(R)\), and set
\[
  \mathcal C_{R,V}(\ell_R)=\{(r,\theta):0<t=R-V(\theta)-r<\ell_R\}.
\]
In the coordinates \((t,\theta)\), the Liouville quadratic form on the collar satisfies, uniformly for
\(\|V\|_{C^{0,1}}\le K\),
\begin{equation}\label{eq:polynomial-collar-calculus}
  \mathfrak q_{R,V}^{\mathcal C}[w]
  \le C_K\int_0^{\ell_R}\!\int_{\Sn}
  \left(|\partial_t w|^2
  +e^{-2R+C_K\ell_R}|\nabla_\theta w|^2
  +e^{-2R+C_K\ell_R}|w|^2\right)d\theta dt
\end{equation}
for all \(w\in H^1(\mathcal C_{R,V}(\ell_R))\).  The corresponding lower bound holds with the
right side replaced by \(c_K\|\partial_t w\|^2_{L^2}\) minus the two exponentially weighted terms.
Thus tangential derivatives and zeroth-order Liouville terms are exponentially suppressed throughout
any polynomial collar.  The choice \(\ell_R=R^{1/2}\) used below is only a convenient member of this
class; the estimates remain valid for any \(\ell_R\to\infty\), \(\ell_R=o(R)\), with the powers of
\(R\) below replaced by the corresponding powers of \(\ell_R\).
\end{lemma}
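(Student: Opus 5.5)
The statement is a pointwise computation of the Liouville quadratic form in flattened collar coordinates, so I would change variables and then bound each coefficient. Use the same coordinates as the statement. Set \(t=R-V(\theta)-r\), so that \(r=R-V(\theta)-t\), and keep \(\theta\) as the angular variable. The Liouville form on the collar is
\[
  \int\!\!\int\Bigl(|\partial_r w|^2+\frac{|\nabla_\theta w|^2}{\sinh^2 r}+\frac{\alpha(\alpha-1)}{\sinh^2r}|w|^2\Bigr)\,dr\,d\theta .
\]
The Jacobian of \((r,\theta)\mapsto(t,\theta)\) is \(1\), since \(dr=-dt-\nabla V\cdot d\theta\) at fixed \(\theta\) has \(|\partial r/\partial t|=1\). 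The collar region becomes the fixed product \((0,\ell_R)\times\Sn\).

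The chain rule gives
\[
  \partial_r=-\partial_t,
  \qquad
  \nabla_\theta\big|_r=\nabla_\theta\big|_t-\nabla_\theta V\,\partial_t .
\]
Since \(V\) is Lipschitz, this holds almost everywhere, and \(H^1\) is preserved by bi-Lipschitz maps, so no regularity issue arises.

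Next I would record the weight bound. On the collar \(r\ge R-\log2-\ell_R\), so
\[
  \sinh^{-2}r\le Ce^{-2r}\le Ce^{-2R+2\log2+2\ell_R}\le e^{-2R+C_K\ell_R}
\]
for \(R\) large. Here \(\ell_R=o(R)\) ensures \(r\to\infty\) uniformly. If the collar is meant to be \(t<\ell_R\) with \(t\) measured only up to the pole, one additionally needs \(\ell_R<R-\log 2-1\), which holds for large \(R\).

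For the upper bound, expand \(|\nabla_\theta w-\nabla V\,\partial_tw|^2\le 2|\nabla_\theta w|^2+2K^2|\partial_tw|^2\). After multiplying by \(\sinh^{-2}r\), the \(|\partial_t w|^2\) piece carries coefficient \(2K^2e^{-2R+C\ell_R}\le1\), so it is absorbed into \(C_K|\partial_tw|^2\). The tangential and zeroth-order terms carry the weight \(e^{-2R+C_K\ell_R}\). This covers both signs of \(\alpha(\alpha-1)\) by taking absolute values, and yields \eqref{eq:polynomial-collar-calculus}.

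For the lower bound, \(|\partial_rw|^2=|\partial_tw|^2\) exactly. The cross term satisfies \(\sinh^{-2}r\,|\nabla_\theta w-\nabla V\partial_tw|^2\ge0\), and it may simply be dropped or bounded below by \(-\)(weighted terms). The only possibly negative term is \(\alpha(\alpha-1)\sinh^{-2}r|w|^2\), which occurs for \(n=2\). It is at least \(-Ce^{-2R+C\ell_R}|w|^2\). This gives the lower bound with \(c_K=1\), minus the exponentially weighted terms.

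The main subtlety is only bookkeeping. The point is that all exponents depend on \(\ell_R\) linearly, with constants independent of \(V\) beyond \(K\) and the height bound \(\log 2\). This is why any \(\ell_R=o(R)\), and in particular \(R^{1/2}\), works.
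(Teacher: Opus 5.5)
Your proposal is correct and follows essentially the same route as the paper: you pass to \((t,\theta)\) coordinates with unit Jacobian, use \(\nabla_\theta|_r=\nabla_\theta|_t-\nabla_\theta V\,\partial_t\), bound \(\sinh^{-2}r\le e^{-2R+C\ell_R}\) on the collar, and absorb the mixed \(|\partial_t w|^2\) term into the radial term because \(\ell_R=o(R)\). For the lower bound you simply drop the nonnegative angular term, which is slightly cleaner than the paper's ``reverse the estimate and absorb'' phrasing and gives \(c_K=1\).
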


\begin{proof}
After the Liouville transform, the form is
\[
  \int \left(|\partial_r u|^2+\sinh^{-2}r\,|\nabla_\theta u|^2
  +\alpha(\alpha-1)\sinh^{-2}r\,|u|^2\right)drd\theta,
\]
with the usual Friedrichs interpretation at the pole.  In the collar coordinates
\(r=R-V(\theta)-t\), one has \(dr=-dt\) for fixed \(\theta\), so \(\partial_r=-\partial_t\).  Angular
derivatives at fixed \(r\) differ from angular derivatives at fixed \(t\) by
\((\nabla_\theta V)\partial_t\).  Hence
\[
  |\nabla_\theta^{\,r}u|^2
  \le C_K\bigl(|\nabla_\theta^{\,t}w|^2+|\partial_t w|^2\bigr).
\]
Since \(0\le V\le\log2\) and \(0<t<\ell_R\),
\[
  \sinh^{-2}r\le C e^{-2R+C\ell_R}.
\]
The mixed angular contribution containing \(|\partial_t w|^2\) is therefore absorbed into the radial
term for large \(R\).  This gives \eqref{eq:polynomial-collar-calculus}; the lower bound follows in
the same coordinates by reversing the estimate and absorbing the exponentially small angular part.
\end{proof}

\begin{lemma}[Mesoscopic collar correction from a normalized boundary residual]\label{lem:collar-correction}
Fix finite numbers \(L,C<\infty\), a Lipschitz bound \(K\), and a residual bound \(\rho_0<\infty\).
Let \(\ell_R\to\infty\), \(\ell_R=o(R)\), be a mesoscopic collar width.  There are constants
\(C_0<\infty\) and \(R_0\), allowed to depend on \(L,C,K,\rho_0\) and on the chosen width sequence, with the following property.  Let
\(0\le V\le\log2\) be a graph deficit with \(\|V\|_{C^{0,1}}\le K\), let
\(|\zeta|\le C\), let \(R\ge R_0\), and let \(f\in P_LL^2(\Sn)\).  Put
\[
  u=\mathcal S(k_R(\zeta))f\quad\text{in }\Omega_{R,V},
  \qquad
  g=\operatorname{tr}_{\partial\Omega_{R,V}}u=B_R^{\mathrm{raw}}(k_R(\zeta),V)f,
\]
and use the normalized boundary residual
\[
  \rho=\frac{\|\mathfrak B_R(\zeta,V)f\|_{L^2(\Sn)}}{\|f\|_{L^2(\Sn)}},
  \qquad
  \mathfrak B_R(\zeta,V)=-\frac R\pi B_R^{\mathrm{raw}}(k_R(\zeta),V).
\]
If \(\rho\le\rho_0\), then there exists \(v\in H_0^1(\Omega_{R,V})\), equal to \(u\) outside the mesoscopic outer collar
\[
  \mathcal C_{R,V}(\ell_R)=\{(r,\theta):R-V(\theta)-\ell_R<r<R-V(\theta)\},
\]
such that
\begin{equation}\label{eq:collar-correction-size}
  \|v-u\|_{L^2(\Omega_{R,V})}
  \le C_0\ell_R^{1/2}R^{-1}\rho\|f\|,
  \qquad
  c_{L,C}R^{1/2}\|f\|
  \le \|v\|_{L^2(\Omega_{R,V})}
  \le C_{L,C}R^{1/2}\|f\| .
\end{equation}
Moreover, with \(E_R=\pi^2/R^2\),
\begin{equation}\label{eq:collar-correction-defect}
  \left|
  \frac{R^3}{2\pi^2}
  \frac{\mathfrak q_{R,V}[v]-E_R\|v\|^2}{\|v\|^2}
  -\zeta
  \right|
  \le C_0\bigl(\rho+\ell_R^{-1}\rho^2+\ell_R/R\bigr).
\end{equation}
Thus a small normalized boundary residual for the regular continuation produces a true Dirichlet
quasimode with the same order of error in the effective \(\zeta\)-scale whenever
\(\ell_R\to\infty\) and \(\ell_R=o(R)\).  In the sequel we take \(\ell_R=R^{1/2}\), for which
\eqref{eq:collar-correction-size} becomes \(O(R^{-3/4}\rho\|f\|)\) and
\eqref{eq:collar-correction-defect} becomes
\(O(\rho+R^{-1/2}\rho^2+R^{-1/2})\).
\end{lemma}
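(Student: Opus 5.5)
The correction \(v\) will be \(v=u-\chi\,\tilde g\), where \(\tilde g\) extends the boundary trace \(g\) into the collar and \(\chi\) cuts off across it. All estimates will be made in the collar coordinates \((t,\theta)\), \(t=R-V(\theta)-r\), of Lemma~\ref{lem:polynomial-collar-calculus}. We take
\[
  v(t,\theta)=u(t,\theta)-\chi(t/\ell_R)\,g(\theta),
\]
with \(\chi\in C^\infty_c([0,1))\), \(\chi=1\) near \(0\). This extension is constant in \(t\). Then \(v\) vanishes on the graph, so \(v\in H^1_0(\Omega_{R,V})\), and \(v=u\) outside \(\mathcal C_{R,V}(\ell_R)\).

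The size bound \eqref{eq:collar-correction-size} comes from
\[
  \|\chi g\|_{L^2}\le C\ell_R^{1/2}\|g\|
  = C\ell_R^{1/2}(\pi/R)\rho\|f\|,
\]
using \eqref{eq:finite-trace-raw-residual}. The two-sided bound on \(\|v\|\) then follows from \eqref{eq:regular-continuation-two-sided}: the cutoff piece is \(o(R^{1/2}\|f\|)\) because \(\ell_R=o(R)\) and \(\rho\le\rho_0\). The collar of width \(\ell_R\) is not covered by \(C_R^{\rm int}\), but the finite-window phase expansion \eqref{eq:regular-phase-normalization} shows that \(u\) is \(O(R^{-1}\|f\|)\) pointwise there. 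Its collar mass is therefore \(O(\ell_R^{1/2}R^{-1}\|f\|)\), which is negligible.

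For the Rayleigh quotient, expand
\[
  \mathfrak q[v]-E_R\|v\|^2
  =\bigl(\mathfrak q[u]-E_R\|u\|^2\bigr)
   -2\operatorname{Re}\bigl(\mathfrak q(u,\chi g)-E_R(u,\chi g)\bigr)
   +\bigl(\mathfrak q[\chi g]-E_R\|\chi g\|^2\bigr).
\]

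\emph{First term.} Integrate by parts using \((H-k^2)u=0\) with \(k=k_R(\zeta)\). This gives
\[
  \mathfrak q[u]=k^2\|u\|^2+\int_{\partial}\bar u\,\partial_\nu u .
\]
The boundary term is bounded by \(\|g\|\,\|\partial_\nu u\|\). Here \(\|\partial_\nu u\|=O(R^{-1}\|f\|)\) by the collar bounds used in Lemma~\ref{lem:finite-trace-regularity}, and the Lipschitz graph only changes the normal by a bounded factor. So the boundary term is \(O(R^{-2}\rho\|f\|^2)\). Since
\[
  k^2-E_R=\frac{2\pi^2}{R^3}\zeta+O(R^{-4}),
  \qquad \|v\|^2\simeq R\|f\|^2,
\]
this term contributes \(\zeta+O(\rho)+O(R^{-1})\) in the normalized scale \(R^3/(2\pi^2\|v\|^2)\).

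\emph{Cross term.} Integrate by parts again; the interior equation kills the bulk contribution up to the factor \(k^2-E_R\). What remains is a boundary term of the same type, \(O(R^{-2}\rho\|f\|^2)\), together with \((k^2-E_R)\,(u,\chi g)=O(R^{-3}\cdot R^{-1}\ell_R\rho\|f\|^2)\).

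\emph{Last term.} Lemma~\ref{lem:polynomial-collar-calculus} bounds it by
\[
  C\int|\chi'|^2\ell_R^{-2}|g|^2+\text{exp.\ small}
  \le C\ell_R^{-1}\|g\|^2+e^{-2R+C\ell_R}\|g\|_{H^1}^2 .
\]
The tangential \(H^1\) norm of \(g\) is controlled by \eqref{eq:finite-trace-regularity}. The exponential factor is harmless only while \(C\ell_R<2R\); if it is not, I would fall back on the Lipschitz-absorbed form, where the relevant coefficient is in fact \(\sinh^{-2}r\lesssim e^{-2R}\) since \(r\ge R/2\). After normalization the last term gives \(\ell_R^{-1}\rho^2\).

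The \(\ell_R/R\) term in \eqref{eq:collar-correction-defect} comes from the ratio \(\|v\|^2/\|u\|^2\) and from \(E_R\) cross contributions. The crude bound \(E_R\|\chi g\|\,\|u\|_{\rm collar}\) is safe after normalization. The main obstacle is the first two terms: one has to get the boundary flux pairing \(\int \bar g\,\partial_\nu u\) to be exactly \(O(R^{-2}\rho)\), not \(O(R^{-3/2})\). This requires the collar derivative bound \(|\partial_r s_\ell|\lesssim R^{-1}\) on the moving Lipschitz graph, where the normal is only \(L^\infty\). I would handle it by integrating by parts in the flattened \((t,\theta)\) coordinates, so that only \(\partial_t\) and the exponentially suppressed tangential terms appear.
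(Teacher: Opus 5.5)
Your proposal is correct and is essentially the paper's proof. The corrector $v=u-\chi(t/\ell_R)g$ and the size bounds are the same, and once your two Green identities are inserted, your three-term expansion reduces to the paper's identity $\mathfrak q[v]-k^2\|v\|^2=-(\mathfrak q[w,v]-k^2\langle w,v\rangle)$ with $w=\chi(t/\ell_R)g$. The paper gets this in one step by testing $(H-k^2)u=0$ against $v\in H_0^1(\Omega_{R,V})$, and then bounds the cross term by Cauchy--Schwarz in the collar, $\|\partial_t w\|\,\|u\|_{\mathfrak q,\mathcal C}\lesssim(\ell_R^{-1/2}R^{-1}\rho\|f\|)(\ell_R^{1/2}R^{-1}\|f\|)=R^{-2}\rho\|f\|^2$. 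So the boundary-flux ``obstacle'' you flag never has to be faced, and in your own route it is settled at once by the pointwise bound $|\partial_r u|\le C_LR^{-1}\|f\|$ on the finite window.

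One slip to fix: in the $\ell_R$-collar, $u$ is only $O((1+t)R^{-1}\|f\|)$, not $O(R^{-1}\|f\|)$, so its collar mass is $O(\ell_R^{3/2}R^{-1}\|f\|)$. This is still $o(R^{1/2}\|f\|)$, and $C_R^{\rm int}\subset\Omega_{R,V}$ already gives the lower bound on $\|u\|$, so none of your conclusions change.
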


\begin{proof}
Work in the outer collar.  Write
\(t=R-V(\theta)-r\), so \(t=0\) is the moving boundary and
\(0<t<\ell_R\) is the collar.  The form estimates used below are those of
Lemma~\ref{lem:polynomial-collar-calculus}.  Choose \(\chi\in C_c^\infty([0,1))\) with
\(\chi(0)=1\), and set
\[
  w(r,\theta)=\chi\!\left(\frac{R-V(\theta)-r}{\ell_R}\right)g(\theta).
\]
Then \(w=g\) on the boundary, \(w\) vanishes at the inner edge of the collar, and
\(v=u-w\in H_0^1(\Omega_{R,V})\).  Since
\[
  g=-\frac{\pi}{R}\mathfrak B_R(\zeta,V)f,
  \qquad
  \|g\|_{L^2(\Sn)}\le \pi R^{-1}\rho\|f\|,
\]
we have
\begin{equation}\label{eq:mesoscopic-extension-size}
  \|w\|_{L^2(\mathcal C_{R,V})}
  \le C\ell_R^{1/2}R^{-1}\rho\|f\|,
\end{equation}
while the radial part of the form satisfies
\begin{equation}\label{eq:mesoscopic-extension-form}
  \|\partial_t w\|_{L^2(\mathcal C_{R,V})}
  \le C\ell_R^{-1/2}R^{-1}\rho\|f\|.
\end{equation}
The angular derivative of the finite trace datum contributes only at exponentially small angular weight.  By
\cref{lem:finite-trace-regularity},
\(\|\nabla_\theta g\|\le C_{L,C,K}R^{-1}\|f\|\), and the collar form multiplies this by
\(e^{-R+O(\ell_R)}\).  Thus \eqref{eq:mesoscopic-extension-size} and
\eqref{eq:mesoscopic-extension-form} contain all polynomial contributions.

For the regular continuation itself, the finite-window phase expansion gives in the same collar
\[
  |s_\ell(R-t,k_R(\zeta))|\le C_{L,C}\frac{1+t}{R},
  \qquad
  |\partial_rs_\ell(R-t,k_R(\zeta))|\le C_{L,C}R^{-1},
  \qquad 0\le t\le\ell_R.
\]
Consequently
\begin{equation}\label{eq:regular-mesoscopic-collar-size}
  \|u\|_{L^2(\mathcal C_{R,V})}
  \le C_{L,C}\ell_R^{3/2}R^{-1}\|f\|,
  \qquad
  \|u\|_{\mathfrak q,\mathcal C_{R,V}}
  \le C_{L,C}\ell_R^{1/2}R^{-1}\|f\|,
\end{equation}
where \(\|\cdot\|_{\mathfrak q,\mathcal C}\) denotes the local square root of the Liouville form
on the collar.  The two-sided estimate for \(\|v\|\) follows from
Lemma~\ref{lem:regular-continuation-conditioning}, because \(\|u\|\simeq R^{1/2}\|f\|\) and
\(\|v-u\|=O(\ell_R^{1/2}R^{-1}\rho\|f\|)=o(R^{1/2}\|f\|)\).

It remains to estimate the defect.  Since \(u\) solves
\((H_{R,V}-k_R(\zeta)^2)u=0\) in the interior and \(v\in H_0^1(\Omega_{R,V})\), testing against
\(v\) gives
\[
  \mathfrak q_{R,V}[u,v]-k_R(\zeta)^2\langle u,v\rangle=0.
\]
With \(v=u-w\),
\[
  \mathfrak q_{R,V}[v]-k_R(\zeta)^2\|v\|^2
  =-\bigl(\mathfrak q_{R,V}[w,v]-k_R(\zeta)^2\langle w,v\rangle\bigr).
\]
The right side is supported in the collar.  Using
\eqref{eq:mesoscopic-extension-form}, \eqref{eq:regular-mesoscopic-collar-size},
\eqref{eq:mesoscopic-extension-size}, and \(k_R(\zeta)^2=O(R^{-2})\), we get
\[
  \left|\mathfrak q_{R,V}[v]-k_R(\zeta)^2\|v\|^2\right|
  \le C_{L,C,K}\bigl(R^{-2}\rho+\ell_R^{-1}R^{-2}\rho^2+\ell_RR^{-3}\bigr)\|f\|^2.
\]
Finally,
\[
  k_R(\zeta)^2=E_R+\frac{2\pi^2\zeta}{R^3}+O_C(R^{-4}),
  \qquad \|v\|^2\simeq R\|f\|^2.
\]
Dividing by \(\|v\|^2\) and multiplying by \(R^3/(2\pi^2)\) gives
\eqref{eq:collar-correction-defect}.
\end{proof}

\begin{lemma}[Collar-corrected finite trial map]\label{lem:collar-corrected-trial-map}
In the setting of \cref{lem:collar-correction}, for each fixed \(L,C,K\) there is a finite-trace collar correction map
\[
  \mathcal E_{R,V,L,\zeta}:P_LL^2(\Sn)\longrightarrow H^1(\Omega_{R,V})
\]
supported in the mesoscopic outer collar of \cref{lem:collar-correction}, defined by extending the actual boundary trace of the regular continuation:
\[
  \mathcal E_{R,V,L,\zeta}f
  =\chi((R-V(\theta)-r)/\ell_R)\,
    \gamma_{R,V}\mathcal S(k_R(\zeta))f
  \quad\text{in collar coordinates.}
\]
It is defined on the finite-dimensional trace range
\[
  \gamma_{R,V}\mathcal S(k_R(\zeta))P_LL^2(\Sn),
\]
where in the rest of the proof we take the convenient width \(\ell_R=R^{1/2}\), and the constants below may depend on \(L,C,K\).  Define
\begin{equation}\label{eq:collar-corrected-map}
  \mathcal T_{R,V}(\zeta)f
  :=\mathcal S(k_R(\zeta))f
   -\mathcal E_{R,V,L,\zeta}f .
\end{equation}
Then \(\mathcal T_{R,V}(\zeta)f\in H_0^1(\Omega_{R,V})\) for every \(f\in P_LL^2(\Sn)\).  Put
\[
  \rho(f)=\frac{\|\mathfrak B_R(\zeta,V)f\|}{\|f\|}
  \quad (f\ne0).
\]
For all finite-window vectors one has
\begin{equation}\label{eq:collar-corrected-size}
  \|\mathcal T_{R,V}(\zeta)f-\mathcal S(k_R(\zeta))f\|_{L^2}
  \le C_{L,C,K} R^{-3/4}\rho(f)\|f\|,
\end{equation}
and the effective form defect satisfies
\begin{equation}\label{eq:collar-corrected-effective-defect}
  \left|
  \frac{R^3}{2\pi^2}
  \frac{
  \mathfrak q_{R,V}[\mathcal T_{R,V}(\zeta)f]
  -E_R\|\mathcal T_{R,V}(\zeta)f\|^2
  }{
  \|\mathcal T_{R,V}(\zeta)f\|^2
  }
  -\zeta
  \right|
  \le C_{L,C,K}\bigl(\rho(f)+R^{-1/2}\rho(f)^2+R^{-1/2}\bigr).
\end{equation}
Thus small normalized boundary residuals give quasimodes, while uniformly bounded residuals still give collar corrections whose \(L^2\)-size is \(o_R(1)\) relative to the bulk \(R^{1/2}\|f\|\)-size of a regular continuation.
By polarization, the same estimate gives the corresponding bilinear form bound on finite sets of
vectors with uniformly small residuals.  Thus every finite Grushin vector used in a Ritz comparison
is first replaced by a collar-corrected vector in the Dirichlet form domain.
\end{lemma}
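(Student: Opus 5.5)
The plan is to reduce this lemma to \cref{lem:collar-correction}, applied vector by vector, and then verify the few extra structural claims: the map is well defined and linear, and its output lies in the Dirichlet form domain.

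First, fix \(L,C,K\) and the width \(\ell_R=R^{1/2}\). For \(f\in P_LL^2(\Sn)\), the trace \(g=\gamma_{R,V}\mathcal S(k_R(\zeta))f\) is a linear function of \(f\). By \cref{lem:finite-trace-regularity} it lies in \(H^1(\Sn)\), with norm \(O_{L,C,K}(R^{-1}\|f\|)\). The collar function \(\chi((R-V(\theta)-r)/\ell_R)g(\theta)\) involves only the Lipschitz graph \(V\), the bounded cutoff \(\chi\), and \(g\in H^1\), so by the chain rule it lies in \(H^1(\Omega_{R,V})\) and is supported in \(\mathcal C_{R,V}(\ell_R)\). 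Hence \(\mathcal E_{R,V,L,\zeta}\) is a well-defined linear map on the finite-dimensional trace range. Since \(\chi(0)=1\), the trace of \(\mathcal T_{R,V}(\zeta)f=u-w\) on \(\partial\Omega_{R,V}\) is \(g-g=0\). Near the pole \(u\) is a regular solution in the Friedrichs form domain and \(w\) vanishes there. Together with the trace criterion for Lipschitz domains, this gives \(\mathcal T_{R,V}(\zeta)f\in H^1_0(\Omega_{R,V})\).

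Second, the estimates \eqref{eq:collar-corrected-size} and \eqref{eq:collar-corrected-effective-defect} follow by applying \cref{lem:collar-correction} with \(\rho=\rho(f)\), because \(v=\mathcal T_{R,V}(\zeta)f\) is exactly the \(v\) constructed there. One issue is that that lemma assumes \(\rho\le\rho_0\). I would remove this by noting that \cref{lem:finite-trace-regularity} gives \(\rho(f)\le\rho_0(L,C,K)\) for every \(f\in P_LL^2\), uniformly on the window. Taking \(\rho_0\) equal to that bound makes the hypothesis automatic, and the constants then depend only on \(L,C,K\). With \(\ell_R=R^{1/2}\), the bound \(\ell_R^{1/2}R^{-1}\) becomes \(R^{-3/4}\), and \(\ell_R^{-1}\rho^2+\ell_R/R\) becomes \(R^{-1/2}(\rho^2+1)\). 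The remark that bounded residuals give \(o_R(1)\) relative corrections then follows by comparing \(R^{-3/4}\rho_0\|f\|\) with \(\|u\|\simeq R^{1/2}\|f\|\), using \cref{lem:regular-continuation-conditioning}.

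Third, the bilinear statement. Polarize the quadratic form \(\mathfrak q_{R,V}[\cdot]-E_R\|\cdot\|^2\) on the span of finitely many vectors \(f_i\). The defect identity in the proof of \cref{lem:collar-correction} is itself bilinear: since \(u_i\) solves the equation,
\[
\mathfrak q[v_i,v_j]-k^2\langle v_i,v_j\rangle=-\bigl(\mathfrak q[w_i,v_j]-k^2\langle w_i,v_j\rangle\bigr).
\]
The right side is estimated by Cauchy--Schwarz using the collar bounds \eqref{eq:mesoscopic-extension-size}, \eqref{eq:mesoscopic-extension-form}, and \eqref{eq:regular-mesoscopic-collar-size} for each index. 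This gives the matrix version with errors \(C(\max_i\rho(f_i)+R^{-1/2})\).

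The main obstacle is the uniformity of the defect estimate in the bilinear and Ritz setting. Polarizing quadratic quasimode bounds naively loses control when residuals differ across vectors. I would therefore redo the bilinear identity directly rather than polarize, and use the cross-term \(\mathfrak q[w_i,w_j]\), which is bounded via Lemma~\ref{lem:polynomial-collar-calculus}. In that lemma the angular contributions carry the factor \(e^{-2R+C\ell_R}\), which is harmless for \(\ell_R=R^{1/2}\).
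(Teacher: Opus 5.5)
Your proposal is correct and follows essentially the same route as the paper. You identify \(\mathcal E_{R,V,L,\zeta}f\) with the extension \(w\) from the proof of \cref{lem:collar-correction}, check that the trace of \(\mathcal T_{R,V}(\zeta)f\) vanishes, read off \eqref{eq:collar-corrected-size}--\eqref{eq:collar-corrected-effective-defect} with \(\ell_R=R^{1/2}\), and get the bilinear version on the finite-dimensional space \(P_LL^2\). Two of your additions go slightly beyond the paper. First, you note that \cref{lem:finite-trace-regularity} makes \(\rho(f)\) uniformly bounded on \(P_LL^2\), so the hypothesis \(\rho\le\rho_0\) holds automatically; the paper leaves this implicit. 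Second, you use a direct bilinear defect identity instead of the paper's polarization. That is a fine substitute, and the paper's polarization also works if you polarize the unnormalized quadratic bound rather than the normalized quotient.
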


\begin{proof}
The displayed definition of \(\mathcal E_{R,V,L,\zeta}\) is the extension \(w\) used in the proof of \cref{lem:collar-correction}, with
\[
  g=\gamma_{R,V}\mathcal S(k_R(\zeta))f.
\]
The boundary datum belongs to the finite trace range controlled by \cref{lem:finite-trace-regularity}.  Its angular derivatives enter the Liouville form multiplied by \(\sinh^{-2}(R+O(\ell_R))\), so the finite-window bound in \cref{lem:collar-correction} controls the extension by the \(L^2\)-normalized residual.  The trace of \(\mathcal S(k_R(\zeta))f-\mathcal E_{R,V,L,\zeta}f\) on the graph is zero, which proves membership in \(H_0^1\).  The size and effective-defect estimates are exactly \eqref{eq:collar-correction-size} and \eqref{eq:collar-correction-defect}, with \(v=\mathcal T_{R,V}(\zeta)f\).  The bilinear statement follows by polarization on the fixed finite-dimensional space \(P_LL^2\).
\end{proof}

\begin{lemma}[Parameter stability of finite corrected trial spaces]\label{lem:finite-corrected-parameter-stability}
Fix finite numbers \(L,C<\infty\) and a Lipschitz bound \(K\).  Let \(0\le V\le\log2\) with
\(\|V\|_{C^{0,1}}\le K\).  For \(|\zeta|\le C\) and \(f,g\in P_LL^2(\Sn)\),
\begin{equation}\label{eq:finite-corrected-parameter-L2}
  \|\mathcal T_{R,V}(\zeta)f-\mathcal T_{R,V}(0)f\|_{L^2(\Omega_{R,V})}
  \le C_{L,C,K}R^{-1/2}\|f\|.
\end{equation}
Moreover, with
\[
  \mathfrak h_R[a,b]=\frac{R^3}{2\pi^2}\bigl(\mathfrak q_{R,V}[a,b]-E_R\langle a,b\rangle\bigr),
\]
we have
\begin{equation}\label{eq:finite-corrected-parameter-form}
 \left|
 \mathfrak h_R[\mathcal T_{R,V}(\zeta)f,\mathcal T_{R,V}(\zeta)g]
 -\mathfrak h_R[\mathcal T_{R,V}(0)f,\mathcal T_{R,V}(0)g]
 \right|
 \le C_{L,C,K}R^{-1/2}\|f\|\|g\|.
\end{equation}
Consequently \(\mathcal T_{R,V}(\zeta)P_LL^2(\Sn)\), \(|\zeta|\le C\), may be replaced in the
Ritz--Grushin comparison by the single finite space \(\mathcal T_{R,V}(0)P_LL^2(\Sn)\), at an
\(o_R(1)\) error in the effective scale.
\end{lemma}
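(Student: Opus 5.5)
The plan is to treat \(\mathcal T_{R,V}(\zeta)f-\mathcal T_{R,V}(0)f\) as two pieces: the bulk difference \(\mathcal S(k_R(\zeta))f-\mathcal S(k_R(0))f\), and the difference of the collar corrections \(\mathcal E_{R,V,L,\zeta}f-\mathcal E_{R,V,L,0}f\). Since \(P_LL^2(\Sn)\) is finite dimensional, everything reduces to finitely many angular sectors, so the constants may depend on \(L\).

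\textbf{Bulk piece.} By \eqref{eq:regular-phase-normalization}, on each sector
\[
s_\ell(r,k_R(\zeta))-s_\ell(r,k_R(0))=\sin\{k_R(\zeta)(r+b_\ell)\}-\sin\{k_R(0)(r+b_\ell)\}+O(R^{-3})+O(R^{-1}(1+r)e^{-2r}).
\]
The phase difference is \(\pi\zeta(r+b_\ell)/R^2=O(R^{-1})\) uniformly for \(r\le R+O(1)\). Hence the pointwise difference is \(O_{L,C}(R^{-1})\). Integrating over \(r\in(0,R)\) gives an \(L^2\) bound \(O(R^{1/2}\cdot R^{-1})=O(R^{-1/2})\|f\|\).

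\textbf{Collar piece.} The collar correction is \(\chi(t/\ell_R)\) times the trace difference \(g_\zeta-g_0\). The trace of the sine difference at \(r=R-V\) is \(O(R^{-1})\), but the extension lives on a collar of width \(\ell_R=R^{1/2}\). Its \(L^2\) norm is therefore \(O(\ell_R^{1/2}R^{-1})\|f\|=O(R^{-3/4})\|f\|\). Adding the two pieces gives \eqref{eq:finite-corrected-parameter-L2}.

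\textbf{Form bound.} For \eqref{eq:finite-corrected-parameter-form} I would not compute \(\mathfrak q_{R,V}\) of the difference directly. Naively that only gives \(\mathfrak q[\delta]\sim R^{-2}\cdot R^{-1}\), and the prefactor \(R^3\) would then destroy smallness. Instead I would exploit that \(u_\zeta=\mathcal S(k_R(\zeta))f\) solves \((H-k_R(\zeta)^2)u_\zeta=0\) in the interior. Testing against \(H_0^1\) functions, as in the proof of \cref{lem:collar-correction}, gives for \(a=\mathcal T(\zeta)f\) and any \(b\in H_0^1\):
\[
\mathfrak q[a,b]-k_R(\zeta)^2\langle a,b\rangle=-\bigl(\mathfrak q[w_\zeta,b]-k_R(\zeta)^2\langle w_\zeta,b\rangle\bigr),
\]
where \(w_\zeta\) is the collar correction.

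Using this, I would write
\[
\mathfrak h_R[a_\zeta,b_\zeta]=\tfrac{R^3}{2\pi^2}\bigl(k_R(\zeta)^2-E_R\bigr)\langle a_\zeta,b_\zeta\rangle+\text{collar term}.
\]
The first term equals \(\zeta\langle a_\zeta,b_\zeta\rangle\), up to \(O(R^{-1})\) times \(\langle\cdot,\cdot\rangle\). The collar term is bounded, via \eqref{eq:mesoscopic-extension-form} and \eqref{eq:regular-mesoscopic-collar-size}, by \(R^3\cdot\ell_R^{-1/2}R^{-1}\cdot\ell_R^{1/2}R^{-1}\rho\|f\|\|g\|\).

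The hard point is that this expression is not obviously stable in \(\zeta\): its leading term \(\zeta\langle a,b\rangle\) varies with \(\zeta\). The difference must therefore be shown small in the \(\|f\|\|g\|\) normalization rather than the \(R\|f\|\|g\|\) normalization. I would handle this by normalizing \(\mathcal T\) by \(\sqrt{2/R}\), consistent with \eqref{eq:regular-continuation-first-band-close}. The comparison to \(\mathcal J_R\) then identifies both forms with \(\langle (T_n+V-b_0)f,g\rangle+o(1)\) through \cref{lem:boundary-shift}. Two facts drive this identification. First, the \(\zeta\)-dependence cancels, because \(\zeta\langle a,b\rangle\) is compensated by the \(\zeta\)-linear part of the normalized residual \(\mathfrak B_R(\zeta,V)=\zeta-A_V+o(1)\) entering the collar term. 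Second, bilinear differentiation in \(\zeta\) is controlled by the \(O(R^{-1/2})\) relative \(L^2\) change shown above, together with the finite-window derivative bounds \(\partial_k s_\ell=O(R)\).

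This cancellation bookkeeping is the main obstacle. I would first try to verify it on a single sector with constant \(V\) using \cref{prop:model-boundary-phase}, then extend it to the finite window by polarization.
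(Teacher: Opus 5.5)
Your \(L^2\) estimate is the paper's argument. The sectorwise phase difference \(\pi\zeta(r+b_\ell)/R^2=O(R^{-1})\), integrated over \(r\le R\), gives \(O(R^{-1/2})\|f\|\). The \(O(R^{-1})\) trace difference spread over the collar of width \(\ell_R=R^{1/2}\) gives \(O(R^{-3/4})\|f\|\). One small correction: the \emph{relative} \(L^2\) change is \(O(R^{-1})\), not \(O(R^{-1/2})\), since \(\|\mathcal T_{R,V}(0)f\|\simeq R^{1/2}\|f\|\).

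For the form bound you depart from the paper, and your diagnosis is right. The paper's proof only cites \(k_R(\zeta)^2-k_R(0)^2=O(R^{-3})\) and the collar estimates. Without a cancellation, that bookkeeping gives at best \(O(R)\|f\|\|g\|\) after the factor \(R^3\), which is not small.

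Your Green-identity route is what works, and the cancellation is one line. Rerun the proof of \cref{lem:fixed-corrected-bilinear-estimates} at \(k=k_R(\zeta)\), where now \(B_R^{\mathrm{raw}}(k_R(\zeta),V)f=\frac{\pi}{R}(A_{V,L}-\zeta)f+O(R^{-2})\|f\|\). Then add
\[
(k_R(\zeta)^2-E_R)\langle\mathcal T_{R,V}(\zeta)f,\mathcal T_{R,V}(\zeta)g\rangle=\tfrac{\pi^2\zeta}{R^2}\langle f,g\rangle+O(R^{-3})\|f\|\|g\|,
\]
which is the Gram estimate; it persists at \(\zeta\) by your \(L^2\) bound. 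The two \(\zeta\)-terms cancel and
\[
\mathfrak h_R[\mathcal T_{R,V}(\zeta)f,\mathcal T_{R,V}(\zeta)g]
=\frac R2\langle A_{V,L}f,g\rangle+O_{L,C,K}(R^{1/2})\|f\|\|g\|,
\qquad |\zeta|\le C .
\]
You do not need \cref{lem:boundary-shift}, which assumes \(V\in C^2\) and concerns different vectors. You also do not need a separate single-sector check.

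Be explicit, however, that this is weaker than \eqref{eq:finite-corrected-parameter-form} by a factor \(R\). The difference is \(O(R^{1/2})\|f\|\|g\|\) in the lemma's sine-amplitude coefficients. It becomes \(O(R^{-1/2})\) only after bulk normalization by the Gram matrix \(\simeq (R/2)I\). Your phrase ``normalizing \(\mathcal T\) by \(\sqrt{2/R}\)'' silently switches to this statement. Say so, because \eqref{eq:finite-corrected-parameter-form} as written is false.

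Here is a counterexample. Take \(n=3\), \(\ell=0\), so \(\beta=1\), \(b_0=0\) and \(s_0(r,k)=\sin kr\) exactly; take \(V=0\) and \(f=Y_{00}\).
\begin{itemize}
\item \(\mathcal T_{R,0}(0)f=\sin(\pi r/R)Y_{00}\) is an exact Dirichlet eigenfunction at energy \(E_R\), so its \(\mathfrak h_R\) vanishes.
\item With \(k=k_R(\zeta)\) and \(\varepsilon=\sin(\pi\zeta/R)\), one has \(T:=\mathcal T_{R,0}(\zeta)f=(\sin kr+\varepsilon\chi((R-r)/\ell_R))Y_{00}\), where \(\chi(0)=1\), \(\chi(1)=0\).
\item Green's identity gives \(\mathfrak q[T]-k^2\|T\|^2=-\varepsilon k\cos(\pi\zeta/R)+\varepsilon^2\bigl(\ell_R^{-1}\int_0^1\chi'^2-k^2\ell_R\int_0^1\chi^2\bigr)\).
\item Also \((k^2-E_R)\|T\|^2=\pi^2\zeta/R^2+O(R^{-3})\).
\end{itemize}
The \(R^{-2}\) terms cancel, leaving
\[
\mathfrak h_R[\mathcal T_{R,0}(\zeta)f]=\frac{\zeta^2}{2}\Bigl(\int_0^1\chi'(t)^2\,dt\Bigr)R^{1/2}+O(1),
\qquad \int_0^1\chi'^2\ge 1 .
\]
So the \(R^{1/2}\) rate is sharp; after normalization it is exactly the \(\ell_R^{-1}\rho^2\) term of \eqref{eq:collar-correction-defect}.

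The ``Consequently'' clause and \cref{lem:fixed-corrected-ritz-space} use only the normalized Ritz matrices. Your argument, stated in normalized form, therefore delivers what is actually needed. The literal inequality should be corrected rather than proved.
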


\begin{proof}
Only a fixed angular window is involved.  By the threshold phase expansion, for
\(r\le R+O(R^{1/2})\),
\[
  s_\ell(r,k_R(\zeta))-s_\ell(r,k_R(0))
  =O_{L,C}\bigl(|k_R(\zeta)-k_R(0)|r\bigr)=O_{L,C}(R^{-1}),
\]
and the same finite-window estimate holds for the radial derivative.  Integrating over a radial
interval of length \(R+O(R^{1/2})\) gives \eqref{eq:finite-corrected-parameter-L2} for the regular
continuations.  The moving traces of the two finite regular continuations differ by
\(O_{L,C,K}(R^{-1})\|f\|\), so their finite collar extensions differ in \(L^2\) by
\(O_{L,C,K}(R^{-3/4})\|f\|\), which is smaller than the bulk bound.

For the form estimate, use
\[
  k_R(\zeta)^2-k_R(0)^2=O_C(R^{-3})
\]
plus the same finite-window radial derivative bounds and mesoscopic collar estimates as in
\cref{lem:collar-correction}.  This changes the rescaled form matrix by at most
\(C_{L,C,K}R^{-1/2}\|f\|\|g\|\), proving \eqref{eq:finite-corrected-parameter-form}.  Since the bulk
norm of a corrected finite continuation is comparable to \(R^{1/2}\|f\|\), the corresponding
subspace gap is \(O(R^{-1})\), and the Ritz values change by \(O(R^{-1/2})\).
\end{proof}

 \subsection{Finite-window estimates and Schur comparison}

The first estimates in this subsection prove the finite-window identity for the boundary pencil.
The remaining estimates compare that finite identity with the full pencil.  Multiplication by \(V\)
can move angular mass out of a fixed window; this is handled by the facts that the multipliers of
\(T_n\) tend to infinity and \(\|V\|_\infty\le\log2\).  The growth rate of the multipliers affects
only the size of the cutoffs, not the form of the comparison.

\begin{lemma}[Boundary expansion of the regular solution]\label{lem:boundary-grushin-expansion}
Fix \(M,C<\infty\).  Uniformly for \(|\zeta|\le C\), \(0\le V\le\log2\), and
\(Y\in\calH_\ell\) with \(\psi(\ell+\alpha)-\psi(\alpha)\le M\),
\begin{equation}\label{eq:scalar-boundary-expansion}
  s_\ell(R-V(\theta),k_R(\zeta))Y(\theta)
  =-\frac\pi R\bigl(\zeta+b_\ell-V(\theta)\bigr)Y(\theta)
  +O_{M,C}(R^{-2})\|Y\|_{L^2}
\end{equation}
in \(L^2(\Sn)\).  Equivalently, if \(b(A)Y_{\ell m}=b_\ell Y_{\ell m}\), then on every fixed
\(T_n\)-window
\begin{equation}\label{eq:operator-boundary-expansion}
  -\frac R\pi\,B_R^{\mathrm{raw}}(k_R(\zeta),V)
  =\zeta I+b(A)-V+O_{M,C}(R^{-1})
  =\zeta I-(T_n+V-b_0)+O_{M,C}(R^{-1}).
\end{equation}
\end{lemma}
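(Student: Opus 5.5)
The plan is to prove \eqref{eq:scalar-boundary-expansion} sector by sector and pointwise in \(\theta\), and then sum over the finitely many harmonics in the window. Fix \(\ell\) with \(\psi(\ell+\alpha)-\psi(\alpha)\le M\). Since \(\psi(\ell+\alpha)\to\infty\), only finitely many degrees \(\ell\le\ell_M\) occur, and the parameters \(\beta=\ell+\alpha\) lie in a compact interval \(I_M\subset[1/2,\infty)\). The constants in the phase expansion \eqref{eq:regular-phase-normalization} (equivalently Lemma~\ref{lem:low-energy-phase}) are therefore uniform over the window. For \(|\zeta|\le C\) we have \(k=k_R(\zeta)=\pi/R+O_C(R^{-2})\sim R^{-1}\), and the evaluation radius \(r=R-V(\theta)\) lies in \([R-\log2,R]\), so the remainder there is \(O(k^3)+O(k(1+r)e^{-2r})=O_{M,C}(R^{-3})\), uniformly in \(\theta\) and in \(V\) with \(0\le V\le\log2\).

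Next I expand the phase. Write
\[
  k_R(\zeta)\bigl(R-V(\theta)+b_\ell\bigr)=\pi+\frac{\pi}{R}\bigl(\zeta+b_\ell-V(\theta)\bigr)+\frac{\pi\zeta(b_\ell-V(\theta))}{R^2}.
\]
The last term is \(O_{M,C}(R^{-2})\) uniformly, since \(|b_\ell|\) is bounded on the window and \(0\le V\le\log2\). Using \(\sin(\pi+x)=-x+O(x^3)\) with \(x=O_{M,C}(R^{-1})\) gives
\[
  s_\ell(R-V(\theta),k_R(\zeta))=-\frac\pi R\bigl(\zeta+b_\ell-V(\theta)\bigr)+O_{M,C}(R^{-2})
\]
uniformly in \(\theta\). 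This is exactly the computation in Proposition~\ref{prop:model-boundary-phase}, now with \(a=V(\theta)\) varying pointwise. The key point is that the bound is uniform in \(a\in[0,\log2]\), so freezing \(a=V(\theta)\) costs nothing.

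Multiplying by \(Y(\theta)\) and taking \(L^2(\Sn)\) norms gives \eqref{eq:scalar-boundary-expansion}, since a sup-norm error times \(Y\) is bounded by that error times \(\|Y\|_{L^2}\). For a general \(f=\sum_{\ell m}f_{\ell m}Y_{\ell m}\) in the \(T_n\)-window, linearity gives
\[
  B^{\mathrm{raw}}_R(k_R(\zeta),V)f=-\frac\pi R\bigl(\zeta f+b(A)f-Vf\bigr)+E,
\]
with \(\|E\|\le\sum_\ell O(R^{-2})\|P_{\calH_\ell}f\|\le C_M R^{-2}\|f\|\). Here I use Cauchy--Schwarz over the finitely many \(\ell\le\ell_M\); the number of these sectors depends only on \(M\) and \(n\). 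Multiplying by \(-R/\pi\) yields the first identity in \eqref{eq:operator-boundary-expansion}. Note that the output \(Vf\) is not in the window, so the operator identity is read as a map from \(P_ML^2\) into \(L^2(\Sn)\). The second identity follows from \(b(A)=b_0-T_n\) on each \(\calH_\ell\), which is \eqref{eq:Tn-b}.

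The only delicate point is the uniformity of the phase remainder at the endpoint \(\beta=1/2\) (the case \(n=2\), \(\ell=0\)) and across \(\beta\in I_M\). I rely on Lemma~\ref{lem:low-energy-phase}, which is stated uniformly on compact \(I\) and includes the Friedrichs endpoint. Apart from that, everything is an elementary Taylor expansion.
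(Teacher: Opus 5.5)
Your proposal is correct and follows essentially the same route as the paper: expand the phase \(k_R(\zeta)(R-V+b_\ell)=\pi+\frac{\pi}{R}(\zeta+b_\ell-V)+O_{M,C}(R^{-2})\) about \(\pi\), apply the sine-amplitude normalization \eqref{eq:regular-phase-normalization}, whose remainder is \(O(R^{-3})\) at \(r=R+O(1)\), and sum over the finitely many sectors in the \(T_n\)-window. The second identity then follows from \(b(A)=b_0-T_n\). Your additional points — uniformity when freezing \(a=V(\theta)\) pointwise, Cauchy--Schwarz over sectors, and reading the identity as a map from the window into \(L^2(\Sn)\) — simply make explicit details that the paper's three-line proof leaves implicit.
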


\begin{proof}
By \eqref{eq:k-zeta},
\[
  k_R(\zeta)(R-V+b_\ell)
  =\pi+\frac\pi R(\zeta+b_\ell-V)+O_{M,C}(R^{-2}).
\]
Taking the sine and using \eqref{eq:regular-phase-normalization} gives
\eqref{eq:scalar-boundary-expansion}.  Since a bounded \(T_n\)-window contains only finitely many
angular momenta, the scalar estimate may be summed over the window.  Finally
\(b(A)=b_0-T_n\), which gives the second identity in \eqref{eq:operator-boundary-expansion}.
\end{proof}

\begin{remark}[Order of angular cutoffs]\label{rem:ordered-cutoffs}
The cutoffs are chosen in the order needed by the estimates.  The inner cutoff \(M\) fixes the
finite section of the effective operator.  With \(M\) fixed, choose \(L\ge M\) so that
\[
  \sup_{V\in\calB}\|P_{>L}VP_M\|
\]
is small.  The limit \(R\to\infty\) is then taken with \((M,L)\) fixed, and finally
\(M\to\infty\).
\end{remark}

\begin{lemma}[Finite-rank high-mode multiplication estimate]\label{lem:finite-rank-high-mode-multiplication}
Fix \(M,C<\infty\) and let \(\calB\subset C(\Sn)\) be compact with
\(0\le V\le\log2\).  Write
\[
  P_M=\mathbf 1_{[0,M]}(T_n),\qquad
  P_L=\mathbf 1_{[0,L]}(T_n),\qquad P_{>L}=I-P_L .
\]
For every \(\varepsilon>0\) there exist \(L\ge M\) and
\(R_0\) such that, uniformly for \(V\in\calB\), \(|\zeta|\le C\), and
\(R\ge R_0\),
\begin{equation}\label{eq:finite-rank-high-mode-multiplication}
  \bigl\|P_{>L}\mathfrak B_R(\zeta,V)P_M\bigr\|_{P_ML^2\to L^2}
  \le \varepsilon,
  \qquad
  \mathfrak B_R(\zeta,V):=-\frac R\pi B_R^{\mathrm{raw}}(k_R(\zeta),V).
\end{equation}
More precisely,
\begin{equation}\label{eq:finite-rank-high-mode-multiplication-expansion}
  P_{>L}\mathfrak B_R(\zeta,V)P_M
  =-P_{>L}VP_M+O_{M,C}(R^{-1})
\end{equation}
in operator norm.
\end{lemma}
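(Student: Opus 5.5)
The plan is to prove the expansion \eqref{eq:finite-rank-high-mode-multiplication-expansion} first and then deduce \eqref{eq:finite-rank-high-mode-multiplication} from a uniform compactness argument for multiplication operators. For the expansion, apply Lemma~\ref{lem:boundary-grushin-expansion} on the fixed \(T_n\)-window \(P_ML^2\). It gives, in operator norm from \(P_ML^2\) to \(L^2(\Sn)\),
\[
  \mathfrak B_R(\zeta,V)P_M=\bigl(\zeta I+b(A)-V\bigr)P_M+O_{M,C}(R^{-1}),
\]
uniformly for \(|\zeta|\le C\) and \(0\le V\le\log2\). The scalar estimate \eqref{eq:scalar-boundary-expansion} is stated for each \(Y\in\calH_\ell\) with an error in \(L^2(\Sn)\). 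Because \(P_ML^2\) is finite dimensional, with dimension depending only on \(M\), summing over an orthonormal basis of spherical harmonics gives the operator-norm bound with a constant depending only on \(M,C\). Next apply \(P_{>L}\) with \(L\ge M\). The operators \(\zeta I\) and \(b(A)=b_0-T_n\) commute with the spectral projections of \(T_n\), so \(P_{>L}(\zeta I+b(A))P_M=0\). What remains is \(-P_{>L}VP_M+O_{M,C}(R^{-1})\), which is \eqref{eq:finite-rank-high-mode-multiplication-expansion}.

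For the smallness statement, it suffices to show \(\sup_{V\in\calB}\|P_{>L}VP_M\|\to0\) as \(L\to\infty\). Once that holds, choose \(L\) with this supremum below \(\varepsilon/2\), and then choose \(R_0\) so that the \(O_{M,C}(R^{-1})\) term is below \(\varepsilon/2\).

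The key step is to fix an orthonormal basis \(Y_1,\dots,Y_N\) of \(P_ML^2\). For \(f=\sum c_iY_i\), one has \(\|P_{>L}Vf\|\le\|c\|_{\ell^1}\max_i\|P_{>L}(VY_i)\|\le \sqrt N\,\|f\|\max_i\|P_{>L}(VY_i)\|\). So it is enough to control \(\sup_{V\in\calB}\|P_{>L}(VY_i)\|_{L^2}\) for each fixed smooth \(Y_i\).

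The map \(V\mapsto VY_i\) is Lipschitz from \(C(\Sn)\) to \(L^2(\Sn)\), with constant \(\|Y_i\|_{L^2}\le C_M\). Hence \(\{VY_i:V\in\calB\}\) is compact in \(L^2\). On a compact set \(\mathcal K\subset L^2\), the projections \(P_{>L}\) tend to zero uniformly, by the same finite \(\varepsilon/3\)-net argument used in Lemma~\ref{lem:uniform-heat-regularization}. This works because \(P_{>L}\to0\) strongly, \(\|P_{>L}\|\le1\), and the multipliers \(\psi(\ell+\alpha)-\psi(\alpha)\) increase to \(\infty\), so \(P_L\) exhausts \(L^2\).

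The main obstacle, mild here, is keeping the orders of quantifiers straight. The constant in \(O_{M,C}(R^{-1})\) must not depend on \(L\), which holds because the estimate involves only the window \(P_M\) and the trace on it. It must also not depend on \(V\), which holds because Lemma~\ref{lem:boundary-grushin-expansion} is uniform over \(0\le V\le\log2\). The choice of \(L\) uses compactness of \(\calB\) alone. With \(M\) fixed, \(L\) is chosen first and \(R_0\) afterwards, exactly as in Remark~\ref{rem:ordered-cutoffs}.
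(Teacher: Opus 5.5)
Your proposal is correct and follows the paper's proof. The expansion comes from Lemma~\ref{lem:boundary-grushin-expansion} with the diagonal terms killed by \(P_{>L}\), and the smallness from uniform convergence of \(P_{>L}\to0\) on a compact subset of \(L^2\). The only cosmetic difference is that you reduce to a fixed orthonormal basis of \(P_ML^2\), at the cost of a \(\sqrt N\) factor, whereas the paper uses compactness of \(\calB\times\{\phi\in P_ML^2:\|\phi\|=1\}\) directly.
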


\begin{proof}
Apply Lemma~\ref{lem:boundary-grushin-expansion} with the vector restricted to the fixed finite
space \(P_ML^2(\Sn)\).  The diagonal pieces \(\zeta I\), \(T_n\), and \(b_0I\) preserve
\(P_ML^2\), and hence are killed by \(P_{>L}\) once \(L\ge M\).  This gives
\eqref{eq:finite-rank-high-mode-multiplication-expansion}.

It remains to control \(P_{>L}VP_M\), uniformly for \(V\in\calB\).  The unit sphere of the finite
space \(P_ML^2\) is compact, and the map
\[
  (V,\phi)\longmapsto V\phi
\]
is continuous from the compact set
\(\calB\times\{\phi\in P_ML^2:\|\phi\|=1\}\) into \(L^2(\Sn)\).  Its image is therefore compact.
Since \(P_{>L}\to0\) strongly on \(L^2(\Sn)\), the convergence is uniform on this compact image.
Thus \(\sup_{V\in\calB}\|P_{>L}VP_M\|\to0\) as \(L\to\infty\).  After choosing \(L\), take
\(R\) large enough to absorb the \(O_{M,C}(R^{-1})\) term.

\end{proof}

\begin{lemma}[High-mode estimate after a fixed core]\label{lem:outer-tail-fixed-inner}
Let \(\calB\subset C(\Sn)\) be compact, with \(0\le V\le\log2\), and fix
\(M,C<\infty\).  For every \(\varepsilon>0\) there are \(L\ge M\) and \(R_0\) such that, for
\(R\ge R_0\), \(|\zeta|\le C\), and \(V\in\calB\), the following holds.  If
\(f=f_M+f_Q\) with \(f_M\in P_ML^2(\Sn)\) and \(f_Q\in (P_L-P_M)L^2(\Sn)\), then
\begin{equation}\label{eq:outer-tail-inner-first}
  \bigl\|P_{>L}\mathfrak B_R(\zeta,V)f\bigr\|
  \le \varepsilon\|f_M\|+
       (\log2+\varepsilon)\|f_Q\|+O_{L,C}(R^{-1})\|f\| .
\end{equation}
Consequently, if \(\|f_Q\|\le \rho_M\|f_M\|\) with \(\rho_M\to0\) as \(M\to\infty\), then the
high-mode projection tends to zero in the nested order
\begin{equation}\label{eq:nested-cutoff-order}
  M\ \hbox{fixed first},\qquad L=L(M,\varepsilon)\gg M,
  \qquad R\to\infty,
  \qquad M\to\infty .
\end{equation}
The compactness estimate used here is \(\sup_{V\in\calB}\|P_{>L}VP_M\|\to0\).
\end{lemma}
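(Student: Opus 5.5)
The plan is to split \(\mathfrak B_R(\zeta,V)f\) by linearity into \(\mathfrak B_R(\zeta,V)f_M\) and \(\mathfrak B_R(\zeta,V)f_Q\), and to bound the \(P_{>L}\)-projection of each piece. Throughout, the relevant spaces are finite dimensional with \((M,L)\) fixed, so the boundary expansion of Lemma~\ref{lem:boundary-grushin-expansion} applies on the whole window \(P_LL^2(\Sn)\). Its \(O_{L,C}(R^{-1})\) remainder is uniform for \(0\le V\le\log2\) and \(|\zeta|\le C\).

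\textbf{Core piece.} Fix \(\varepsilon>0\). By Lemma~\ref{lem:finite-rank-high-mode-multiplication}, choose \(L\ge M\) and \(R_0\) so that \(\|P_{>L}\mathfrak B_R(\zeta,V)P_M\|\le\varepsilon\) uniformly on \(\calB\). More precisely, I use the expansion \eqref{eq:finite-rank-high-mode-multiplication-expansion}, which gives \(\|P_{>L}VP_M\|\le\varepsilon\) plus \(O_{M,C}(R^{-1})\). This produces the term \(\varepsilon\|f_M\|+O(R^{-1})\|f_M\|\).

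\textbf{Shell piece.} Apply Lemma~\ref{lem:boundary-grushin-expansion} on the window \(P_L\), which is legitimate because \(L\) is now fixed. This gives
\[
  \mathfrak B_R(\zeta,V)f_Q=\bigl(\zeta-(T_n+V-b_0)\bigr)f_Q+O_{L,C}(R^{-1})\|f_Q\|.
\]
The diagonal operators \(\zeta I\), \(T_n\) and \(b_0 I\) preserve \((P_L-P_M)L^2\), so \(P_{>L}\) annihilates them. What remains is \(-P_{>L}Vf_Q\), and \(\|P_{>L}Vf_Q\|\le\|V\|_\infty\|f_Q\|\le\log2\,\|f_Q\|\). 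This already yields \eqref{eq:outer-tail-inner-first}, even without the extra \(\varepsilon\) in front of \(\|f_Q\|\). The remainders add up to \(O_{L,C}(R^{-1})(\|f_M\|+\|f_Q\|)\le O_{L,C}(R^{-1})\|f\|\), using orthogonality of \(f_M\) and \(f_Q\).

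\textbf{Consequence.} If \(\|f_Q\|\le\rho_M\|f_M\|\), the right side is at most
\[
  (\varepsilon+(\log2+\varepsilon)\rho_M)\|f_M\|+O_{L,C}(R^{-1})\|f\|.
\]
Take the limits in the nested order \eqref{eq:nested-cutoff-order}. With \(M\) fixed and \(L=L(M,\varepsilon)\) chosen, send \(R\to\infty\) to kill the \(R^{-1}\) term. Then let \(\varepsilon\downarrow0\) along the choice of \(L\), and finally let \(M\to\infty\), so that \(\rho_M\to0\). The bound then tends to zero relative to \(\|f\|\).

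\textbf{Main obstacle.} The only delicate point is uniformity. The \(O_{L,C}(R^{-1})\) constant from Lemma~\ref{lem:boundary-grushin-expansion} must be uniform in \(V\in\calB\). This is why \(L\) has to be fixed before \(R\to\infty\): the sector constants blow up with \(L\). The compactness estimate \(\sup_{V\in\calB}\|P_{>L}VP_M\|\to0\) is only available for a fixed inner core, which explains the ordering.
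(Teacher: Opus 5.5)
Your proposal is correct and follows the paper's proof essentially verbatim. The core piece is handled by the compactness estimate of Lemma~\ref{lem:finite-rank-high-mode-multiplication}. On the shell, the diagonal terms are annihilated by \(P_{>L}\), leaving only \(-P_{>L}Vf_Q\), of norm at most \(\log2\,\|f_Q\|\), plus the fixed-window \(O_{L,C}(R^{-1})\) remainder from Lemma~\ref{lem:boundary-grushin-expansion}. Your explicit treatment of the nested limit (send \(R\to\infty\), then \(\varepsilon\downarrow0\), then \(M\to\infty\)) spells out what the paper leaves as ``the final assertion.'' Your remark that the extra \(\varepsilon\) in front of \(\|f_Q\|\) is superfluous is also right.
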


\begin{proof}
For the inner part, apply Lemma~\ref{lem:finite-rank-high-mode-multiplication} after \(M\) is fixed:
compactness of the set
\[
  \{V\phi:V\in\calB,\ \phi\in P_ML^2(\Sn),\ \|\phi\|=1\}
\]
implies \(P_{>L}VP_M\to0\) uniformly as \(L\to\infty\).  Thus
\(P_{>L}\mathfrak B_R(\zeta,V)P_M\) has norm at most \(\varepsilon\) for large \(L\), then large
\(R\).

For \(f_Q\in(P_L-P_M)L^2\), the diagonal terms \(\zeta I\), \(T_n\), and \(b_0I\) remain in
\(P_LL^2\) and are killed by \(P_{>L}\).  The only non-diagonal leading term is multiplication by
\(-V\), and \(\|V\|_\infty\le\log2\).  The finite-window boundary expansion contributes the
\(O_{L,C}(R^{-1})\) error, because \(L\) is already fixed when \(R\to\infty\).  This proves
\eqref{eq:outer-tail-inner-first} and the final assertion.
\end{proof}

The preceding expansion is finite-window.  We now connect this finite window to the low spectrum.
The connection uses Dirichlet monotonicity,
\[
  \Omega_{R,V}\subset B_R(o)
\]
and the spectral decomposition of the ball.  High angular first modes and all higher radial modes of
the ball lie above the effective window.  A low Rayleigh quotient in \(\Omega_{R,V}\), after zero
extension to \(B_R\), therefore has a small component in those directions.

Let \(\nu_{k\ell}(R)\) denote the \(k\)-th radial eigenvalue of \(H_\ell\) on \((0,R)\), so that the
spectrum of the Liouville operator on the ball \(B_R\) is
\[
  \{\nu_{k\ell}(R): k\ge1,\ \ell\ge0\},
\]
with the usual spherical-harmonic multiplicities.  For \(M>0\), define
\begin{equation}\label{eq:ball-low-projector}
  \Pi_{R,M}
  =\sum_{T_n(\ell)\le M}\sum_m
  |\zeta_{\ell,R}Y_{\ell m}\rangle\langle \zeta_{\ell,R}Y_{\ell m}|,
\end{equation}
where \(\zeta_{\ell,R}\) is the first radial eigenfunction in the \(\ell\)-sector.  Thus
\(\Pi_{R,M}\) is the first radial band of the ball, cut off at effective angular energy \(M\).

\begin{lemma}[Ball bracketing above an effective window]\label{lem:ball-bracketing-window}
Fix \(C<\infty\) and \(M\ge1\).  There are constants \(K_C\), independent of \(M\), and \(R_0(C,M,n)\) such that for
every \(R\ge R_0\),
\begin{align}
  \inf \spec\bigl(H_R^0\big|_{\Pi_{R,M}^{\perp}}\bigr)
  &\ge
  \frac{\pi^2}{R^2}
  +\frac{2\pi^2}{R^3}\bigl(\tfrac12 M-b_0-K_C\bigr),
  \label{eq:ball-high-lower}\\
  \inf\spec(H_R^0)
  &\ge
  \frac{\pi^2}{R^2}-\frac{K_C}{R^3}.
  \label{eq:ball-bottom-lower}
\end{align}
In particular, if \(M\) is chosen larger than \(2(C+b_0+K_C+1)\), then the ball has no spectrum contributed by
\(\Pi_{R,M}^{\perp}\) in the window
\[
  \frac{\pi^2}{R^2}+\frac{2\pi^2}{R^3}[-C,C].
\]
\end{lemma}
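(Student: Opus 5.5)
The plan is to use the separation of variables on the ball: the Liouville operator \(H_R^0\) decomposes as the orthogonal sum over \(\ell,m\) of the radial operators \(H_\ell\) on \((0,R)\). Its spectrum is therefore \(\{\nu_{k\ell}(R)\}\), and \(\Pi_{R,M}^\perp\) is spanned by two kinds of radial eigenfunctions: the first radial modes (\(k=1\)) with \(T_n(\ell)>M\), and all higher radial modes (\(k\ge2\)) in every sector. Both bounds then reduce to uniform lower bounds for \(\nu_{k\ell}(R)\) in these two families.

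\emph{Higher radial modes.} The potential \(\beta(\beta-1)/\sinh^2 r\), with \(\beta=\ell+\alpha\), is nonnegative whenever \(\beta\ge1\). In the few sectors with \(\beta<1\), it is bounded below by \(-1/(4\sinh^2 r)\). I would compare with a Hardy-type operator to get \(\nu_{k\ell}\ge\nu_{k}^{\rm free}(R)-O(R^{-3})\), uniformly in \(\ell\). The cleaner route is to observe that the potential is monotone increasing in \(\ell\) for \(\ell\ge1\) (or \(\beta\ge1\)). Hence \(\nu_{k\ell}\ge\nu_{k,\ell_{\min}}\), and the finitely many smallest sectors are handled by Proposition~\ref{prop:sector-asymp}-type asymptotics for the \(k\)-th root: the \(k=2\) root is \(4\pi^2/(R+b)^2+O(R^{-4})\). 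This gives \(\nu_{k\ell}\ge 4\pi^2/R^2-O(R^{-3})\) for \(k\ge2\), which exceeds the right side of \eqref{eq:ball-high-lower} by \(3\pi^2/R^2\) for \(R\ge R_0(M)\).

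\emph{High first modes.} For \(k=1\) and \(T_n(\ell)>M\), I would again use monotonicity in \(\ell\) of \(\nu_{1\ell}\). Let \(\ell_M\) be the smallest degree with \(T_n(\ell_M)>M\). Then \(\nu_{1\ell}\ge\nu_{1\ell_M}\), and Proposition~\ref{prop:sector-asymp} gives
\[
\nu_{1\ell_M}(R)=\frac{\pi^2}{R^2}+\frac{2\pi^2}{R^3}\bigl(T_n(\ell_M)-b_0\bigr)+O_{M}(R^{-4}).
\]
Since \(T_n(\ell_M)>M\ge\tfrac12M\), choosing \(R_0(C,M,n)\) so that the \(O_M(R^{-4})\) term is below \(2\pi^2/R^3\) yields \eqref{eq:ball-high-lower} with \(K_C=1\), say, which is independent of \(M\). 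The factor \(\tfrac12\) leaves slack in case the monotonicity only holds eventually (for instance when \(\beta<1\) at \(\ell=0\)): degrees with \(\ell\ge1\) all have \(\beta\ge1\), and \(\ell_M\ge1\) once \(M>0\). For \eqref{eq:ball-bottom-lower}, the bottom is \(\min(\nu_{10},\nu_{11},\dots)\). Monotonicity reduces this to \(\nu_{10}\) and \(\nu_{11}\), both of which equal \(\pi^2/R^2+O(R^{-3})\) by Proposition~\ref{prop:sector-asymp}, so \(K_C\) is a fixed multiple of \(\max(|b_0|,|b_1|)+1\). Finally, the window statement follows because \(\tfrac12M-b_0-K_C>C\) under the stated choice of \(M\).

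The main obstacle is the uniformity in \(\ell\) without infinitely many applications of the sectorwise asymptotics, whose constants depend on \(\ell\). This is exactly what monotonicity of \(\nu_{k\ell}\) in \(\ell\) provides: it follows from the min–max principle, since \(\beta(\beta-1)\) is increasing for \(\beta\ge1/2\), with the Friedrichs form domains nested appropriately. The check I would be most careful with is that the \(k\)-th root asymptotics for \(k=2\) hold with the same phase argument (the second zero of \(\sin(k(R+b_\ell))\)), via Lemma~\ref{lem:low-energy-phase} with \(k\sim2\pi/R\).
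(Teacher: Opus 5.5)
Your proposal is correct and is essentially the paper's proof. It uses the same splitting of \(\Pi_{R,M}^{\perp}\) into higher radial roots, bounded below by \(\nu_{20}(R)=4\pi^2/R^2+O(R^{-3})\) via the second-zero phase argument, and first roots with \(T_n(\ell)>M\), bounded below by \(\nu_{1\ell_M}(R)\) from Proposition~\ref{prop:sector-asymp}, with min--max monotonicity of the radial potential in \(\ell\) giving uniformity in \(\ell\).

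Your hedges can be dropped. Since \(\alpha\ge1/2\), one has \(\beta=\ell+\alpha\ge1/2\) for every \(\ell\ge0\), so monotonicity holds from \(\ell=0\) and the bottom of the spectrum is simply \(\nu_{10}(R)\). The Hardy-comparison aside is unnecessary, and as stated it is not right: comparing with \(-d^2/dr^2-1/(4r^2)\) in the sector \(n=2,\ell=0\) loses order \(R^{-2}\), not \(R^{-3}\).
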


\begin{proof}
The bottom estimate \eqref{eq:ball-bottom-lower} is the \(\ell=0\), first-root asymptotic
\eqref{eq:sector-asymp}, with the constant enlarged to absorb the finite-\(R\) error.

On \(\Pi_{R,M}^{\perp}\) there are two kinds of states.  The first kind consists of higher radial
roots \(k\ge2\).  The radial potentials are increasing in \(\ell\), because
\(\ell+\alpha\ge1/2\), so the smallest higher radial root occurs in the \(\ell=0\) sector.  The same
one-dimensional phase argument used for Proposition~\ref{prop:sector-asymp}, now at the second
zero, gives
\[
  \nu_{20}(R)=\frac{4\pi^2}{R^2}+O(R^{-3}).
\]
Consequently \(\nu_{k\ell}(R)\ge 4\pi^2R^{-2}+O(R^{-3})\) for all \(k\ge2\).  This is much larger
than \(\pi^2/R^2+O(R^{-3})\) and therefore satisfies \eqref{eq:ball-high-lower} after increasing
\(R_0\) for every fixed \(M\).

The second kind consists of first radial roots with \(T_n(\ell)>M\).  The potential
\((\ell+\alpha)(\ell+\alpha-1)\sinh^{-2}r\) is increasing in \(\ell\) because
\(\ell+\alpha\ge1/2\).  Hence the smallest such first root occurs at the first degree
\(\ell_M\) with \(T_n(\ell_M)>M\).  For fixed \(M\), Proposition~\ref{prop:sector-asymp} gives
\[
  \nu_{1\ell_M}(R)
  =\frac{\pi^2}{R^2}
  +\frac{2\pi^2}{R^3}\bigl(T_n(\ell_M)-b_0\bigr)
  +O_M(R^{-4}).
\]
Since \(T_n(\ell_M)>M\), the coefficient of \(R^{-3}\) is at least \(M-b_0\) up to an error which,
for this fixed \(M\), is smaller than \(M/2+K_C\) once \(R\ge R_0(C,M,n)\).  This gives
\eqref{eq:ball-high-lower}.  The factor \(1/2\) leaves room for the fixed-\(M\) remainder before the final limit
\(M\to\infty\).
\end{proof}

\begin{lemma}[Spectral cutoff for low graph eigenfunctions]\label{lem:graph-spectral-cutoff}
Fix \(C<\infty\) and \(\delta>0\).  There exist \(M=M(C,\delta,n)\) and \(R_0\) such that the
following holds for every \(R\ge R_0\), every graph deficit \(0\le V\le\log2\), and every
\(u\in H_0^1(\Omega_{R,V})\).  Let \(\widetilde u\) be its zero extension to the ball \(B_R\).  If
\(\|\widetilde u\|_{L^2(B_R)}=1\) and
\begin{equation}\label{eq:low-rayleigh-assumption}
  \mathfrak q_{R,V}[u]
  \le
  \left(\frac{\pi^2}{R^2}+\frac{2\pi^2C}{R^3}\right)\|u\|^2,
\end{equation}
then
\begin{equation}\label{eq:graph-cutoff-conclusion}
  \|(I-\Pi_{R,M})\widetilde u\|_{L^2(B_R)}\le \delta.
\end{equation}
The same conclusion holds for every vector in a spectral subspace of \(H_{R,V}\) whose spectrum is
contained in the window in \eqref{eq:low-rayleigh-assumption}.
\end{lemma}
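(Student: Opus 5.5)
Zero extension is the basic device.  Since \(\Omega_{R,V}\subset B_R(o)\) and \(u\in H_0^1(\Omega_{R,V})\), the zero extension \(\widetilde u\) lies in the Dirichlet form domain of the ball.  Moreover \(\mathfrak q^0_R[\widetilde u]=\mathfrak q_{R,V}[u]\) and \(\|\widetilde u\|=\|u\|=1\): the Liouville form is the same integrand on both domains, and \(\widetilde u\) vanishes outside \(\Omega_{R,V}\).  No collar flattening or Hadamard calculus is therefore needed.  The whole statement reduces to a spectral estimate for the self-adjoint ball operator \(H_R^0\), which is block diagonal with respect to \(\Pi_{R,M}\oplus\Pi_{R,M}^\perp\).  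Indeed \(\Pi_{R,M}\) is a spectral projection of \(H_R^0\): it projects onto the eigenfunctions \(\zeta_{\ell,R}Y_{\ell m}\).

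The plan is as follows.  Write \(\widetilde u=a+b\) with \(a=\Pi_{R,M}\widetilde u\) and \(b=(I-\Pi_{R,M})\widetilde u\).  Orthogonality of spectral subspaces gives
\[
\mathfrak q^0_R[\widetilde u]=\mathfrak q^0_R[a]+\mathfrak q^0_R[b].
\]
By \eqref{eq:ball-bottom-lower}, \(\mathfrak q^0_R[a]\ge(\pi^2R^{-2}-K_CR^{-3})\|a\|^2\).  By \eqref{eq:ball-high-lower} of Lemma~\ref{lem:ball-bracketing-window},
\[
\mathfrak q^0_R[b]\ge\Bigl(\frac{\pi^2}{R^2}+\frac{2\pi^2}{R^3}\bigl(\tfrac12M-b_0-K_C\bigr)\Bigr)\|b\|^2 .
\]
Subtract \(\pi^2R^{-2}\|\widetilde u\|^2\), use \(\|a\|^2+\|b\|^2=1\), and compare with the hypothesis \eqref{eq:low-rayleigh-assumption}.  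After multiplying by \(R^3/(2\pi^2)\) this gives
\[
C\ge -\tfrac{K_C}{2\pi^2}\|a\|^2+\bigl(\tfrac12M-b_0-K_C\bigr)\|b\|^2 ,
\]
and hence
\[
\|b\|^2\le\frac{C+K_C/(2\pi^2)}{\tfrac12M-b_0-K_C}.
\]
Choose \(M\) so large that the right side is at most \(\delta^2\).  The key point is that the constant \(K_C\) in Lemma~\ref{lem:ball-bracketing-window} is independent of \(M\).  Then take \(R_0=R_0(C,M,n)\) from that lemma.  The bound is uniform in \(V\), since \(V\) enters only through the inclusion \(\Omega_{R,V}\subset B_R\).

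For the last sentence, let \(u\) lie in the spectral subspace of \(H_{R,V}\) for the window, i.e.\ the range of \(\mathbf 1_{(-\infty,E]}(H_{R,V})\) intersected with the window, where \(E=\pi^2R^{-2}+2\pi^2CR^{-3}\).  Then \(u\in\mathcal D(\mathfrak q_{R,V})=H_0^1\) and \(\mathfrak q_{R,V}[u]\le E\|u\|^2\) by the spectral theorem.  Normalizing and applying the first part gives the same conclusion.

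I expect the only delicate point to be the uniformity of \(K_C\) in \(M\) within the ball bracketing lemma.  If it were absent, one would first fix a uniform lower bound on \(\nu_{1\ell}\) for \(T_n(\ell)>M\), valid as \(\ell\to\infty\).  That bound comes from monotonicity of the potential in \(\ell\): it reduces the question to the single sector \(\ell_M\) together with the higher radial roots \(\ge 4\pi^2R^{-2}+O(R^{-3})\).  I take this as given from Lemma~\ref{lem:ball-bracketing-window}.
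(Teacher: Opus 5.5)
Your proposal is correct and is essentially the paper's proof. You zero-extend \(u\) to \(H_0^1(B_R)\) so that the ball form equals the graph form, split \(\widetilde u\) orthogonally by \(\Pi_{R,M}\), insert the two lower bounds of Lemma~\ref{lem:ball-bracketing-window} (with \(K_C\) independent of \(M\)), and solve the resulting linear inequality for \(\|(I-\Pi_{R,M})\widetilde u\|^2\); the paper writes that bound as \((\Lambda_C-\gamma_0)/(\gamma_M-\gamma_0)\le K_C'/(M-K_C')\), and, as in the paper, the spectral-subspace version follows by applying the same estimate to each normalized vector of the subspace.
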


\begin{proof}
Because \(\Omega_{R,V}\subset B_R\), the zero extension belongs to \(H_0^1(B_R)\), and its ball form
is exactly the graph-domain form:
\[
  \mathfrak q_{B_R}[\widetilde u]=\mathfrak q_{R,V}[u].
\]
Let \(P=\Pi_{R,M}\), \(Q=I-P\), and put
\[
  \Lambda_C(R)=\frac{\pi^2}{R^2}+\frac{2\pi^2C}{R^3}.
\]
By Lemma~\ref{lem:ball-bracketing-window},
\[
  \mathfrak q_{B_R}[\widetilde u]
  \ge \gamma_0(R)\|P\widetilde u\|^2+
       \gamma_M(R)\|Q\widetilde u\|^2,
\]
where
\[
  \gamma_0(R)\ge \frac{\pi^2}{R^2}-\frac{K_C}{R^3},
  \qquad
  \gamma_M(R)\ge
  \frac{\pi^2}{R^2}+\frac{2\pi^2}{R^3}(\tfrac12 M-b_0-K_C).
\]
Since \(\|P\widetilde u\|^2+\|Q\widetilde u\|^2=1\) and
\(\mathfrak q_{B_R}[\widetilde u]\le\Lambda_C(R)\), we obtain
\[
  \|Q\widetilde u\|^2
  \le
  \frac{\Lambda_C(R)-\gamma_0(R)}{\gamma_M(R)-\gamma_0(R)}
  \le
  \frac{K'_C}{M-K'_C}
\]
for a constant \(K'_C\) depending only on \(C\) and \(n\).  Choosing \(M\) so that the last quantity
is at most \(\delta^2\) proves \eqref{eq:graph-cutoff-conclusion}.  The spectral-subspace version
follows by applying the same argument to every normalized vector in the subspace.

\end{proof}

\begin{lemma}[Finite first-band trace on moving graphs]\label{lem:first-band-moving-trace}
Fix \(L<\infty\), \(K<\infty\), and a collar width \(a>0\).  Let
\[
  w(r,\theta)=\sum_{T_n(\ell)\le L}\sum_m a_{\ell m}\zeta_{\ell,R}(r)Y_{\ell m}(\theta)
\]
be a ball first-band vector in the constant-end cylinder, where the radial functions are normalized in \(L^2(0,R)\).  We denote the ball first-band coefficient size by
\[
  A_L^2=\sum |a_{\ell m}|^2 .
\]
If \(0\le V\le\log2\) and \(\Lip V\le K\), then
\begin{equation}\label{eq:first-band-moving-trace}
  \|\gamma_{R,V}w\|_{H^{1/2}(\Sn)}
  \le C_{L,K}R^{-3/2}A_L.
\end{equation}
The analogous finite-window collar estimate also holds, with constants depending on \(L\), because only finitely many radial and angular branches are involved.
\end{lemma}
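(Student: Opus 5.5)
Since only finitely many angular degrees occur, the plan is to reduce the estimate to a bound in $H^1(\Sn)$ and then interpolate. I would use $\|g\|_{H^{1/2}}\le\|g\|_{L^2}^{1/2}\|g\|_{H^1}^{1/2}\le\|g\|_{H^1}$ with $g=\gamma_{R,V}w$, i.e.\ $g(\theta)=w(R-V(\theta),\theta)$. For a Lipschitz graph the chain rule gives, a.e.\ in $\theta$,
\[
  \nabla_\theta g=(\nabla_\theta w)(R-V(\theta),\theta)-(\partial_rw)(R-V(\theta),\theta)\nabla_\theta V(\theta),
\]
exactly as in the proof of Lemma~\ref{lem:finite-trace-regularity}. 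It therefore suffices to show that, in the collar $R-\log2\le r\le R$, each finite angular sum satisfies pointwise
\[
  |w|+|\nabla_\theta w|+|\partial_r w|\le C_{L}R^{-3/2}A_L .
\]
The desired bound then follows with a constant depending on $L$ and $K$, since $|\nabla_\theta V|\le K$ and $\Sn$ has finite measure.

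The radial input is the collar behavior of the normalized first radial eigenfunction. By Proposition~\ref{prop:sector-asymp} and the phase expansion of Lemma~\ref{lem:low-energy-phase}, in the outer collar $R-a'\le r\le R$ with $a'=\max(a,\log 2)$ one has
\[
  \zeta_{\ell,R}(r)=\sqrt{\tfrac2R}\,\sin\{k_{\ell,1}(R)(r+b_\ell)\}+O_L(R^{-3/2}),
\]
with the same expansion for the derivative at the improved error order. This was already used in the proof of Lemma~\ref{lem:constant-band}. Here $k_{\ell,1}(R)(R+b_\ell)=\pi+O(R^{-3})$, so at $r=R-t$ with $0\le t\le a'$ the sine equals $\sin(\pi-k t+O(R^{-3}))=O((1+t)/R)$. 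Hence $|\zeta_{\ell,R}(r)|\le C_L R^{-1/2}\cdot R^{-1}=C_LR^{-3/2}$, and $|\zeta_{\ell,R}'(r)|\le \sqrt{2/R}\,k+O=C_LR^{-3/2}$.

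The key step I expect to be the main obstacle is controlling the error term in $\zeta_{\ell,R}$ at order $R^{-3/2}$ rather than the cruder $R^{-1/2}$. I would take it from the normalized-sine-amplitude statement: the sine-amplitude branch $\sigma_\beta$ has $L^2(0,R)$ norm $\sqrt{R/2}(1+O(R^{-1}))$, so $\zeta_{\ell,R}=\sigma_\beta/\|\sigma_\beta\|$ carries error $R^{-1/2}\cdot O(k^3+ke^{-2r})=O(R^{-7/2})$ in the collar.

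Summing over the finitely many $(\ell,m)$ in the window, with $|Y_{\ell m}|$, $|\nabla Y_{\ell m}|\le C_L$, Cauchy--Schwarz gives the pointwise bound with $A_L$, and this proves the claim. The collar variant is the same computation on $R-a\le r\le R$ applied to $w$, $\partial_r w$ and $\nabla_\theta w$.
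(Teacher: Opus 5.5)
Your proposal is correct and follows essentially the paper's route: uniform $O(R^{-3/2})$ bounds on $\zeta_{\ell,R}$ and $\zeta_{\ell,R}'$ in a fixed outer collar, summed over the finitely many $(\ell,m)$ in the window. The paper gets the value bound from $\zeta_{\ell,R}(R)=0$ plus a Taylor expansion, whereas you read it off the phase expansion near $\pi$; your chain-rule step with $\|g\|_{H^{1/2}}\le\|g\|_{H^1}$ is a more careful version of the paper's appeal to equivalence of Sobolev norms on the finite window (that appeal does not literally apply to the trace, which is not band-limited), and it is exactly where the dependence on $K$ enters.
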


\begin{proof}
For \(T_n(\ell)\le L\) there are only finitely many \(\ell\)'s.  Lemma~\ref{lem:constant-band} and the one-dimensional boundary equation give, uniformly in this finite set,
\[
  \zeta_{\ell,R}(R)=0,\qquad
  |\partial_r^j\zeta_{\ell,R}(R-t)|\le C_{L,j}R^{-3/2},
  \qquad 0\le t\le a,
\]
for \(j=0,1\).  Taylor expansion from \(R\) to \(R-V(\theta)\) gives the trace estimate.  Since the angular window is finite, all Sobolev norms on the sphere are equivalent on it.  Integrating over a collar of fixed width gives the collar-energy version.
\end{proof}

\begin{lemma}[Finite first band versus regular continuation]\label{lem:finite-first-band-regular-comparison}
Fix an angular cutoff \(L\), an effective window \([-C,C]\), and a Lipschitz bound \(K\).  Let
\[
  w_L(r,\theta)=\sum_{T_n(\ell)\le L,m} a_{\ell m}\zeta_{\ell,R}(r)Y_{\ell m}(\theta),
  \qquad A_L^2=\sum |a_{\ell m}|^2,
\]
be a ball first-band vector.  For every \(|\zeta|\le C\) and every
\(V\in\mathfrak C_K\), there is a unique coefficient vector
\(f_L\in P_LL^2(\Sn)\), depending linearly on the coefficients \(a_{\ell m}\), such that
\begin{equation}\label{eq:first-band-regular-coeff-norm}
  c_{L,C}R^{-1/2}A_L\le \|f_L\|\le C_{L,C}R^{-1/2}A_L
\end{equation}
and
\begin{equation}\label{eq:first-band-regular-comparison}
  \|\mathcal S(k_R(\zeta))f_L-w_L\|_{L^2(\Omega_{R,V})}
  \le C_{L,C,K}R^{-1}A_L .
\end{equation}
Moreover
\begin{equation}\label{eq:first-band-regular-trace-bound}
  \|\gamma_{R,V}\mathcal S(k_R(\zeta))f_L\|_{H^{1/2}(\Sn)}
  \le C_{L,C,K}R^{-3/2}A_L .
\end{equation}
\end{lemma}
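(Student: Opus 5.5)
We construct \(f_L\) sector by sector: on the finite window \(T_n(\ell)\le L\), the vector \(w_L\) is a finite sum of products \(\zeta_{\ell,R}(r)Y_{\ell m}(\theta)\), and \(\mathcal S(k_R(\zeta))\) acts diagonally on spherical harmonics. Define
\[
  f_{\ell m}:=\sqrt{\tfrac{2}{R}}\,a_{\ell m}.
\]
This choice is motivated by \eqref{eq:regular-continuation-first-band-close}, which says \(\sqrt{2/R}\,\mathcal S(k_R(\zeta))P_L\approx\mathcal J_RP_L\) up to \(O_{L,C}(R^{-1})\) in operator norm. With this choice \(f_L\) is linear in \((a_{\ell m})\) and \(\|f_L\|=\sqrt{2/R}\,A_L\), which gives \eqref{eq:first-band-regular-coeff-norm} at once. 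For uniqueness, I would interpret the statement as uniqueness of this canonical sectorwise matching, i.e.\ the coefficient vector whose sine amplitudes agree with the bulk amplitudes of the \(\zeta_{\ell,R}\). Equivalently, it is the unique \(f_L\) with \(\mathcal C_{r_0}\)-coordinates proportional to those of \(w_L\) on each sector, since \(\mathcal C_{r_0}\mathcal S(k)=I\) by Lemma~\ref{lem:fixed-coefficient-parametrization}.

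For \eqref{eq:first-band-regular-comparison}, I would split \(\Omega_{R,V}\) into the common interior \(C_R^{\rm int}=(0,R-\log2)\times\Sn\) and the thin remainder \(\{R-\log2\le r<R-V(\theta)\}\). On \(C_R^{\rm int}\), apply \eqref{eq:regular-continuation-first-band-close} to the vector \(\sqrt{R/2}\,f_L\), whose norm is \(A_L\); this gives an error of \(C_{L,C}R^{-1}A_L\). On the remainder, which has radial width at most \(\log2\), both functions are pointwise small in the finite window. The function \(\zeta_{\ell,R}\) is \(O(R^{-3/2})\) there, by the bound \(|\zeta_{\ell,R}(R-t)|\le C_LR^{-3/2}\) for \(0\le t\le a\) from the proof of Lemma~\ref{lem:first-band-moving-trace}. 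The regular continuation satisfies \(|s_\ell(r,k_R(\zeta))|\le C R^{-1}\) for \(r=R+O(1)\) by \eqref{eq:regular-phase-normalization}, hence is \(O(R^{-3/2})A_L\) after the factor \(\|f_L\|\sim R^{-1/2}A_L\). Integrating over a region of bounded measure in \((r,\theta)\) therefore gives \(O_{L,C}(R^{-3/2})A_L\), which is better than needed.

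For the trace bound \eqref{eq:first-band-regular-trace-bound}, I would write
\[
  \gamma_{R,V}\mathcal S(k_R(\zeta))f_L=\sum f_{\ell m}s_\ell(R-V(\theta),k_R(\zeta))Y_{\ell m}(\theta).
\]
By Lemma~\ref{lem:finite-trace-regularity}, its \(H^1(\Sn)\)-norm is at most \(C_{L,C,K}R^{-1}\|f_L\|\le C R^{-3/2}A_L\). Since \(H^1\) controls \(H^{1/2}\), this gives the claim. The Lipschitz bound \(K\) enters only through the chain rule term \(\partial_r u\,\nabla_\theta V\) in that lemma.

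I expect the main obstacle to be checking that \eqref{eq:regular-continuation-first-band-close} really holds with the stated \(R^{-1}\) rate uniformly in \(|\zeta|\le C\). The sectorwise comparison between \(\sin\{k_R(\zeta)(r+b_\ell)\}\) and \(\sin\{k_{\ell,1}(R)(r+b_\ell)\}\) differs by \(O(R^{-1})\) pointwise, since \(|k_R(\zeta)-k_{\ell,1}(R)|\,r=O(R^{-1})\). This gives \(O(R^{-1/2})\) in \(L^2(0,R)\), and multiplication by \(\sqrt{2/R}\) then gives the required \(O(R^{-1})\). I would also verify that the inner region \(r\le1\), where \eqref{eq:regular-phase-normalization} is not used, contributes only \(O(R^{-1})\) after normalization. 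This holds because both radial profiles are \(O(R^{-1})\) there: the regular branch scaled as \(k\,y_\beta\), and \(\zeta_{\ell,R}\) is of size \(R^{-3/2}\) times a bounded profile.
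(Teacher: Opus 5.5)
Your proposal is correct and is essentially the paper's argument: you make the sectorwise choice $f_{\ell m}=(2/R)^{1/2}a_{\ell m}$ (the paper allows an extra factor $1+O_{L,C}(R^{-1})$) and use the $O_{L,C}(R^{-1})$ accumulated phase difference. The differences are only in bookkeeping: you cite \eqref{eq:regular-continuation-first-band-close} and Lemma~\ref{lem:finite-trace-regularity} instead of redoing the phase comparison and routing the trace through $w_L$ via Lemma~\ref{lem:first-band-moving-trace}, and you treat the shell $R-\log2\le r<R-V(\theta)$ explicitly, which the paper's $L^2(0,R-\log2)$ computation leaves implicit. You are right that uniqueness cannot be literal, since perturbing $f_L$ by $O(R^{-3/2}A_L)$ preserves both estimates, and the paper does not address this; the meaningful version is uniqueness up to $O(R^{-3/2}A_L)$, which follows from the lower bound in \eqref{eq:regular-continuation-two-sided}.
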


\begin{proof}
Everything is finite dimensional once \(L\) is fixed.  It is enough to work in a single
spherical harmonic sector.  Let \(\sigma_\ell(r,k)\) denote the sine-amplitude regular branch in
that sector, and let \(\zeta_{\ell,R}\) be the normalized first eigenfunction for the constant end
\(R\).  By the phase expansion of Lemma~\ref{lem:low-energy-phase}, uniformly for the finitely many
\(\ell\)'s with \(T_n(\ell)\le L\),
\[
  \sigma_\ell(r,k_R(\zeta))
  =\sin\{k_R(\zeta)(r+b_\ell)\}+O_{L,C}(R^{-2})
\]
in \(L^\infty(0,R-\log2)\), and the same estimate holds for one radial derivative in the fixed
outer collar.  The first normalized ball mode satisfies the same expansion with
\(k=k_{\ell,1}(R)=\pi/R+O_L(R^{-2})\), after multiplying by the normalizing factor
\((2/R)^{1/2}(1+O_L(R^{-1}))\).  Since
\(|k_R(\zeta)-k_{\ell,1}(R)|=O_{L,C}(R^{-2})\), the accumulated phase difference over a length
\(O(R)\) is \(O_{L,C}(R^{-1})\).  Thus, after taking
\(f_{\ell m}=(2/R)^{1/2}(1+O_{L,C}(R^{-1}))a_{\ell m}\) in each sector, the sectorwise
\(L^2(0,R-\log2)\)-difference is \(O_{L,C}(R^{-1})|a_{\ell m}|\).  Summing over the fixed finite
window gives \eqref{eq:first-band-regular-coeff-norm} and \eqref{eq:first-band-regular-comparison}.
The moving-boundary trace estimate follows from \cref{lem:first-band-moving-trace} for \(w_L\),
from the derivative version of the same finite-window phase comparison in the outer collar, and
from the Lipschitz bound on \(V\).  The composition formula
\[
  \nabla_\theta u(R-V(\theta),\theta)
  = (\nabla_\theta u)(R-V(\theta),\theta)
    - (\partial_r u)(R-V(\theta),\theta)\nabla_\theta V(\theta)
\]
shows that the \(H^{1/2}\)-norm is controlled by the finite angular window and the preceding
\(L^2\)-trace estimates.  This proves \eqref{eq:first-band-regular-trace-bound}.
\end{proof}

\begin{remark}[Boundary normalizations in a fixed window]\label{rem:boundary-normalizations}
Two coefficient normalizations occur in the proof.  A ball first-band vector is written
\[
  w_L=\sum_{T_n(\ell)\le L,m}a_{\ell m}\zeta_{\ell,R}Y_{\ell m},
  \qquad A_L^2=\sum |a_{\ell m}|^2,
\]
so that \(\|w_L\|_{L^2(B_R)}=A_L\).  The sine-amplitude regular continuation coefficient
\(f_L\) is normalized instead by the boundary equation; on every fixed angular window,
\[
  A_L\simeq R^{1/2}\|f_L\| .
\]
Consequently a raw moving-boundary trace of size \(R^{-3/2}A_L\) is the same as a raw trace of
size \(R^{-1}\|f_L\|\).  After the normalization
\(\mathfrak B_R=-(R/\pi)B_R^{\mathrm{raw}}\), this corresponds to a bounded normalized residual.
Small normalized residuals are needed for quasimodes; bounded residuals are enough for the
finite trace correction to be \(o_R(1)\) in bulk \(L^2\).
\end{remark}

The Dirichlet-to-core comparison uses a different invariant from the Schur-to-Dirichlet comparison.  A true
Dirichlet eigenfunction need not produce a small raw boundary trace after one simply projects its zero
extension to the ball first-band window.  The cancellation already appears in the constant-deficit
interval model: at the shifted endpoint \(R-a\),
\[
  \sin\frac{\pi(R-a)}R=\frac{\pi a}{R}+O(R^{-3}),
  \qquad
  \sin\frac{2\pi(R-a)}R=-\frac{2\pi a}{R}+O(R^{-3}),
\]
so the combination
\[
  \sin\frac{\pi r}{R}+\frac12\sin\frac{2\pi r}{R}
\]
has trace \(O(R^{-3})\) at \(r=R-a\), whereas its first-mode projection alone has trace
\(\pi a/R+O(R^{-3})\).  Thus small boundary trace is not stable under naive first-band projection.
What is stable under the Ritz comparison is instead closeness to a finite-dimensional Dirichlet trial
space obtained after the finite trace correction.  Boundary residual smallness is used in the opposite
direction, from finite Schur zeros to Dirichlet quasimodes, through Lemma~\ref{lem:collar-corrected-trial-map}.

\begin{lemma}[Linear corrected projection of exact low states]\label{lem:low-eigenfunction-coefficient-localization}
Fix a compact effective window \([-C,C]\), an integer \(J\ge0\), a Lipschitz bound \(K\), and
\(\delta>0\).  After choosing \(L\) large enough and then \(R\) large enough, the following holds
uniformly for every smooth graph deficit \(V\in\mathfrak C_K\).  Let
\(\mathscr E_{R,V}(C,J)\) denote any spectral subspace spanned by at most \(J+1\) Dirichlet
eigenfunctions whose effective parameters lie in \([-C,C]\).  Then there is a linear map
\[
  \mathcal A_{R,L,V}:\mathscr E_{R,V}(C,J)\longrightarrow P_LL^2(\Sn)
\]
such that
\begin{equation}\label{eq:linear-corrected-localization}
  \left\|u-
  \mathcal T_{R,V}(0)\mathcal A_{R,L,V}u\right\|_{L^2(\Omega_{R,V})}
  \le (\delta+o_R(1))\|u\|_{L^2(\Omega_{R,V})},
  \qquad u\in\mathscr E_{R,V}(C,J).
\end{equation}
Moreover, once \(L\) is chosen so that the left side of \eqref{eq:linear-ball-localization} below is
less than \(\|u\|/2\), the coefficient scale is
\begin{equation}\label{eq:linear-corrected-coefficient-scale}
  c_{C,J,K,L}R^{-1/2}\|u\|
  \le \|\mathcal A_{R,L,V}u\|
  \le C_{C,J,K,L}R^{-1/2}\|u\|,
\end{equation}
and the rescaled form
\[
  \mathfrak h_R[a,b]
  =\frac{R^3}{2\pi^2}\bigl(\mathfrak q_{R,V}[a,b]-E_R\langle a,b\rangle\bigr)
\]
is bounded on
\[
  \mathscr E_{R,V}(C,J)+F^D_{R,L}(V),
  \qquad
  F^D_{R,L}(V):=\mathcal T_{R,V}(0)P_LL^2(\Sn),
\]
by a constant depending only on \(C,J,K,L\).
\end{lemma}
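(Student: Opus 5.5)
The construction of \(\mathcal A_{R,L,V}\) goes in two stages: first localize the exact low states in the ball first band, then convert first-band coordinates into regular-continuation coefficients and apply the collar correction.

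\emph{Stage one: ball localization.} Fix \(u\in\mathscr E_{R,V}(C,J)\) and let \(\widetilde u\) be its zero extension to \(B_R\). Since the spectrum of \(H_{R,V}\) on \(\mathscr E_{R,V}(C,J)\) lies in the window \(\pi^2/R^2+(2\pi^2/R^3)[-C,C]\), every normalized vector in the span satisfies \eqref{eq:low-rayleigh-assumption}. Lemma~\ref{lem:graph-spectral-cutoff}, applied with \(\delta/2\), then gives an \(M=M(C,\delta,n)\) such that
\begin{equation}\label{eq:linear-ball-localization}
  \|\widetilde u-\Pi_{R,L}\widetilde u\|_{L^2(B_R)}\le \tfrac{\delta}{2}\|u\|
\end{equation}
for every \(L\ge M\). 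This holds because \(\Pi_{R,L}\ge\Pi_{R,M}\), as both are spectral projections in the same first-band basis. Write
\[
  \Pi_{R,L}\widetilde u=w_L=\sum_{T_n(\ell)\le L,m}a_{\ell m}\zeta_{\ell,R}Y_{\ell m},
\]
with \(A_L\le\|u\|\). The map \(u\mapsto(a_{\ell m})\) is linear.

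\emph{Stage two: conversion and correction.} Lemma~\ref{lem:finite-first-band-regular-comparison} at \(\zeta=0\) supplies a linear map \(a\mapsto f_L\in P_LL^2(\Sn)\) with three properties: \(\|f_L\|\simeq R^{-1/2}A_L\), \(\|\mathcal S(k_R(0))f_L-w_L\|_{L^2(\Omega_{R,V})}=O(R^{-1}A_L)\), and trace bounded by \(O(R^{-3/2}A_L)\). Set \(\mathcal A_{R,L,V}u:=f_L\). By Remark~\ref{rem:boundary-normalizations}, the trace bound makes the normalized residual \(\rho(f_L)\) uniformly bounded. Lemma~\ref{lem:collar-corrected-trial-map} then shows that \(\mathcal T_{R,V}(0)f_L\) differs from \(\mathcal S(k_R(0))f_L\) by \(O(R^{-3/4}\|f_L\|)=o_R(1)\|u\|\) in \(L^2\).

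One further point needs care: \(w_L\) lives on \(B_R\), while \(\mathcal S f_L\) is only compared on \(\Omega_{R,V}\). The mass of \(w_L\) in the shell \(\Omega_{R,V}^c\cap B_R\) is \(O(R^{-3/2}A_L)\), by Lemma~\ref{lem:first-band-moving-trace}-type collar bounds on a shell of width \(\le\log2\). This mass is therefore negligible. Chaining these estimates with the triangle inequality gives \eqref{eq:linear-corrected-localization}.

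\emph{Coefficient scale.} Once \(L\) is chosen so that the left side of \eqref{eq:linear-ball-localization} is at most \(\|u\|/2\), we have \(A_L\ge\|u\|/2\). Hence \(\|f_L\|\simeq R^{-1/2}\|u\|\), which is \eqref{eq:linear-corrected-coefficient-scale}.

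\emph{Boundedness of \(\mathfrak h_R\).} On \(\mathscr E_{R,V}(C,J)\), the rescaled form is \(\zeta\) times the \(L^2\) pairing with \(|\zeta|\le C\), so it is bounded there. On \(F^D_{R,L}(V)\), \eqref{eq:collar-corrected-effective-defect} together with polarization on the finite space gives \(|\mathfrak h_R[v]|\le C_{L,C,K}\|v\|^2\) whenever the residuals are bounded. For the residuals, I would invoke Lemma~\ref{lem:boundary-grushin-expansion}: on \(P_LL^2\), \(\mathfrak B_R(0,V)=-(T_n+V-b_0)+O(R^{-1})\), which is bounded by roughly \(L+\log2+|b_0|\).

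The cross terms \(\mathfrak h_R[u,v]\) need an argument, since the form is not positive after subtracting \(E_R\). I would handle them with a Cauchy--Schwarz inequality for the shifted form \(\mathfrak h_R+K'\). This shifted form is nonnegative on \(H_0^1(\Omega_{R,V})\) because, by Dirichlet monotonicity and \eqref{eq:ball-bottom-lower}, \(\mathfrak q_{R,V}\ge(\pi^2/R^2-K_C/R^3)\|\cdot\|^2\). The shift \(K'\) is then controlled by the \(L^2\) norms.

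\emph{Main obstacle.} I expect the hardest step to be verifying that the residual \(\rho(f_L)\) is uniformly bounded on the whole finite space, uniformly in \(V\in\mathfrak C_K\) and linearly in \(u\). Everything else is bookkeeping of \(O_L(R^{-1/2})\)-type errors at fixed \(L\), with \(\delta\) fixed first.
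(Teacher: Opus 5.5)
Your construction is the paper's. You localize the zero extension in the ball first band via \cref{lem:graph-spectral-cutoff}, rescale the first-band coefficients to regular-continuation coefficients, and subtract the finite collar correction. Using \cref{lem:finite-first-band-regular-comparison} instead of \eqref{eq:regular-continuation-first-band-close} is a harmless and slightly cleaner variant. The shell-mass worry is unnecessary, since restricting \(\widetilde u-w_L\) to \(\Omega_{R,V}\) only lowers its norm. The residual bound you call the main obstacle is immediate from \cref{lem:boundary-grushin-expansion}, as you note yourself. So \eqref{eq:linear-corrected-localization} and \eqref{eq:linear-corrected-coefficient-scale} are fine; the genuine problem is the last assertion.

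Your three estimates (on \(\mathscr E_{R,V}(C,J)\), on \(F^D_{R,L}(V)\), and on cross terms via Cauchy--Schwarz for the shifted form) only give \(|\mathfrak h_R[e+g]|\lesssim(\|e\|+\|g\|)^2\) for \(e\in\mathscr E_{R,V}(C,J)\), \(g\in F^D_{R,L}(V)\). Boundedness on the sum, in the sense in which it is used, means \(|\mathfrak h_R[x]|\le K_0\|x\|^2\) for every \(x\) in the sum. This is hypothesis (ii) of \cref{lem:abstract-ritz-grushin}, whose proof applies the form bound to \(\widehat Ue-e\). But \eqref{eq:linear-corrected-localization} says precisely that the two subspaces are nearly aligned, so \(\|e\|+\|g\|\) is not controlled by \(\|e+g\|\).

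In fact the strict bound is false, as the following example shows.
\begin{itemize}
\item Take \(V\equiv\kappa\) constant, \(0\le\kappa\le\log2\), \(\kappa\ne b_0\), with \(C>|\kappa-b_0|+1\).
\item Let \(\mathscr E=\operatorname{span}\{u\}\), with \(u\) the radial ground state of \(B_{R-\kappa}\), and let \(v_0=\mathcal T_{R,V}(0)\mathbf 1\in F^D_{R,L}(V)\).
\item Outside the collar \((H-E_R)v_0=0\), while \(u\) has energy \(E_R+\frac{2\pi^2}{R^3}(\kappa-b_0)+O(R^{-4})\ne E_R\).
\item Hence \(w=v_0-P_{\mathscr E}v_0\) is a nonzero radial \(H^1_0\) function orthogonal to the ground state.
\item So \(\mathfrak q_{R,V}[w]\ge(4\pi^2/R^2+O(R^{-3}))\|w\|^2\), i.e.\ \(\mathfrak h_R[w]\ge\frac{3R}{2}(1+o(1))\|w\|^2\), and no \(R\)-independent constant can work.
\end{itemize}

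The paper's own proof (``bounded on each summand and on the mixed terms'') has exactly the same defect, so you inherited it rather than introduced it. What the downstream Ritz comparison really needs is energy capture: smallness of \(\mathfrak h_R[u-v]+K'\|u-v\|^2\) relative to \(\|u\|^2\) for \(v=\mathcal T_{R,V}(0)\mathcal A_{R,L,V}u\), with \(K'\) your shift. The ball localization cannot supply this, because it is only an \(L^2\) statement. Already for \(V\equiv\kappa\), the piece \((I-\Pi_{R,L})\widetilde u\) has small \(L^2\) norm but carries effective ball energy \(\approx\kappa\|u\|^2\), namely the boundary kink of the zero extension. That energy estimate is the missing ingredient.
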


\begin{proof}
We make the construction linear because the subspace angle estimate is what is used in the
Ritz--Grushin comparison.  Let \(\widetilde u\) be the zero extension of \(u\) to the ball \(B_R\).
By the spectral-subspace version of Lemma~\ref{lem:graph-spectral-cutoff}, choose \(L\) so that, for
all large \(R\),
\begin{equation}\label{eq:linear-ball-localization}
  \|(I-\Pi_{R,L})\widetilde u\|_{L^2(B_R)}
  \le \frac{\delta}{4}\|u\|,
  \qquad u\in\mathscr E_{R,V}(C,J).
\end{equation}
Write
\[
  \Pi_{R,L}\widetilde u
  =\mathcal J_Ra(u)
  =\sum_{T_n(\ell)\le L,m}a_{\ell m}(u)\zeta_{\ell,R}Y_{\ell m}.
\]
The map \(u\mapsto a(u)\) is linear.  The projection estimate gives \(\|a(u)\|\le \|u\|+o_R(1)\|u\|\),
and, after the chosen \(L\), also \(\|a(u)\|\ge c\|u\|\) on the subspace being followed.

Define
\[
  \mathcal A_{R,L,V}u:=\sqrt{\frac2R}\,a(u)\in P_LL^2(\Sn).
\]
By the finite-window conditioning estimate \eqref{eq:regular-continuation-first-band-close} with
\(\zeta=0\),
\begin{equation}\label{eq:linear-regular-continuation-close}
  \left\|\mathcal S(k_R(0))\mathcal A_{R,L,V}u-
  \mathcal J_Ra(u)\right\|_{L^2(\Omega_{R,V})}
  \,\le o_R(1)\|u\|.
\end{equation}
The coefficient scale \eqref{eq:linear-corrected-coefficient-scale} follows from
\(\|\mathcal A_{R,L,V}u\|=\sqrt{2/R}\|a(u)\|\).

It remains to subtract the finite trace.  The raw trace of the fixed-window continuation
\(\mathcal S(k_R(0))\mathcal A_{R,L,V}u\) belongs to the finite trace range of
Lemma~\ref{lem:finite-trace-regularity}.  In the notation of Remark~\ref{rem:boundary-normalizations},
its normalized boundary residual is uniformly bounded on the fixed window.  Hence
Lemma~\ref{lem:collar-corrected-trial-map} gives
\begin{equation}\label{eq:linear-correction-small}
  \left\|\mathcal E_{R,V,L,0}\mathcal A_{R,L,V}u\right\|_{L^2(\Omega_{R,V})}
  =o_R(1)\|u\|.
\end{equation}
Combining \eqref{eq:linear-ball-localization}, \eqref{eq:linear-regular-continuation-close}, and
\eqref{eq:linear-correction-small} proves \eqref{eq:linear-corrected-localization}.

Finally, the rescaled form is bounded on each summand and on the mixed terms.  On
\(\mathscr E_{R,V}(C,J)\), this is the eigenvalue equation and the bound \(|\zeta|\le C\).  On
\(F^D_{R,L}(V)\), it is the fixed finite Ritz model in Lemma~\ref{lem:finite-boundary-quasimode}.
For a mixed term, if \(e\in\mathscr E_{R,V}(C,J)\) has effective eigenvalue \(\zeta_e\), then for
any \(v\in H_0^1(\Omega_{R,V})\)
\[
  \mathfrak h_R[e,v]=(\zeta_e+O(R^{-1}))\langle e,v\rangle,
\]
which is uniformly bounded by \(C\|e\|\|v\|\).  This proves the final assertion.
\end{proof}

\begin{lemma}[Ritz comparison from a captured exact cluster]\label{lem:abstract-ritz-grushin}
Let \(A_R\) be a self-adjoint operator with closed form \(\mathfrak a_R\), and fix a reference
energy \(E_R^0\).  Set
\[
  \mathfrak h_R[u,v]
  :=\frac{R^3}{2\pi^2}
  \bigl(\mathfrak a_R[u,v]-E_R^0\langle u,v\rangle\bigr).
\]
Let
\[
  \mu_0^{(R)}\le\mu_1^{(R)}\le\cdots\le \mu_J^{(R)}
\]
be the first \(J+1\) rescaled eigenvalues of \(A_R\), and let \(\mathscr E_J\) be their spectral
span.  Let \(F_R\) be a finite-dimensional subspace of the form domain, and let
\[
  r_0^{(R)}\le r_1^{(R)}\le\cdots\le r_J^{(R)}
\]
be the first \(J+1\) Ritz values of \(\mathfrak h_R\) on \(F_R\).  Suppose that, for some
\(0<\delta<1/4\) and \(K<\infty\),
\begin{enumerate}[label=(\roman*)]
\item \(\|(I-P_F)P_{\mathscr E_J}\|\le\delta\);
\item the rescaled form is bounded on \(\mathscr E_J+F_R\):
\[
  |\mathfrak h_R[u,v]|\le K\|u\|\|v\|,
  \qquad u,v\in\mathscr E_J+F_R .
\]
\end{enumerate}
Then, for \(0\le j\le J\),
\begin{equation}\label{eq:captured-ritz-two-sided}
  |\mu_j^{(R)}-r_j^{(R)}|\le C_{J,K}\delta .
\end{equation}
The same conclusion holds for a cluster inside a bounded interval after grouping eigenvalues at the
scale \(C_{J,K}\delta\).  In particular, no separate reverse-angle hypothesis is needed: the upper
bound comes from the Ritz trial space, and the lower bound comes from projecting the exact cluster
into that space.
\end{lemma}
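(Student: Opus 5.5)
The plan is to prove the two inequalities separately, using only min--max applied to the bounded form \(\mathfrak h_R\) on the finite-dimensional space \(G:=\mathscr E_J+F_R\).

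\emph{Upper bound, \(r_j^{(R)}\le\mu_j^{(R)}+C_{J,K}\delta\).}
Consider the map \(P_F|_{\mathscr E_J}:\mathscr E_J\to F_R\). By (i), \(\|u-P_Fu\|\le\delta\|u\|\) for \(u\in\mathscr E_J\). Since \(\delta<1/4\), this map is injective with \(\|P_Fu\|\ge(1-\delta)\|u\|\).

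Fix \(j\le J\) and let \(\mathscr E_j\) be the span of the first \(j+1\) eigenvectors. Then \(P_F\mathscr E_j\subset F_R\) is \((j+1)\)-dimensional. For \(u\in\mathscr E_j\), write \(P_Fu=u-e\) with \(\|e\|\le\delta\|u\|\), and expand
\[
  \mathfrak h_R[P_Fu]=\mathfrak h_R[u]-2\operatorname{Re}\mathfrak h_R[u,e]+\mathfrak h_R[e].
\]
Both \(u\) and \(e\) lie in \(G\), so hypothesis (ii) bounds the last two terms by \(K(2\delta+\delta^2)\|u\|^2\). The first term satisfies \(\mathfrak h_R[u]\le\mu_j^{(R)}\|u\|^2\). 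Dividing by \(\|P_Fu\|^2\ge(1-\delta)^2\|u\|^2\) gives a Rayleigh quotient at most \(\mu_j^{(R)}+C\delta\). The constant is uniform because \(|\mu_j^{(R)}|\le K\): this follows from (ii) applied to eigenvectors. Min--max on \(F_R\) then gives the upper bound.

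\emph{Lower bound, \(\mu_j^{(R)}\le r_j^{(R)}+C_{J,K}\delta\).}
This is the step where care is needed. Taking a \((j+1)\)-dimensional Ritz subspace of \(F_R\) as a trial space for \(A_R\) only yields \(\mu_j\le r_j\) directly, which is the inequality we want. The argument is therefore cleaner than expected: the Ritz trial space lies in the form domain, so the full min--max for \(A_R\) gives \(\mu_j^{(R)}\le r_j^{(R)}\) with no error term. Thus one direction is exact and the other comes from projecting the exact cluster into \(F_R\) as above. This matches the remark that no reverse-angle hypothesis is needed.

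Two points must be checked. First, the scaling \(\mathfrak h_R\) is an affine monotone function of \(\mathfrak a_R\), so min--max commutes with it. Second, \(\mu_j^{(R)}\) must be the genuine \(j\)-th eigenvalue of the whole operator, i.e.\ the first \(J+1\) eigenvalues are discrete below the essential spectrum. I take this as implicit in the hypotheses (compact resolvent in the application).

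\emph{Cluster version and main obstacle.}
For a cluster inside a bounded interval, apply the same two inequalities to eigenvalue counts in the interval, enlarged by \(C\delta\), and group eigenvalues at that scale. The main obstacle is ensuring the constant in the upper bound does not depend on \(R\). This rests entirely on hypothesis (ii) together with \(|\mu_j|\le K\), so I will state that dependence explicitly.
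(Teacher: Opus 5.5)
Your proposal is correct and follows the paper's argument. You get $\mu_j^{(R)}\le r_j^{(R)}$ exactly by min--max with the Ritz trial space, and $r_j^{(R)}\le\mu_j^{(R)}+C_{J,K}\delta$ by pushing the span of the first $j+1$ exact eigenvectors into $F_R$ with $P_F$ and controlling the form via (ii). The only difference is cosmetic: you bound the Rayleigh quotient of $P_Fu$ directly (using $|\mu_j^{(R)}|\le K$ to handle the division by $\|P_Fu\|^2$), whereas the paper first replaces $P_F|_{E_j}$ by its polar isometry $\widehat U=U(U^*U)^{-1/2}$ and compares the two form matrices.
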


\begin{proof}
The min--max principle gives one side immediately.  Since \(F_R\) is an admissible trial space,
\[
  \mu_j^{(R)}\le r_j^{(R)},\qquad 0\le j\le J .
\]
For the reverse inequality, let \(E_j\subset\mathscr E_J\) be the span of the first \(j+1\) exact
rescaled eigenvectors.  The map \(U=P_F|_{E_j}\) is injective and
\(\|U^*U-I\|\le 3\delta\).  After the polar correction
\(\widehat U=U(U^*U)^{-1/2}\), we have an isometric embedding
\(\widehat U:E_j\to F_R\) satisfying
\[
  \|\widehat Ue-e\|\le C\delta\|e\|,
  \qquad e\in E_j .
\]
Using the form bound on \(\mathscr E_J+F_R\),
\[
  |\mathfrak h_R[\widehat Ue,\widehat Ue']-\mathfrak h_R[e,e']|
  \le C_K\delta\|e\|\|e'\| .
\]
Thus the matrix of \(\mathfrak h_R\) on \(\widehat U E_j\) is within \(C_K\delta\) of the exact
matrix on \(E_j\).  Its largest eigenvalue is therefore at most \(\mu_j^{(R)}+C_K\delta\).  By
min--max for the finite Ritz problem,
\[
  r_j^{(R)}\le \mu_j^{(R)}+C_{J,K}\delta .
\]
Together with the trial-space upper bound this proves \eqref{eq:captured-ritz-two-sided}.  The
cluster version is the same argument applied to the grouped spectral subspace; grouping only avoids
ordering ambiguities at the boundary of a cluster.
\end{proof}

\begin{lemma}[One-sided Ritz enclosure from a corrected trial space]\label{lem:one-sided-ritz-enclosure}
Let \(A_R\) be a self-adjoint operator with closed form \(\mathfrak a_R\), and set
\[
  \mathfrak h_R[u,v]
  =\frac{R^3}{2\pi^2}\bigl(\mathfrak a_R[u,v]-E_R^0\langle u,v\rangle\bigr).
\]
Let \(\mathscr E_R\) be a finite-dimensional exact spectral subspace whose rescaled eigenvalues
lie in a compact interval \(I\), and let \(F_R\) be a finite-dimensional subspace of the form domain.
Assume
\[
  \|(I-P_F)P_{\mathscr E_R}\|\le \delta<1/4
\]
and that \(\mathfrak h_R\) is bounded by \(K\) on \(\mathscr E_R+F_R\).  Then every exact rescaled
eigenvalue in \(I\) lies within \(C_K\delta\) of a Ritz value of \(\mathfrak h_R|_{F_R}\), counted
with multiplicity after grouping clusters at distance \(C_K\delta\).  The constant depends only on
\(K\) and on the dimension of the cluster being followed.
\end{lemma}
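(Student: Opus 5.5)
The plan is to reduce Lemma~\ref{lem:one-sided-ritz-enclosure} to the argument already used in Lemma~\ref{lem:abstract-ritz-grushin}, but run only in the direction available here. That direction is from the exact cluster into the trial space. Let \(d=\dim\mathscr E_R\). Let \(\mu_0\le\dots\le\mu_{d-1}\) be the exact rescaled eigenvalues in \(I\), with orthonormal eigenvectors \(e_i\). Set \(U=P_F|_{\mathscr E_R}\).

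\textbf{Step 1: a near-isometric copy of the cluster.} Hypothesis \(\|(I-P_F)P_{\mathscr E_R}\|\le\delta\) gives \(\|U^*U-I\|\le 2\delta+\delta^2\le3\delta\), so \(U\) is injective. The polar correction \(\widehat U=U(U^*U)^{-1/2}\) is then an isometry of \(\mathscr E_R\) into \(F_R\) with \(\|\widehat Ue-e\|\le C\delta\|e\|\). Set \(G=\widehat U\mathscr E_R\subset F_R\).

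\textbf{Step 2: perturbation of the compressed matrix.} Using boundedness of \(\mathfrak h_R\) by \(K\) on \(\mathscr E_R+F_R\), I will show
\[
|\mathfrak h_R[\widehat Ue,\widehat Ue']-\mathfrak h_R[e,e']|\le C_K\delta\|e\|\|e'\|.
\]
Hence the compression of \(\mathfrak h_R\) to \(G\), written in the basis \(\widehat Ue_i\), is within \(C_K\delta\) in norm of \(\operatorname{diag}(\mu_i)\). By Weyl, its eigenvalues are \(\mu_i+O(C_K\delta)\).

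\textbf{Step 3: passing from the subspace \(G\) to the Ritz values of \(F_R\).} This is the main obstacle, since \(G\) is only a subspace of \(F_R\). Eigenvalues of a compression to a subspace do not automatically sit near eigenvalues of the larger compression. The key is that each \(\widehat Ue_i\) is an \emph{approximate eigenvector} of \(\mathfrak h_R|_{F_R}\), not merely of \(\mathfrak h_R|_G\). Let \(H_F\) be the self-adjoint operator on \(F_R\) representing \(\mathfrak h_R|_{F_R}\). For \(g\in F_R\) one has
\[
\mathfrak h_R[\widehat Ue_i,g]-\mu_i\langle\widehat Ue_i,g\rangle
=\bigl(\mathfrak h_R[e_i,g]-\mu_i\langle e_i,g\rangle\bigr)+O(C_K\delta\|g\|),
\]
and the bracket vanishes because \(e_i\) is an exact eigenvector and \(g\) lies in the form domain. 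Hence
\[
\|(H_F-\mu_i)\widehat Ue_i\|\le C_K\delta.
\]
This holds with the same bound on the whole span of any group of \(e_i\).

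\textbf{Step 4: conclusion with multiplicity.} Group the \(\mu_i\) into clusters separated by more than \(C_K\delta\). For a group of size \(m\) with centre \(\mu\), Step~3 gives an \(m\)-dimensional near-orthonormal subspace of \(F_R\) on which \(\|H_F-\mu\|\le C_K\delta\). The standard spectral-projection (Davis--Kahan type) argument for the finite self-adjoint matrix \(H_F\) then yields at least \(m\) Ritz values, counted with multiplicity, within \(C'_K\delta\) of \(\mu\). Distinct groups correspond to disjoint enlarged windows, so they are matched to disjoint sets of Ritz values. The constants depend only on \(K\) and on \(d\).
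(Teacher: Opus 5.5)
Your proof is correct, but its decisive step differs from the paper's. The paper's argument is your Steps~1--2 and nothing more. It embeds the cluster isometrically into $F_R$ via $\widehat U=U(U^*U)^{-1/2}$, and shows that the matrix of $\mathfrak h_R$ on $\widehat U\mathscr E_R$ is within $O_K(\delta)$ of the exact cluster matrix. It then appeals to the finite-dimensional min--max principle.

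That compression estimate is enough in Lemma~\ref{lem:abstract-ritz-grushin}, because there the cluster is the bottom of the spectrum and the trial-space inequality $\mu_j\le r_j$ supplies the other side. For a cluster in the interior of the spectrum it is not enough. The compression of $\mathfrak h_R$ to a proper subspace of $F_R$ only gives interlacing bounds on the extreme Ritz values. It cannot exclude a large coupling between $\widehat U\mathscr E_R$ and $F_R\ominus\widehat U\mathscr E_R$ that pushes every Ritz value away from $\mu_i$.

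Your Step~3 is exactly the estimate that excludes this. You test the exact eigen-equation $\mathfrak h_R[e_i,g]=\mu_i\langle e_i,g\rangle$ against every $g\in F_R$, and you use the form bound on $\mathscr E_R+F_R$ for the mixed pairs. This makes the embedded cluster a family of genuine approximate eigenvectors of the Ritz operator $H_F$. The elementary rank argument then gives the count with multiplicity: if $\mathbf 1_{[\mu-\epsilon,\mu+\epsilon]}(H_F)$ had rank below $m$, some nonzero $w$ in your $m$-dimensional test space would be orthogonal to its range and would satisfy $\|(H_F-\mu)w\|>\epsilon\|w\|$. The paper's route buys brevity and a common template with Lemma~\ref{lem:abstract-ritz-grushin}. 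Yours is what actually proves the enclosure for an arbitrary cluster.

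Two simplifications are available.
\begin{itemize}
\item Step~2 is never used in your argument and can be dropped.
\item The polar correction is unnecessary. Take $v=\sum c_ie_i$ in a group of diameter $D$ centred at $\mu$. The orthogonality $\langle (I-P_F)v,g\rangle=0$ for $g\in F_R$ gives directly $\|(H_F-\mu)P_Fv\|\le (D/2+K\delta)\|v\|$, with $\|P_Fv\|\ge(1-\delta^2)^{1/2}\|v\|$.
\end{itemize}
This also settles the constant bookkeeping in Step~4. Each group's window is its convex hull enlarged by $O(K\delta)$, so grouping at a scale exceeding twice that enlargement makes the windows disjoint. The diameter of a group of at most $d$ eigenvalues is where the dependence of the constant on the cluster dimension enters.
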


\begin{proof}
Let \(U=P_F|_{\mathscr E_R}\).  The angle assumption gives
\(\|U^*U-I\|\le 3\delta\).  The polar correction
\(\widehat U=U(U^*U)^{-1/2}\) is a unitary map from \(\mathscr E_R\) onto the subspace
\(\widehat U\mathscr E_R\subset F_R\), and
\(\|\widehat Ue-e\|\le C\delta\|e\|\).  Since the rescaled form is bounded on
\(\mathscr E_R+F_R\),
\[
  |\mathfrak h_R[\widehat Ue,\widehat Ue']-\mathfrak h_R[e,e']|
  \le C_K\delta\|e\|\|e'\| .
\]
Thus the matrix of the exact cluster and the Ritz matrix restricted to \(\widehat U\mathscr E_R\)
differ by \(O_K(\delta)\).  The finite-dimensional min-max principle gives the claimed one-sided
enclosure.  Grouping clusters prevents endpoint ordering ambiguities.
\end{proof}

\begin{lemma}[Finite-dimensional Rouch\'e comparison]\label{lem:rouche-finite-pencils}
Let \(F(\zeta)\) and \(G(\zeta)\) be holomorphic families of linear maps on a fixed finite-dimensional Hilbert space, and let \(U\Subset\mathbb C\) have piecewise smooth boundary.  Suppose \(G(\zeta)\) is invertible on \(\partial U\) and
\[
  \sup_{\zeta\in\partial U}\|(F(\zeta)-G(\zeta))G(\zeta)^{-1}\|<1.
\]
Then \(F\) and \(G\) have the same number of characteristic zeros in \(U\), counted with algebraic multiplicity.  In particular, if two self-adjoint finite pencils are uniformly close on contours separating a cluster, their characteristic values in that cluster differ by the size of the perturbation.
\end{lemma}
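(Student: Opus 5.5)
The plan is to reduce to the scalar argument principle via determinants. Put \(H(\zeta):=(F(\zeta)-G(\zeta))G(\zeta)^{-1}\). Then
\[
  F(\zeta)=(I+H(\zeta))G(\zeta)
\]
on a neighbourhood of \(\partial U\), where \(G\) is invertible; this is the only region in which the factorization is used. The characteristic zeros of a holomorphic family on a fixed finite-dimensional space, counted with algebraic multiplicity, are exactly the zeros of \(\det F(\zeta)\) counted with order. This is the finite-dimensional case of Definition~\ref{def:characteristic-multiplicity}, where the Grushin reduction is the identity. Equivalently, the count equals the sum of the partial multiplicities of the Smith normal form, and that sum is the vanishing order of the determinant.

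We may assume \(\det G\not\equiv0\), since \(G\) is invertible on \(\partial U\), and likewise for \(F\), as shown below. The count of zeros of \(F\) in \(U\) is then
\[
  \frac1{2\pi i}\oint_{\partial U}\frac{(\det F)'}{\det F}\,d\zeta
  =\frac1{2\pi i}\oint_{\partial U}\frac{(\det G)'}{\det G}\,d\zeta
  +\frac1{2\pi i}\oint_{\partial U}\frac{d}{d\zeta}\log\det(I+H(\zeta))\,d\zeta .
\]
The decisive step is to show that the last winding number vanishes. The hypothesis gives \(\|H(\zeta)\|<1\) on \(\partial U\), and we can strengthen this to \(\sup_{\partial U}\|H\|\le q<1\) by compactness and continuity. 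Consider the homotopy \(t\mapsto\det(I+tH(\zeta))\) for \(t\in[0,1]\). It never vanishes on \(\partial U\), because \(\|tH\|<1\) makes \(I+tH\) invertible through a Neumann series.

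The winding number of \(\zeta\mapsto\det(I+tH(\zeta))\) around \(0\) along \(\partial U\) is an integer that depends continuously on \(t\), so it is constant. At \(t=0\) it equals \(0\). The same invertibility shows that \(\det F\) does not vanish on \(\partial U\), so \(\det F\not\equiv0\) and the left side is defined. Hence \(F\) and \(G\) have the same number of characteristic zeros in \(U\). An alternative route is the Gohberg--Sigal operator Rouché theorem, but the determinant argument suffices in finite dimensions.

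For the self-adjoint corollary, take \(G(\zeta)=\zeta I-A\) and \(F(\zeta)=\zeta I-B\) with \(\|A-B\|\le\varepsilon\). Let \(U\) be a disc or contour around a cluster of \(\operatorname{spec}A\) whose boundary stays at distance \(d>\varepsilon\) from \(\operatorname{spec}A\). Then \(\|G^{-1}\|\le d^{-1}\), and the Rouché hypothesis holds because \(\varepsilon/d<1\). So the counts agree on every such contour. Shrinking the contours to neighbourhoods of width just over \(\varepsilon\) around each cluster gives a matching within \(\varepsilon\), which is Weyl's inequality in disguise.

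The only potential obstacle is the identification of algebraic multiplicity with the vanishing order of \(\det\). The plan is to cite the Smith normal form of a matrix over the local ring of holomorphic germs, or Definition~\ref{def:characteristic-multiplicity}, rather than reprove it.
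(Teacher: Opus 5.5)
Your proposal is correct and is essentially the paper's argument: the factorization \(\det F=\det G\cdot\det\bigl(I+(F-G)G^{-1}\bigr)\) on \(\partial U\), followed by the argument principle, where your homotopy \(t\mapsto\det(I+tH(\zeta))\) justifies the vanishing winding number that the paper only asserts. For the self-adjoint clause, both you and the paper ultimately rely on the ordering of real eigenvalues (Weyl's inequality), and this is genuinely needed, since Rouch\'e counts on cluster contours alone do not force a matching within \(\varepsilon\) inside a long chained cluster.
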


\begin{proof}
The determinant identity
\[
  \det F(\zeta)=\det G(\zeta)\,
  \det\{I+(F(\zeta)-G(\zeta))G(\zeta)^{-1}\}
\]
shows that the second determinant is nonvanishing on \(\partial U\) and has winding number zero there.  The argument principle therefore gives the same number of zeros for \(\det F\) and \(\det G\) in \(U\).  The final statement follows by taking \(U\) to be a union of small contours around the real cluster and using the self-adjoint ordering of the characteristic values.
\end{proof}

\begin{lemma}[Gram matrix for the fixed corrected Ritz space]\label{lem:fixed-corrected-gram-estimates}
Fix finite numbers \(L,C<\infty\) and a Lipschitz bound \(K\).  Let \(V\in\mathfrak C_K\) be smooth.
For \(f,g\in P_LL^2(\Sn)\), put
\[
  u_f=\mathcal S(k_R(0))f,
  \qquad e_f=\mathcal E_{R,V,L,0}f,
  \qquad T_f=\mathcal T_{R,V}(0)f=u_f-e_f .
\]
Then, uniformly for \(f,g\in P_LL^2(\Sn)\),
\begin{equation}\label{eq:fixed-ritz-gram-bilinear}
  \langle T_f,T_g\rangle_{L^2(\Omega_{R,V})}
  =\frac R2\langle f,g\rangle_{L^2(\Sn)}+O_{L,C,K}(1)\|f\|\|g\| .
\end{equation}
\end{lemma}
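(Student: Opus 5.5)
We expand the Gram bilinear form
\[
  \langle T_f,T_g\rangle=\langle u_f,u_g\rangle-\langle u_f,e_g\rangle-\langle e_f,u_g\rangle+\langle e_f,e_g\rangle
\]
and estimate the four terms separately. Since \(P_LL^2(\Sn)\) is finite dimensional and only finitely many degrees \(\ell\) with \(T_n(\ell)\le L\) occur, it suffices to prove each estimate for \(f=f_{\ell m}Y_{\ell m}\), \(g=g_{\ell'm'}Y_{\ell'm'}\) and then sum; all constants may depend on \(L\).

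\textbf{Main term.} Split \(\Omega_{R,V}\) into the common ball \(\{r<R-\log2\}\) and the thin shell \(R-\log2\le r<R-V(\theta)\).

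On the common ball, orthogonality of spherical harmonics in \(\theta\) reduces \(\langle u_f,u_g\rangle\) to the diagonal radial integrals \(\int_0^{R-\log2}s_\ell(r,k_R(0))^2\,dr\). The phase normalization \eqref{eq:regular-phase-normalization} gives \(s_\ell(r,\pi/R)=\sin\{\pi(r+b_\ell)/R\}+O(R^{-3})+O(R^{-1}(1+r)e^{-2r})\) for \(r\ge1\). On \([0,1]\) the regular branch is bounded by \(O(R^{-1})\) via the Cauchy data from Lemma~\ref{lem:fixed-coefficient-parametrization}. Hence
\[
  \int_0^{R-\log2}s_\ell^2\,dr=\int_0^{R}\sin^2\!\frac{\pi(r+b_\ell)}{R}\,dr+O_L(1)=\frac R2+O_L(1),
\]
exactly as in the proof of Lemma~\ref{lem:regular-continuation-conditioning}.

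On the shell, \(|s_\ell|\le C_L R^{-1}\) by the collar bound used in Lemma~\ref{lem:collar-correction}, and the shell has bounded radial width. Its contribution is therefore \(O(R^{-2})\|f\|\|g\|\); the non-orthogonality of harmonics there, caused by the \(\theta\)-dependent upper limit, is harmless at this size. This yields \(\langle u_f,u_g\rangle=\frac R2\langle f,g\rangle+O_L(1)\|f\|\|g\|\).

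\textbf{Correction terms.} By Lemma~\ref{lem:finite-trace-regularity}, the trace \(\gamma_{R,V}u_f\) has \(L^2\) norm at most \(C_{L,K}R^{-1}\|f\|\). This is a bounded normalized residual \(\rho(f)\le C_{L,K}\), so \eqref{eq:collar-corrected-size} gives \(\|e_f\|\le C R^{-3/4}\|f\|\). The term \(\langle e_f,e_g\rangle\) is thus \(O(R^{-3/2})\|f\|\|g\|\).

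For the cross terms we avoid the crude Cauchy--Schwarz bound \(\|u_g\|\|e_f\|\sim R^{1/2}R^{-3/4}\), which is fine but not sharp. Instead we use the support of \(e_f\): it lies in the mesoscopic collar of width \(\ell_R=R^{1/2}\). By \eqref{eq:regular-mesoscopic-collar-size}, \(\|u_g\|_{L^2(\mathcal C_{R,V})}\le C\ell_R^{3/2}R^{-1}\|g\|\). Combined with \(\|e_f\|\le C\ell_R^{1/2}R^{-1}\|f\|\) from \eqref{eq:mesoscopic-extension-size}, this gives
\[
  |\langle e_f,u_g\rangle|\le C\ell_R^2R^{-2}\|f\|\|g\|=O(R^{-1})\|f\|\|g\|.
\]

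\textbf{Conclusion.} Summing the finitely many sector pairs gives \eqref{eq:fixed-ritz-gram-bilinear}. The main technical point is uniformity of the main-term remainder over the window: the \(O_L(1)\) comes from the inner region \(r\lesssim1\) and from the \(O(1)\)-length discrepancy between \(R-\log2\) and \(R-V\). Both are controlled by the finite-window uniform bounds \(|s_\ell|\le C_L R^{-1}\) (which hold on the whole interval, since \(\sin\) of phase \(\pi r/R\) near the endpoints is \(O(R^{-1})\)) together with the phase expansion. No dependence on \(C\) arises, since \(\zeta=0\).
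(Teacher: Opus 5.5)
Your proposal is correct and follows essentially the same route as the paper. Both arguments expand $\langle T_f,T_g\rangle$, take $\langle u_f,u_g\rangle=\frac R2\langle f,g\rangle+O_L(1)\|f\|\|g\|$ from the finite-window phase normalization, and deduce $\|e_f\|\le CR^{-3/4}\|f\|$ from the bounded normalized residual via the collar-correction estimate. You differ only in treating the shell $R-\log2\le r<R-V(\theta)$ explicitly and in bounding the cross terms by the collar-localized norm of $u_g$, which gives $O(R^{-1})$ instead of the paper's Cauchy--Schwarz bound $R^{1/2}\cdot R^{-3/4}$; both suffice. One small correction: the bound $|s_\ell|\le C_LR^{-1}$ holds only near the endpoints $r\lesssim1$ and $R-r=O(1)$, not on the whole interval, though that is all you actually use.
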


\begin{proof}
The regular continuation estimate gives
\[
  \langle u_f,u_g\rangle=\frac R2\langle f,g\rangle+O_{L,C,K}(1)\|f\|\|g\| .
\]
At \(\zeta=0\), \cref{lem:boundary-grushin-expansion} gives
\(\|\mathfrak B_R(0,V)f\|\le C_{L,K}\|f\|\) on the fixed window.  Hence the mesoscopic correction satisfies
\[
  \|e_f\|_{L^2}\le C_{L,K}R^{-3/4}\|f\|,
  \qquad
  \|e_f\|_{\mathfrak q,\mathcal C_{R,V}}
  \le C_{L,K}R^{-5/4}\|f\|,
\]
with the angular derivative contribution controlled by the collar estimate of Lemma~\ref{lem:polynomial-collar-calculus}.  Since \(\|u_f\|\simeq R^{1/2}\|f\|\), the cross terms involving \(e_f\) are \(O_{L,C,K}(1)\|f\|\|g\|\) in the Gram matrix.  This proves \eqref{eq:fixed-ritz-gram-bilinear}.
\end{proof}

\begin{lemma}[Boundary-flux form matrix for the fixed corrected Ritz space]\label{lem:fixed-corrected-bilinear-estimates}
Fix finite numbers \(L,C<\infty\) and a Lipschitz bound \(K\).  Let \(V\in\mathfrak C_K\) be smooth, and set
\[
  A_{V,L}:=P_L(T_n+V-b_0)P_L .
\]
For \(f,g\in P_LL^2(\Sn)\), put
\[
  u_f=\mathcal S(k_R(0))f,
  \qquad e_f=\mathcal E_{R,V,L,0}f,
  \qquad T_f=\mathcal T_{R,V}(0)f=u_f-e_f .
\]
Then
\begin{equation}\label{eq:fixed-ritz-form-bilinear}
  \mathfrak h_R[T_f,T_g]
  =\frac R2\langle A_{V,L}f,g\rangle_{L^2(\Sn)}+O_{L,C,K}(R^{1/2})\|f\|\|g\|,
\end{equation}
where
\[
  \mathfrak h_R[a,b]=\frac{R^3}{2\pi^2}\bigl(\mathfrak q_{R,V}[a,b]-E_R\langle a,b\rangle\bigr).
\]
Consequently, using the Gram matrix estimate of Lemma~\ref{lem:fixed-corrected-gram-estimates}, the normalized Ritz matrix of \(\mathfrak h_R\) on
\(\mathcal T_{R,V}(0)P_LL^2(\Sn)\), pulled back to coefficient space, is
\[
  P_L(T_n+V-b_0)P_L+O_{L,C,K}(R^{-1/2}).
\]
\end{lemma}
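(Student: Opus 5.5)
Write \(T_f=u_f-e_f\) and expand the rescaled form bilinearly:
\[
  \mathfrak h_R[T_f,T_g]=\mathfrak h_R[u_f,u_g]-\mathfrak h_R[u_f,e_g]-\mathfrak h_R[e_f,u_g]+\mathfrak h_R[e_f,e_g].
\]
Each term is treated separately. The main term \(\mathfrak h_R[u_f,u_g]\) is computed by Green's identity on \(\Omega_{R,V}\). Since \(u_f\) solves \((H-k_R(0)^2)u_f=0\) with \(k_R(0)^2=E_R\) exactly, integration by parts leaves only a boundary flux:
\[
  \mathfrak q_{R,V}[u_f,u_g]-E_R\langle u_f,u_g\rangle
  =\int_{\partial\Omega_{R,V}}\partial_\nu u_f\,\overline{u_g}\,dS .
\]
The regularity at the pole, with the Friedrichs convention of Lemma~\ref{lem:friedrichs-endpoint}, kills the inner boundary term. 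In collar coordinates \(dS\) and \(\partial_\nu\) differ from \(d\theta\) and \(\partial_r\) only by factors \(1+O(K^2e^{-2R})\), because the tangential metric is \(\sinh^2r\) and the Liouville weight has already been absorbed. Hence the flux equals \(\int_{\Sn}(\partial_ru_f)(R-V,\theta)\,\overline{u_g(R-V,\theta)}\,d\theta\) up to exponentially small errors.

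Next, insert the finite-window expansions. By \eqref{eq:regular-phase-normalization} and Lemma~\ref{lem:boundary-grushin-expansion} with \(\zeta=0\),
\[
  u_g(R-V,\cdot)=-\frac\pi R\bigl(b(A)-V\bigr)g+O_{L,K}(R^{-2})\|g\|,
\]
and by \eqref{eq:phase-error-bound},
\[
  \partial_ru_f(R-V,\cdot)=k\cos(\pi+O(R^{-1}))f=-\frac\pi R f+O_{L}(R^{-2})\|f\|.
\]
The flux is therefore
\[
  \frac{\pi^2}{R^2}\langle f,(b(A)-V)g\rangle+O_{L,K}(R^{-3})\|f\|\|g\|.
\]
Multiplying by \(R^3/(2\pi^2)\) and using \(b(A)-V=-(T_n+V-b_0)\), together with the sign from the outward normal (which points in \(+r\)), gives
\[
  \tfrac R2\langle A_{V,L}f,g\rangle+O(1)\|f\|\|g\|.
\]
The projection \(P_L\) appears because the paired vector \(f\) lies in \(P_LL^2\). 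The sign should be cross-checked against the constant-deficit calibration in Remark~\ref{rem:sign-conventions}, and fixing it that way is the safest route.

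The correction terms are then estimated with the collar bounds from Lemma~\ref{lem:fixed-corrected-gram-estimates}. Since \(\|\mathfrak B_R(0,V)f\|\le C_{L,K}\|f\|\), Lemma~\ref{lem:collar-correction} with \(\ell_R=R^{1/2}\) gives
\[
  \|e_f\|_{L^2}\lesssim R^{-3/4}\|f\|,\qquad \|e_f\|_{\mathfrak q,\mathcal C}\lesssim R^{-5/4}\|f\|,
\]
while \eqref{eq:regular-mesoscopic-collar-size} gives \(\|u_g\|_{\mathfrak q,\mathcal C}\lesssim R^{-3/4}\|g\|\). The collar form estimate of Lemma~\ref{lem:polynomial-collar-calculus} suppresses angular terms. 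This yields
\[
  |\mathfrak q[u_f,e_g]|\lesssim R^{-2}\|f\|\|g\|,\qquad
  E_R|\langle u_f,e_g\rangle|\lesssim R^{-2}\cdot R^{3/4}R^{-1}R^{-3/4}\|f\|\|g\|,
\]
where the last pairing uses the collar \(L^2\) size of \(u_f\), namely \(\ell_R^{3/2}R^{-1}\). After the factor \(R^3\) both are \(O(R)\), which is too crude. The mixed term must therefore be handled more carefully. Green's identity applies again: \(u_f\) is a solution and \(e_g\) has trace \(g_{\rm tr}=\gamma u_g\), so
\[
  \mathfrak q[u_f,e_g]-E_R\langle u_f,e_g\rangle=\int\partial_\nu u_f\,\overline{\gamma u_g}.
\]
This is exactly the same flux as the main term. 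Consequently
\[
  \mathfrak h[u_f,u_g]-\mathfrak h[u_f,e_g]=0\cdot(\text{flux})+\ldots,
\]
and the algebra must be reorganized. The correct grouping is
\[
  \mathfrak h[T_f,T_g]=\mathfrak h[u_f,T_g]-\mathfrak h[e_f,T_g],
\]
and \(\mathfrak h[u_f,T_g]=0\) by Green's identity, since \(T_g\in H_0^1\) and \(u_f\) solves the equation at \(k^2=E_R\) exactly. Hence
\[
  \mathfrak h_R[T_f,T_g]=-\mathfrak h_R[e_f,u_g]+\mathfrak h_R[e_f,e_g].
\]

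I expect these collar integrals to be the main obstacle. In \(\mathfrak h[e_f,u_g]\), the leading part is
\[
  \frac{R^3}{2\pi^2}\int\partial_te_f\,\overline{\partial_tu_g}
  =\frac{R^3}{2\pi^2}\int_{\Sn}\gamma u_f\,\overline{\partial_\nu u_g}\,d\theta\;\times(1+O(\ell_R/R)).
\]
This holds because \(\int_0^{\ell_R}\chi'(t/\ell_R)\ell_R^{-1}\,dt=-1\) and \(\partial_ru_g\) is essentially constant across the collar up to \(O(\ell_R/R)\) relative error. This reproduces \(\frac R2\langle f,A_{V,L}g\rangle\) with relative error \(O(\ell_R/R)=O(R^{-1/2})\), so the absolute error is \(O(R^{1/2})\). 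The remaining pieces are as follows. The \(E_R\)-term is \(R^3R^{-2}\cdot R^{-3/4}\cdot R^{3/4}R^{-1}=O(1)\). The term \(\mathfrak h[e_f,e_g]\) is \(R^3\cdot R^{-5/2}=O(R^{1/2})\). Angular contributions are exponentially small. Together these give \eqref{eq:fixed-ritz-form-bilinear}. Self-adjointness of \(A_{V,L}\) reconciles the order of \(f\) and \(g\).

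For the consequence, divide by the Gram matrix \(\frac R2(I+O(R^{-1}))\) from \eqref{eq:fixed-ritz-gram-bilinear}. Since \(\|A_{V,L}\|\) is bounded on the fixed window, the normalized Ritz matrix is
\[
  P_L(T_n+V-b_0)P_L+O(R^{-1/2}).
\]
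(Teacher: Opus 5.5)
Your final argument is the paper's proof. The regrouping $\mathfrak h_R[T_f,T_g]=\mathfrak h_R[u_f,T_g]-\mathfrak h_R[e_f,T_g]$ with $\mathfrak h_R[u_f,T_g]=0$ is the paper's first step. So are the bound $\mathfrak h_R[e_f,e_g]=O(R^{1/2})\|f\|\|g\|$, the evaluation of $\mathfrak h_R[e_f,u_g]$ as a boundary flux through the finite-window expansions, and the Gram normalization. The only difference is how you get the flux. The paper applies Green's identity with $u_g$ as the solution, which gives $\mathfrak q_{R,V}[e_f,u_g]-E_R\langle e_f,u_g\rangle=\int_{\Sn}\gamma u_f\,\overline{\partial_\nu u_g}\,d\theta$ up to lower-order tangential terms in one line. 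Your $\chi'$-averaging across the collar and the separate bound on $E_R\langle e_f,u_g\rangle$ are therefore unnecessary, though they still give the same $O(R^{1/2})$ error.

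The sign is the substance of the lemma, and it should come out of this computation rather than be fixed by calibration. As written, you get it wrong.

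\textbf{First computation.} There is no extra ``sign from the outward normal''. You already used $\partial_\nu=\partial_r$ when you wrote $\partial_\nu u_f=-\frac\pi R f+O(R^{-2})\|f\|$. Hence $\mathfrak h_R[u_f,u_g]=-\frac R2\langle f,A_{V,L}g\rangle+O(1)\|f\|\|g\|$.

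\textbf{Final flux.} We have $\gamma u_f=\frac\pi R(T_n+V-b_0)f+O(R^{-2})\|f\|$ and $\partial_\nu u_g=-\frac\pi R g+O(R^{-2})\|g\|$. So the rescaled flux $\frac{R^3}{2\pi^2}\int_{\Sn}\gamma u_f\,\overline{\partial_\nu u_g}\,d\theta$ equals $-\frac R2\langle A_{V,L}f,g\rangle$, not $+\frac R2\langle f,A_{V,L}g\rangle$ as you assert. The positive sign in the lemma comes precisely from the minus sign in $\mathfrak h_R[T_f,T_g]=-\mathfrak h_R[e_f,u_g]+\mathfrak h_R[e_f,e_g]$. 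With your stated flux sign, that regrouping would have produced $-\frac R2\langle A_{V,L}f,g\rangle$.
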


\begin{proof}
Use Green's identity in the flattened collar.  The regular continuations solve
\((H-E_R)u_f=0\), while \(T_g\) has zero trace on the moving boundary.  Therefore
\[
  \mathfrak q_{R,V}[T_f,T_g]-E_R\langle T_f,T_g\rangle
  =-\bigl(\mathfrak q_{R,V}[e_f,u_g]-E_R\langle e_f,u_g\rangle\bigr)
   +\bigl(\mathfrak q_{R,V}[e_f,e_g]-E_R\langle e_f,e_g\rangle\bigr).
\]
The quadratic correction term is lower order:
\[
  \left|\mathfrak q_{R,V}[e_f,e_g]-E_R\langle e_f,e_g\rangle\right|
  \le C_{L,C,K}R^{-5/2}\|f\|\|g\| .
\]
The linear term is a boundary-flux pairing.  In the collar coordinate \(t=R-V(\theta)-r\), the tangential part of the normal and the graph Jacobian contribute terms of size \(O(e^{-2R+C\ell_R})\), hence below every polynomial order for the mesoscopic collar.  With \(\nu\) denoting the outward Liouville normal,
\[
  \mathfrak q_{R,V}[e_f,u_g]-E_R\langle e_f,u_g\rangle
  =\int_{\Sn} B_R^{\mathrm{raw}}(k_R(0),V)f\,
       \overline{\partial_\nu u_g}\,d\theta
       +O_{L,C,K}(R^{-5/2})\|f\|\|g\| .
\]
The finite boundary expansion at \(\zeta=0\) from \cref{lem:boundary-grushin-expansion} and the finite-window normal derivative estimate give
\[
  B_R^{\mathrm{raw}}(k_R(0),V)f
  =\frac{\pi}{R}A_{V,L}f+O_{L,C,K}(R^{-2})\|f\|,
  \qquad
  \partial_\nu u_g=-\frac{\pi}{R}g+O_{L,C,K}(R^{-2})\|g\| .
\]
The sign comes from the outward normal \(\nu=-\partial_t+O(e^{-R})\nabla_\theta\) while
\(\partial_t=-\partial_r\) points into the collar.  Substitution gives
\[
  \mathfrak q_{R,V}[e_f,u_g]-E_R\langle e_f,u_g\rangle
  =-\frac{\pi^2}{R^2}\langle A_{V,L}f,g\rangle
   +O_{L,C,K}(R^{-5/2})\|f\|\|g\| .
\]
Putting this into the previous identity yields
\[
  \mathfrak q_{R,V}[T_f,T_g]-E_R\langle T_f,T_g\rangle
  =\frac{\pi^2}{R^2}\langle A_{V,L}f,g\rangle
   +O_{L,C,K}(R^{-5/2})\|f\|\|g\| .
\]
Multiplying by \(R^3/(2\pi^2)\) proves \eqref{eq:fixed-ritz-form-bilinear}.  By Lemma~\ref{lem:fixed-corrected-gram-estimates}, the Gram matrix is
\(G=(R/2)I+O(1)\), so \(G^{-1/2}=(2/R)^{1/2}I+O(R^{-3/2})\), and the
\(O(R^{1/2})\) form error becomes \(O(R^{-1/2})\) after normalization.
\end{proof}

\begin{lemma}[Finite boundary model, fixed Ritz model, and residual quasimodes]\label{lem:finite-boundary-quasimode}
Fix an angular cutoff \(L\), an effective window \([-C,C]\), and a Lipschitz bound \(K\).  Let
\(V\in\mathfrak C_K\) be smooth, and put
\[
  A_{V,L}:=P_L(T_n+V-b_0)P_L:P_LL^2(\Sn)\to P_LL^2(\Sn).
\]
Then the finite boundary pencil has the model expansion
\begin{equation}\label{eq:finite-boundary-effective-model}
  P_L\mathfrak B_R(\zeta,V)P_L
  =\zeta I-A_{V,L}+O_{L,C,K}(R^{-1})
\end{equation}
uniformly for \(|\zeta|\le C\).

At the fixed parameter \(\zeta=0\), let
\[
  F^D_{R,L}(V):=\mathcal T_{R,V}(0)P_LL^2(\Sn)\subset H^1_0(\Omega_{R,V}).
\]
Define its Gram matrix and rescaled form matrix by
\[
  G^D_{R,L}(V)[f,g]
  =\langle \mathcal T_{R,V}(0)f,\mathcal T_{R,V}(0)g\rangle,
\]
\[
  H^D_{R,L}(V)[f,g]
  =\frac{R^3}{2\pi^2}
  \bigl(\mathfrak q_{R,V}[\mathcal T_{R,V}(0)f,\mathcal T_{R,V}(0)g]
  -E_R\langle \mathcal T_{R,V}(0)f,\mathcal T_{R,V}(0)g\rangle\bigr).
\]
After bulk normalization, the finite corrected Ritz matrix satisfies
\begin{equation}\label{eq:finite-corrected-ritz-fixed-model}
  (G^D_{R,L})^{-1/2}H^D_{R,L}(G^D_{R,L})^{-1/2}
  =A_{V,L}+O_{L,C,K}(R^{-1/2}).
\end{equation}
Consequently the Ritz values of the fixed corrected Dirichlet trial space and the eigenvalues of
\(A_{V,L}\) differ by \(O_{L,C,K}(R^{-1/2})\), counted with multiplicity on separating contours.

Finally, if the full normalized boundary residual satisfies
\begin{equation}\label{eq:full-residual-for-ritz-defect}
  \rho(f):=\frac{\|\mathfrak B_R(\zeta,V)f\|}{\|f\|}\le \rho,
\end{equation}
then the corrected finite trial vector
\(\mathcal T_{R,V}(\zeta)f\in H_0^1(\Omega_{R,V})\) satisfies
\begin{equation}\label{eq:full-residual-ritz-defect}
  \left|
  \frac{R^3}{2\pi^2}
  \frac{\mathfrak q_{R,V}[\mathcal T_{R,V}(\zeta)f]-E_R\|\mathcal T_{R,V}(\zeta)f\|^2}
       {\|\mathcal T_{R,V}(\zeta)f\|^2}
  -\zeta
  \right|
  \le C_{L,C,K}\bigl(\rho+R^{-1/2}\rho^2+R^{-1/2}\bigr).
\end{equation}
The estimate \eqref{eq:full-residual-ritz-defect} is applied to vectors satisfying the full residual bound \eqref{eq:full-residual-for-ritz-defect}.  In the finite-window reduction such vectors are produced by the Schur lift in \cref{lem:ordered-boundary-schur-residual}; projected finite-window residuals enter inside the Schur system.
\end{lemma}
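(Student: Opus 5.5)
The lemma has three parts, and I would prove them in order, each by assembling results already established above.

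\textbf{Finite boundary model.} For \eqref{eq:finite-boundary-effective-model} I apply Lemma~\ref{lem:boundary-grushin-expansion} on the fixed window \(P_LL^2(\Sn)\). It gives
\[
  \mathfrak B_R(\zeta,V)=\zeta I-(T_n+V-b_0)+O_{L,C}(R^{-1})
\]
as a map from \(P_LL^2\) into \(L^2(\Sn)\). The bound is uniform because the window contains only finitely many angular degrees \(\ell\), and the scalar expansion \eqref{eq:scalar-boundary-expansion} only needs \(0\le V\le\log2\). Compressing on the left by \(P_L\) is a contraction. Since \(T_n\) and \(b_0\) commute with \(P_L\), the compression of the leading term is exactly \(\zeta I-A_{V,L}\). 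The remainder keeps its size, and the constants are uniform in \(|\zeta|\le C\) and \(V\in\mathfrak C_K\).

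\textbf{Fixed corrected Ritz model.} For \eqref{eq:finite-corrected-ritz-fixed-model} I combine Lemma~\ref{lem:fixed-corrected-gram-estimates} and Lemma~\ref{lem:fixed-corrected-bilinear-estimates}:
\[
  G^D_{R,L}=\tfrac R2\bigl(I+O(R^{-1})\bigr),\qquad
  H^D_{R,L}=\tfrac R2 A_{V,L}+O(R^{1/2}).
\]
Functional calculus then gives \((G^D_{R,L})^{-1/2}=(2/R)^{1/2}(I+O(R^{-1}))\). Conjugating \(H^D_{R,L}\) by this produces two error sources. The \(O(R^{1/2})\) form error becomes \(O(R^{-1/2})\). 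The correction from \(G^{-1/2}\) acting on \(\tfrac R2A_{V,L}\) is \(O(R^{-1})\), because \(\|A_{V,L}\|\le L+\log2+|b_0|\) is bounded on the window.

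Ritz values of \(F^D_{R,L}(V)\) are the eigenvalues of this symmetric normalized matrix. Weyl's inequality then gives the \(O_{L,C,K}(R^{-1/2})\) eigenvalue comparison. For counting with multiplicity on separating contours I would use Lemma~\ref{lem:rouche-finite-pencils} with
\[
  G(\zeta)=\zeta-A_{V,L},\qquad F(\zeta)=\zeta-(\text{Ritz matrix}).
\]
For this I need injectivity of \(\mathcal T_{R,V}(0)\) on \(P_LL^2\), which follows from the Gram estimate once \(R\) is large.

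\textbf{Residual quasimodes.} Estimate \eqref{eq:full-residual-ritz-defect} is exactly \eqref{eq:collar-corrected-effective-defect} of Lemma~\ref{lem:collar-corrected-trial-map} with \(\ell_R=R^{1/2}\), applied to \(v=\mathcal T_{R,V}(\zeta)f\). The only hypothesis to check is \(\rho\le\rho_0\), and this is what makes the constants uniform. So I fix \(\rho_0\) in advance, absorb it into \(C_{L,C,K}\), and note that larger \(\rho\) is never used downstream.

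\textbf{Main obstacle.} I expect the delicate step to be the bookkeeping in the second part. The Gram and form errors are stated in raw bulk scale, so I must check that the \(O(1)\) Gram error does not multiply the large factor \(R\) into an \(O(1)\) matrix error. It does not: the Gram error is relative \(O(R^{-1})\), and after normalization it meets only the bounded operator \(A_{V,L}\). That is exactly where the fixed finite window, with \(L\) held fixed, is essential.
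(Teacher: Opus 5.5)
Your proposal is correct and follows the paper's proof essentially step for step. You compress \cref{lem:boundary-grushin-expansion} to $P_LL^2(\Sn)$, combine the Gram and boundary-flux estimates of \cref{lem:fixed-corrected-gram-estimates} and \cref{lem:fixed-corrected-bilinear-estimates} with functional calculus for $(G^D_{R,L})^{-1/2}$ and min--max/Rouch\'e, and read off the quasimode bound from \cref{lem:collar-corrected-trial-map} with $\ell_R=R^{1/2}$.

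One small simplification: you do not need to fix a separate $\rho_0$ in advance. For $f\in P_LL^2(\Sn)$ the finite-window expansion already gives $\rho(f)\le C+L+|b_0|+\log2+O_{L,C}(R^{-1})$, so the hypothesis of \cref{lem:collar-correction} holds automatically.
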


\begin{proof}
The boundary expansion \eqref{eq:finite-boundary-effective-model} is
\cref{lem:boundary-grushin-expansion} restricted to \(P_LL^2\).

The fixed corrected Ritz model is the content of
\cref{lem:fixed-corrected-bilinear-estimates}.  Indeed, that lemma gives
\[
  G^D_{R,L}=\frac R2 I+O_{L,C,K}(1),
  \qquad
  H^D_{R,L}=\frac R2 A_{V,L}+O_{L,C,K}(R^{1/2}).
\]
Since \(G^D_{R,L}=(R/2)(I+O_{L,C,K}(R^{-1}))\), functional calculus on the fixed finite-dimensional
space gives
\[
  (G^D_{R,L})^{-1/2}=\left(\frac2R\right)^{1/2}(I+O_{L,C,K}(R^{-1})).
\]
Consequently
\[
  (G^D_{R,L})^{-1/2}H^D_{R,L}(G^D_{R,L})^{-1/2}
  =A_{V,L}+O_{L,C,K}(R^{-1/2}),
\]
which is \eqref{eq:finite-corrected-ritz-fixed-model}.  The finite-dimensional spectral comparison
then follows from the min--max principle, or equivalently from \cref{lem:rouche-finite-pencils} on
separating contours.

The residual-to-quasimode estimate is \cref{lem:collar-corrected-trial-map}.  It is stated here in terms of the full boundary pencil.  The finite-window residual enters only through the Schur lift in \cref{lem:ordered-boundary-schur-residual}, where the full residual is estimated.
\end{proof}

\begin{remark}[Two finite models]\label{rem:two-finite-models}
The boundary pencil is linear in the spectral parameter and has model \(\zeta I-A_{V,L}\), while the fixed corrected Dirichlet Ritz space has model \(A_{V,L}\).  Exact low states are compared with the Ritz matrix.  Conversely, finite Schur zeros are lifted to vectors with controlled full boundary residual and then converted to Dirichlet quasimodes by \eqref{eq:full-residual-ritz-defect}.
\end{remark}

\begin{lemma}[From basis localization to subspace angle]\label{lem:basis-to-subspace-angle}
Let \(\mathscr E\) be a \(d\)-dimensional subspace of a Hilbert space with orthonormal basis
\(u_1,\ldots,u_d\).  Let \(F\) be another closed subspace.  Suppose that for each \(i\) there is
\(v_i\in F\) such that
\[
  \|u_i-v_i\|\le \varepsilon .
\]
Then, with \(P_F\) denoting the orthogonal projection onto \(F\),
\begin{equation}\label{eq:basis-localization-subspace-angle}
  \|(I-P_F)P_{\mathscr E}\|\le d^{1/2}\varepsilon .
\end{equation}
If \(\varepsilon<1/(2d^{1/2})\), then the map \(P_F|_{\mathscr E}:\mathscr E\to F\) is injective
and its image is a \(d\)-dimensional subspace of \(F\).
\end{lemma}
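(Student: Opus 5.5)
The plan is to bound the defect operator $(I-P_F)P_{\mathscr E}$ on an arbitrary vector of $\mathscr E$ and then take the supremum.

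Take $x\in\mathscr E$ and expand it in the orthonormal basis as $x=\sum_{i=1}^d c_iu_i$, so that $\|x\|^2=\sum_i|c_i|^2$. Since $P_F$ is the orthogonal projection onto the closed subspace $F$, the vector $(I-P_F)y$ has minimal norm among $y-w$ with $w\in F$. Hence
\[
  \|(I-P_F)u_i\|\le\|u_i-v_i\|\le\varepsilon .
\]
By linearity and the triangle inequality,
\[
  \|(I-P_F)x\|\le\sum_i|c_i|\,\|(I-P_F)u_i\|\le\varepsilon\sum_i|c_i|.
\]
Cauchy--Schwarz in $\mathbb C^d$ gives $\sum_i|c_i|\le d^{1/2}\|x\|$. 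Because $P_{\mathscr E}x=x$ for $x\in\mathscr E$ and $P_{\mathscr E}$ annihilates $\mathscr E^\perp$, this yields $\|(I-P_F)P_{\mathscr E}\|\le d^{1/2}\varepsilon$, which is \eqref{eq:basis-localization-subspace-angle}.

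For the injectivity claim, Pythagoras gives, for $x\in\mathscr E$,
\[
  \|P_Fx\|^2=\|x\|^2-\|(I-P_F)x\|^2\ge(1-d\varepsilon^2)\|x\|^2 .
\]
When $\varepsilon<1/(2d^{1/2})$ the factor satisfies $1-d\varepsilon^2>3/4>0$. So $P_F|_{\mathscr E}$ is injective, in fact bounded below by $\sqrt3/2$. The image of the $d$-dimensional space $\mathscr E$ under an injective linear map is $d$-dimensional and lies in $F$.

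There is no real obstacle here. The only point needing care is to use $\ell^1$--$\ell^2$ Cauchy--Schwarz for the coefficients rather than an orthogonality argument on the errors $u_i-v_i$; those errors need not be orthogonal, and that is exactly the source of the factor $d^{1/2}$.
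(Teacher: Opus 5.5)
Your argument is correct and is essentially the paper's. The paper bounds \(\dist(u,F)\le\|u-\sum_i c_iv_i\|\le\varepsilon\sum_i|c_i|\le d^{1/2}\varepsilon\) for unit \(u\in\mathscr E\), which is the same triangle-inequality and Cauchy--Schwarz estimate you apply to the vectors \((I-P_F)u_i\); your Pythagoras step for injectivity simply makes quantitative (lower bound \(\sqrt3/2\)) the paper's remark that a bound below one forces \(P_F|_{\mathscr E}\) to be injective.
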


\begin{proof}
For a unit vector \(u=\sum_{i=1}^d c_i u_i\in\mathscr E\), set \(v=\sum_i c_i v_i\in F\).  Then
\[
  \operatorname{dist}(u,F)\le \|u-v\|
  \le \sum_i |c_i|\,\|u_i-v_i\|
  \le \varepsilon\left(\sum_i |c_i|^2\right)^{1/2}d^{1/2}
  =d^{1/2}\varepsilon .
\]
Taking the supremum over unit vectors in \(\mathscr E\) proves \eqref{eq:basis-localization-subspace-angle}.
If the right side is below one, then no nonzero vector in \(\mathscr E\) is projected to zero by
\(P_F\), proving injectivity; the dimension statement follows.
\end{proof}

\begin{lemma}[Exact clusters are captured by the fixed corrected space]\label{lem:exact-cluster-corrected-angle}
Fix a compact effective window \([-C,C]\), an integer \(J\ge0\), a Lipschitz bound \(K\), and
\(\delta>0\).  After choosing \(L\) large and then \(R\) large, the following holds uniformly for
smooth \(V\in\mathfrak C_K\).  Let \(\mathscr E_R\) be any Dirichlet spectral subspace of
\(H_{R,V}\) spanned by at most \(J+1\) eigenfunctions with effective parameters in \([-C,C]\), and set
\[
  F^D_{R,L}(V)=\mathcal T_{R,V}(0)P_LL^2(\Sn)\subset H_0^1(\Omega_{R,V}).
\]
Then
\begin{equation}\label{eq:exact-cluster-fixed-angle}
  \|(I-P_{F^D_{R,L}})P_{\mathscr E_R}\|
  \le C_J\delta+o_R(1).
\end{equation}
Moreover the rescaled form
\[
  \mathfrak h_R[a,b]
  =\frac{R^3}{2\pi^2}\bigl(\mathfrak q_{R,V}[a,b]-E_R\langle a,b\rangle\bigr)
\]
is bounded on \(\mathscr E_R+F^D_{R,L}(V)\) by a constant depending only on \(C,J,K,L\).  Consequently,
every exact rescaled eigenvalue in \([-C,C]\) is within \(C_{C,J,K}\delta+o_R(1)\) of a Ritz
value of \(\mathfrak h_R\) on \(F^D_{R,L}(V)\), counted with multiplicity after grouping clusters at
that scale.
\end{lemma}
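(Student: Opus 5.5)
The plan is to assemble the statement from three earlier results: the linear corrected projection of Lemma~\ref{lem:low-eigenfunction-coefficient-localization}, the basis-to-angle conversion of Lemma~\ref{lem:basis-to-subspace-angle}, and the one-sided Ritz enclosure of Lemma~\ref{lem:one-sided-ritz-enclosure}.

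\textbf{Step 1 (capture of an orthonormal basis).} Fix \(C,J,K,\delta\). Choose \(L\) from Lemma~\ref{lem:low-eigenfunction-coefficient-localization} with the tolerance \(\delta\), then let \(R\) be large. Let \(u_1,\ldots,u_d\), \(d\le J+1\), be an orthonormal eigenbasis of \(\mathscr E_R\). Put \(v_i=\mathcal T_{R,V}(0)\mathcal A_{R,L,V}u_i\). Since \(\mathcal A_{R,L,V}u_i\in P_LL^2(\Sn)\), each \(v_i\) lies in \(F^D_{R,L}(V)\). By \eqref{eq:linear-corrected-localization},
\[
\|u_i-v_i\|\le \delta+o_R(1).
\]
The only point to check is uniformity. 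The \(o_R(1)\) there comes from \eqref{eq:regular-continuation-first-band-close}, from the collar correction bound of Lemma~\ref{lem:collar-corrected-trial-map} (applied with a bounded residual, using Lemma~\ref{lem:boundary-grushin-expansion} at \(\zeta=0\)), and from the ball localization of Lemma~\ref{lem:graph-spectral-cutoff}. Their constants depend only on \(L,C,K\), so the bound is uniform over smooth \(V\in\mathfrak C_K\).

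\textbf{Step 2 (subspace angle).} Lemma~\ref{lem:basis-to-subspace-angle} then gives
\[
\|(I-P_{F^D_{R,L}})P_{\mathscr E_R}\|\le (J+1)^{1/2}(\delta+o_R(1)).
\]
This is \eqref{eq:exact-cluster-fixed-angle} with \(C_J=(J+1)^{1/2}\).

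\textbf{Step 3 (form bound).} Boundedness of \(\mathfrak h_R\) on \(\mathscr E_R+F^D_{R,L}(V)\) is the final assertion of Lemma~\ref{lem:low-eigenfunction-coefficient-localization}. Spelled out, there are three pieces.
\begin{itemize}
\item On \(\mathscr E_R\), the form is bounded by \(C\) through the eigenvalue equation.
\item On \(F^D_{R,L}(V)\), Lemma~\ref{lem:fixed-corrected-bilinear-estimates} and Lemma~\ref{lem:fixed-corrected-gram-estimates} show the normalized Ritz matrix is \(A_{V,L}+O(R^{-1/2})\). Its norm is at most \(M_L+\log 2+|b_0|+1\), where \(M_L=\max\{T_n(\ell):T_n(\ell)\le L\}\le L\).
\item Mixed terms are controlled by \(\mathfrak h_R[e,v]=\zeta_e\langle e,v\rangle\) for eigenvectors \(e\) and any \(v\) in the form domain.
\end{itemize}
Sums are handled by Cauchy--Schwarz after orthogonally splitting \(F\) relative to \(\mathscr E_R\). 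The near-alignment from Step 2 keeps the decomposition well conditioned once \(C_J\delta<1/4\).

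\textbf{Step 4 (conclusion).} Take \(\delta\) small so that \(C_J\delta+o_R(1)<1/4\). Lemma~\ref{lem:one-sided-ritz-enclosure}, with \(I=[-C,C]\) and form bound \(K'=K'(C,J,K,L)\), then yields the enclosure with error \(C_{K'}(C_J\delta+o_R(1))\).

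The main obstacle is the dependence of the constant on \(L\). The Ritz enclosure constant depends on the form bound, which depends on \(L\), while \(L\) itself was chosen depending on \(\delta\). The stated error \(C_{C,J,K}\delta\) requires the enclosure constant to be independent of \(L\).

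I would handle this by re-examining the proof of Lemma~\ref{lem:one-sided-ritz-enclosure}. There the error is really \(\|\mathfrak h_R\|\) restricted to \(\mathscr E_R+\widehat U\mathscr E_R\), not on all of \(F\). Because \(\widehat Ue\) is \(\delta\)-close to \(e\), the quadratic-form difference can be written as
\[
\mathfrak h_R[\widehat Ue-e,\widehat Ue]+\mathfrak h_R[e,\widehat Ue-e],
\]
and the second term is controlled by \(C\delta\) through the eigenvalue equation. For the first term I would try to use the energy bound on \(\widehat Ue-e\) from the Ritz model, restricted to vectors whose Rayleigh quotient stays in a bounded window. If this fails, the fallback is to accept \(C_{C,J,K,L}\). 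Since \(L\) is a function of \((C,J,K,\delta)\), this still gives smallness in the ordered limit \(R\to\infty\) and then \(\delta\to0\), after rechoosing \(\delta\) against the constant, which is all that later use requires.
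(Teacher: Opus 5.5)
\textbf{Overall.} Your Steps 1, 2 and 4 are exactly the paper's proof. The paper localizes an orthonormal eigenbasis with \cref{lem:low-eigenfunction-coefficient-localization}, passes to the subspace angle with \cref{lem:basis-to-subspace-angle}, quotes the form bound on \(\mathscr E_R+F^D_{R,L}(V)\) from the same lemma, and applies \cref{lem:one-sided-ritz-enclosure}. The genuine gap is in Step~3, which you treat as routine, and it is more basic than the \(L\)-dependence you flag.

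\textbf{The gap in Step 3.} Your three bullets bound \(\mathfrak h_R\) on \(\mathscr E_R\times\mathscr E_R\), \(F\times F\) and \(\mathscr E_R\times F\). That controls \(\mathfrak h_R[u]\) only through a splitting \(u=e+f\). Near-alignment makes this splitting \emph{ill}-conditioned, not well-conditioned. The vector \(w=(I-P_F)e\in\mathscr E_R+F\) is small compared with \(\|e\|\), while both of its pieces \(e\) and \(P_Fe\) have size \(\|e\|\). So the pairwise bounds only give \(|\mathfrak h_R[w]|\lesssim K'\|e\|^2\), not \(\lesssim K'\|w\|^2\).

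In fact no bound \(|\mathfrak h_R[u,v]|\le K\|u\|\|v\|\) on the sum can hold with \(K\) independent of \(R\). Take \(n=3\), \(V\equiv a>0\) and the radial sector. Then \(e=\sin(\pi r/(R-a))\), and \(F\) contains \(T=\sin(\pi r/R)-\chi((R-a-r)/\ell_R)\sin(\pi a/R)\).
\begin{itemize}
\item The vector \(w=(I-P_T)e\) has \(\|w\|^2\asymp a^2/R\), coming from the smooth phase mismatch.
\item \(w\) also carries the collar ramp of \(T\). The ramp is nearly orthogonal to \(T\) and tiny in \(L^2\), so projecting does not remove it. Its rescaled energy is \(\asymp a^2R/\ell_R=a^2R^{1/2}\).
\item Hence \(\mathfrak h_R[w]/\|w\|^2\asymp R^{3/2}\).
\end{itemize}
So hypothesis (ii) of \cref{lem:one-sided-ritz-enclosure} is not available; that lemma uses it on the small vectors \(\widehat Ue-e\). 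The paper's proof has the same defect. It quotes the final assertion of \cref{lem:low-eigenfunction-coefficient-localization}, whose proof checks only summands and mixed pairs.

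\textbf{What the enclosure actually needs.} It needs energy-norm capture. Choose \(c\) with \(\mathfrak h_R+c\ge0\) on \(H^1_0(\Omega_{R,V})\); this is possible by domain monotonicity. You need the \(v_i\) of Step~1 to satisfy \((\mathfrak h_R+c)[u_i-v_i]=o(\|u_i\|^2)\) in addition to \(L^2\) closeness. Then for \(u=\sum c_iu_i\) and \(v=\sum c_iv_i\),
\[
\mathfrak h_R[v]=\mathfrak h_R[u]+2\operatorname{Re}\mathfrak h_R[u,v-u]+\mathfrak h_R[v-u],
\qquad
\mathfrak h_R[u_i,\cdot]=\zeta_i\langle u_i,\cdot\rangle .
\]
Min--max on \(\operatorname{span}\{v_i\}\), applied from the bottom of the spectrum, gives \(r_j\le\mu_j+o(1)\). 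The inequality \(\mu_j\le r_j\) is automatic. No form bound on \(F\) enters, so this route also removes your \(L\)-dependence. Your ``refined argument'' points this way: after using the eigenvalue equation, the term you left open is exactly \(\mathfrak h_R[\widehat Ue-e]\), i.e.\ this energy.

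The collar itself is harmless: in the model above the ramp costs only \(O(R^{-1/2})\) after normalization. The missing estimate is that the part of the exact states not captured by \(P_LL^2\) is small in energy, uniformly in \(R\). \cref{lem:graph-spectral-cutoff} and \cref{lem:low-eigenfunction-coefficient-localization} give only \(L^2\) smallness of that part; its energy is merely bounded.

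\textbf{Your fallback.} Even granting (ii), accepting \(C_{C,J,K,L}\) does not close the argument. The cutoff produced by \cref{lem:graph-spectral-cutoff} is \(L\approx\delta^{-2}\), and the form bound on \(F\) grows like \(L\). So the enclosure error \(C_{K'}\delta\) is of order \(\delta^{-1}\), and rechoosing \(\delta\) only moves \(L\).
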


\begin{proof}
Take an orthonormal eigenbasis \(u_1,\ldots,u_d\) of \(\mathscr E_R\), with \(d\le J+1\).  Apply
\cref{lem:low-eigenfunction-coefficient-localization} to each \(u_i\).  This gives vectors
\[
  v_i=\mathcal T_{R,V}(0)\mathcal A_{R,L,V}u_i\in F^D_{R,L}(V)
\]
satisfying
\[
  \|u_i-v_i\|\le \delta+o_R(1),\qquad i=1,\ldots,d .
\]
The linear-algebra estimate \cref{lem:basis-to-subspace-angle} gives
\eqref{eq:exact-cluster-fixed-angle}.  The form boundedness on the sum is the final assertion of
\cref{lem:low-eigenfunction-coefficient-localization}.  Finally, applying
\cref{lem:one-sided-ritz-enclosure} with \(F_R=F^D_{R,L}(V)\) gives the stated one-sided spectral
enclosure.  The grouping of clusters is precisely the one in \cref{lem:one-sided-ritz-enclosure}; it
prevents endpoint ordering ambiguities and preserves multiplicity at the enlarged scale.
\end{proof}

\begin{lemma}[Exact-to-finite Ritz comparison]\label{lem:fixed-corrected-ritz-space}
Fix a compact effective window \([-C,C]\), an integer \(J\ge0\), a Lipschitz bound \(K\), and
\(\delta>0\).  After choosing \(L\) large enough and then \(R\) large enough, the fixed finite
Dirichlet trial space
\[
  F^D_{R,L}(V):=\mathcal T_{R,V}(0)P_LL^2(\Sn)
  \subset H_0^1(\Omega_{R,V})
\]
captures every Dirichlet spectral subspace in
\[
  E_R+\frac{2\pi^2}{R^3}[-C,C]
\]
spanned by at most \(J+1\) eigenfunctions with subspace angle at most
\(C_{C,J,K}\delta+o_R(1)\), uniformly for smooth \(V\in\mathfrak C_K\).  Consequently every exact
rescaled eigenvalue in this window is within \(C_{C,J,K}\delta+o_R(1)\) of a Ritz value on
\(F^D_{R,L}(V)\), counted after grouping clusters at that scale.  Moreover, if
\(\mathcal H^D_{R,L}(V)\) denotes the normalized rescaled Ritz matrix of \(\mathfrak h_R\) on
\(F^D_{R,L}(V)\), pulled back to \(P_LL^2(\Sn)\) by the coefficient map, then
\begin{equation}\label{eq:fixed-corrected-ritz-matrix}
  \mathcal H^D_{R,L}(V)
  =P_L(T_n+V-b_0)P_L+O_{L,C,K}(R^{-1/2})
\end{equation}
in finite-dimensional operator norm.
\end{lemma}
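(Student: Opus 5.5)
The statement is an assembly of results already in place, so I will prove it by combining the capture estimate of Lemma~\ref{lem:exact-cluster-corrected-angle} with the Ritz-matrix expansion of Lemmas~\ref{lem:fixed-corrected-bilinear-estimates} and~\ref{lem:finite-boundary-quasimode}. No new analysis is needed. The work is to check that the cutoff ordering (first $\delta$, then $L$, then $R$) is consistent across the lemmas, and that all constants are uniform over smooth $V\in\mathfrak C_K$.

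The first step is the capture statement. Fix $[-C,C]$, $J$, $K$ and $\delta$. By Lemma~\ref{lem:graph-spectral-cutoff}, in the spectral-subspace form used in Lemma~\ref{lem:low-eigenfunction-coefficient-localization}, choose $L=L(C,J,\delta)$ so that the ball localization \eqref{eq:linear-ball-localization} holds for every graph deficit. This choice is independent of $V$, because Lemma~\ref{lem:graph-spectral-cutoff} uses only $\Omega_{R,V}\subset B_R$ and $0\le V\le\log2$.

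With $L$ fixed, Lemma~\ref{lem:low-eigenfunction-coefficient-localization} produces, for each orthonormal eigenvector $u_i$ of a cluster $\mathscr E_R$ with $\dim\mathscr E_R\le J+1$, a vector $v_i=\mathcal T_{R,V}(0)\mathcal A_{R,L,V}u_i\in F^D_{R,L}(V)$ satisfying $\|u_i-v_i\|\le\delta+o_R(1)$. Lemma~\ref{lem:basis-to-subspace-angle} converts this basis estimate into the subspace angle bound $(J+1)^{1/2}(\delta+o_R(1))$.

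The $o_R(1)$ terms come from \eqref{eq:regular-continuation-first-band-close} and from the collar correction \eqref{eq:linear-correction-small}. Their constants depend only on $L$, $C$ and $K$, so they are uniform in $V$. This is exactly the content of Lemma~\ref{lem:exact-cluster-corrected-angle}, which I would cite directly.

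The second step is the eigenvalue enclosure. Lemma~\ref{lem:exact-cluster-corrected-angle} also gives form boundedness of $\mathfrak h_R$ on $\mathscr E_R+F^D_{R,L}(V)$, with a constant depending only on $C$, $J$, $K$ and $L$. Lemma~\ref{lem:one-sided-ritz-enclosure} then places each exact rescaled eigenvalue within $C_{C,J,K}\delta+o_R(1)$ of a Ritz value, with clusters grouped at that scale.

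One subtlety is that the enclosure constant $C_K$ in Lemma~\ref{lem:one-sided-ritz-enclosure} depends on the form bound, and that bound depends on $L$, which in turn depends on $\delta$. The statement absorbs this into $C_{C,J,K}$. I expect this to be the main obstacle, and my first approach is as follows. On $\mathscr E_R$ the form is bounded by $C$ alone, by the eigenvalue equation. On the captured part of $F^D$ the matrix is close to $A_{V,L}$ restricted to low modes, so its norm is bounded by $C+O(\delta)$ rather than by $L$. I would therefore apply the enclosure with $F_R$ replaced by the $d$-dimensional image $\widehat U\mathscr E_R$, where the bound is $L$-independent. If that fails, I will accept $L$-dependence and note that $L=L(C,J,\delta)$.

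The third step is the matrix expansion \eqref{eq:fixed-corrected-ritz-matrix}. It follows directly from \eqref{eq:finite-corrected-ritz-fixed-model}. Lemma~\ref{lem:fixed-corrected-gram-estimates} gives $G=(R/2)(I+O(R^{-1}))$, and Lemma~\ref{lem:fixed-corrected-bilinear-estimates} gives $H=(R/2)A_{V,L}+O(R^{1/2})$. Conjugating by $G^{-1/2}$ yields $A_{V,L}+O_{L,C,K}(R^{-1/2})$.

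All constants in these lemmas depend on $V$ only through $K$, via Lemmas~\ref{lem:finite-trace-regularity} and~\ref{lem:polynomial-collar-calculus} and the finite-window phase expansion. This gives the claimed uniformity for smooth $V\in\mathfrak C_K$.
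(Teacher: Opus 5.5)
Your assembly is the paper's proof. The paper quotes \cref{lem:exact-cluster-corrected-angle} for both the capture and the one-sided enclosure, and \eqref{eq:finite-corrected-ritz-fixed-model} for \eqref{eq:fixed-corrected-ritz-matrix}. That second citation rests on the Gram matrix $G=(R/2)(I+O(R^{-1}))$ and the flux matrix $H=(R/2)A_{V,L}+O(R^{1/2})$. The only addition is an aside via \cref{lem:finite-corrected-parameter-stability} that the $\zeta$-dependent corrected spaces may be replaced by the $\zeta=0$ space. So at the level of this lemma, citing the enclosure conclusion of \cref{lem:exact-cluster-corrected-angle} is all the paper does, and you may do the same.

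The one place you go beyond the paper is your repair of the $L$-dependence, and it does not work. \Cref{lem:one-sided-ritz-enclosure} needs a bound for $\mathfrak h_R$ on the sum $\mathscr E_R+\widehat U\mathscr E_R$. These two subspaces nearly coincide, so the sum contains the short vectors $d=\widehat Ue-e$ with $\|d\|\le C\delta\|e\|$. For an exact eigenvector $e$ with rescaled eigenvalue $\mu$, the eigenvalue equation and $\|\widehat Ue\|=\|e\|$ give exactly
\[
  \mathfrak h_R[d]=\bigl(\mathfrak h_R[\widehat Ue]-\mu\|e\|^2\bigr)+\mu\|d\|^2 .
\]
An $L$-independent bound $\mathfrak h_R[\widehat Ue]\le(C+O(\delta))\|e\|^2$ on the summand therefore yields only $|\mathfrak h_R[d]|=O(1)\|e\|^2$, against $\|d\|^2=O(\delta^2)\|e\|^2$. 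The sum-space constant you can infer is then of order $\delta^{-2}$, and the enclosure error becomes $O(\delta^{-1})$.

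What is actually needed is $\mathfrak h_R[\widehat Ue]=\mu\|e\|^2+O(\delta)\|e\|^2$, i.e.\ capture in form norm rather than in $L^2$. That is the nontrivial half of the enclosure itself, so your argument is circular.

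Be aware that the paper does not resolve this either. The form bound behind \cref{lem:exact-cluster-corrected-angle} is proved in \cref{lem:low-eigenfunction-coefficient-localization} by bounding the two summands and the mixed terms separately. That does not control the form on a sum of nearly parallel subspaces, and the bound depends on $L$ in any case. The constant $C_{C,J,K}$ is inherited, not derived.

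Your fallback of keeping $L=L(C,J,\delta)$ inside the constant is more honest, but it does not give the stated bound. Since $L$ grows as $\delta\to0$, the product $C_{L(\delta)}\delta$ need not tend to zero.
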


\begin{proof}
The dependence on the effective parameter is controlled by
\cref{lem:finite-corrected-parameter-stability}.  In particular, for \(|\zeta|\le C\), the spaces
\(\mathcal T_{R,V}(\zeta)P_LL^2\) and \(F^D_{R,L}(V)=\mathcal T_{R,V}(0)P_LL^2\) have gap
\(O_{L,C,K}(R^{-1})\), and their rescaled forms differ by \(O_{L,C,K}(R^{-1/2})\).

The capture of exact spectral subspaces and the one-sided enclosure of exact effective eigenvalues
by Ritz values are exactly \cref{lem:exact-cluster-corrected-angle}.  This is the exact-to-finite comparison.  The opposite direction, from a finite Schur vector back to a Dirichlet quasimode, is supplied later by \cref{lem:ordered-boundary-schur-residual}, which first estimates the full boundary residual after restoring the shell and the exterior tail.

Finally, \eqref{eq:fixed-corrected-ritz-matrix} is the fixed corrected Ritz model
\eqref{eq:finite-corrected-ritz-fixed-model}.
\end{proof}

\begin{lemma}[Outer cutoff for exact low Ritz values]\label{lem:outer-cutoff-boundary}
Fix a compact effective window \([-C,C]\), an integer \(J\ge0\), a number \(K<\infty\), and
\(\delta>0\).  There are an angular cutoff \(L=L(C,J,K,\delta,n)\) and \(R_0\) such that the
following holds for every smooth graph deficit \(V\in\mathfrak C_K\) and every \(R\ge R_0\).
Let \(\zeta\in[-C,C]\) be the effective parameter corresponding to one of the first \(J+1\)
Dirichlet eigenvalues in the window
\[
  E_R+\frac{2\pi^2}{R^3}[-C,C].
\]
Then \(\zeta\) lies within \(C_{C,K}\delta+o_R(1)\) of the characteristic spectrum of the finite
outer boundary pencil
\[
  P_L\mathfrak B_R(\cdot,V)P_L:P_LL^2(\Sn)\to P_LL^2(\Sn).
\]
Equivalently, after the finite trace correction, every exact low Dirichlet state is detected by the
finite outer boundary problem up to the indicated error in characteristic value distance.
\end{lemma}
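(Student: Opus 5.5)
The plan is to route an exact low Dirichlet eigenvalue through the fixed corrected Ritz space, and then compare its finite Ritz matrix with the finite boundary pencil. Three earlier results do the work: Lemma~\ref{lem:fixed-corrected-ritz-space}, the Ritz model \eqref{eq:fixed-corrected-ritz-matrix}, and the finite boundary expansion \eqref{eq:finite-boundary-effective-model}. The invariant used in the exact-to-finite direction is subspace capture, not boundary residual smallness. This avoids the cancellation phenomenon described before Lemma~\ref{lem:low-eigenfunction-coefficient-localization}.

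\textbf{Step 1 (capture).} Fix \(C,J,K,\delta\). Choose \(L=L(C,J,K,\delta,n)\) as in Lemma~\ref{lem:fixed-corrected-ritz-space}. That lemma first takes \(L\) from the ball-bracketing cutoff of Lemma~\ref{lem:graph-spectral-cutoff}, which is uniform in \(V\in\mathfrak C_K\) because it uses only \(\Omega_{R,V}\subset B_R\). Then take \(R\ge R_0\). Let \(\mathscr E_R\) be the span of the first \(J+1\) Dirichlet eigenfunctions in the window \(E_R+\frac{2\pi^2}{R^3}[-C,C]\). By Lemma~\ref{lem:exact-cluster-corrected-angle} and Lemma~\ref{lem:one-sided-ritz-enclosure}, every exact effective parameter \(\zeta\) of such an eigenvalue lies within \(C_{C,J,K}\delta+o_R(1)\) of a Ritz value of \(\mathfrak h_R\) on \(F^D_{R,L}(V)\). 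These Ritz values are the eigenvalues of the normalized matrix \(\mathcal H^D_{R,L}(V)\). The same lemmas give the bound on the form over \(\mathscr E_R+F^D_{R,L}\), with constant depending on \(C,J,K,L\).

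\textbf{Step 2 (Ritz matrix to effective section).} By \eqref{eq:fixed-corrected-ritz-matrix}, \(\mathcal H^D_{R,L}(V)=A_{V,L}+O_{L,C,K}(R^{-1/2})\) with \(A_{V,L}=P_L(T_n+V-b_0)P_L\). Both sides are self-adjoint finite matrices, so Weyl's inequality moves each Ritz value by \(o_R(1)\) to an eigenvalue of \(A_{V,L}\). Hence \(\zeta\) is within \(C\delta+o_R(1)\) of \(\spec A_{V,L}\).

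\textbf{Step 3 (effective section to boundary pencil).} By \eqref{eq:finite-boundary-effective-model}, \(P_L\mathfrak B_R(\zeta,V)P_L=\zeta I-A_{V,L}+E_R(\zeta)\), where \(\|E_R\|=O_{L,C,K}(R^{-1})\) uniformly for \(|\zeta|\le C\). Let \(\eta\) be an eigenvalue of \(A_{V,L}\) close to \(\zeta\). The model pencil \(\zeta I-A_{V,L}\) has characteristic value exactly \(\eta\). Apply Lemma~\ref{lem:rouche-finite-pencils} on a disk of radius \(r\gg R^{-1}\) around \(\eta\), for instance \(r=R^{-1/2}\), after merging nearby eigenvalues into clusters. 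On the boundary of this disk, \(\|(\zeta I-A_{V,L})^{-1}\|\le r^{-1}\), so the perturbation ratio is \(O(R^{-1}/r)<1\). The pencil \(P_L\mathfrak B_R P_L\) therefore has a characteristic value within \(o_R(1)\) of \(\eta\). Chaining the three steps gives a distance of \(C_{C,K}\delta+o_R(1)\). The dependence on \(J\) is absorbed into the fixed choice of \(L\), or kept as in the cited constants.

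\textbf{Main obstacle.} I expect the main obstacle to be uniformity in Step~3 for clustered or crossing eigenvalues of \(A_{V,L}\). The Rouché contours must separate clusters uniformly in \(V\). I would handle this by grouping eigenvalues at a mesoscopic scale \(r_R\to0\) with \(r_RR\to\infty\). Since the cluster has fixed dimension \(\dim P_LL^2\), a pigeonhole argument produces contours at distance \(\gtrsim r_R/\dim P_LL^2\) from \(\spec A_{V,L}\). This yields an \(o_R(1)\) distance uniformly over smooth \(V\in\mathfrak C_K\).
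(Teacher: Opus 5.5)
Your proposal is correct and follows the paper's proof. The three steps match it: capture of the exact cluster by the fixed corrected Ritz space via \cref{lem:fixed-corrected-ritz-space}, comparison of the Ritz matrix with \(A_{V,L}\) via \eqref{eq:fixed-corrected-ritz-matrix}, and a finite Rouch\'e comparison (\cref{lem:rouche-finite-pencils}) between \(P_L\mathfrak B_R(\cdot,V)P_L\) and \(\zeta I-A_{V,L}\) using \eqref{eq:finite-boundary-effective-model}. Your contour radius \(r_R\to0\) with \(r_RR\to\infty\), and rescaling \(\delta\) so the \(J\)-dependence goes into \(L\), are slightly more careful than the paper, which asserts \(O_{L,C,K}(R^{-1})\) proximity on cluster-separating contours and writes \(C_{C,K}\) where the cited lemmas give \(C_{C,J,K}\); neither changes the argument.
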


\begin{proof}
Let \(\mathscr E_{R,V}(C)\) be the Dirichlet spectral subspace of \(H_{R,V}\) corresponding to
\[
  E_R+\frac{2\pi^2}{R^3}[-C,C].
\]
By \cref{lem:fixed-corrected-ritz-space}, after choosing \(L\) and then \(R\), every exact rescaled
parameter among the first \(J+1\) values in the window lies within \(C_{C,J,K}\delta+o_R(1)\) of a
Ritz value of the fixed corrected matrix \(\mathcal H^D_{R,L}(V)\).  By
\eqref{eq:fixed-corrected-ritz-matrix}, those Ritz values lie within \(O_{L,C,K}(R^{-1/2})\) of the
eigenvalues of
\[
  A_{V,L}=P_L(T_n+V-b_0)P_L.
\]
On the other hand, the finite boundary pencil satisfies
\[
  P_L\mathfrak B_R(\zeta,V)P_L=\zeta I-A_{V,L}+O_{L,C,K}(R^{-1})
\]
by \eqref{eq:finite-boundary-effective-model}.  The finite-dimensional Rouch\'e comparison
\cref{lem:rouche-finite-pencils}, applied on contours separating the finite clusters of
\(A_{V,L}\), shows that the characteristic values of the finite outer boundary pencil are within
\(O_{L,C,K}(R^{-1})\) of the same eigenvalues.  Combining the three estimates gives the claimed
proximity of every exact low effective parameter to the finite outer boundary characteristic
spectrum.  Multiplicity is preserved by the same contour comparison.
\end{proof}

\begin{lemma}[Finite-section convergence for the effective operator]\label{lem:finite-section-effective}
Let \(P_M=\mathbf 1_{[0,M]}(T_n)\).  For each fixed \(N\),
\[
  \lambda_j\bigl(P_M(T_n+V)P_M\big|_{P_ML^2}\bigr)
  \longrightarrow \eta_j(T_n+V),
  \qquad 0\le j\le N,
\]
as \(M\to\infty\), uniformly for \(V\) in compact subsets of \(C(\Sn)\) with a common
\(L^\infty\)-bound.
\end{lemma}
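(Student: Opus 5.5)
The plan is to prove matching upper and lower bounds for each \(\lambda_j\) by min--max. Write \(A=T_n+V\) and \(A_M=P_MAP_M|_{P_ML^2}\). Since \(T_n\) has compact resolvent (its multipliers \(\psi(\ell+\alpha)-\psi(\alpha)\) increase to \(\infty\)) and \(V\) is bounded, \(A\) is self-adjoint with compact resolvent, so all eigenvalues \(\eta_j(A)\) are well defined.

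\textbf{Upper bound.} \(P_ML^2\) is a subspace of the form domain, so min--max gives
\[
  \lambda_j(A_M)\ge\eta_j(A)
\]
for every \(M\) with \(\dim P_ML^2\ge j+1\). The sequence is also nonincreasing in \(M\), because the trial spaces are nested.

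\textbf{Lower bound.} Let \(E_j\) be the span of the first \(j+1\) eigenvectors of \(A\), and let \(e\in E_j\) be a unit vector. Then \(\langle Te,e\rangle\le\eta_j+\|V\|_\infty\), with \(T=T_n\ge0\). On the range of \(P_{>M}\) we have \(T\ge M\), hence
\[
  \|P_{>M}e\|^2\le\frac{\eta_j+\|V\|_\infty}{M}.
\]
This bound is uniform in \(V\) in the family, since \(\eta_j(T+V)\le\eta_j(T)+\|V\|_\infty\). Set \(F=P_ME_j\). For large \(M\) the map \(P_M|_{E_j}\) is injective, so \(\dim F=j+1\).

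\textbf{Form comparison.} For \(f=P_Me\), the cross term \(\langle TP_Me,P_{>M}e\rangle\) vanishes because \(T\) commutes with \(P_M\). Therefore
\[
  \langle Tf,f\rangle\le\langle Te,e\rangle .
\]
For the potential,
\[
  \bigl|\langle Vf,f\rangle-\langle Ve,e\rangle\bigr|\le 3\|V\|_\infty\|P_{>M}e\|.
\]
Also \(\|f\|^2\ge1-\|P_{>M}e\|^2\). Combining these, the Rayleigh quotient of \(A\) on \(F\) satisfies
\[
  \sup_{f\in F}\frac{\langle Af,f\rangle}{\|f\|^2}\le\frac{\eta_j+\epsilon_M}{1-\epsilon_M'},
\]
where \(\epsilon_M,\epsilon_M'=O(M^{-1/2})\). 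Here one must first shift \(A\) by \(\|V\|_\infty+1\) so that it is positive; otherwise dividing by \(\|f\|^2\) reverses the inequality. Min--max on \(P_ML^2\) then gives
\[
  \lambda_j(A_M)\le\eta_j(A)+O(M^{-1/2}),
\]
with constants depending only on \(N\) and the \(L^\infty\) bound on \(V\).

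The uniformity in \(V\) is the only point needing care, and it is already built into these explicit bounds. The rate \(O(M^{-1/2})\) depends only on \(\sup\|V\|_\infty\) and \(\eta_N(T_n)\), so compactness of the family is not actually needed; a common \(L^\infty\)-bound suffices.
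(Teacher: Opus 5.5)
Your argument is correct, but it reaches uniformity by a different route from the paper's.

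The paper treats a fixed \(V\) by abstract Galerkin convergence: \(P_M\to I\) strongly in the form domain of \(T_n\), and min--max then gives convergence of each fixed eigenvalue. Uniformity on a compact family then comes from the \(1\)-Lipschitz dependence of both \(\eta_j(T_n+V)\) and the finite-section eigenvalues on \(\|V\|_\infty\), combined with a finite \(C(\Sn)\)-net of the family.

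You instead use directly that \(P_M\) is a spectral projection of \(T_n\). The kinetic term can only decrease under \(P_M\), and the tail mass of the low eigenvectors is controlled quantitatively by \(\|P_{>M}e\|^2\le\langle T_ne,e\rangle/M\). This gives an explicit rate,
\[
0\le\lambda_j(A_M)-\eta_j(A)\le C(N,\Lambda)\,M^{-1/2},
\]
where \(\Lambda\) is the common \(L^\infty\)-bound. The estimate holds for every real \(V\in L^\infty(\Sn)\) with \(\|V\|_\infty\le\Lambda\), so neither compactness nor continuity of \(V\) is needed. It also makes the Galerkin remainder \(E_{\rm Gal}(M)\) in the boundary Grushin reduction quantitative rather than merely \(o(1)\).

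The paper's route is shorter and uses only soft facts. However, it yields no rate, and its net argument genuinely uses the compactness of the family.

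Two minor points: your \(\epsilon_M'\) is actually \(O(M^{-1})\). The positivity shift is harmless, though you could avoid it by treating a negative numerator separately, since then dividing by \(\|f\|^2\le1\) only lowers the quotient.
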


\begin{proof}
For a fixed \(V\), this is Galerkin convergence for the closed form
\[
  \mathfrak t_V[\phi]=\langle T_n^{1/2}\phi,T_n^{1/2}\phi\rangle+
  \int_{\Sn}V|\phi|^2.
\]
Indeed, \(P_M\to I\) strongly in the form domain of \(T_n\), and the min-max principle gives
convergence of every fixed eigenvalue.  Since \(T_n+V\) has compact resolvent on the sphere, the
spectrum is purely discrete and no essential-threshold issue is present.  Uniformity on compact families follows from the Lipschitz
bound
\[
  |\eta_j(T_n+V)-\eta_j(T_n+W)|\le \|V-W\|_\infty
\]
and the same bound for the finite sections, together with a finite \(C(\Sn)\)-net of the compact
family.
\end{proof}

\begin{lemma}[Effective Schur tail]\label{lem:effective-schur-tail}
Let
\[
  G_\infty(\zeta,V):=\zeta I-(T_n+V-b_0)
\]
acting on \(L^2(\Sn)\), and let \(P_M=\mathbf 1_{[0,M]}(T_n)\), \(Q_M=I-P_M\).
Fix \(C<\infty\) and \(\Lambda<\infty\).  If \(|\zeta|\le C\), \(\|V\|_\infty\le \Lambda\), and
\[
  M>|b_0|+C+\Lambda+1,
\]
then the high block
\[
  Q_MG_\infty(\zeta,V)Q_M:Q_ML^2(\Sn)\to Q_ML^2(\Sn)
\]
is invertible and
\begin{equation}\label{eq:effective-high-inverse}
  \bigl\|(Q_MG_\infty(\zeta,V)Q_M)^{-1}\bigr\|
  \le \frac{1}{M-|b_0|-C-\Lambda}.
\end{equation}
Consequently the effective Schur complement
\[
  S_{\infty,M}(\zeta,V)
  :=P_MG_\infty P_M
  -P_MG_\infty Q_M(Q_MG_\infty Q_M)^{-1}Q_MG_\infty P_M
\]
satisfies
\begin{equation}\label{eq:effective-schur-tail}
  \bigl\|S_{\infty,M}(\zeta,V)-P_MG_\infty(\zeta,V)P_M\bigr\|
  \le
  \frac{\Lambda^2}{M-|b_0|-C-\Lambda}.
\end{equation}
In particular, on every bounded effective window, the zeros of the full limiting Grushin family
\(G_\infty(\zeta,V)\) are obtained, up to an error tending to zero as \(M\to\infty\), from a finite
spherical-harmonic matrix.
\end{lemma}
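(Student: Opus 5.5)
The plan is a direct Neumann-series argument on the high block, followed by the standard Schur-complement bound. The statement concerns the operator \(G_\infty(\zeta,V)=\zeta I-(T_n+V-b_0)\) on \(L^2(\Sn)\). Since \(T_n\) is diagonal in spherical harmonics, \(Q_M=\mathbf 1_{(M,\infty)}(T_n)\) commutes with \(T_n\), and therefore
\[
  Q_MG_\infty Q_M=Q_M\bigl(\zeta+b_0-T_n\bigr)Q_M-Q_MVQ_M .
\]
I will treat the first piece as the principal part and \(Q_MVQ_M\) as a bounded perturbation.

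\textbf{Step 1: the principal part.} On \(Q_ML^2\), every multiplier satisfies \(T_n(\ell)>M\). Hence the diagonal operator \(D:=Q_M(T_n-\zeta-b_0)Q_M\) has entries of real part at least \(M-C-|b_0|\), and
\[
  \|D^{-1}\|\le (M-|b_0|-C)^{-1}.
\]
If \(\zeta\) is real, one can argue through quadratic forms instead. For \(\phi\in Q_M\mathcal D(T_n)\),
\[
  \operatorname{Re}\langle (T_n-\zeta-b_0+V)\phi,\phi\rangle\ge (M-C-|b_0|-\Lambda)\|\phi\|^2 .
\]
This is an accretivity bound for the operator \(-Q_MG_\infty Q_M\) on its natural domain \(Q_M\mathcal D(T_n)\).

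\textbf{Step 2: invertibility and the inverse bound.} The coercivity from Step 1 gives injectivity and the lower bound \(\|Q_MG_\infty Q_M\phi\|\ge (M-|b_0|-C-\Lambda)\|\phi\|\). The same bound holds for the adjoint, which has \(\bar\zeta\) in place of \(\zeta\) and the same lower bound, so the range is dense and closed. Together these give invertibility and \eqref{eq:effective-high-inverse}. Alternatively, the Neumann series \((D+Q_MVQ_M)^{-1}=D^{-1}\sum_k(-Q_MVQ_MD^{-1})^k\) yields the same conclusion. Note that \(M-|b_0|-C-\Lambda>1\) by hypothesis.

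\textbf{Step 3: the Schur tail.} The diagonal terms \(\zeta\), \(b_0\) and \(T_n\) commute with \(P_M\) and \(Q_M\). Hence the off-diagonal blocks are
\[
  P_MG_\infty Q_M=-P_MVQ_M,\qquad Q_MG_\infty P_M=-Q_MVP_M,
\]
each of norm at most \(\|V\|_\infty\le\Lambda\). The correction term is therefore \(P_MVQ_M(Q_MG_\infty Q_M)^{-1}Q_MVP_M\). Its norm is at most \(\Lambda\cdot(M-|b_0|-C-\Lambda)^{-1}\cdot\Lambda\), which is \eqref{eq:effective-schur-tail}.

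\textbf{Step 4: the final assertion.} By the standard Schur-complement identity, \(G_\infty(\zeta,V)\) is invertible exactly when \(S_{\infty,M}(\zeta,V)\) is, for \(\zeta\) in the window. The zeros of \(G_\infty\) are therefore the characteristic values of \(S_{\infty,M}\). This finite matrix family is within \(O(\Lambda^2/M)\) of \(P_MG_\infty P_M\), uniformly on the window. Applying \cref{lem:rouche-finite-pencils} on contours around clusters, or simply Weyl's inequality, since all these operators are self-adjoint for real \(\zeta\), shows that the zeros of \(G_\infty\) lie within \(O(M^{-1})\) of the eigenvalues of \(P_M(T_n+V-b_0)P_M\).

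\textbf{Main obstacle.} The only point needing care is Step 2 for complex \(\zeta\), which involves a genuinely unbounded, non-self-adjoint high block. I would first reduce to real \(\zeta\) plus an imaginary shift. The operator \(Q_M(T_n+V-b_0-\operatorname{Re}\zeta)Q_M\) is self-adjoint with spectrum bounded below by \(M-|b_0|-C-\Lambda\), and subtracting \(i\operatorname{Im}\zeta\) does not decrease the distance of the spectrum from \(0\). This immediately gives the resolvent bound \eqref{eq:effective-high-inverse} via the spectral theorem.
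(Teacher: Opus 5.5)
Your proposal is correct and follows essentially the same route as the paper: coercivity of the high block from \(T_n>M\) on \(Q_ML^2\), the observation that the off-diagonal blocks are only \(-P_MVQ_M\) and \(-Q_MVP_M\), and the Schur-complement equivalence for the final claim. Your separate treatment of complex \(\zeta\), via the self-adjoint real part plus an imaginary shift, or via the Neumann series, together with the adjoint argument for surjectivity, makes rigorous a point that the paper's one-line quadratic-form inequality glosses over. That inequality holds only for the real part when \(\zeta\notin\mathbb R\), and it does not by itself give invertibility.
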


\begin{proof}
On \(Q_ML^2\) one has \(T_n\ge M\).  Hence, for \(u\in Q_ML^2\),
\[
  \bigl\langle (T_n+V-b_0-\zeta)u,u\bigr\rangle
  \ge (M-|b_0|-C-\Lambda)\|u\|^2.
\]
Since \(G_\infty=-(T_n+V-b_0-\zeta)\), this proves \eqref{eq:effective-high-inverse}.  Moreover
\(T_n\), \(b_0\), and \(\zeta\) are diagonal with respect to the splitting \(P_M\oplus Q_M\).  Thus
all off-diagonal blocks of \(G_\infty\) come from multiplication by \(-V\), and
\[
  \|P_MG_\infty Q_M\|+\|Q_MG_\infty P_M\|\le 2\Lambda.
\]
The displayed Schur estimate follows, with the factor written in the symmetric
form \(\Lambda^2/(M-|b_0|-C-\Lambda)\).  The final statement is the analytic Fredholm equivalence between
invertibility of \(G_\infty\) and invertibility of its Schur complement.
\end{proof}

\begin{lemma}[Ordered boundary Schur residual]\label{lem:ordered-boundary-schur-residual}
Fix a compact effective window \([-C,C]\), a Lipschitz bound \(K\), and a smooth family
\(\calB\subset C^\infty(\Sn)\cap\mathfrak C_K\) with a common \(C^{m_0}\)-bound \(\mathfrak S_{m_0}(\calB)<\infty\) as in Proposition~\ref{prop:boundary-pair-reduction}.  Let
\[
  A_V=T_n+V-b_0,
  \qquad
  P_M=\mathbf 1_{[0,M]}(T_n),
  \qquad
  Q_{M,L}=P_L-P_M .
\]
Assume
\[
  M>|b_0|+C+\log2+1.
\]
After fixing \(M\), choose \(L\ge M\), and then let \(R\) be large.  Uniformly for
\(V\in\calB\) and \(|\zeta|\le C\), the finite boundary block
\[
  D^B_{R,M,L}(\zeta,V)
  :=Q_{M,L}\mathfrak B_R(\zeta,V)Q_{M,L}
\]
is invertible on \(Q_{M,L}L^2(\Sn)\), and
\begin{equation}\label{eq:boundary-shell-inverse}
  \|(D^B_{R,M,L})^{-1}\|
  \le
  \frac{1+o_R(1)}{M-|b_0|-C-\log2}.
\end{equation}
The corresponding finite boundary Schur complement
\[
  S^B_{R,M,L}(\zeta,V)
  :=P_M\mathfrak B_RP_M
    -P_M\mathfrak B_RQ_{M,L}(D^B_{R,M,L})^{-1}Q_{M,L}\mathfrak B_RP_M
\]
satisfies
\begin{equation}\label{eq:boundary-schur-effective}
  S^B_{R,M,L}(\zeta,V)
  =P_M\bigl(\zeta I-A_V\bigr)P_M+\mathcal R_{M,L}(\zeta,V)+O_{L,C,K}(R^{-1}),
\end{equation}
where
\begin{equation}\label{eq:boundary-schur-tail}
  \|\mathcal R_{M,L}(\zeta,V)\|
  \le
  \frac{(\log2)^2+o_R(1)}{M-|b_0|-C-\log2}.
\end{equation}
Moreover, the Schur complement gives a controlled extension from the inner core to the full boundary
pencil.  If \(\|f_M\|=1\) and
\[
  \|S^B_{R,M,L}(\zeta,V)f_M\|\le \rho,
\]
then, with
\[
  h=- (D^B_{R,M,L})^{-1}Q_{M,L}\mathfrak B_R(\zeta,V)P_Mf_M,
  \qquad f_L=f_M+h,
\]
one has
\begin{equation}\label{eq:ordered-extension-shell-size}
  \|h\|
  \le
  \frac{\log2+o_R(1)}{M-|b_0|-C-\log2},
\end{equation}
and, after choosing \(L\) so that
\(\sup_{V\in\calB}\|P_{>L}VP_M\|\le\delta\),
\begin{equation}\label{eq:ordered-full-boundary-residual}
  \|\mathfrak B_R(\zeta,V)f_L\|
  \le
  \rho+C\delta+
  \frac{C}{M-|b_0|-C-\log2}+o_R(1).
\end{equation}
Here \(C\) depends only on the fixed effective window and the common height bound.
\end{lemma}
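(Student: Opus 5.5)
The plan is to reduce everything to the finite-window expansion \eqref{eq:finite-boundary-effective-model}, apply a perturbed block-inverse (Schur) argument on $P_ML^2\oplus Q_{M,L}L^2$, and then treat the exterior tail $P_{>L}$ separately using \cref{lem:outer-tail-fixed-inner}. By \cref{lem:finite-boundary-quasimode} we have
\[
  P_L\mathfrak B_R(\zeta,V)P_L=P_L(\zeta I-A_V)P_L+O_{L,C,K}(R^{-1})
\]
uniformly for $V\in\calB$ and $|\zeta|\le C$. In this decomposition $\zeta$, $T_n$ and $b_0$ are block diagonal. The off-diagonal blocks therefore come only from $-V$ and have norm at most $\log2$. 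The shell block is
\[
  D^B_{R,M,L}=-Q_{M,L}(A_V-\zeta)Q_{M,L}+O_L(R^{-1}).
\]
On $Q_{M,L}L^2$ we have $T_n>M$, so $\langle (A_V-\zeta)u,u\rangle\ge (M-|b_0|-C-\log2)\|u\|^2$, exactly as in \cref{lem:effective-schur-tail}. A Neumann series absorbs the $O_L(R^{-1})$ perturbation once $R$ is large for the fixed $(M,L)$. This gives \eqref{eq:boundary-shell-inverse}, with the $o_R(1)$ coming from that series.

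Next I expand the Schur complement. Substituting the block expansions into $S^B_{R,M,L}$ gives $P_M(\zeta I-A_V)P_M+O_L(R^{-1})$ plus the product term
\[
  \mathcal R_{M,L}=-P_MVQ_{M,L}\,(D^B)^{-1}\,Q_{M,L}VP_M
\]
(with sign conventions from $\mathfrak B_R$'s $-V$), up to $O_L(R^{-1})$ errors. Its norm is at most $(\log2)^2\|(D^B)^{-1}\|$, which yields \eqref{eq:boundary-schur-effective}–\eqref{eq:boundary-schur-tail}. The shell correction $h$ obeys \eqref{eq:ordered-extension-shell-size} because $\|Q_{M,L}\mathfrak B_RP_M\|\le\log2+O_L(R^{-1})$.

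For the full residual I split $\mathfrak B_Rf_L=P_L\mathfrak B_Rf_L+P_{>L}\mathfrak B_Rf_L$. By construction of $h$, $Q_{M,L}\mathfrak B_Rf_L=0$ exactly, since $Q_{M,L}\mathfrak B_RP_L$ is the finite block. Also $P_M\mathfrak B_Rf_L=S^B f_M$, which has norm at most $\rho$; this is the standard Schur identity. So it remains to bound $P_{>L}\mathfrak B_R(f_M+h)$. Here I invoke \cref{lem:outer-tail-fixed-inner} with $f_Q=h$:
\[
  \|P_{>L}\mathfrak B_Rf_L\|\le \delta\|f_M\|+(\log2+\delta)\|h\|+O_{L,C}(R^{-1}).
\]
Inserting \eqref{eq:ordered-extension-shell-size} gives the term $C/(M-|b_0|-C-\log2)$, hence \eqref{eq:ordered-full-boundary-residual}.

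I expect the main obstacle to be the exterior tail. The full pencil $\mathfrak B_R$ is only known through finite-window expansions, and $P_{>L}\mathfrak B_RP_L$ involves infinitely many modes. I would rely on the fact that $f_L$ lies in the fixed window $P_LL^2$, so the boundary expansion of \cref{lem:boundary-grushin-expansion} applies to $f_L$ itself with $L$-dependent constants. The high-mode part of $\mathfrak B_Rf_L$ is then $-P_{>L}Vf_L+O_L(R^{-1})$ in $L^2(\Sn)$. The core contribution is made small by compactness, $\sup_{\calB}\|P_{>L}VP_M\|\le\delta$, while the shell contribution is small only through $\|h\|=O(M^{-1})$. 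This is why the order is $M$ fixed, then $L$, then $R\to\infty$; the common $C^{m_0}$ bound only ensures uniformity of the $O_L(R^{-1})$ constants over $\calB$.
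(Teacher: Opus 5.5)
Your proposal is correct and follows the paper's proof essentially step for step. You invert the shell block by a Neumann series built on the finite-window expansion and the coercivity $T_n>M$; you use the block Schur identities $Q_{M,L}\mathfrak B_R f_L=0$ and $P_M\mathfrak B_R f_L=S^B_{R,M,L}f_M$; and you split the exterior as $P_{>L}\mathfrak B_R(P_Mf_M+Q_{M,L}h)$, with the first term controlled by $\sup_{V\in\calB}\|P_{>L}VP_M\|\le\delta$ and the second by $\|V\|_\infty\le\log2$ times $\|h\|$. The only cosmetic difference is that you obtain the shell-to-exterior bound by quoting \cref{lem:outer-tail-fixed-inner} with $f_Q=h$, which is exactly the estimate the paper re-derives inline.
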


\begin{proof}
We prove the block estimate in the splitting \(P_M\oplus(P_L-P_M)\oplus P_{>L}\).  On the finite shell
\(Q_{M,L}L^2\), the model operator is
\[
  Q_{M,L}(\zeta I-A_V)Q_{M,L}.
\]
Since \(T_n\ge M\) on this shell and \(\|V\|_\infty\le\log2\), the operator is uniformly
invertible with inverse norm at most
\((M-|b_0|-C-\log2)^{-1}\).  The finite-window expansion
\[
  P_L\mathfrak B_R(\zeta,V)P_L=P_L(\zeta I-A_V)P_L+O_{L,C,K}(R^{-1})
\]
then gives \eqref{eq:boundary-shell-inverse} by a Neumann-series argument for fixed
\((M,L)\) and large \(R\).

The Schur formula is the usual block inversion identity.  The off-diagonal terms have a specific source.  The diagonal pieces \(\zeta I\), \(T_n\), and \(b_0I\) preserve the
splitting \(P_M\oplus Q_{M,L}\).  Thus the coupling between the core and the shell is only
multiplication by \(-V\), whose norm is at most \(\log2\).  This gives
\eqref{eq:boundary-schur-effective}--\eqref{eq:boundary-schur-tail}.

If \(f_M\) is given, the vector \(h\) is chosen so that the \(Q_{M,L}\)-component of
\(P_L\mathfrak B_RP_L(f_M+h)\) vanishes.  The \(P_M\)-component is exactly
\(S^B_{R,M,L}f_M\), hence its norm is at most \(\rho\).  The estimate
\eqref{eq:ordered-extension-shell-size} follows from the inverse bound and the fact that the
core-to-shell coupling again comes only from multiplication by \(V\), up to the fixed-window
\(O_{L,C,K}(R^{-1})\) error.

It remains only to estimate the part of the full boundary residual outside the chosen outer window.
For the Schur-lifted vector one has the decomposition
\begin{equation}\label{eq:exterior-residual-decomposition}
  P_{>L}\mathfrak B_R(\zeta,V)f_L
  =P_{>L}\mathfrak B_R(\zeta,V)P_Mf_M
   +P_{>L}\mathfrak B_R(\zeta,V)Q_{M,L}h .
\end{equation}
Lemma~\ref{lem:outer-tail-fixed-inner} gives
\[
  \|P_{>L}\mathfrak B_R(\zeta,V)P_Mf_M\|\le \delta+o_R(1)
\]
by the high-mode multiplication estimate \(\sup_{V\in\calB}\|P_{>L}VP_M\|\to0\).  For the second
term in \eqref{eq:exterior-residual-decomposition}, the diagonal pieces \(\zeta I\), \(T_n\), and
\(b_0I\) have no map from \(Q_{M,L}\) to \(P_{>L}\).  Thus only multiplication by \(V\) and the
finite-window remainder contribute, giving
\[
  \|P_{>L}\mathfrak B_R(\zeta,V)Q_{M,L}h\|
  \le (\log2+\delta+o_R(1))\|h\| .
\]
Combining these two bounds with \eqref{eq:ordered-extension-shell-size} proves
\eqref{eq:ordered-full-boundary-residual}.  The estimates are uniform under the following ordered limit:
\(M\) gives coercivity on the shell, \(L\) makes \(\sup_{V\in\calB}\|P_{>L}VP_M\|\) small, and the
finite-window remainders then vanish as \(R\to\infty\).
\end{proof}

\begin{lemma}[Spectral sandwich in a separated window]\label{lem:abstract-spectral-sandwich}
Let \(Y\) be a finite multiset contained in an interval \(I\), and assume
\(\dist(Y,\partial I)\ge\gamma\).  Let \(0<\varepsilon<\gamma/4\).  For each connected component
\(J_a\) of
\[
  \bigcup_{y\in Y}[y-\varepsilon,y+\varepsilon],
\]
let \(Y_a=Y\cap J_a\), counted with multiplicity.  Suppose a second multiset \(X\) of characteristic
values satisfies the following cluster comparison: for every \(a\), \(X\cap J_a\) has the same
cardinality as \(Y_a\), and \(X\cap(I\setminus\cup_a J_a)=\emptyset\).  Then \(X\cap I\) and \(Y\)
have the same cardinality and
\[
  d_{\rm ms}(X\cap I,Y)\le \varepsilon .
\]
In applications the equalities of cluster cardinalities are obtained by Rouch\'e comparison on the
boundaries of the intervals \(J_a\), together with the two spectral inclusions supplied by the
Schur-lift/full-residual construction and the exact-state capture estimate.
\end{lemma}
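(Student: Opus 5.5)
The plan is to argue in two stages: first the cardinality statement, which is purely combinatorial, and then the matching bound, which I would obtain from the monotone (sorted) matching of two real multisets. For the cardinality, the intervals \(J_a\) are pairwise disjoint, and since \(\varepsilon<\gamma/4\le\dist(Y,\partial I)/4\) each \(J_a\) lies in the interior of \(I\). By hypothesis \(X\) has no points in \(I\setminus\bigcup_aJ_a\). Hence
\[
\#(X\cap I)=\sum_a\#(X\cap J_a)=\sum_a\#Y_a=\#Y ,
\]
counted with multiplicity. This settles the first assertion, and it also shows that the matching problem splits into the finitely many independent clusters \(J_a\).

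For the distance bound I would order both multisets increasingly, \(x_0\le\cdots\le x_J\) and \(y_0\le\cdots\le y_J\), and use the identity permutation. For real multisets the sorted matching minimizes the maximal displacement. Moreover, by the standard Hall-type criterion, \(\max_j|x_j-y_j|\le\varepsilon\) holds if and only if, for every \(t\in\mathbb R\),
\[
\#\{x\le t\}\ge\#\{y\le t-\varepsilon\},
\qquad
\#\{y\le t\}\ge\#\{x\le t-\varepsilon\}.
\]
Because the clusters are disjoint and separated, these inequalities reduce to the same inequalities inside each \(J_a\). When \(Y_a\) is a single value (possibly with multiplicity), \(J_a=[y-\varepsilon,y+\varepsilon]\), and every \(x\in X\cap J_a\) is within \(\varepsilon\) of \(y\). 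The matching bound is then immediate from the equal-cardinality hypothesis.

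The main obstacle is a cluster \(J_a\) containing several distinct values of \(Y\). Equal total cardinality on \(J_a\) alone does not force the counting inequalities above, because the points of \(X\) could all sit at one end of a long cluster. Here I would use the final sentence of the lemma: the cluster comparison is produced by Rouch\'e comparison together with the two spectral inclusions. Those inclusions (Schur lift, full residual, exact-state capture) give the counting statement on every interval of the form \((-\infty,t]\cap J_a\) whose endpoint \(t\) lies at distance \(\ge\varepsilon\) from \(Y\). Applying the finite-dimensional Rouch\'e comparison (\cref{lem:rouche-finite-pencils}) on small contours around the half-lines \(\{y\le t\}\), I would first try to extract these one-sided counts and then feed them into the Hall criterion. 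If the available hypothesis provides only cluster totals, this step fails. In that case the argument still yields the bound with \(\varepsilon\) replaced by \(\operatorname{diam}J_a\le2(\#Y_a)\varepsilon\), and the stated constant \(\varepsilon\) requires exactly the refined counting supplied by the Rouch\'e input.
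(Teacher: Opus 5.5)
Your cardinality count is correct and is exactly the paper's argument: each $[y-\varepsilon,y+\varepsilon]$ lies in $I$, the components $J_a$ are disjoint, and $X\cap I\subset\bigcup_aJ_a$. Your concern about clusters containing several distinct values of $Y$ is also correct, and it settles the matter: the lemma is false as stated. Take $I=[-10\varepsilon,10\varepsilon]$, $\gamma=5\varepsilon$, $Y=\{0,\varepsilon\}$ and $X=\{-\varepsilon,-\tfrac12\varepsilon\}$. There is a single component $J_1=[-\varepsilon,2\varepsilon]$. Moreover $\#(X\cap J_1)=2=\#Y_1$, and $X$ has no points in $I\setminus J_1$. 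Yet $d_{\rm ms}(X\cap I,Y)=\tfrac32\varepsilon>\varepsilon$. The paper's proof says ``matching points component by component gives the stated distance bound,'' which is precisely the step you flagged. Componentwise matching only yields
\[
  d_{\rm ms}(X\cap I,Y)\le\max_a\bigl(\diam J_a-\varepsilon\bigr)\le\bigl(2\max_a\#Y_a-1\bigr)\varepsilon ,
\]
and this bound is attained, e.g.\ for $Y=\{0,2\varepsilon\}$ and $X=\{-\varepsilon,-\varepsilon\}$.

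Your proposed rescue does not close the gap. The last sentence of the statement describes how the hypotheses are verified in applications. It is not a hypothesis, so it cannot be invoked inside the proof. Even granting it, the counts you want are on $(-\infty,t]\cap J_a$ with $\dist(t,Y)\ge\varepsilon$. But every point of $J_a$ is within $\varepsilon$ of $Y$, so such $t$ can only be endpoints of $J_a$ or points where two $\varepsilon$-intervals merely touch. You therefore recover nothing beyond the cluster totals.

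Hall's criterion needs counts at thresholds such as $t=y_j+\varepsilon$ in the interior of $J_a$. There a contour cannot be kept at distance $>\varepsilon$ from $Y$, and that separation is what a Rouch\'e comparison at perturbation size $\varepsilon$ requires. An $\varepsilon$-sharp ordered matching would have to come from a Weyl or min--max comparison, not from contour counting.

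So what can actually be proved is your fallback bound, or the stated bound under the extra hypothesis that each $J_a$ contains only one distinct value of $Y$. The weakened form suffices for the use in \cref{lem:finite-section-grushin-stability}: there $\#Y_a\le J+1$, so the factor $2J+1$ can be absorbed into $C_*$.
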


\begin{proof}
The intervals \(J_a\) lie in \(I\) because \(\varepsilon<\gamma/4\).  By hypothesis, every point of
\(X\cap I\) lies in one of these intervals and each interval contains exactly the number of points of
\(X\) prescribed by \(Y\).  Matching points component by component gives the stated distance bound.
\end{proof}

The preceding lemmas compare the finite Schur core with the full spectral problem in both directions.
A small Schur residual gives a small full boundary residual after the shell and tail have been restored,
and the collar correction converts this residual into a Dirichlet quasimode.  Conversely, an exact low
Dirichlet state is localized in the ball first band, then compared with the corrected finite Ritz
space and the finite outer pencil.  The next lemma records the resulting multiset estimate.

\begin{lemma}[Finite-section Grushin multiset comparison]\label{lem:finite-section-grushin-stability}
Fix a compact effective window \([-C,C]\), an integer \(J\ge0\), a number \(K<\infty\), and a
family \(\calB\subset C^\infty(\Sn)\cap\mathfrak C_K\) which is compact in \(C(\Sn)\) and satisfies a common \(C^{m_0}\)-bound \(\mathfrak S_{m_0}(\calB)<\infty\) as in Proposition~\ref{prop:boundary-pair-reduction}.  For \(V\in\calB\), set
\[
  A_V:=T_n+V-b_0,
  \qquad P_M:=\mathbf 1_{[0,M]}(T_n).
\]
Let \(I\Subset(-C,C)\) be \(\gamma\)-regular for the first \(J+1\) eigenvalues of \(A_V\), uniformly in
\(V\in\calB\).  Let
\[
  \mathfrak B_R(\zeta,V):=-\frac R\pi B_R^{\mathrm{raw}}(k_R(\zeta),V)
\]
be the normalized boundary characteristic operator.  For
\[
  \dim P_ML^2(\Sn)\ge J+1,
  \qquad M>|b_0|+C+\log2,
  \qquad L\ge M,
\]
define
\[
  \mathcal E_{\rm Gal}(M)
  :=\sup_{V\in\calB}\max_{0\le j\le J}
  |\eta_j(P_MA_VP_M)-\eta_j(A_V)|,
\]
\[
  \mathcal E_{\rm tail}(M,L)
  :=\sup_{V\in\calB}\|P_{>L}VP_M\|,
  \qquad
  \mathcal E_{\rm shell}(M,C)
  :=\frac{(\log2)^2}{M-|b_0|-C-\log2}.
\]
There is a constant \(C_*\), depending on \(J,C,K,\gamma\) and on \(\mathfrak S_{m_0}(\calB)\), such that the following holds.  Put
\[
  \mathcal E_{M,L,R}:=
  \mathcal E_{\rm Gal}(M)+\mathcal E_{\rm shell}(M,C)+\mathcal E_{\rm tail}(M,L)
  +C_{L,C,K,\calB}R^{-1}+C_{L,C,K,\calB}R^{-1/2}.
\]
If \(C_*\mathcal E_{M,L,R}<\gamma/4\), then \(\operatorname{Char}(\mathfrak B_R(\cdot,V);I)\) has cardinality
\(J+1\), and
\begin{equation}\label{eq:finite-section-error-decomposition}
  d_{\rm ms}
  \left(
    \operatorname{Char}(\mathfrak B_R(\cdot,V);I),
    \boldsymbol\eta_J(A_V)
  \right)
  \le C_*\mathcal E_{M,L,R} .
\end{equation}
This estimate is uniform for \(V\in\calB\).  In particular,
\[
  \lim_{M\to\infty}\limsup_{L\to\infty}\limsup_{R\to\infty}
  \sup_{V\in\calB}
  d_{\rm ms}
  \left(
    \operatorname{Char}(\mathfrak B_R(\cdot,V);I),
    \boldsymbol\eta_J(A_V)
  \right)=0.
\]
Thus the characteristic values in every bounded effective window are exhausted, in the limit, by the Schur-lifted first radial band, and their limiting multiset is the corresponding multiset of \(A_V=T_n+V-b_0\).

\end{lemma}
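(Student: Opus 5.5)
The plan is to prove the multiset estimate by establishing two spectral inclusions at the scale \(C_*\mathcal E_{M,L,R}\), and then converting them into equal cluster cardinalities through \cref{lem:abstract-spectral-sandwich}. Throughout, \(Y=\boldsymbol\eta_J(A_V)\) and \(\varepsilon=C_*\mathcal E_{M,L,R}<\gamma/4\). The clusters \(J_a\) are the connected components of the \(\varepsilon\)-neighbourhood of \(Y\). The cutoffs are fixed in the order of \cref{rem:ordered-cutoffs}: first \(M\), then \(L\) with \(\mathcal E_{\rm tail}(M,L)\) small, then \(R\) large.

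\emph{Step 1: model side.} By \cref{lem:effective-schur-tail} and \cref{lem:ordered-boundary-schur-residual}, the boundary Schur complement has the form
\[
  S^B_{R,M,L}(\zeta,V)=\zeta I-P_MA_VP_M+O\bigl(\mathcal E_{\rm shell}+C_{L}R^{-1}\bigr).
\]
This is a linear self-adjoint pencil up to a small perturbation. Its characteristic values therefore lie within that error of \(\eta_j(P_MA_VP_M)\), and hence within \(\mathcal E_{\rm Gal}+\mathcal E_{\rm shell}+C_LR^{-1}\) of \(Y\). Rouch\'e on \(\partial J_a\) (\cref{lem:rouche-finite-pencils}) gives the cluster counts. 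The resolvent \((\zeta-P_MA_VP_M)^{-1}\) on \(\partial J_a\) has norm at most \(\varepsilon^{-1}\), and this is where the constant \(C_*\) absorbs the ratio.

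\emph{Step 2: from Schur zeros to Dirichlet eigenvalues (upper inclusion).} Take an orthonormal family of approximate kernel vectors \(f_M\) of \(S^B\) at a model value \(\zeta\in J_a\). Lift them via \(f_L=f_M+h\). By \eqref{eq:ordered-full-boundary-residual} the full normalized residual is \(O(\mathcal E_{\rm shell}^{1/2}+\mathcal E_{\rm tail}+o_R(1))\); this is small because \(\|h\|\) is controlled by \eqref{eq:ordered-extension-shell-size}. \Cref{lem:collar-corrected-trial-map} then turns these lifts into a \(\dim Y_a\)-dimensional family of Dirichlet trial vectors, almost orthogonal by \cref{lem:fixed-corrected-gram-estimates}, with rescaled Rayleigh quotients within \(C\varepsilon\) of \(\zeta\). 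Polarization gives the bilinear version. Min--max then yields at least \(\#Y_a\) Dirichlet effective parameters near \(J_a\); cumulatively over clusters, this gives at least \(\#\{y\le\sup J_a\}\) Dirichlet values below each cluster's top. This is the step I expect to be the main obstacle. The quasimode bound \eqref{eq:full-residual-ritz-defect} only gives Rayleigh quotients, and a quasimode estimate locates spectrum only near \(\zeta\) with multiplicity if one controls the orthogonality of the lifted vectors and uses residual (not just form) smallness. I would first try to pass through the Ritz matrix of the corrected space \(F^D_{R,L}\), which by \eqref{eq:fixed-corrected-ritz-matrix} equals \(P_LA_VP_L+O(R^{-1/2})\). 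Its low eigenvalues match \(Y\) up to \(\mathcal E_{\rm Gal}+\mathcal E_{\rm shell}+\mathcal E_{\rm tail}\) by the same Schur argument applied to \(P_LA_VP_L\). Min--max on this trial space gives the upper counting directly.

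\emph{Step 3: from Dirichlet eigenvalues to the model (lower inclusion).} \Cref{lem:exact-cluster-corrected-angle} and \cref{lem:fixed-corrected-ritz-space} show that every exact low cluster is captured by \(F^D_{R,L}\) with angle \(C_J\delta+o_R(1)\). Here \(\delta\) is tied to \(M\) through \cref{lem:graph-spectral-cutoff} and is bounded by a multiple of \(\mathcal E_{\rm shell}\). \Cref{lem:abstract-ritz-grushin} then places exact values within \(C\varepsilon\) of Ritz values, hence of \(Y\), with multiplicity, so none lie in \(I\setminus\bigcup_aJ_a\). \Cref{prop:sobolev-fredholm-framework} and \cref{lem:boundary-dirichlet-multiplicity} identify exact effective parameters with \(\operatorname{Char}(\mathfrak B_R(\cdot,V);I)\) with multiplicity. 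Combining the two inclusions gives equal cardinalities per \(J_a\), and \cref{lem:abstract-spectral-sandwich} gives \eqref{eq:finite-section-error-decomposition}. All constants depend only on the listed data, so the estimate is uniform on \(\calB\). The iterated limit then follows from \cref{lem:finite-section-effective} (\(\mathcal E_{\rm Gal}\to0\)), from \(\mathcal E_{\rm shell}\to0\) as \(M\to\infty\), and from \(\mathcal E_{\rm tail}\to0\) as \(L\to\infty\) by \cref{lem:finite-rank-high-mode-multiplication}.
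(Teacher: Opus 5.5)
\textbf{Where your route differs from the paper's.} Your overall architecture matches the paper's. You feed two cluster inclusions into \cref{lem:abstract-spectral-sandwich}. The reverse direction (capture of exact Dirichlet clusters by $F^D_{R,L}(V)$, then the multiplicity identification of \cref{prop:sobolev-fredholm-framework} and \cref{lem:boundary-dirichlet-multiplicity}) is taken exactly as in the paper.

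You genuinely depart in the forward direction. The paper lifts Schur core vectors to full-residual quasimodes and asserts that this forces characteristic values of $\mathfrak B_R$ near each cluster. As you correctly observe, \eqref{eq:full-residual-ritz-defect} controls only Rayleigh quotients, so min--max turns such vectors into cumulative upper bounds, not into spectrum near $\zeta$.

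Your replacement is min--max on the fixed corrected space, whose Ritz matrix is $P_LA_VP_L+O_L(R^{-1/2})$ by \eqref{eq:fixed-corrected-ritz-matrix}. Writing $\mu_j=\zeta_{j,R}(V)$ for the rescaled Dirichlet parameters, this gives $\mu_j\le\eta_j(A_V)+\mathcal E_{\rm Gal}(M)+C_LR^{-1/2}$ for $j\le J$ directly. The reason is monotonicity of compressions, $\eta_j(A_V)\le\eta_j(P_LA_VP_L)\le\eta_j(P_MA_VP_M)$, so no Schur argument is needed for $P_LA_VP_L$ either.

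Combined with the capture estimate, this pins down $\mu_0,\dots,\mu_J$ from both sides. It also keeps $\mu_{J+1}$ out of $I$: apply capture to $J+2$ states and use $\eta_{J+1}(P_LA_VP_L)\ge\eta_{J+1}(A_V)\ge\sup I+\gamma$. The passage to $\operatorname{Char}(\mathfrak B_R(\cdot,V);I)$ then costs only the $O(R^{-1})$ shift $\mu=\zeta+\zeta^2/(2R)$ between a characteristic value and its rescaled Dirichlet parameter.

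This is cleaner and more robust than the paper's forward step. It also shows that your Step~1, the Schur lift, $\mathcal E_{\rm shell}$ and $\mathcal E_{\rm tail}$ are not needed for this lemma. What the paper's route would buy, if its forward step were completed, is an existence mechanism intrinsic to the boundary pencil, rather than one resting on min--max for the self-adjoint Dirichlet problem.

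\textbf{The step that fails: error bookkeeping in Step~3.} \Cref{lem:graph-spectral-cutoff} gives $\|(I-\Pi_{R,M})\widetilde u\|^2\le K'_C/(M-K'_C)$. The capture angle it yields is therefore only $\delta\lesssim\mathcal E_{\rm shell}^{1/2}$, not $\delta\lesssim\mathcal E_{\rm shell}$. Moreover, in \cref{lem:low-eigenfunction-coefficient-localization} the relevant cutoff is the outer one, giving $\delta\lesssim L^{-1/2}$.

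Since \cref{lem:abstract-ritz-grushin} is linear in $\delta$, your lower inclusion carries an error of order $\delta$. This is not dominated by $C_*\mathcal E_{M,L,R}$ as $\mathcal E_{M,L,R}\to0$. What your argument actually establishes is $d_{\rm ms}\le C\bigl(\mathcal E_{\rm Gal}(M)+\delta(L)\bigr)+C_LR^{-1/2}$. That gives the iterated limit, but not \eqref{eq:finite-section-error-decomposition}.

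A linear bound would need a sharper capture estimate, for instance a second-order Rayleigh--Ritz error. That requires closeness in the energy norm and not only in $L^2$, and none of the cited lemmas supplies it. In fairness, the paper's proof does not place the capture error in $\mathcal E_{M,L,R}$ either: it invokes exact-state capture to exclude extra characteristic values without stating a rate.
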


\begin{proof}
We compare four objects in sequence:
\[
  A_V
  \longleftrightarrow P_MA_VP_M
  \longleftrightarrow S^B_{R,M,L}(\zeta,V)
  \longleftrightarrow P_L\mathfrak B_R(\zeta,V)P_L
  \longleftrightarrow \mathfrak B_R(\zeta,V).
\]
The first comparison is \(\mathcal E_{\rm Gal}(M)\), by the Galerkin convergence of
Lemma~\ref{lem:finite-section-effective}.

For the second comparison, Lemma~\ref{lem:ordered-boundary-schur-residual} gives, on the core
\(P_ML^2\),
\[
  S^B_{R,M,L}(\zeta,V)
  =P_M(\zeta I-A_V)P_M+\mathcal R_{M,L}(V)+O_{L,C,K}(R^{-1}),
\]
with
\[
  \|\mathcal R_{M,L}(V)\|
  \le C_C\mathcal E_{\rm shell}(M,C).
\]
Finite-dimensional Rouch\'e comparison on contours around separated clusters therefore moves the
characteristic values by at most
\(C_{J,C,K}(\mathcal E_{\rm shell}(M,C)+C_{L,C,K}R^{-1})\).

The passage from the finite Schur complement to the finite outer pencil is the Schur lifting in
Lemma~\ref{lem:ordered-boundary-schur-residual}.  A core vector whose Schur residual is small lifts
to a vector in \(P_LL^2\) whose full normalized boundary residual is bounded by
\[
  C_{J,C,K}
  \bigl(\mathcal E_{\rm shell}(M,C)+\mathcal E_{\rm tail}(M,L)+C_{L,C,K}R^{-1}\bigr).
\]
The term \(\mathcal E_{\rm tail}\) is the high-mode multiplication estimate
\(\sup_{V\in\calB}\|P_{>L}VP_M\|\).

The final passage is between boundary residuals and Dirichlet spectral values.  The vector entering
the collar-corrected trial map is the Schur-lifted vector whose full normalized boundary residual was
estimated above.  By Lemma~\ref{lem:collar-corrected-trial-map}, with the choice \(\ell_R=R^{1/2}\),
a full residual error \(\rho\) produces an effective defect
\(C(\rho+R^{-1/2}\rho^2+R^{-1/2})\).  On bounded windows this contributes the term
\(C_{L,C,K}R^{-1/2}\) plus the already listed residual errors.  Conversely, Lemma~\ref{lem:outer-cutoff-boundary}
starts from an exact low Dirichlet state, localizes it in the ball first-band window, and compares it
with the corrected finite Ritz space.  The fixed Ritz matrix differs from
\(P_LA_VP_L\) by \(O_{L,C,K}(R^{-1/2})\) by
Lemma~\ref{lem:finite-boundary-quasimode}.

The preceding paragraphs give the two cluster inclusions needed in Lemma~\ref{lem:abstract-spectral-sandwich}.  To spell out the counting step, decompose the effective cluster in \(I\) into disjoint intervals \(J_a\) whose boundaries have distance at least \(2\varepsilon\) from the effective spectrum, with
\[
  \varepsilon=C_*\mathcal E_{M,L,R}.
\]
For \(C_*\) chosen large and \eqref{eq:comparison-remainder-small-condition} imposed, each boundary contour \(\partial J_a\) remains separated from the finite Schur spectrum and from the full characteristic set.  Rouch\'e comparison between the Schur complement and \(P_M(\zeta I-A_V)P_M\) preserves the finite cardinality in \(J_a\).  The Schur-to-boundary direction gives at least that many characteristic values of \(\mathfrak B_R\) in the \(\varepsilon\)-neighbourhood of \(J_a\), because Schur core vectors lift to full-residual quasimodes.  Equivalently, applying the full-residual lifting estimate on the contour \(\partial J_a\) shows that absence of a characteristic value in the neighbourhood would contradict the Fredholm inverse bound on that contour.  The reverse exact-state capture gives no additional characteristic values in the same neighbourhood: any such value is localized, compared with the corrected Ritz space, and therefore returns to the same finite effective cluster.  Fredholm multiplicity is identified by Proposition~\ref{prop:sobolev-fredholm-framework} and Lemma~\ref{lem:boundary-dirichlet-multiplicity}.  Hence the hypotheses of Lemma~\ref{lem:abstract-spectral-sandwich} hold, and that lemma gives \eqref{eq:finite-section-error-decomposition}.  The limiting statement follows because
\(\mathcal E_{\rm Gal}(M)\to0\) as \(M\to\infty\),
\(\mathcal E_{\rm shell}(M,C)\to0\) as \(M\to\infty\),
\(\mathcal E_{\rm tail}(M,L)\to0\) as \(L\to\infty\) with \(M\) fixed, and the two finite-window
errors vanish as \(R\to\infty\) after \((M,L)\) have been fixed.
\end{proof}

\begin{corollary}[Exhaustion by the finite effective core]\label{cor:no-high-characteristics}
Let \(\calB\subset\mathfrak C_K\) be a compact family of smooth graph deficits.  For each compact
effective window \([-C,C]\) and each \(\varepsilon>0\), one can choose \(M\) so large that, uniformly
for \(V\in\calB\) and all large \(R\), every characteristic value in the window is generated, up to an
\(\varepsilon\)-error in the effective parameter, by the finite matrix
\(P_MA_VP_M\).  Equivalently, the characteristic values in the window are exhausted, in the effective scale, by the finite effective core.
\end{corollary}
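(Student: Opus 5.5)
The plan is to read the corollary off Lemma~\ref{lem:finite-section-grushin-stability}, run at a window slightly larger than \([-C,C]\), together with the Schur-tail estimate of Lemma~\ref{lem:effective-schur-tail}.

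First, enlarge the window to \([-C-1,C+1]\). The operators \(A_V=T_n+V-b_0\) have spectra depending \(1\)-Lipschitz on \(V\) in sup norm, and \(\calB\) is compact. Covering \(\calB\) by a finite net, we can choose finitely many intervals \(I_1,\dots,I_p\). We require that their union contains \([-C,C]\), that each is \(\gamma\)-regular for a fixed cluster uniformly on the corresponding net piece, and that \(\gamma>0\) depends only on \(\calB\) and \(C\). Such endpoints exist because on each piece the eigenvalues in \([-C-1,C+1]\) are finite in number, uniformly in \(V\) by the lower bound \(T_n\ge0\) and Weyl counting for the multiplier \(\psi(\ell+\alpha)-\psi(\alpha)\). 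Hence gaps of a fixed size can be placed between them after shrinking the pieces.

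Next, on each piece apply Lemma~\ref{lem:finite-section-grushin-stability}, with \(J\) the uniform count bound. Its error \(\mathcal E_{M,L,R}\) is made smaller than \(\varepsilon/(2C_*)\) in the usual order. Choose \(M\) so that \(\mathcal E_{\rm Gal}(M)\) and \(\mathcal E_{\rm shell}(M,C)\) are small, using Lemma~\ref{lem:finite-section-effective} for the Galerkin term. Then choose \(L\) so that \(\mathcal E_{\rm tail}(M,L)\) is small, using the compactness estimate \(\sup_{V\in\calB}\|P_{>L}VP_M\|\to0\) of Lemma~\ref{lem:finite-rank-high-mode-multiplication}. Finally let \(R\) be large. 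The lemma then matches the characteristic values of \(\mathfrak B_R(\cdot,V)\) in each \(I_i\), counted with multiplicity, with the corresponding eigenvalues of \(A_V\) within \(\varepsilon/2\).

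The last step replaces \(A_V\) by the finite matrix \(P_MA_VP_M\). Since \(\mathcal E_{\rm Gal}(M)<\varepsilon/2\) uniformly on \(\calB\), each of those eigenvalues is within \(\varepsilon/2\) of an eigenvalue of \(P_MA_VP_M\). Equivalently, by Lemma~\ref{lem:effective-schur-tail}, the eigenvalues of \(A_V\) are zeros of the Schur complement, which is \(O(M^{-1})\)-close to \(P_M(\zeta-A_V)P_M\). The triangle inequality then gives the claim: every characteristic value in \([-C,C]\) is within \(\varepsilon\) of \(\spec(P_MA_VP_M)\). Conversely, no characteristic value is left unmatched, since the count in each \(I_i\) equals \(J+1\).

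The main obstacle is the uniform regularity of the covering windows. Eigenvalues of \(A_V\) can cross or cluster as \(V\) varies, so a single \(\gamma\)-regular interval may not exist for all of \(\calB\) at once. The compactness/net argument handles this, because Definition~\ref{def:windowed-multisets} allows internal crossings and only the interval boundaries must avoid the spectrum. If that proves awkward, the fallback is to apply Lemma~\ref{lem:finite-section-grushin-stability} to each \(V\) with windows chosen for \(V\) and extend to a \(C(\Sn)\)-neighbourhood. This uses the Lipschitz bound \(|\eta_j(A_V)-\eta_j(A_W)|\le\|V-W\|_\infty\), and the neighbourhoods are then covered finitely.
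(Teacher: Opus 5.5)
Your proposal is correct and follows essentially the paper's own argument. You fix $J$ uniformly on $\calB$ by compactness, apply Lemma~\ref{lem:finite-section-grushin-stability} in the order $M$, then $L$, then $R$, and pass from $A_V$ to $P_MA_VP_M$ by the Galerkin convergence of Lemma~\ref{lem:finite-section-effective}; your finite-net construction of uniformly $\gamma$-regular windows only makes explicit what the paper leaves implicit (as it also does in the proof of Theorem~\ref{thm:effective-band}). Like the paper, you tacitly use the common $C^{m_0}$-bound $\mathfrak S_{m_0}(\calB)<\infty$ that this lemma requires, which compactness of $\calB$ in $C(\Sn)$ alone does not give, so the corollary should be read with that hypothesis added.
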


\begin{proof}
Apply the preceding multiset comparison and then let the inner cutoff tend to infinity.
Choose \(J\) larger than the number of effective eigenvalues of \(A_V\) which can lie in a slightly
enlarged window, uniformly for \(V\in\calB\); compactness of \(\calB\) and compact resolvent of
\(T_n+V\) give such a number.  Lemma~\ref{lem:finite-section-grushin-stability} reduces the full
boundary characteristic values in \([-C,C]\) to the eigenvalues of \(P_MA_VP_M\), up to the ordered
errors.  The Galerkin convergence of Lemma~\ref{lem:finite-section-effective} then lets us choose
\(M\) so that the remaining effective tail is below \(\varepsilon\).  The localization estimate
Lemma~\ref{lem:graph-spectral-cutoff} is what excludes higher radial roots before the boundary
Schur step begins.
\end{proof}

\begin{lemma}[Uniform finite-window continuity in the graph deficit]\label{lem:finite-window-graph-continuity}
Fix \(L,C,K<\infty\).  If \(V,W\in\mathfrak C_K\), then, uniformly for \(|\zeta|\le C\),
\begin{equation}\label{eq:finite-window-graph-continuity}
  \left\|P_L\{\mathfrak B_R(\zeta,V)-\mathfrak B_R(\zeta,W)\}P_L\right\|
  \le C_{L,C,K}\|V-W\|_\infty+o_R(1)\|V-W\|_\infty .
\end{equation}
The same estimate holds for the finite collar-corrected trial forms.  Consequently heat-regularized smooth deficits may be used with the limit order \(t>0\) fixed, then \(R\to\infty\), then \(t\downarrow0\).
\end{lemma}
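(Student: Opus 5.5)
The plan is to work sector by sector on the finite window \(P_LL^2(\Sn)\), where only finitely many angular degrees \(\ell\) occur. For \(f\in P_LL^2\),
\[
  \bigl(\mathfrak B_R(\zeta,V)-\mathfrak B_R(\zeta,W)\bigr)f(\theta)
  =-\frac R\pi\sum_{\ell,m}f_{\ell m}\bigl[s_\ell(R-V(\theta),k)-s_\ell(R-W(\theta),k)\bigr]Y_{\ell m}(\theta),
\]
with \(k=k_R(\zeta)\). I will bound this difference by the mean value theorem in the radial variable:
\[
  s_\ell(R-V,k)-s_\ell(R-W,k)=-(V-W)\int_0^1\partial_rs_\ell\bigl(R-W-\tau(V-W),k\bigr)\,d\tau .
\]
The radial points involved lie in \([R-\log2,R]\), since \(0\le V,W\le\log2\).

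On that range the derivative phase estimate \eqref{eq:phase-error-bound} of Lemma~\ref{lem:low-energy-phase} applies uniformly for the finitely many \(\beta=\ell+\alpha\) in the window. It gives
\[
  \partial_rs_\ell(r,k)=k\cos\{k(r+b_\ell)\}+O_{L,C}(R^{-4}).
\]
Since \(k(r+b_\ell)=\pi+O_{L,C}(R^{-1})\), the cosine equals \(-1+O(R^{-2})\). Hence \(\partial_rs_\ell=-\pi/R+O_{L,C}(R^{-2})\) uniformly on the window.

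Multiplying by \(-R/\pi\), the sector contribution becomes \(-(V-W)Y_{\ell m}+O_{L,C}(R^{-1})\,|V-W|\,|Y_{\ell m}|\). Summing over the finite window, with \(\|Y_{\ell m}\|_\infty\le C_L\) and the Cauchy--Schwarz inequality on the finitely many coefficients, gives
\[
  P_L\bigl(\mathfrak B_R(\zeta,V)-\mathfrak B_R(\zeta,W)\bigr)P_L=-P_L(V-W)P_L+O_{L,C}(R^{-1})\|V-W\|_\infty .
\]
Since \(\|P_L(V-W)P_L\|\le\|V-W\|_\infty\), this yields \eqref{eq:finite-window-graph-continuity}. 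The first constant \(C_{L,C,K}\) could even be taken equal to \(1\), and the remainder is \(o_R(1)\|V-W\|_\infty\). The Lipschitz bound \(K\) is not needed for this operator estimate.

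For the collar-corrected trial forms, I would repeat the argument of Lemma~\ref{lem:fixed-corrected-bilinear-estimates}. By Lemma~\ref{lem:fixed-corrected-gram-estimates}, the Gram matrices of \(\mathcal T_{R,V}(0)P_L\) and \(\mathcal T_{R,W}(0)P_L\) both equal \((R/2)I+O(1)\). The rescaled form matrices equal \((R/2)A_{V,L}\), respectively \((R/2)A_{W,L}\), up to errors of relative size \(O_{L,C,K}(R^{-1/2})\). So the normalized Ritz matrices differ by \(P_L(V-W)P_L+O_{L,C,K}(R^{-1/2})\).

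\textbf{Main obstacle.} To get the stated form, with the error proportional to \(\|V-W\|_\infty\) rather than merely \(o_R(1)\), one has to track the difference directly. The two domains \(\Omega_{R,V}\) and \(\Omega_{R,W}\) differ, so I would compare them after pulling back through the collar map \eqref{eq:collar-map} to the fixed cylinder. I would then integrate the Hadamard variation of Lemma~\ref{lem:hadamard-fixed} along the segment \(W+\tau(V-W)\), applied with the direction \(V-W\). Its boundary term is \((2\pi^2/R^3)\langle(V-W)f,g\rangle\), and the collar remainders are \(O(R^{-4})\|V-W\|\) times smooth norms. This is where the dependence of constants on \(K\) and on smooth norms enters: the Hadamard remainder uses \(C^2\) control of the direction. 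Per Remark~\ref{rem:smooth-constants-order}, this is harmless when both deficits are heat-regularized at a fixed time \(t\). In that case the smooth norms of \(P_tV\) are bounded in terms of \(t\) and \(\|V\|_\infty\).

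Finally, the ordered-limit consequence follows from this estimate together with Lemma~\ref{lem:uniform-heat-regularization}. That lemma gives \(\sup_{V\in\calB}\|P_tV-V\|_\infty\to0\). Thus, after \(R\to\infty\) at fixed \(t\), the finite-window pencils and Ritz matrices for \(P_tV\) and for \(V\) differ by at most \(C_{L,C,K}\sup_{V\in\calB}\|P_tV-V\|_\infty\), which tends to \(0\) as \(t\downarrow0\).
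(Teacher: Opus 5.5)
Your argument for the displayed pencil estimate is the paper's own. It uses the fundamental theorem of calculus along each ray between $R-V(\theta)$ and $R-W(\theta)$, the bound $\partial_r s_\ell=-\pi/R+O_{L,C}(R^{-2})$ on $[R-\log2,R]$ from \cref{lem:low-energy-phase}, and multiplication by $-R/\pi$. Isolating the leading term $-P_L(V-W)P_L$ is a small sharpening: the constant is $1$ and does not depend on $K$. Unlike subtracting two copies of \eqref{eq:operator-boundary-expansion}, it also keeps the remainder proportional to $\|V-W\|_\infty$.

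For the collar-corrected forms your route differs from the paper's. The paper only asserts that the same fixed-collar calculation applies. That means running the same radial interpolation inside the boundary-flux representation of \cref{lem:fixed-corrected-bilinear-estimates}, where the deficit enters through the evaluation point $R-V(\theta)$ of $u_f$ and $\partial_r u_g$ and through the position of the collar. You instead subtract the two Ritz models of \cref{lem:finite-boundary-quasimode}, obtaining $\|V-W\|_\infty+O_{L,C,K}(R^{-1/2})$. Here the $o_R(1)$ is additive rather than multiplicative. That is all the ordered limit ($t$ fixed, $R\to\infty$, then $t\downarrow0$) uses, so your final paragraph already follows from this bound, given that the Ritz model holds with constants depending only on $L,C,K$.

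The Hadamard detour is therefore unnecessary, and as sketched it targets a slightly different object. \cref{lem:hadamard-fixed} varies the form on transported ball first-band vectors $\mathcal J_R\varphi$, not on the $V$-dependent corrected spaces $\mathcal T_{R,V}(0)P_LL^2(\Sn)$. Using it would also require controlling how the corrected trial space moves with $V$, and it imports smooth-norm constants that the radial-interpolation argument avoids.
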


\begin{proof}
For \(f\in P_LL^2\), use the fundamental theorem of calculus in the radial variable:
\[
  \mathcal S(k_R(\zeta))f(R-V,\theta)-\mathcal S(k_R(\zeta))f(R-W,\theta)
  =(W-V)\int_0^1\partial_r\mathcal S(k_R(\zeta))f(R-V+\tau(V-W),\theta)\,d\tau .
\]
On a fixed angular window the radial derivative in the outer collar is \(O_{L,C}(R^{-1})\) in sine-amplitude normalization, with \(O_{L,C}(R^{-2})\) remainder from Lemma~\ref{lem:low-energy-phase}.  After multiplication by \(-R/\pi\), this gives \eqref{eq:finite-window-graph-continuity}.  The collar-corrected form estimate follows from the same fixed-collar calculation.
\end{proof}

 \subsection{Completion of the effective band theorem}\label{sec:effective-completion}

\begin{proof}[Proof of \cref{prop:boundary-pair-reduction}]
Set
\[
  E_{\rm phase}(L,R)=C_{L,C,K,\calB}R^{-1},
  \qquad
  E_{\rm collar}(L,R)=C_{L,C,K,\calB}R^{-1/2},
\]
where the constants depend on the \(C^{m_0}\)-bound \(\mathfrak S_{m_0}(\calB)\).  Lemma~\ref{lem:finite-section-grushin-stability} gives the separated-window comparison.  More precisely, once the total error is smaller than a fixed fraction of the distance from \(\partial I\) to the effective spectrum, Rouch\'e comparison on the two boundary contours of \(I\) gives exact characteristic count inside \(I\), and the same estimate gives \eqref{eq:boundary-pair-comparison-error}.  Multiplicity is preserved by the Fredholm identification in Proposition~\ref{prop:sobolev-fredholm-framework} and by the finite-dimensional Rouch\'e comparison on cluster contours.

The limiting statement follows from the four limits
\[
  E_{\rm Gal}(M)\to0,
  \qquad E_{\rm shell}(M,C)\to0
  \quad(M\to\infty),
\]
\[
  E_{\rm tail}(M,L)\to0
  \quad(L\to\infty,\ M\text{ fixed}),
  \qquad
  E_{\rm phase}(L,R)+E_{\rm collar}(L,R)\to0
  \quad(R\to\infty,\ M,L\text{ fixed}).
\]
The first is Lemma~\ref{lem:finite-section-effective}; the second is the explicit shell bound; the
third is Lemma~\ref{lem:outer-tail-fixed-inner}; the last two are the finite-window expansion and
collar correction.
\end{proof}

\begin{proof}[Proof of \cref{thm:effective-band}]
First fix \(K<\infty\) and prove the assertion for a family
\(\calB_s\subset C^\infty(\Sn)\cap\mathfrak C_K\) which is compact in \(C(\Sn)\) and satisfies the \(C^{m_0}\)-bound required in Proposition~\ref{prop:boundary-pair-reduction}.  Choose \(C\) so large that, uniformly for
\(V\in\calB_s\), the first \(N+1\) effective parameters \(\eta_j(T_n+V)-b_0\), \(0\le j\le N\), lie
in \((-C+1,C-1)\).  Proposition~\ref{prop:boundary-pair-reduction} gives convergence on every
regular effective cluster.  If one of the first \(N+1\) levels is multiple, we enclose the whole
coalescing cluster in a single regular interval and compare multisets in that interval.  Since the
effective eigenvalues depend continuously on \(V\) and \(\calB_s\) is compact, finitely many such
cluster windows cover the family.  Reading the cluster convergence in increasing order gives
\[
  \zeta_{j,R}(V)=\eta_j(T_n+V)-b_0+o_R(1),
  \qquad 0\le j\le N,
\]
uniformly for \(V\in\calB_s\).  This proves \eqref{eq:eig-effective} for compact smooth families in
\(\mathfrak C_K\).

Now let \(\calB\subset\mathfrak C_K\) be the compact Lipschitz family in the theorem.  Given
\(\delta>0\), choose \(t>0\) so small that, with \(W=P_tV\),
\[
  \sup_{V\in\calB}\|W-V\|_\infty<\delta,
  \qquad
  |\eta_j(T_n+W)-\eta_j(T_n+V)|<\delta,\quad 0\le j\le N,
\]
uniformly for \(V\in\calB\).  This follows from \cref{lem:uniform-heat-regularization} and the min-max principle.  For this fixed \(t\), the family \(P_t\calB\) has the required \(C^{m_0}\)-bounds.  The smooth-family theorem is then applied with \(t\) fixed and \(R\to\infty\), and only afterwards is \(t\downarrow0\) through the height bracket below.

The height approximation gives the inclusions
\begin{equation}\label{eq:lipschitz-height-bracket}
  \Omega_{R-\delta,W}\subset\Omega_{R,V}\subset\Omega_{R+\delta,W},
\end{equation}
for all large \(R\).  Indeed, \(|V-W|\le\delta\) is equivalent to
\[
  R-W-\delta\le R-V\le R-W+\delta.
\]
Dirichlet monotonicity therefore gives
\[
  \lambda_{j+1}(\Omega_{R+\delta,W})
  \le \lambda_{j+1}(\Omega_{R,V})
  \le \lambda_{j+1}(\Omega_{R-\delta,W}).
\]
Applying the smooth expansion at the radius \(R+s\), with fixed \(s=\pm\delta\), and then renormalizing around \(R\), uses
\[
  \frac{R^3}{2\pi^2}
  \left(\frac{\pi^2}{(R+s)^2}-\frac{\pi^2}{R^2}\right)=-s+O_s(R^{-1}).
\]
Thus, uniformly in \(W\),
\[
  \frac{R^3}{2\pi^2}
  \left(\lambda_{j+1}(\Omega_{R+s,W})-\alpha^2-\frac{\pi^2}{R^2}\right)
  =\eta_j(T_n+W)-b_0-s+o_R(1).
\]
Thus the normalized parameter for \(\Omega_{R,V}\) lies between
\[
  \eta_j(T_n+W)-b_0-\delta+o_R(1)
  \quad\text{and}\quad
  \eta_j(T_n+W)-b_0+\delta+o_R(1).
\]
Since the effective eigenvalues are \(1\)-Lipschitz with respect to the \(L^\infty\)-norm of the potential, this differs from \(\eta_j(T_n+V)-b_0\) by at most \(2\delta+o_R(1)\).  Letting first \(R\to\infty\) with \(\delta\) fixed, and then \(\delta\downarrow0\), proves the theorem for compact Lipschitz families.

\end{proof}

\subsection{Sequential graph limits and the gap formula}

\begin{corollary}[Sequential graph limits]\label{cor:sequential-effective-limit}
Let \(R_k\to\infty\), and let \(V_k\) be graph deficits belonging to one compact Lipschitz family
\(\calB\subset\mathfrak C_K\).  Suppose \(V_k\to V\) uniformly.  Then, for every fixed \(N\) and
\(0\le j\le N\),
\begin{equation}\label{eq:sequential-effective-eigs}
  \lambda_{j+1}(\Omega_{R_k,V_k})
  =\alpha^2+\frac{\pi^2}{R_k^2}
  +\frac{2\pi^2}{R_k^3}\bigl(\eta_j(T_n+V)-b_0\bigr)
  +o(R_k^{-3}).
\end{equation}
Consequently
\begin{equation}\label{eq:sequential-effective-gap}
  \Gap(\Omega_{R_k,V_k})
  =\frac{2\pi^2}{R_k^3}\bigl[\eta_1(T_n+V)-\eta_0(T_n+V)\bigr]+o(R_k^{-3}).
\end{equation}
The same conclusion holds in the fixed-height variant \(0\le V_k\le A\), with constants depending
also on \(A\).
\end{corollary}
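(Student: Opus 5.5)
The plan is to reduce \cref{cor:sequential-effective-limit} to \cref{thm:effective-band}, applied to the fixed compact family \(\calB\), and then to use the Lipschitz dependence of the effective eigenvalues on the potential to replace \(V_k\) by \(V\).

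First, apply \cref{thm:effective-band} to \(\calB\). It gives, uniformly for \(W\in\calB\),
\[
  \zeta_{j,R}(W)=\eta_j(T_n+W)-b_0+o_R(1),\qquad 0\le j\le N .
\]
Concretely, for every \(\varepsilon>0\) there is \(R_\varepsilon\) such that
\[
  |\zeta_{j,R}(W)-\eta_j(T_n+W)+b_0|<\varepsilon
  \qquad\text{for all } R\ge R_\varepsilon,\ W\in\calB .
\]
Evaluating this along the diagonal \(R=R_k\), \(W=V_k\in\calB\) gives
\[
  \zeta_{j,R_k}(V_k)=\eta_j(T_n+V_k)-b_0+o(1).
\]
The uniformity in \(W\) is exactly what permits this diagonal evaluation. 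No pointwise-in-\(V\) statement would suffice, because the deficit changes with \(k\).

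Second, replace \(V_k\) by \(V\). The min--max principle gives the bound already used in \cref{lem:finite-section-effective},
\[
  |\eta_j(T_n+V_k)-\eta_j(T_n+V)|\le\|V_k-V\|_\infty .
\]
The right side tends to \(0\) by uniform convergence. Multiplying the resulting identity by \(2\pi^2/R_k^3\) and restoring \(\alpha^2+\pi^2/R_k^2\) gives \eqref{eq:sequential-effective-eigs}. Subtracting the cases \(j=1\) and \(j=0\) cancels the common terms \(\alpha^2\), \(\pi^2/R_k^2\) and \(b_0\), which gives \eqref{eq:sequential-effective-gap}. Since \(\calB\) is closed in \(C(\Sn)\), the limit \(V\) lies in \(\calB\) and in particular in \(L^\infty\). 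That is all this step needs.

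Third, treat the fixed-height variant \(0\le V_k\le A\). Here \cref{thm:effective-band} is stated for \(\mathfrak C_K\), which has height \(\log2\), so this is the only point needing real care. The boundary-phase and collar lemmas used \(\log2\) only as a uniform height bound: through the flattening condition \(\|\chi'\|_\infty\log2<1/2\), the common ball \(r<R-\log2\), and the shell constant \(M>|b_0|+C+\log2\). I would rerun the argument with \(\log2\) replaced by \(A\) throughout. This means choosing \(a_{\rm col}\) and \(\chi\) with \(\|\chi'\|_\infty A<1/2\), using the interior region \(r<R-A-1\), and taking \(M>|b_0|+C+A\). All constants then acquire a dependence on \(A\), but none of the estimates changes.

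A cleaner alternative avoids re-checking those lemmas. Write \(V_k=\min(V_k,A)\); the only issue is that \(V_k>\log2\) may occur. Shift the radius instead: with \(c=A-\log2\), put \(\widetilde R_k=R_k-c\) and \(\widetilde V_k=V_k-c\). The trouble is that \(\widetilde V_k\) can be negative, which breaks the standing assumption \(V\ge0\). So I would rely on the direct rerun of the lemmas with \(A\) in place of \(\log2\), which is routine. I expect this bookkeeping to be the main, though mild, obstacle.
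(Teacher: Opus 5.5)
Your proof is correct and follows the paper's argument: apply \cref{thm:effective-band} uniformly on the compact family, evaluate along the diagonal \((R_k,V_k)\), and replace \(\eta_j(T_n+V_k)\) by \(\eta_j(T_n+V)\) via the min--max bound \(|\eta_j(T_n+V_k)-\eta_j(T_n+V)|\le\|V_k-V\|_\infty\). For the height-\(A\) variant the paper only points to \cref{lem:admissible-lipschitz-hull}, which records compactness but not the spectral statement, so your explicit remark that the spectral lemmas must be rerun with \(\log2\) replaced by \(A\) (collar cutoff, interior ball, shell constant) is the more complete justification of that step.
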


\begin{proof}
The compact set \(\{V\}\cup\{V_k:k\ge k_0\}\) is contained in a compact Lipschitz family.  Apply
Theorem~\ref{thm:effective-band} uniformly to this family.  It gives the expansion with
\(\eta_j(T_n+V_k)-b_0\) in place of \(\eta_j(T_n+V)-b_0\).  Since
\[
  |\eta_j(T_n+V_k)-\eta_j(T_n+V)|\le \|V_k-V\|_\infty\to0,
\]
the replacement of \(V_k\) by \(V\) changes only the \(o(R_k^{-3})\) term.  Taking the difference of
the cases \(j=1\) and \(j=0\) gives \eqref{eq:sequential-effective-gap}.  The height-\(A\) variant
is the variant recorded in Lemma~\ref{lem:admissible-lipschitz-hull}.
\end{proof}

\begin{remark}[Source of the \(R^{-3}\) operator]\label{rem:fixed-cylinder-point}
The effective operator is the sum of three contributions:
\[
  \text{threshold scattering near }r=0,
  \qquad
  \text{Dirichlet displacement at }r=R,
  \qquad
  \text{higher radial roots.}
\]
The threshold contribution is \(T_n-b_0\), the boundary displacement is multiplication by \(V\), and the remaining radial branches stay separated at scale \(R^{-2}\).  Hence the order \(R^{-3}\) operator in \eqref{eq:eig-effective} is \(T_n+V-b_0\).
\end{remark}

Taking the difference between the cases \(j=1\) and \(j=0\) in \eqref{eq:eig-effective} gives
\begin{equation}\label{eq:gap-effective}
  \Gap(\Om_{R,V})=
  \frac{2\pi^2}{R^3}\bigl(\eta_1(T_n+V)-\eta_0(T_n+V)\bigr)+o(R^{-3}).
\end{equation}
Together with \(D=2R+O(1)\), this is the source of the factor \(16\pi^2\) in
\cref{thm:var-formula}.

\section{Passage from graph asymptotics to horoconvex domains}\label{sec:transfer}

The analytic theorem in the preceding section is stated for radial graph domains
\(\Omega_{R,V}\).  Horoconvex domains enter through the compactness and realization theory of
\Cref{sec:horoconvex-compactness}.  We write \(\lambda_j(K)\) for the Dirichlet eigenvalues of the
interior of the compact body \(K\); changing the boundary does not change the Dirichlet spectrum.  To
make the logical passage explicit, we record the passage from graph asymptotics to horoconvex bodies
used in the proofs of the main theorems.

\begin{proposition}[Passage from graph asymptotics to horoconvex bodies]\label{prop:horoconvex-transfer}
Let \(K_k\subset\Hn\) be Chebyshev-centered bounded horoconvex bodies with circumradii
\(R_k\to\infty\).
We write \(\lambda_j(K_k)\) for the Dirichlet eigenvalue of \(\operatorname{int}K_k\).
Let
\[
  V_k(\theta)=R_k-\rho_{K_k}(\theta)
\]
be their radial deficits with respect to the chosen centers.  Suppose that, after passing to a
subsequence, \(V_k\to V\) uniformly.  Then \(V\in\calA_n\), \(\diam K_k=2R_k+O(1)\), and for every
fixed \(N\) and \(0\le j\le N\),
\begin{equation}\label{eq:horoconvex-transfer-eigs}
  \lambda_{j+1}(K_k)=
  \alpha^2+\frac{\pi^2}{R_k^2}
  +\frac{2\pi^2}{R_k^3}\bigl(\eta_j(T_n+V)-b_0\bigr)
  +o(R_k^{-3}).
\end{equation}
Consequently,
\begin{equation}\label{eq:horoconvex-transfer-gap}
  \Gap(K_k)=
  \frac{2\pi^2}{R_k^3}
  \bigl[\eta_1(T_n+V)-\eta_0(T_n+V)\bigr]
  +o(R_k^{-3}).
\end{equation}

Conversely, if \(V\in\calA_n\) and \(q=q_V\) is its canonical support profile, the horoconvex
support realizations
\[
  K_R(q)=\bigcap_{\xi\in\Sn}\{x:\beta_\xi(x)\le e^Rq(\xi)\}
\]
satisfy \eqref{eq:horoconvex-transfer-eigs} along \(R\to\infty\) with limiting deficit \(V\).  If
\(R_D\) is chosen by Lemma~\ref{lem:diameter-calibration}, then the same expansion holds for the
exact-diameter family \(\diam K_{R_D}(q)=D\), with \(R_k\) replaced by \(R_D\).  In particular,
\begin{equation}\label{eq:exact-diameter-transfer-gap}
  D^3\Gap(K_{R_D}(q))\longrightarrow 16\pi^2\mathfrak g(V).
\end{equation}
\end{proposition}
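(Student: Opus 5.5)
The plan is to reduce everything to Corollary~\ref{cor:sequential-effective-limit}, applied to radial graph domains. The geometry results of Section~\ref{sec:horoconvex-compactness} supply the needed graph representation and compactness.

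\textbf{Direct part.} By Lemma~\ref{lem:finite-R-lipschitz}, after discarding finitely many terms the deficits \(V_k\) lie in one fixed-height compact Lipschitz family, of the variant of \(\mathfrak C_K\) recorded in Lemma~\ref{lem:admissible-lipschitz-hull}. Since \(K_k\) is star-shaped about its Chebyshev center, \(\operatorname{int}K_k\) coincides with the graph domain \(\Omega_{R_k,V_k}\) up to the boundary graph, which has measure zero. The two Dirichlet spectra therefore agree, because the Lipschitz graph boundary does not change \(H^1_0\). Proposition~\ref{prop:radial-compactness} then gives \(V\in\calA_n\) and \(\diam K_k=2R_k+O(1)\); here the limit \(V\) is the given uniform limit, identified along the subsequence. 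The fixed-height variant of Corollary~\ref{cor:sequential-effective-limit} yields \eqref{eq:horoconvex-transfer-eigs}. Subtracting the cases \(j=0\) and \(j=1\) gives \eqref{eq:horoconvex-transfer-gap}.

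\textbf{Converse part.} Let \(V\in\calA_n\) and put \(q=q_V\). By Lemma~\ref{lem:realization}, the body \(K_R(q)\) is horoconvex, its radial deficit about \(o\) satisfies \(\|V_R-V\|_\infty\le Ce^{-2R}\), and \(V_R\) stays in a compact Lipschitz family. That lemma controls only the sup norm, so the Lipschitz bound needs a separate check. I would obtain it by noting that \(o\) is an almost-Chebyshev center, since the circumradius is \(R+O(e^{-2R})\). The transversality argument in the proof of Lemma~\ref{lem:finite-R-lipschitz} uses only \(1/2\le q\le1\) together with the root equation, so it applies verbatim with \(o\) as center. Corollary~\ref{cor:sequential-effective-limit}, applied along any sequence \(R\to\infty\), then gives \eqref{eq:horoconvex-transfer-eigs} with limiting deficit \(V\).

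The expansion is indexed by the parameter \(R\) of the realization, not by the true circumradius. The two differ by \(O(e^{-2R})\), which is negligible at order \(R^{-3}\), so this causes no problem.

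\textbf{Exact diameter.} Lemma~\ref{lem:diameter-calibration} provides \(R_D\) with \(\diam K_{R_D}(q)=D\) and \(R_D=(D-\Delta(V))/2+O(e^{-D})\), so \(R_D\to\infty\). Applying the gap expansion along \(R=R_D\) gives \(\Gap=2\pi^2\mathfrak g(V)R_D^{-3}+o(R_D^{-3})\). Because \(D/R_D\to2\), it follows that \(D^3\Gap\to 16\pi^2\mathfrak g(V)\).

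The main obstacle is the converse part, specifically the uniform Lipschitz control of the realized deficits \(V_R\) about the possibly non-Chebyshev point \(o\). The estimate is routine by the argument above, but it is the only hypothesis of Corollary~\ref{cor:sequential-effective-limit} that is not already stated verbatim in an earlier lemma.
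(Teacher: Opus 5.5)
Your proposal is correct and follows the paper's proof essentially step by step. The direct part uses the graph representation about the Chebyshev center together with Lemma~\ref{lem:finite-R-lipschitz}, Proposition~\ref{prop:radial-compactness} and Corollary~\ref{cor:sequential-effective-limit}; the converse uses Lemma~\ref{lem:realization}, the negligible $O(e^{-2R})$ circumradius shift, and Lemma~\ref{lem:diameter-calibration} with $D/R_D\to2$. Your explicit check that the realized deficits $R-\rho_{K_R(q)}$ stay in one fixed-height Lipschitz family, obtained by rerunning the transversality argument of Lemma~\ref{lem:finite-R-lipschitz} with $q_V$, supplies a hypothesis of Corollary~\ref{cor:sequential-effective-limit} that the paper's proof uses without comment; that argument needs only $1/2\le q_V\le1$ and the inclusion $K_R(q)\subset\overline{B_R(o)}$, so your almost-Chebyshev remark is superfluous.
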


\begin{proof}
The first assertion that \(V\in\calA_n\) and \(\diam K_k=2R_k+O(1)\) is
Proposition~\ref{prop:radial-compactness}.  The star-shaped graph representation with respect to the
Chebyshev center was established in \Cref{sec:horoconvex-compactness}; hence, up to a boundary set
of measure zero,
\[
  K_k=\Omega_{R_k,V_k}=\{(r,\theta):0\le r\le R_k-V_k(\theta)\}.
\]
The deficits \(V_k\) lie, after discarding finitely many terms, in one fixed-height compact
Lipschitz graph family by Lemma~\ref{lem:finite-R-lipschitz}.  Corollary~\ref{cor:sequential-effective-limit}
therefore applies and gives \eqref{eq:horoconvex-transfer-eigs}.  Subtracting the cases \(j=1\) and
\(j=0\) gives \eqref{eq:horoconvex-transfer-gap}.

For the converse, Lemma~\ref{lem:realization} gives the radial deficit
\(W_R=R-\rho_{K_R(q)}\) and
\[
  \|W_R-V\|_{C(\Sn)}=O(e^{-2R})
\]
with respect to the chosen support center.  These support-center coordinates already give the graph
domain \(K_R(q)=\Omega_{R,W_R}\), and the spectrum is intrinsic to the set.  If one also records the true Chebyshev circumradius, Lemma~\ref{lem:realization} gives
\(R_{\rm ch}=R+O(e^{-2R})\), and this replacement is negligible at the effective scale:
\[
  R_{\rm ch}^{-2}=R^{-2}+o(R^{-4}),\qquad
  R_{\rm ch}^{-3}=R^{-3}+o(R^{-5}).
\]
Moreover
\[
  |\eta_j(T_n+W_R)-\eta_j(T_n+V)|\le \|W_R-V\|_\infty=O(e^{-2R}).
\]
The sequential graph limit therefore yields \eqref{eq:horoconvex-transfer-eigs} with limiting
deficit \(V\).  The exact-diameter statement follows in the same way from
Lemma~\ref{lem:diameter-calibration}, because
\(R_D=(D-\Delta(V))/2+O(e^{-D})\) and the realized deficits are still \(V+O(e^{-D})\).
\end{proof}

\begin{proof}[Proof of \cref{thm:effective-limit}]
Apply Proposition~\ref{prop:radial-compactness} to a divergent sequence of Chebyshev-centered
horoconvex domains.  After passing to a subsequence, the radial deficits converge uniformly to some
\(V\in\calA_n\), and \(\diam\Omega_k=2R_k+O(1)\).  Proposition~\ref{prop:horoconvex-transfer}, with
\(N=1\), gives \eqref{eq:eff-intro} for \(j=1,2\).  The converse realization statement is the second
part of Proposition~\ref{prop:horoconvex-transfer}.
\end{proof}

\section{Positivity and the effective gap}

The lower bound in \cref{thm:main-lower} follows from compactness of the effective problem and
the strict positivity of the effective gap.

Let \(P_\rho\) be the spherical Poisson semigroup,
\[
  P_\rho Y=\rho^\ell Y,
  \qquad Y\in\calH_\ell.
\]
The digamma identity
\[
  \psi(\ell+\alpha)-\psi(\alpha)
  =\int_0^1(1-\rho^\ell)\frac{\rho^{\alpha-1}}{1-\rho}\,d\rho
\]
gives the operator formula
\begin{equation}\label{eq:Tn-poisson}
  T_n=\int_0^1(I-P_\rho)\frac{\rho^{\alpha-1}}{1-\rho}\,d\rho.
\end{equation}

\begin{lemma}[Basic spectral properties of \(T_n\)]\label{lem:Tn-basic-spectral}
The operator \(T_n\) is self-adjoint, nonnegative, and has compact resolvent.  Its kernel is the
constant functions.  For every real \(V\in L^\infty(\Sn)\), the operator \(T_n+V\) is self-adjoint with
compact resolvent, and its eigenvalues satisfy the min-max Lipschitz estimate
\begin{equation}\label{eq:Tn-eigenvalue-lipschitz}
  |\eta_j(T_n+V)-\eta_j(T_n+W)|\le \|V-W\|_\infty,
  \qquad j=0,1,2,\ldots .
\end{equation}
\end{lemma}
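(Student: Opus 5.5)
The plan is to read everything off the spectral definition \eqref{eq:Tn}: \(T_n\) acts diagonally in the orthogonal decomposition \(L^2(\Sn)=\bigoplus_{\ell\ge0}\calH_\ell\) as multiplication by the real numbers \(t_\ell:=\psi(\ell+\alpha)-\psi(\alpha)\). Self-adjointness on the maximal domain \(\{\sum Y_\ell:\sum t_\ell^2\|Y_\ell\|^2<\infty\}\) is then the standard fact for real multiplier operators. For nonnegativity and the kernel, I will use the digamma integral displayed just before \eqref{eq:Tn-poisson}. There \(t_\ell=\int_0^1(1-\rho^\ell)\rho^{\alpha-1}(1-\rho)^{-1}d\rho\), so \(t_0=0\) and \(t_\ell>0\) for \(\ell\ge1\). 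Alternatively, \(\psi\) is strictly increasing on \((0,\infty)\) since \(\psi'(x)=\sum_k (x+k)^{-2}>0\). Hence \(T_n\ge0\), and \(\ker T_n=\calH_0\), which is the constants.

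For compact resolvent, I will note that each \(\calH_\ell\) is finite dimensional and that \(t_\ell\to\infty\). For instance, \(\psi(x)=\log x+O(1/x)\), or directly from the integral via monotone convergence, since \(\int_0^1\rho^{\alpha-1}(1-\rho)^{-1}d\rho=\infty\). Then \((T_n+1)^{-1}\) is diagonal with eigenvalues \((1+t_\ell)^{-1}\to0\) on finite-dimensional blocks, so it is a norm limit of finite-rank operators and hence compact.

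For a real \(V\in L^\infty\), multiplication by \(V\) is bounded and self-adjoint. By Kato--Rellich, \(T_n+V\) is self-adjoint on \(\mathcal D(T_n)\). Its resolvent is compact because the second resolvent identity gives \((T_n+V-z)^{-1}=(T_n-z)^{-1}\bigl(I-V(T_n+V-z)^{-1}\bigr)\), which is a compact operator composed with a bounded one, for \(z\) off the real axis. The spectrum is therefore discrete, consisting of eigenvalues \(\eta_0\le\eta_1\le\cdots\) counted with multiplicity and bounded below by \(-\|V\|_\infty\).

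For the Lipschitz estimate I will use min--max on the common form domain \(\mathcal D(T_n^{1/2})\):
\[
  \eta_j(T_n+V)=\min_{\dim F=j+1}\max_{\phi\in F,\|\phi\|=1}\bigl(\|T_n^{1/2}\phi\|^2+\langle V\phi,\phi\rangle\bigr).
\]
Since \(\langle V\phi,\phi\rangle\le\langle W\phi,\phi\rangle+\|V-W\|_\infty\) for unit \(\phi\), taking max and then min gives \(\eta_j(T_n+V)\le\eta_j(T_n+W)+\|V-W\|_\infty\). Exchanging the roles of \(V\) and \(W\) yields \eqref{eq:Tn-eigenvalue-lipschitz}. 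There is no real obstacle here. The only point needing care is confirming \(t_\ell\to\infty\) (so the resolvent is compact), and the digamma integral or the asymptotic expansion of \(\psi\) settles it.
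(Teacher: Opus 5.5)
Your proposal is correct and follows the paper's route. You diagonalize $T_n$ on the spherical-harmonic decomposition, and you use positivity and divergence of the multipliers $\psi(\ell+\alpha)-\psi(\alpha)$ for nonnegativity, the kernel, and compact resolvent. You treat $V$ as a bounded self-adjoint perturbation and obtain the Lipschitz bound from min--max, filling in details the paper leaves implicit: Kato--Rellich, the resolvent identity, and the Rayleigh-quotient comparison.
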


\begin{proof}
The multipliers \(\psi(\ell+\alpha)-\psi(\alpha)\) are nonnegative, vanish only for \(\ell=0\), and
satisfy \(\psi(\ell+\alpha)=\log\ell+O(\ell^{-1})\).  Since each spherical harmonic space is finite
dimensional and the multipliers tend to infinity, \(T_n\) has compact resolvent.  A bounded real
potential is a bounded self-adjoint perturbation, so \(T_n+V\) remains self-adjoint with compact
resolvent.  The estimate \eqref{eq:Tn-eigenvalue-lipschitz} follows directly from the min-max principle.
\end{proof}

\begin{lemma}[Positivity improving]\label{lem:positive-improving}
For every real \(V\in L^\infty(\Sn)\) and every \(t>0\), the semigroup
\(e^{-t(T_n+V)}\) is positivity improving on \(L^2(\Sn)\).  Hence the lowest eigenvalue of
\(T_n+V\) is simple.
\end{lemma}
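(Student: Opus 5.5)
The plan is to first prove positivity improving for \(e^{-tT_n}\) alone, using the Poisson representation \eqref{eq:Tn-poisson}, and then add the bounded potential \(V\) by a Trotter product argument.

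\textbf{Step 1: \(-T_n\) generates a Markov semigroup with positive off-diagonal jumps.} Write
\[
  -T_n=\int_0^1(P_\rho-I)\,\nu(d\rho),\qquad \nu(d\rho)=\frac{\rho^{\alpha-1}}{1-\rho}\,d\rho .
\]
The Poisson kernel \(p_\rho(\theta,\phi)\) of the unit ball, restricted to radius \(\rho<1\), is strictly positive, continuous, and has unit mass. The measure \(\nu\) has infinite total mass only near \(\rho=1\); near \(\rho=0\) it is integrable because \(\alpha>0\). Set \(\nu_\varepsilon=\nu|_{[0,1-\varepsilon]}\), which is a finite measure of mass \(m_\varepsilon\), and define
\[
  T_\varepsilon=\int_0^1(I-P_\rho)\,\nu_\varepsilon(d\rho)=m_\varepsilon I-K_\varepsilon,
  \qquad K_\varepsilon=\int P_\rho\,\nu_\varepsilon(d\rho).
\]
The operator \(K_\varepsilon\) is an integral operator with a strictly positive continuous kernel: this uses \(p_\rho>0\) for every \(\rho\in(0,1)\) and \(\nu_\varepsilon((0,1-\varepsilon))>0\). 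Hence
\[
  e^{-tT_\varepsilon}=e^{-tm_\varepsilon}e^{tK_\varepsilon}\ge e^{-tm_\varepsilon}\,tK_\varepsilon ,
\]
and this operator maps every nonzero \(f\ge0\) to a function bounded below by a positive constant.

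\textbf{Step 2: pass to the limit \(\varepsilon\to0\).} The multipliers of \(T_\varepsilon\) increase to those of \(T_n\), so \(e^{-tT_\varepsilon}\to e^{-tT_n}\) strongly, multiplier by multiplier with dominated convergence. This limit only gives positivity preserving, so I need a quantitative lower bound to keep strictness. Split \(T_n=T_\varepsilon+T'_\varepsilon\), where both pieces are Fourier multipliers and therefore commute. Then
\[
  e^{-tT_n}=e^{-tT'_\varepsilon}e^{-tT_\varepsilon}.
\]
The factor \(e^{-tT'_\varepsilon}\) is positivity preserving, as the limit of positive operators built the same way on \([1-\varepsilon,1)\) with truncation. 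It also fixes constants, since \(T'_\varepsilon 1=0\). For \(f\ge0\), \(f\ne0\), Step 1 gives \(e^{-tT_\varepsilon}f\ge c>0\). Therefore
\[
  e^{-tT_n}f\ge e^{-tT'_\varepsilon}c=c .
\]
So \(e^{-tT_n}\) is positivity improving, in fact with a uniform positive lower bound.

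\textbf{Step 3: add the potential.} Since \(V\) is bounded,
\[
  e^{-t(T_n+V)}=\lim_{k\to\infty}\bigl(e^{-tT_n/k}e^{-tV/k}\bigr)^k
\]
strongly, by the Trotter formula for a bounded perturbation. Each factor \(e^{-tV/k}\) is bounded below by \(e^{-t\|V\|_\infty/k}\) pointwise. Hence
\[
  e^{-t(T_n+V)}f\ge e^{-t\|V\|_\infty}e^{-tT_n}f
\]
for \(f\ge0\). Alternatively, use the Dyson series comparison with \(V+\|V\|_\infty\ge0\). Either way, positivity improving transfers directly.

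\textbf{Step 4: simplicity of the ground state.} Since \(T_n+V\) has compact resolvent (Lemma~\ref{lem:Tn-basic-spectral}), \(e^{-t(T_n+V)}\) is compact, self-adjoint, and positivity improving. The Perron--Frobenius theorem for positivity improving operators (Reed--Simon XIII.44) then says its top eigenvalue \(e^{-t\eta_0}\) is simple with a strictly positive eigenfunction. It follows that \(\eta_0(T_n+V)\) is simple.

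The main obstacle is Step 2: strict positivity is not preserved by strong limits. The commuting factorization, together with \(e^{-tT'_\varepsilon}\) preserving constants, is what I would rely on to handle it.
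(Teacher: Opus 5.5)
Your proof is correct. Steps 3--4 coincide with the paper's argument: a Trotter product with the pointwise bound $e^{-tV/k}\ge e^{-t\|V\|_\infty/k}$, followed by Perron--Frobenius.

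The routes differ in Steps 1--2, the positivity improving property of $e^{-tT_n}$ itself.

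\textbf{The paper's route.} It regards $\Phi(\ell)=\psi(\ell+\alpha)-\psi(\alpha)$ as a Bernstein function of the Poisson generator $A$. It then invokes Bochner subordination, $e^{-tT_n}=\int_0^\infty e^{-sA}\,d\mu_t(s)$, where $\mu_t$ is a probability measure charging $(0,\infty)$. Hence $e^{-tT_n}$ is a mixture of strictly positive Poisson kernels.

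\textbf{Your route.} You work directly with the L\'evy measure $\nu$ in the $\rho$-variable and split it in two.
\begin{itemize}
\item The truncated part is a bounded compound-Poisson generator $m_\varepsilon I-K_\varepsilon$. Its exponential dominates $t e^{-tm_\varepsilon}K_\varepsilon$, which has a strictly positive kernel.
\item The singular remainder $T'_\varepsilon$ only needs to be positivity preserving and to satisfy $e^{-tT'_\varepsilon}1=1$. Commutation of the multipliers lets you combine the two pieces.
\end{itemize}

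\textbf{What each buys.} Your argument is more elementary. It needs only exponentials of bounded operators and one strong limit, not the existence of the subordinator or of the convolution semigroup $\mu_t$. It also gives the quantitative bound $e^{-tT_n}f\ge c_{t,\varepsilon}\int f\,d\sigma$, i.e.\ a heat kernel bounded below by a positive constant. The paper's argument is shorter once subordination is granted, and it gives an explicit Poisson-mixture representation of $e^{-tT_n}$.

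\textbf{A small slip.} Your parenthetical Dyson-series alternative should expand in $\|V\|_\infty-V\ge0$. Write $T_n+V=(T_n+\|V\|_\infty)-(\|V\|_\infty-V)$; then every Dyson term is positivity preserving, and $e^{-t(T_n+V)}\ge e^{-t\|V\|_\infty}e^{-tT_n}$. Expanding in $V+\|V\|_\infty\ge0$ instead gives an alternating series and only an upper bound. The Trotter argument you give first is fine as it stands.
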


\begin{proof}
For \(0<\rho<1\), the Poisson kernel on \(\Sn\) is strictly positive.  Write \(P_\rho=e^{-sA}\) with \(\rho=e^{-s}\), where \(A\) is the positive Poisson generator, \(AY=\ell Y\) on \(\calH_\ell\).  The multiplier of \(T_n\) is the Bernstein function
\[
  \Phi(\ell)=\psi(\ell+\alpha)-\psi(\alpha)
  =\int_0^\infty (1-e^{-s\ell})\,\frac{e^{-\alpha s}}{1-e^{-s}}\,ds .
\]
By the subordination theorem for Bernstein functions,
\[
  e^{-tT_n}=e^{-t\Phi(A)}=\int_0^\infty e^{-sA}\,d\mu_t(s),
\]
where \(\mu_t\) is a probability measure which has nonzero mass on \((0,\infty)\).  Since each \(e^{-sA}=P_{e^{-s}}\) has a strictly positive kernel for \(s>0\), the mixture has a strictly positive kernel.

It remains to check that adding a bounded potential does not destroy strict positivity.  Let
\(M=\|V\|_\infty\).  In the Trotter product
\[
  e^{-t(T_n+V)}=\lim_{m\to\infty}\left(e^{-tT_n/m}e^{-tV/m}\right)^m,
\]
we have \(e^{-tV/m}\ge e^{-tM/m}I\) as multiplication operators.  Since \(e^{-tT_n/m}\) is positivity
preserving, induction gives, for every \(f\ge0\),
\[
  \left(e^{-tT_n/m}e^{-tV/m}\right)^m f
  \ge e^{-tM}\left(e^{-tT_n/m}\right)^m f
  = e^{-tM}e^{-tT_n}f .
\]
Taking the strong limit preserves the order in the positive cone.  If \(f\ge0\) and
\(f\not\equiv0\), then \(e^{-tT_n}f>0\) a.e.; hence \(e^{-t(T_n+V)}f>0\) a.e.  The simplicity of the
ground state follows from the Perron-Frobenius-Jentzsch theorem for compact-resolvent self-adjoint
semigroups.
\end{proof}

\begin{lemma}[Uniform effective gap]\label{lem:uniform-effective-gap}
There exists \(\delta_n>0\) such that
\begin{equation}\label{eq:delta}
  \mathfrak g(V):=\eta_1(T_n+V)-\eta_0(T_n+V)\ge\delta_n
  \qquad\text{for all }V\in\calA_n.
\end{equation}
\end{lemma}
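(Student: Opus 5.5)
The argument combines compactness of \(\calA_n\) (Lemma~\ref{lem:admissible-closed}), continuity of \(\mathfrak g\), and strict positivity of \(\mathfrak g\) at each point (Lemma~\ref{lem:positive-improving}).

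\emph{Continuity.} By the min-max Lipschitz estimate \eqref{eq:Tn-eigenvalue-lipschitz} of Lemma~\ref{lem:Tn-basic-spectral}, each map \(V\mapsto\eta_j(T_n+V)\), \(j=0,1\), is \(1\)-Lipschitz from \(C(\Sn)\), with the sup norm, to \(\mathbb R\). Hence
\[
  |\mathfrak g(V)-\mathfrak g(W)|\le 2\|V-W\|_\infty,
\]
so \(\mathfrak g\) is continuous on \(\calA_n\subset C(\Sn)\).

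\emph{Pointwise positivity.} Every \(V\in\calA_n\) is continuous and real. In particular \(V\in L^\infty(\Sn)\), since \(0\le V\le\log2\) by Lemma~\ref{lem:modulus}. Lemma~\ref{lem:positive-improving} then says that \(e^{-t(T_n+V)}\) is positivity improving, so the lowest eigenvalue \(\eta_0(T_n+V)\) is simple. Because \(T_n+V\) has compact resolvent (Lemma~\ref{lem:Tn-basic-spectral}), its spectrum is discrete. Simplicity of the bottom eigenvalue therefore forces the next eigenvalue, counted with multiplicity, to be strictly larger: \(\eta_1(T_n+V)>\eta_0(T_n+V)\). Thus \(\mathfrak g(V)>0\) for every \(V\in\calA_n\).

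\emph{Conclusion.} Lemma~\ref{lem:admissible-closed} shows that \(\calA_n\) is compact in \(C(\Sn)\). A continuous, strictly positive function on a nonempty compact set attains a positive minimum, and \(\calA_n\) is nonempty since it contains \(V\equiv0\). We set \(\delta_n:=\min_{\calA_n}\mathfrak g>0\).

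The only substantive input is the simplicity statement. It relies on the Perron--Frobenius--Jentzsch argument of Lemma~\ref{lem:positive-improving}, which was already proved using the subordination representation \eqref{eq:Tn-poisson}. No further obstacle is expected here. One small point to confirm: the Perron--Frobenius theorem requires a semigroup that is positivity improving on all of \(L^2(\Sn)\). This holds because the Poisson kernel is strictly positive and the subordinating measure \(\mu_t\) charges \((0,\infty)\). I would also remark, as a sanity check, that \(\delta_n\le\mathfrak g(0)=1/\alpha=2/(n-1)\).
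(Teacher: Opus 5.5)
Your proof is correct and follows the paper's argument: compactness of \(\calA_n\), the \(1\)-Lipschitz dependence of \(\eta_0,\eta_1\) on \(V\) in the sup norm, and simplicity of the ground state from the positivity-improving semigroup. The only difference is presentational: you argue directly that the continuous, pointwise positive function \(\mathfrak g\) attains a positive minimum on the compact set, whereas the paper argues by contradiction, producing a minimizer \(V_*\) with \(\eta_1(T_n+V_*)=\eta_0(T_n+V_*)\).
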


\begin{proof}
The class \(\calA_n\) is compact in \(C(\Sn)\).  By Lemma~\ref{lem:Tn-basic-spectral}, the first two
eigenvalues are continuous, in fact Lipschitz, under uniform perturbations of \(V\).  If
\(\inf_{\calA_n}\mathfrak g=0\), compactness gives a minimizer
\(V_*\in\calA_n\) with \(\eta_1(T_n+V_*)=\eta_0(T_n+V_*)\), contradicting the simplicity of the
ground state from Lemma~\ref{lem:positive-improving}.
\end{proof}

\begin{proof}[Proof of \cref{thm:main-lower}]
Assume, to the contrary, that there are horoconvex domains \(\Om_k\) with
\(D_k=\diam\Om_k\to\infty\) and \(D_k^3\Gap(\Om_k)\to0\).  Center \(\Om_k\) at a Chebyshev center and
let \(R_k\) be its circumradius.  By Proposition~\ref{prop:radial-compactness}, after passing to a subsequence the radial deficits
\(V_k=R_k-\rho_{\Omega_k}\) converge uniformly to some \(V\in\calA_n\), and \(D_k=2R_k+O(1)\).
Proposition~\ref{prop:horoconvex-transfer} gives
\[
  \Gap(\Om_k)=\frac{2\pi^2}{R_k^3}\mathfrak g(V)+o(R_k^{-3}).
\]
By Lemma~\ref{lem:uniform-effective-gap}, \(\mathfrak g(V)\ge\delta_n>0\), and therefore
\[
  D_k^3\Gap(\Om_k)\to16\pi^2\mathfrak g(V)\ge16\pi^2\delta_n>0,
\]
a contradiction.
\end{proof}

\section{The asymptotic variational formula}

\begin{proof}[Proof of \cref{thm:var-formula}]
The lower bound follows from the compactness argument in the proof of \cref{thm:main-lower}.  For
any sequence \(\Om_k\) with \(D_k\to\infty\), a subsequence has a limit \(V\in\calA_n\), and
\[
  \liminf_{k\to\infty}D_k^3\Gap(\Om_k)
  \ge16\pi^2\mathfrak g(V)
  \ge16\pi^2\inf_{\calA_n}\mathfrak g.
\]

For the upper bound, fix \(V\in\calA_n\).  The exact-diameter realization in
Proposition~\ref{prop:horoconvex-transfer} gives horoconvex bodies \(K_D(V)\) with
\(\diam K_D(V)=D\) and
\[
  D^3\Gap(K_D(V))\longrightarrow16\pi^2\mathfrak g(V).
\]
Hence
\[
  \limsup_{D\to\infty}D^3
  \inf_{\diam\Om=D}\Gap(\Om)
  \le16\pi^2\mathfrak g(V).
\]

Taking the infimum over \(V\in\calA_n\) proves \eqref{eq:main-constant}.  Positivity of the
infimum is Lemma~\ref{lem:uniform-effective-gap}.
\end{proof}

\section{Non-optimality of geodesic balls}

For \(V\equiv0\), the operator \(T_n\) has lowest eigenvalue \(0\), with constants as ground states,
and first excited eigenvalue
\[
  \psi(\alpha+1)-\psi(\alpha)=\frac1\alpha=\frac{2}{n-1},
\]
with eigenspace \(\calH_1\).  Hence
\begin{equation}\label{eq:g0}
  \mathfrak g(0)=\frac{2}{n-1}.
\end{equation}

Let \(e\in\mathbb R^n\) be a unit vector and set
\[
  W(\theta)=1-(e\cdot\theta)^2.
\]
By Lemma~\ref{lem:axial-admissible-curve}, the curve
\[
  q_\eps(\xi)=\exp[-\eps W(-\xi)]
\]
is normalized for all sufficiently small \(\eps>0\), and its admissible envelope satisfies
\[
  V_{q_\eps}(\theta)=\eps W(\theta)+O(\eps^2)
\]
uniformly.

Now apply first-order perturbation theory to \(T_n+\eps W\).  The first excited level of \(T_n\) has
multiplicity \(n\), so the right derivative of \(\eta_1(T_n+\eps W)\) is given by Kato's degenerate
perturbation formula: one compresses multiplication by \(W\) to the eigenspace \(\calH_1\) and takes
the smallest eigenvalue of that compression.  Let \(\sigma\) be normalized surface measure on \(\Sn\),
and write \(X=e\cdot\theta\).  Since \(\mathbb E X^2=1/n\),
\[
  \eta_0'(0)=\int_{\Sn}W\,d\sigma=1-\frac1n=\frac{n-1}{n}.
\]
On the first excited space \(\calH_1=\operatorname{span}\{\theta_1,\dots,\theta_n\}\), the first-order splitting is the matrix
\[
  M_{ij}=
  \frac{\int_{\Sn}(1-X^2)\theta_i\theta_j\,d\sigma}
       {\int_{\Sn}\theta_i^2\,d\sigma} .
\]
Choosing coordinates with \(e=e_1\), symmetry makes this matrix diagonal.  Using
\[
  \mathbb E X^4=\frac{3}{n(n+2)},\qquad
  \mathbb E[X^2\theta_j^2]=\frac{1}{n(n+2)}\quad(j\ne1),
\]
its axial eigenvalue is
\[
  1-\frac{3}{n+2}=\frac{n-1}{n+2},
\]
whereas every transverse eigenvalue is
\[
  1-\frac{1}{n+2}=\frac{n+1}{n+2}.
\]
Thus the smallest perturbation is obtained by the axial harmonic \(Y(\theta)=X\).  Therefore
\[
  \mathfrak g(\eps W)=\frac{2}{n-1}
  -\frac{2(n-1)}{n(n+2)}\eps+O(\eps^2).
\]
Finally \(V_{q_\eps}=\eps W+O(\eps^2)\) uniformly.  By the Lipschitz estimate
\eqref{eq:Tn-eigenvalue-lipschitz}, replacing \(\eps W\) by this admissible curve changes both
relevant effective eigenvalues, and hence \(\mathfrak g\), by \(O(\eps^2)\).  
This proves
\cref{thm:ball-not-optimal}.

\begin{example}[A reduced axial model in dimension three]\label{ex:n3-axial-numerics}
When \(n=3\), one has \(\alpha=1\) and
\[
  T_3Y_{\ell m}=H_\ell Y_{\ell m},
  \qquad H_\ell=1+\frac12+\cdots+\frac1\ell,
\]
with \(H_0=0\).  The axial tangent direction above is
\[
  W(\theta)=1-z^2,
  \qquad z=e\cdot\theta .
\]
Multiplication by \(W\) preserves the azimuthal quantum number \(m\) and couples only degrees
\(\ell\) and \(\ell\pm2\).  Truncating the axisymmetric blocks at \(\ell\le16\) gives the following
values for the reduced model \(T_3+\varepsilon W\):
\[
\begin{array}{c|ccc}
\varepsilon & \eta_0 & \eta_1 & \eta_1-\eta_0 \\
\hline
0    & 0       & 1       & 1       \\
0.05 & 0.03318 & 1.01980 & 0.98661 \\
0.10 & 0.06607 & 1.03919 & 0.97312 \\
0.20 & 0.13090 & 1.07678 & 0.94588 \\
0.50 & 0.31760 & 1.18063 & 0.86303
\end{array}
\]
The derivative of the last column at \(0\) is \(-4/15\), in agreement with
\cref{thm:ball-not-optimal} for \(n=3\).  The proof of \cref{thm:ball-not-optimal} is analytic; this finite calculation illustrates
how the reduced gap moves away from the ball profile in the simplest nonradial direction.
\end{example}

\section{The reduced sharp-constant problem}

The preceding theorem gives the sharp \(D^{-3}\) scale and identifies the remaining large-diameter constant through the
compact nonlocal variational problem
\[
  \inf_{V\in\calA_n}\mathfrak g(V),
  \qquad
  \mathfrak g(V)=\eta_1(T_n+V)-\eta_0(T_n+V).
\]
Every large horoconvex domain contributes, after normalization, one element of \(\calA_n\), and every
element of \(\calA_n\) is realized by horoconvex domains with calibrated diameter.  Thus the
optimizer geometry has been separated from the scale theorem.  The first-variation calculation gives
\[
  \mathfrak g(0)=\frac{2}{n-1},
  \qquad
  \left.\frac{d}{d\eps}\mathfrak g(V_{q_\eps})\right|_{\eps=0}
  =-\frac{2(n-1)}{n(n+2)}<0,
\]
so the admissible axial curve gives a strict first-order decrease from the round profile.

The next observation records what an unconstrained interior minimizer would have to satisfy.  The
scale theorem does not use it; it identifies the remaining minimization as a constrained
support-envelope problem.

\begin{proposition}[Necessary condition for unconstrained criticality]\label{prop:unconstrained-criticality}
Let \(\mathcal U\subset L^\infty(\Sn)\) be open, let \(V_*\in\mathcal U\), and suppose the first
excited eigenvalue of \(T_n+V_*\) is simple.  If \(V_*\) is a local minimizer of \(\mathfrak g\) in
\(\mathcal U\), then, for normalized real eigenfunctions \(\phi_0\) and \(\phi_1\) associated with
\(\eta_0(T_n+V_*)\) and \(\eta_1(T_n+V_*)\), one has
\begin{equation}\label{eq:unconstrained-critical-density}
  |\phi_1(\theta)|^2=|\phi_0(\theta)|^2
  \qquad\text{for a.e. }\theta\in\Sn .
\end{equation}
Consequently, any local minimizer for which the density difference
\(|\phi_1|^2-|\phi_0|^2\) is not identically zero must lie on the boundary of the admissible
constraint set rather than in an unconstrained interior regime.
\end{proposition}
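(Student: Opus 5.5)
The plan is a first-variation (Hellmann--Feynman) argument in the direction of an arbitrary bounded perturbation. Fix \(W\in L^\infty(\Sn)\) real. Since \(\mathcal U\) is open in \(L^\infty\), the curve \(V_*+\eps W\) stays in \(\mathcal U\) for \(|\eps|\) small.

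\textbf{Step 1: differentiability of both eigenvalues.} By Lemma~\ref{lem:positive-improving}, \(\eta_0(T_n+V_*)\) is simple, and \(\eta_1\) is simple by hypothesis. Both are isolated, since \(T_n+V_*\) has compact resolvent by Lemma~\ref{lem:Tn-basic-spectral}. The family \(T_n+V_*+\eps W\) is a bounded, holomorphic (in fact linear) perturbation of type (A). Kato's analytic perturbation theory for isolated simple eigenvalues then shows that \(\eps\mapsto\eta_j(T_n+V_*+\eps W)\) is real-analytic near \(0\) for \(j=0,1\). One must check that the labels \(\eta_0,\eta_1\) still pick out these analytic branches for small \(\eps\). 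This follows from the Lipschitz bound \eqref{eq:Tn-eigenvalue-lipschitz}: the gaps separating \(\eta_0\), \(\eta_1\), and \(\eta_2\) persist under perturbations of size \(|\eps|\|W\|_\infty\). The derivatives are given by the Feynman--Hellmann formula
\[
  \frac{d}{d\eps}\Big|_{\eps=0}\eta_j(T_n+V_*+\eps W)=\langle W\phi_j,\phi_j\rangle=\int_{\Sn}W|\phi_j|^2\,d\theta .
\]

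\textbf{Step 2: vanishing of the first variation.} Since \(V_*\) is a local minimizer of \(\mathfrak g\) on \(\mathcal U\), the differentiable function \(\eps\mapsto\mathfrak g(V_*+\eps W)\) has an interior local minimum at \(\eps=0\). Hence its derivative vanishes there:
\[
  \int_{\Sn}W\bigl(|\phi_1|^2-|\phi_0|^2\bigr)\,d\theta=0\qquad\text{for all real }W\in L^\infty(\Sn).
\]
The function \(|\phi_1|^2-|\phi_0|^2\) lies in \(L^1(\Sn)\) because the eigenfunctions are \(L^2\)-normalized. Testing against \(W=\operatorname{sgn}(|\phi_1|^2-|\phi_0|^2)\) gives its \(L^1\) norm as zero, which is \eqref{eq:unconstrained-critical-density}.

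\textbf{Step 3: the consequence.} The final assertion is the contrapositive. If the density difference is not identically zero, then \(V_*\) cannot be an interior local minimizer in any \(L^\infty\)-open set where \(\eta_1\) is simple. So within \(\calA_n\), whose interior in \(L^\infty\) is empty or at least not the relevant regime, any such minimizer must be pinned by the admissibility constraint.

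\textbf{Main obstacle.} The only delicate point is Step 1's branch-tracking: \(\eta_1\) must remain the analytic continuation of the simple level, rather than a crossing level. Simplicity of both \(\eta_0\) and \(\eta_1\), together with the uniform Lipschitz bound, handles this. I would also note that the phase of the real eigenfunctions is irrelevant, since only \(|\phi_j|^2\) enters the derivative formula.
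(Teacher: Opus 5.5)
Your proposal is correct and follows the paper's proof. You perturb along \(V_*+tW\) with \(W\in L^\infty(\Sn)\), use analytic perturbation theory for the two simple levels to get \(\frac{d}{dt}\eta_i=\int W|\phi_i|^2\), and conclude from the vanishing first derivative of \(\mathfrak g\) for every \(W\) that \(|\phi_1|^2=|\phi_0|^2\) a.e. Your additional details make explicit what the paper leaves implicit: branch tracking via \eqref{eq:Tn-eigenvalue-lipschitz}, simplicity of \(\eta_0\) from Lemma~\ref{lem:positive-improving}, testing with \(W=\operatorname{sgn}(|\phi_1|^2-|\phi_0|^2)\), and the observation that the uniformly Lipschitz class \(\calA_n\) has empty \(L^\infty\)-interior.
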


\begin{proof}
For \(W\in L^\infty(\Sn)\) and \(|t|\) small, \(V_*+tW\in\mathcal U\).  Since the first two levels are
simple at \(t=0\), analytic perturbation theory gives
\[
  \left.\frac d{dt}\eta_i(T_n+V_*+tW)\right|_{t=0}
  =\int_{\Sn}W(\theta)|\phi_i(\theta)|^2\,d\sigma(\theta),
  \qquad i=0,1.
\]
The first derivative of \(\mathfrak g(V_*+tW)\) at \(t=0\) must vanish for every \(W\).  Hence
\[
  \int_{\Sn}W\bigl(|\phi_1|^2-|\phi_0|^2\bigr)d\sigma=0
  \qquad\text{for every }W\in L^\infty(\Sn),
\]
which is exactly \eqref{eq:unconstrained-critical-density}.
\end{proof}

\begin{remark}[Finite-contact Euler inequality]\label{rem:finite-contact-euler}
A finite-contact envelope gives a concrete form of the constrained variation cone.  Suppose that, near a candidate profile, only finitely many supports are active, so that
\[
  V(\theta)=\max_{1\le i\le N}
  \Phi_i(\theta),
  \qquad
  \Phi_i(\theta)=\log\frac{1-\theta\cdot\xi_i}{2q_i}.
\]
If the support heights are varied by \(q_i(t)=q_i e^{-t a_i}\), then the right derivative of the
envelope is
\[
  W_a(\theta)=\max_{i\in I(\theta)}a_i,
  \qquad
  I(\theta)=\{i:\Phi_i(\theta)=V(\theta)\}.
\]
Thus, at a simple first excited level, the one-sided first variation along this finite-contact cone is
\[
  \left.\frac d{dt}\mathfrak g(V_t)\right|_{t=0+}
  =\int_{\Sn}W_a(\theta)
  \bigl(|\phi_1(\theta)|^2-|\phi_0(\theta)|^2\bigr)\,d\sigma(\theta).
\]
A constrained minimizer of this type would therefore satisfy nonnegativity of this quantity for all
height variations \(a=(a_1,\ldots,a_N)\) preserving the normalization constraints.  This finite-contact
condition is the support-envelope analogue of the usual Euler inequality for an obstacle problem.

The condition \eqref{eq:unconstrained-critical-density} is much stronger than the usual Euler--Lagrange
equation for a constrained potential problem.  In the present geometry the admissible variations are
a cone determined by the horospherical support envelope.
The negative axial variation and \cref{prop:unconstrained-criticality} identify the optimizer problem as a boundary problem for the support-envelope constraint.  An explicit sharp optimizer, if one can be described, should be governed by the active-contact structure of \(\calA_n\).
\end{remark}

\section*{Future directions}

The variational formula in \cref{thm:var-formula} leaves a compact nonlocal support-envelope
optimization problem on \(\calA_n\).  The axial decrease in \cref{thm:ball-not-optimal} and the
finite-contact Euler inequality in \cref{rem:finite-contact-euler} indicate that the remaining
optimizer problem is governed by the active-contact geometry of the horospherical support envelope.

Several tasks are separate from the first-band asymptotic theorem proved here: selecting the first
excited sector in the reduced problem, reducing general admissible profiles to lower-dimensional or
axially saturated support envelopes, and promoting first-order stability conditions to global
minimality in \(\calA_n\).  These questions form a constrained nonlocal shape-optimization problem on
the sphere and will be studied separately.

\section*{Acknowledgements}

The author is grateful to the researchers whose work on fundamental gap estimates, horoconvexity,
and hyperbolic spectral geometry provided the context for this paper.

\section*{Declaration of generative AI and AI-assisted technologies in the manuscript preparation process}

This manuscript was prepared with extensive assistance from OpenAI GPT-5.5 Pro, used through
ChatGPT.  The tool assisted in drafting and rewriting substantial portions of the prose,
reorganizing the presentation of lemmas and proofs, formulating explanatory passages, identifying
possible referee concerns, and planning revisions.  The author reviewed, edited, and checked the
final manuscript, including the mathematical statements, proof arguments, constants, references,
claims, and attributions, and takes full responsibility for its content.  No AI system is credited as
an author or as a source of mathematical authority.

\clearpage
\appendix
\section{Uniform threshold phase expansion}\label{app:phase-expansion}

This appendix proves the one-dimensional estimate used in Lemma~\ref{lem:low-energy-phase}.  The
constant \(b(\beta)\) found at zero energy is the slope of the positive-energy
phase at threshold.  Thus the constants \(b_\ell\) used in the main text are the threshold phase
slopes determined by the Friedrichs branch at the regular singular endpoint.

\begin{proposition}[Threshold phase expansion including the Friedrichs endpoint]\label{prop:appendix-phase}
Let \(I\subset[1/2,\infty)\) be compact.  For \(\beta\in I\), let \(\sigma_\beta(r,k)\) be the
Friedrichs regular positive-energy branch of
\[
  -u''+\frac{\beta(\beta-1)}{\sinh^2 r}u=k^2u
\]
whose exterior sine amplitude is one.  Then, for \(0<k<k_0(I)\) and \(1\le r\le c_0/k\),
\begin{equation}\label{eq:appendix-phase-function}
  \sigma_\beta(r,k)
  =\sin\bigl(k(r+b(\beta))\bigr)
   +O_I\bigl(k^3+k(1+r)e^{-2r}\bigr),
  \qquad b(\beta)=-\gamma_E-\psi(\beta),
\end{equation}
and
\begin{equation}\label{eq:appendix-phase-derivative}
  \partial_r\sigma_\beta(r,k)
  =k\cos\bigl(k(r+b(\beta))\bigr)
   +O_I\bigl(k^4+k(1+r)e^{-2r}\bigr).
\end{equation}
Moreover \(k^{-1}\sigma_\beta(\cdot,k)\to y_\beta\) locally uniformly, where
\(y_\beta(r)=r+b(\beta)+O(re^{-2r})\) is the zero-energy regular solution.  All constants are
uniform for \(\beta\in I\), including the endpoint \(\beta=1/2\) with the Friedrichs convention.
\end{proposition}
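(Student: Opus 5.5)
The plan is to build the positive-energy branch by a variation-of-parameters (Volterra) perturbation around the zero-energy solution $y_\beta$ of \cref{lem:scattering}, and then to match it with the free sine solution on the exterior region where the potential $V_\beta(r)=\beta(\beta-1)\sinh^{-2}r$ is $O(e^{-2r})$. I split $(0,c_0/k)$ at $r=1$. The region $r\le1$ will be an inner compact problem. The region $1\le r\le c_0/k$ will be an exterior problem.

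\emph{Inner region.} Let $u_\beta(r,k)$ be the Friedrichs regular solution normalized by $u_\beta=r^\beta(1+O(r^2))$ (\cref{lem:friedrichs-endpoint}). By Frobenius theory it is entire in $k^2$, uniformly for $\beta\in I$. At $\beta=1/2$ the logarithmic branch is excluded, but the regular branch $r^{1/2}$ is still analytic in $k^2$ with uniform bounds. Hence the Cauchy data of $u_\beta$ at $r=1$ equal the zero-energy data plus $O_I(k^2)$.

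\emph{Exterior region.} On $[1,c_0/k]$ I write the solution in amplitude--phase form, $u=A(r)\sin(kr+\delta(r))$ with $u'=kA\cos(kr+\delta)$. The Prüfer equations read
\[
\delta'=-k^{-1}V_\beta\sin^2(kr+\delta),\qquad (\log A)'=k^{-1}V_\beta\sin(kr+\delta)\cos(kr+\delta).
\]
\begin{itemize}
\item Since $V_\beta=O(e^{-2r})$, the limits $\delta_\infty,A_\infty$ exist.
\item The differences satisfy $\delta(r)-\delta_\infty=O(k^{-1}\cdot k^2r^2e^{-2r})$, using $\sin^2=O((kr+\delta)^2)$, and $\log A-\log A_\infty=O(re^{-2r})$. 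These are exactly the $k(1+r)e^{-2r}$ error terms after the sine-amplitude normalization $\sigma_\beta=u/A_\infty$.
\end{itemize}

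\emph{Threshold phase.} The remaining claim is $\delta_\infty(k)=kb(\beta)+O(k^3)$, and this is where the threshold analysis enters. I compare with the zero-energy solution: for $kr\ll1$ the equation is a perturbation of $-y''+V_\beta y=0$ by $k^2y$. A Volterra iteration with kernel built from $y_\beta$ and a second solution $\tilde y_\beta\sim1$ at infinity gives, on $[1,r_1]$ with $r_1\sim k^{-1/2}$,
\[
u=c(k)\,y_\beta+O(k^2r^3)\quad\text{(relative)}.
\]
Matching at $r_1$, where $y_\beta=r+b+O(re^{-2r})$, with $\sin(kr+\delta_\infty)/k=r+\delta_\infty/k+O(k^2r^3)$ yields $\delta_\infty/k=b(\beta)+O(k^2)$.

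To make the error truly $O(k^3)$ rather than $O(k^2r_1^3)$, the better route is to note that $\delta_\infty$ is odd and analytic in $k$. The Jost function at a regular-singular origin with exponentially decaying potential is analytic near $k=0$, and $\tan\delta_\infty$ is an odd real-analytic function. Its linear coefficient is then $b(\beta)$ by the zero-energy limit $k^{-1}\sigma_\beta\to y_\beta$.

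The main obstacle is establishing this analyticity, and its uniformity in $\beta\in I$, including $\beta=1/2$. I would argue via the exponential decay of $V_\beta$: Jost solutions $f(r,\pm k)\sim e^{\pm ikr}$ extend holomorphically to $|\operatorname{Im}k|<1$. The inner regular branch is entire in $k^2$. Their Wronskian $W(k)$ is therefore holomorphic in the strip, and $e^{2i\delta_\infty}=W(-k)/W(k)$. Near $k=0$ one has $W(0)\ne0$, since there is no zero-energy resonance: $y_\beta$ grows linearly. This gives analyticity and oddness with uniform bounds.

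The derivative estimate follows from the same Prüfer representation. The local convergence $k^{-1}\sigma_\beta\to y_\beta$ follows from continuity of the normalized Cauchy data at $r=1$ together with $A_\infty^{-1}\sim k\cdot(\text{const})$.
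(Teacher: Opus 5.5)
Your argument is correct, and it takes a genuinely different route from the paper.

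\textbf{How the paper argues.} The paper works with the exact positive-energy hypergeometric branch $U_{\beta,k}=z^{\beta/2}(1-z)^{-ik/2}\,{}_2F_1\bigl(\tfrac{\beta-ik}{2},\tfrac{\beta+1-ik}{2};\beta+\tfrac12;z\bigr)$, with $z=\tanh^2 r$. It applies the connection formula at $z=1$ to write this as $C_\beta(k)e^{ikr}G^+_\beta+\overline{C_\beta(k)}e^{-ikr}G^-_\beta$, where $C_\beta(k)$ is an explicit Gamma-function ratio, and reads off the phase $\delta_\beta(k)=\arg C_\beta(k)+\pi/2$. The slope $-\gamma_E-\psi(\beta)$ then comes from Taylor expansion of $\log\Gamma$ and the digamma duplication formula. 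The absence of a $k^2$ term comes from the conjugation symmetry of $\log\Gamma$.

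\textbf{How you argue.} You obtain oddness and analyticity of $\delta_\infty$ structurally, from $e^{2i\delta_\infty}=F(-k)/F(k)$. Here $F(\pm k)$ is holomorphic in $|\operatorname{Im}k|<1$, and $F(0)\neq0$ because there is no zero-energy resonance ($y_\beta'\to1$). Uniformity in $\beta\in I$ follows from continuity and compactness. The value of the slope is imported from the zero-energy computation of Lemma~\ref{lem:scattering} through the limit $k^{-1}\sigma_\beta\to y_\beta$. This is not circular: that limit follows from your exterior representation, one-dimensionality of the regular branch, and $kA_\infty\to\mathrm{const}\neq0$.

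\textbf{What each route buys.} The paper's computation gives a closed-form phase shift and a self-contained derivation of $b(\beta)$. Yours avoids the positive-energy connection formula and the bookkeeping of the factors $G^\pm_\beta$. It also applies unchanged to any potential that is regular-singular at the origin, with Friedrichs branch entire in $k^2$, and $O(e^{-2r})$ at infinity.

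\textbf{One small correction in the exterior step.} The bounds you state are $\delta(r)-\delta_\infty=O(kr^2e^{-2r})$ and $\log A-\log A_\infty=O(re^{-2r})$, the latter multiplied by $\sin\phi=O(k(1+r))$. Together these only give
\[
  \sigma_\beta-\sin(kr+\delta_\infty)=O\bigl(k(1+r)^2e^{-2r}\bigr).
\]
The $r^2$ parts of the phase and amplitude corrections do cancel, but absolute-value bounds do not see it. The clean fix is the Volterra equation from infinity,
\[
  \sigma_\beta(r,k)=\sin(kr+\delta_\infty)+\int_r^\infty\frac{\sin k(s-r)}{k}\,V_\beta(s)\,\sigma_\beta(s,k)\,ds .
\]
Combine it with $|\sin k(s-r)/k|\le s-r$ and the a priori bound $|\sigma_\beta(s,k)|\le C\min\{1,k(1+s)\}$ from your Pr\"ufer representation. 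This yields the stated $O(k(1+r)e^{-2r})$ directly. Your derivative estimate already has the stated form. For the paper's later use at $r=R+O(1)$ the extra factor of $r$ is harmless, since both bounds are exponentially small there.
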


\begin{proof}
We use the exact hypergeometric regular branch.  Put
\[
  z=\tanh^2 r,
  \qquad
  a_k=\frac{\beta-ik}{2},
  \qquad
  b_k=\frac{\beta+1-ik}{2},
  \qquad
  c=\beta+\frac12 .
\]
A direct substitution gives a solution regular at \(r=0\):
\begin{equation}\label{eq:positive-energy-regular-branch}
  U_{\beta,k}(r)
  =z^{\beta/2}(1-z)^{-ik/2}
   {}_2F_1(a_k,b_k;c;z).
\end{equation}
At \(k=0\) this is the zero-energy regular branch used in Lemma~\ref{lem:scattering}.  The formula
is uniform for \(\beta\in I\); when \(\beta=1/2\), the local behavior is \(r^{1/2}(1+O(r^2))\), so
it is exactly the Friedrichs branch.

We next move this solution to infinity.  The connection formula for \({}_2F_1\) at \(z=1\), with
\(c-a_k-b_k=ik\), gives
\begin{equation}\label{eq:connection-positive-energy}
  U_{\beta,k}(r)
  =C_\beta(k)e^{ikr}G^+_\beta(r,k)
   +\overline{C_\beta(k)}e^{-ikr}G^-_\beta(r,k),
\end{equation}
where \(G^-_\beta=\overline{G^+_\beta}\) for real \(k\), and
\begin{equation}\label{eq:Cbeta}
  C_\beta(k)
  =2^{-ik}
    \frac{\Gamma(c)\Gamma(ik)}
    {\Gamma\bigl((\beta+ik)/2\bigr)
     \Gamma\bigl((\beta+1+ik)/2\bigr)} .
\end{equation}
The functions \(G^\pm_\beta\) are the hypergeometric factors in the variable \(1-z=\operatorname{sech}^2r\), together with the explicit factor comparing \((1-z)^{\mp ik/2}\) with \(2^{\mp ik}e^{\pm ikr}\).  Hence, for \(j=0,1\),
\begin{equation}\label{eq:Gpm-bounds}
  \left|\partial_r^j(G^\pm_\beta(r,k)-G^\pm_\beta(r,0))\right|
  \le C_I k(1+r)e^{-2r},
  \qquad
  \left|\partial_r^j(G^\pm_\beta(r,0)-1)\right|
  \le C_I(1+r)e^{-2r}.
\end{equation}
These estimates follow by differentiating the hypergeometric series at \(1-z=0\).  The only possible
loss is the derivative of \((1-z)^{\mp ik/2}e^{\mp ikr}2^{\pm ik}\), and it is
\(O(k e^{-2r})\).  Uniformity in \(\beta\) is immediate because the parameters stay in a compact set
away from the poles of the Gamma and hypergeometric coefficients.

Let \(\arg C_\beta(k)\) be the continuous branch with \(\arg\Gamma(ik)\to-\pi/2\) as
\(k\downarrow0\), and set
\[
  \delta_\beta(k):=\arg C_\beta(k)+\frac\pi2 .
\]
Dividing \eqref{eq:connection-positive-energy} by \(2|C_\beta(k)|\) gives the real regular solution
with exterior sine amplitude one:
\begin{equation}\label{eq:sigma-normalized-appendix}
  \sigma_\beta(r,k)=\sin(kr+\delta_\beta(k))+E_\beta(r,k),
\end{equation}
where \eqref{eq:Gpm-bounds} implies
\begin{equation}\label{eq:appendix-spatial-error}
  |E_\beta(r,k)|\le C_I k(1+r)e^{-2r},
  \qquad
  |\partial_rE_\beta(r,k)|\le C_I k(1+r)e^{-2r}.
\end{equation}
The extra factor \(k(1+r)\) is important.  At \(k=0\), the normalized solution collapses to zero;
therefore the static amplitude correction \(G^\pm_\beta(r,0)-1\) is multiplied by
\(|\sin(kr+\delta_\beta(k))|\le C k(1+r)\) on the range \(r\le c_0/k\).

It remains to identify the threshold phase.  Taking the imaginary part of the logarithm of
\eqref{eq:Cbeta}, and using Taylor expansion of \(\log\Gamma\) on compact subsets of the right
half-plane, gives
\[
\begin{aligned}
  \arg C_\beta(k)
  &=-\frac\pi2
    +k\left[-\gamma_E-\log2
      -\frac12\psi\!\left(\frac\beta2\right)
      -\frac12\psi\!\left(\frac{\beta+1}{2}\right)
\right]
    +O_I(k^3).
\end{aligned}
\]
There is no quadratic term in the phase: around positive real arguments the imaginary part of
\(\log\Gamma(x+ik/2)\) contains only odd powers of \(k\), and the same is true for
\(\log\Gamma(ik)\) after the singular \(-\log(ik)\) term has supplied the constant \(-\pi/2\).  By
the duplication identity for the digamma function,
\[
  \psi\!\left(\frac\beta2\right)
  +\psi\!\left(\frac{\beta+1}{2}\right)
  =2\psi(\beta)-2\log2,
\]
we obtain
\begin{equation}\label{eq:delta-threshold-appendix}
  \delta_\beta(k)=k[-\gamma_E-\psi(\beta)]+O_I(k^3)=kb(\beta)+O_I(k^3).
\end{equation}
Combining \eqref{eq:sigma-normalized-appendix}, \eqref{eq:appendix-spatial-error}, and
\eqref{eq:delta-threshold-appendix} gives \eqref{eq:appendix-phase-function}.  Differentiating the
same representation gives \eqref{eq:appendix-phase-derivative}.  Finally, on compact \(r\)-intervals,
\[
  k^{-1}\sin\bigl(k(r+b(\beta))\bigr)\to r+b(\beta),
\]
and the spatial error divided by \(k\) is \(O_I((1+r)e^{-2r})\).  This recovers the zero-energy
branch from Lemma~\ref{lem:scattering}, including the stated tail estimate, and completes the proof.
\end{proof}

\end{document}